\numberwithin{equation}{section}
\newtheorem{theorem}{Theorem}[section]
\newtheorem{lemma}[theorem]{Lemma}
\newtheorem{proposition}[theorem]{Proposition}
\newtheorem{corollary}[theorem]{Corollary}
\newcounter{thmc}
\newtheorem{thmcite}[thmc]{Theorem}
\theoremstyle{definition}
\renewcommand{\tilde}{\widetilde}          
\DeclareMathSymbol{\leqslant}{\mathalpha}{AMSa}{"36} 
\DeclareMathSymbol{\geqslant}{\mathalpha}{AMSa}{"3E} 
\DeclareMathSymbol{\eset}{\mathalpha}{AMSb}{"3F}     
\renewcommand{\leq}{\;\leqslant\;}                   
\renewcommand{\geq}{\;\geqslant\;}                   
\newcommand{\C}{\mathbb{C}}
\newcommand{\R}{\mathbb{R}}
\newcommand{\N}{\mathbb{N}}
\newcommand{\D}{\mathbb{D}}
\newcommand{\E}{\mathds{E}}
\newcommand{\X}{\bm{\mathrm X}} 
\newcommand{\Y}{\bm{\mathrm Y}} 
\newcommand{\M}{\bm{\mathrm M}} 
\newcommand{\V}{\bm{\mathrm V}} 
\newcommand{\B}{\bm{\mathcal B}}
\renewcommand{\P}{\mathds{P}}
\newcommand{\ps}[1]{\langle #1 \rangle}
\newcommand{\eqlaw}{\overset{\text{(law)}}{=}}
\def\eps{\varepsilon}
\def\S{\mathbb{S}}
\def\bi{\begin{itemize}}
	\def\ei{\end{itemize}}
\def\bnum{\begin{enumerate}}
	\def\enum{\end{enumerate}}
\def\<#1{\langle #1 \rangle}
\def\M{\mathbf{M}}
\definecolor{ao(english)}{rgb}{0.0, 0.5, 0.0}
\definecolor{darkcandyapplered}{rgb}{0.64, 0.0, 0.0}
\definecolor{amber}{rgb}{1.0, 0.49, 0.0}
\newcommand{\norm}[1]{\left\lvert#1\right\rvert}
\newcommand{\expect}[1]{\mathbb{E}\left[#1\right]}
\title[Reflection coefficients in Toda CFTs]{Three-point correlation functions in the $\mathfrak{sl}_3$ Toda theory I: Reflection coefficients}
\author{Baptiste Cercl\'e}
\email{baptiste.cercle@universite-paris-saclay.fr}
\address{Laboratoire de Math\'ematiques d'Orsay, B\^atiment 307. Facult\'e des Sciences d'Orsay, Universit\'e Paris-Saclay. F-91405 Orsay Cedex, France}
\begin{document}

	\maketitle
	\begin{abstract}
		Toda Conformal Field Theories (CFTs) form a family of 2d CFTs indexed by semisimple and complex Lie algebras. They are natural generalizations of the Liouville CFT in that they enjoy an enhanced level of symmetry encoded by W-algebras.
		These theories can be rigorously defined using a probabilistic framework that involves the consideration of correlated Gaussian Multiplicative Chaos measures.
		
		This document provides a first step towards the computation of a class of three-point correlation functions, that generalize the celebrated DOZZ formula and whose expressions were predicted in the physics literature by Fateev-Litvinov, within the probabilistic framework associated to the $\mathfrak{sl}_3$ Toda CFT. 
		
		Namely this first article of a two-parts series is dedicated to the probabilistic derivation of the reflection coefficients of general Toda CFTs, which are essential building blocks in the understanding of Toda correlation functions. Along the computations of these reflection coefficients a new path decomposition for diffusion processes in Euclidean spaces, based on a suitable notion of minimum and that generalizes the celebrated one-dimensional result of Williams, will be unveiled. As a byproduct we describe the joint tail expansion of correlated Gaussian Multiplicative Chaos measures together with an asymptotic expansion of class one Whittaker functions. 
	\end{abstract}
	
	
	\section{Introduction}
	
	\subsection{Toda Conformal Field Theories}
	Liouville Conformal Field Theory (CFT hereafter) has drawn considerable attention in the mathematics community over the past few years. First introduced in the physics literature ---as a model for 2d quantum gravity and string theory--- by Polyakov in his 1981 pioneering work~\cite{Pol81}, the mathematical understanding of this theory is much more recent, and providing a rigorous meaning to the framework proposed by Polyakov has now proved to be key in the study of two-dimensional random geometry. This study led to numerous achievements, for instance thanks to the rich interplays between Liouville CFT and (the scaling limit of) Random Planar Maps~\cite{LeG13, Mie13, MS15a, MS16a, MS16b, HoSu19, DDDF,DFGPS, GM20}. Connections between Liouville CFT and Conformal Loop Ensembles have also proved to be particularly thriving~\cite{SW, DMS14}, as exemplified by the recent derivation of the imaginary DOZZ formula in~\cite{AS_DOZZ} which was shown there to describe certain Conformal Loop Ensemble observables.
	
	Closer to the language of Polyakov and the methods developed in the physics literature to study Liouville CFT is the program initiated by David-Kupiainen-Rhodes-Vargas in 2014~\cite{DKRV}. One of their achievements is a rigorous derivation of the DOZZ formula~\cite{KRV_DOZZ}, an essential building block in the understanding at Liouville CFT which they found to agree with predictions made by physicists~\cite{DO94,ZZ96} in the 90's. Later on and together with Guillarmou was unveiled a recursive procedure~\cite{GKRV}, dubbed \emph{conformal bootstrap} and first proposed by Belavin-Polyakov-Zamolodchikov in a groundbreaking work~\cite{BPZ}, that allows to compute the so-called \emph{correlation functions of Vertex Operators} using the DOZZ formula as a fundamental input. Computing such correlation functions ---that is providing explicit formulas for quantities of the form $\ps{\prod_{k=1}^NV_{\alpha_k}(z_k)}$ with the $V_\alpha$ being Vertex Operators--- is a fundamental issue in Liouville CFT (should it be from the physics or mathematics perspective) and the general derivation of these based on Segal's axioms~\cite{Seg04} a few months ago in~\cite{GKRV_Segal} can be seen as the culmination of this program.
	
	Models with an enhanced level of symmetry appeared shortly after the 1984 article of BPZ, with the introduction by Zamoldchikov in 1985~\cite{Za85} of a generalization of the framework from~\cite{BPZ} designed to adapt this machinery to models that enjoy, in addition to conformal invariance, \textit{higher-spin}- (or \textit{W}-) symmetry. This higher level of symmetry is no longer encoded by the Virasoro algebra but rather by $W$-algebras, which are Vertex Operator algebras that contain the Virasoro algebra as a subalgebra. 
	
	Liouville CFT admits a generalization within this setting through Toda CFTs, that form a family of 2d CFTs indexed by semisimple and complex Lie algebras $\mathfrak{g}$ (Liouville CFT then corresponds to $\mathfrak g=\mathfrak{sl}_2$). These Toda CFTs have been extensively studied in the physics literature but remain far from being completely understood, and Toda correlation functions are only known in a few cases~\cite{FaLi1} (apart for the case of Liouville CFT). From a mathematical viewpoint, a rigorous definition has been recently proposed by Rhodes, Vargas and the author in~\cite{Toda_construction} and was shown to recover certain assumptions from the physics literature in~\cite{Toda_OPEWV}, where Huang and the author provided a manifestation of the $W$-symmetry for the $\mathfrak{g}=\mathfrak{sl}_3$ Toda CFT via the existence of so-called \emph{Ward identities}. Proving such identities is key in the computation of certain $\mathfrak{sl}_3$ Toda three-point correlation functions, that represent the analogs of the DOZZ formula within this setting. Using the framework introduced in~\cite{Toda_construction} these three-point correlation functions depend on three \emph{weights} $\alpha_1,\alpha_2,\alpha_3\in\R^2$ and are denoted by
	\[
	C_\gamma(\alpha_1,\alpha_2,\alpha_3)\coloneqq \langle V_{\alpha_1}(0)V_{\alpha_2}(1)V_{\alpha_3}(\infty)\rangle
	\]
	where the right-hand side admits a probabilistic representation involving the consideration of two correlated Gaussian Multiplicative Chaos (GMC in the sequel) measures. Computing such correlation functions can therefore be understood as an integrability statement for the GMC.
	
	The present document represents a major step towards the achievement of this program in that it provides a probabilistic derivation of key objects that naturally arise when considering Toda three-point correlation functions, the so-called \emph{reflection coefficients}. Indeed it is assumed in the physics literature that such coefficients allow to relate Toda Vertex Operators with same quantum weights, by which is meant that there exists a family of transformations $\hat s:\R^2\to\R^2$ indexed by elements $s$ of the Weyl group associated to $\mathfrak{g}$ (see Subsection~\ref{subsection_background} and Section~\ref{sec:whittaker} for more background) such that for $\alpha\in\R^2$
	\begin{equation}\label{eq:refl_rel}
		V_\alpha=R_s(\alpha)V_{\hat s\alpha}
	\end{equation}
	where recall that the $V_\alpha$ are Toda Vertex Operators while $R_s(\alpha)\in\R$ is a reflection coefficient, whose expression has been predicted in the physics literature~\cite{reflection_simplylaced, refl_non_simply_laced, Fat_refl}. In particular this would imply that the probabilistically defined three-point correlation functions $C_\gamma(\alpha_1,\alpha_2,\alpha_3)$ should satisfy a relation of the form
	\begin{equation}
		C_\gamma(\alpha_1,\alpha_2,\alpha_3)=R_s(\alpha_1)C_\gamma(\hat s\alpha_1,\alpha_2,\alpha_3)
	\end{equation}
	for any such $s$. However one subtle issue in the above expression is that for $s\neq Id$, $C_\gamma(\alpha_1,\alpha_2,\alpha_3)$ and $C_\gamma(\hat s\alpha_1,\alpha_2,\alpha_3)$ cannot both make sense probabilistically speaking. In this document we provide a probabilistic representation of certain Toda reflection coefficients and motivate how a meaning can be given to the reflection relation~\eqref{eq:refl_rel} even within this probabilistic framework, in the same fashion as in the probabilistic approach towards Liouville CFT (see~\cite[Lemma 10.5]{KRV_DOZZ}). 
	
	In a follow-up article and building on the results of the present manuscript we will show that some $\mathfrak{sl}_3$ Toda three-point correlation functions, defined thanks to our probabilistic framework, can be explicitly computed and that their values agree with predictions from the physics literature~\cite{FaLi0, FaLi1}. Namely we will prove in~\cite{Toda_correl2} that if $C^{\text{FL}}_\gamma(\alpha_1,\alpha_2,\alpha_3)$ denotes the expression proposed by Fateev-Litvinov in~\cite[Equation (21)]{FaLi1}, then the following holds true:
	\begin{thmcite}[Adapted from Theorem 1.2 in~\cite{Toda_correl2}]\label{thm:main_result}
		For $\gamma\in[1,\sqrt2)$ and $\mathfrak{g}=\mathfrak{sl}_3$, assume that $\alpha_1$ is as prescribed in~\cite[Equation (21)]{FaLi1}.
		Then as soon as $\alpha_0,\alpha_1$ and $\alpha_\infty$ satisfy the Seiberg bounds~\cite[Item (1) of Theorem 1.1]{Toda_construction}:
		\begin{equation}
			C_\gamma(\alpha_1,\alpha_2,\alpha_3) = C^{\text{FL}}_\gamma(\alpha_1,\alpha_2,\alpha_3).
		\end{equation}
	\end{thmcite} 
	In the probabilistic approach towards Toda CFTs the hypothesis that $0<\gamma<\sqrt 2$ corresponds to the optimal range of values for which the probabilistic representation of the correlation functions make sense. It differs from the usual assumption in Liouville CFT that $\gamma<2$ due to the the fact that the longest roots have squared norm $2$. This is purely a matter of conventions, and rescaling $\gamma$ by a multiplicative factor $\sqrt 2$ allows to recover the usual hypothesis that $\gamma\in(0,\sqrt 2)$. The additional assumption in the above statement that $\gamma$ should satisfy $\gamma>1$ comes from another constraint specific to the probabilistic definition of Toda CFTs, that is the fact that certain four-point correlation functions must be well-defined.
	
	\subsection{On a reflection principle: Williams path decomposition and class one Whittaker functions}
	\subsubsection{Williams path decomposition}
	From a completely different perspective, in a celebrated article~\cite{Williams}, Williams in 1974 described a remarkable path decomposition for Brownian paths and more generally one-dimensional diffusions. In its simplest form where the underlying process is a Brownian motion with positive drift $\nu$ (which we denote $B^\nu$), this decomposition can be formulated by saying that, conditionally on the value of the global minimum of the process $\M\coloneqq \inf\limits_{t\geq 0} B^\nu_t$, the law of $B^\nu$ (knowing $\M$) is no longer a Markov process but it can be realized by joining together two Markov processes. Namely, the first process has the law of $B^{-\nu}$ until reaching $\M$, and the second one has the law of the diffusion process $\mathcal B^{\nu}$ whose law is that of $B^{\nu}$ conditioned on staying above $\M$. For general diffusion processes, the decomposition corresponds to realizing the law of a diffusion process conditionally on the value of its maximum by welding together two Markov processes, the latter processes being defined using a Doob $h$-transform.
	
	Following its discovery by Williams, this path decomposition has been thoroughly investigated in the probability community and has inspired numerous fundamental statements such as Pitman's celebrated theorem~\cite{Pit75}. Extending this decomposition for different classes of processes has been a very active field of research~\cite{Ber1, Cha, KM, BY} since Williams' 1974 article. See the account (in French) by Le Gall~\cite{LeGall_Williams} on this topic. 
	However and to the best of our knowledge, providing a general formulation of Williams path decomposition for diffusions in any dimensions is still lacking and remains an open question up to date. 
	
	\subsubsection{Class one Whittaker functions}
	Since their introduction (in their general form) by Jacquet~\cite{J67}, Whittaker functions have proved to be particularly significant in the study of semisimple Lie groups from the point of view of number and representation theory, as well as in the understanding of the quantum Toda lattice~\cite{K77}. Such objects are also relevant from the perspective of probability theory, since they naturally arise in a wide range of probability-related topics. One striking instance of such a manifestation is the connection between Whittaker functions and the geometric Robinson-Schensted-Knuth correspondence~\cite{COcSZ, OcSZ_RSK_Toda}. This connection in turn allows to relate Whittaker functions to models of random polymers in that ($q$-)Whittaker processes, defined by means of Whittaker functions, can be shown via this connection to describe partition functions of certain directed random polymers~\cite{Oc_polymer,BC_McDo}. These models of random polymers themselves turn out to be relevant in the understanding of the stationary Kardar-Parisi-Zhang equation~\cite{BCFV}. Further details on the interplays between Whittaker functions and probability theory can be found \textit{e.g.} in~\cite{Oc_review}.
	
	\subsubsection{Reminders on reflection groups}\label{subsection_background} 
	Reflection groups are naturally associated to class-one Whittaker functions and, as we will see below, provide the natural setting to extend the path decomposition by Williams to a higher-dimensional context. Therefore and before going any further let us first recall some basic notions on reflection groups. More details can be found \emph{e.g.} in the textbook by Humphreys~\cite{humphreys_reflection}.
	
	We consider an Euclidean space $\V$ equipped with a scalar product $\ps{\cdot,\cdot}$ and associated norm $\norm{x}^2\coloneqq \ps{x,x}$. On this Euclidean space we consider a \textbf{finite}\footnote{Reflection groups will be implicitly taken finite in what follows.} reflection group $W$ (\emph{i.e.} the realisation of a Coxeter group), that is a finite subgroup of the general linear group of $\V$ that is generated by finitely many reflections 
	\begin{equation}
		s_\alpha:x\mapsto x-2\frac{\ps{x,\alpha}}{\ps{\alpha,\alpha}}\alpha
	\end{equation}
	for $\alpha\in\V^*\simeq\V$, which are reflections across the hyperplanes $\left\{x\in\V, \ps{x,\alpha}=0\right\}$. Associated to it is \emph{a root system} of the reflection group $(W,\V)$, defined as a finite subset $\Phi$ of $\V\setminus\{0\}$ such that 
	\begin{equation}
		\begin{split}
			&\text{(i) the elements of }\Phi\text{ span }\V,\\
			&\text{(ii) }\Phi\cap\R\alpha=\{\alpha,-\alpha\}\quad\text{for all }\alpha\in\Phi,\\
			&\text{(iii) }s_\alpha\Phi=\Phi\quad\text{for all }\alpha\in\Phi.
		\end{split} 
	\end{equation} Its elements are called the \emph{roots} and are such that $W$ is generated by the $(s_\alpha)_{\alpha\in\Phi}$. For future convenience we denote by $\alpha^\vee\coloneqq\frac{2\alpha}{\ps{\alpha,\alpha}}$ the coroot of a root $\alpha$. Of particular significance are the \emph{simple roots} of $W$, which are linearly independent roots with the additional property that any element of the root system can be written as a linear combination, with coefficients of the same sign, of simple roots. These simple roots will be denoted by $e_1,\cdots,e_r$\footnote{Different choices of simple roots are of course possible, but will be related by conjugation under $W$: for any two simple systems there is a $s\in W$ such that $s e_i=e'_i$ (up to reordering the roots). Such a simple system always exists~\cite[Section 1.3]{humphreys_reflection} and $r=\dim \V$.}. To these simple roots are associated \emph{fundamental weights} $\omega_i$, $1\leq i\leq r$, which are defined so that $\ps{\omega_i,e_j^\vee}=\delta_{ij}$ for all $1\leq i,j\leq r$. Likewise we consider special elements $(\omega_i)_{1\leq i\leq r}$ of $\V$ defined via $\ps{\omega_i^\vee,e_j}=\delta_{ij}$.
	We further introduce the subset $\Phi^+$ of $\Phi$ made of \emph{positive roots}, that is roots of the form \[\alpha^+=\sum_{i=1}^r \lambda_i e_i\quad\text{with }\lambda_i\geq 0.\]
	The \emph{Weyl vector} $\rho$ is then defined as the half-sum of the positive roots:
	\begin{equation}
		\rho\coloneqq\frac12\sum_{\alpha\in\Phi^+}\alpha.    
	\end{equation}
	It can be written $\rho=\sum_{i=1}^r\omega_i$; in particular $\ps{\rho,e_i^\vee}=1$ for all $1\leq i\leq r$.
	
	The hyperplanes orthogonal to the roots divide the space into finitely many connected components, called \emph{Weyl chambers}, on which $W$ acts freely and transitively. We denote by 
	\begin{equation}
		\mathcal{C}\coloneqq \left\{x\in\V, \ps{x,e_i}>0\quad\text{for all }1\leq i\leq r\right\}
	\end{equation}
	the \emph{fundamental Weyl chamber} and set $\mathcal{C}_-\coloneqq -\mathcal{C}$ (which is also a Weyl chamber).  The boundary $\partial\mathcal{C}$ of this chamber is made of $r$ components $\left(\partial\mathcal{C}_i\right)_{1\leq i\leq r}$ which are defined by
	\begin{equation}
		\partial\mathcal{C}_i\coloneqq \left\{x\in\V, \ps{x,e_i}=0\right\}\cap \partial\mathcal{C}
	\end{equation}
	and which we refer to as \emph{walls}. 
	
	A key example of such reflection groups and root systems is the $A_2$ root system. A special attention will be dedicated to it in this document since in that case the underlying vector space $\V$ is two-dimensional and therefore can be thought of as the simplest model that is not one-dimensional. To represent the root system $A_2$ we thus take $\V=\R^2$ equipped with its canonical scalar product and basis.  The simple roots can then be taken to be $e_1=\left(-\frac{\sqrt3}{\sqrt2},\frac{1}{\sqrt2}\right)$ and $e_2=\left(0,-\sqrt2\right)$, while the root system is given by $\{e_1,e_2,\rho,-e_1,-e_2,-\rho\}$ where $\rho=e_1+e_2$. As for the Weyl group $W$, this is the group made of six elements and generated by the reflections $s_1$ and $s_2$ associated to the simple roots $e_1$ and $e_2$. This is illustrated in Figure~\ref{fig:weyl} below. 
	
	\begin{center}
		\includegraphics[height=8cm]{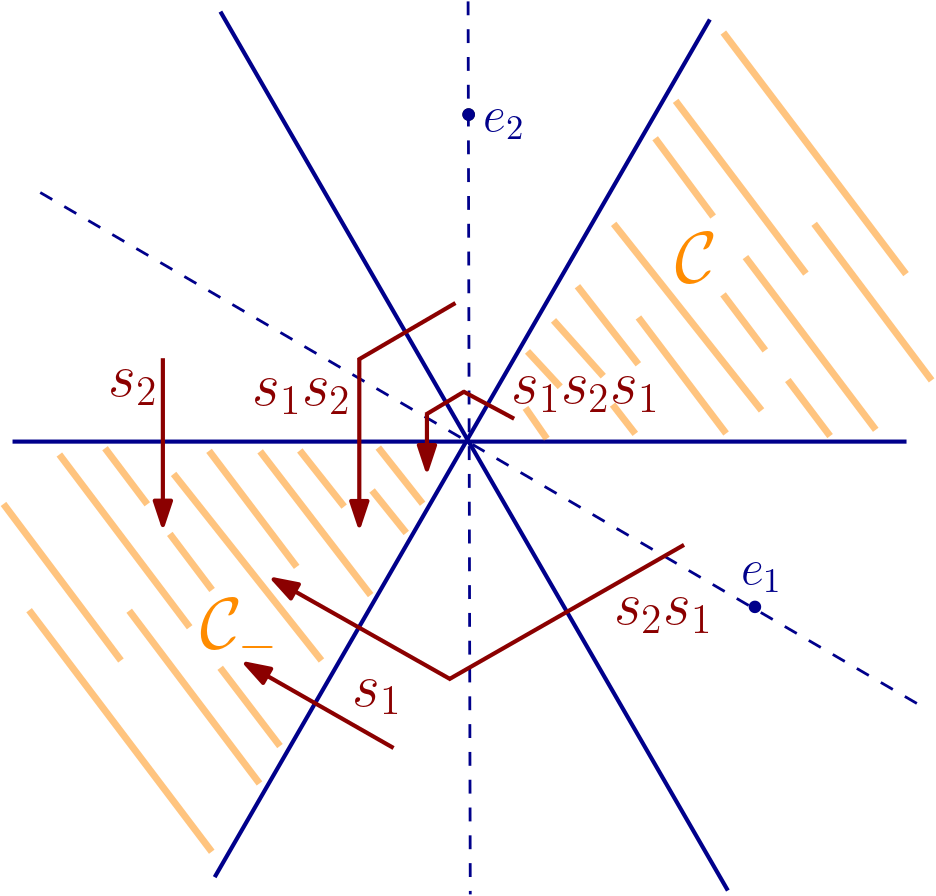}
		\captionof{figure}{The $A_2$ root system}
		\label{fig:weyl}
	\end{center}
	
	Any element $s$ of the reflection group can be written as a composition of reflections orthogonal to the simple roots: $s=s_{i_1}\cdots s_{i_k}$. It admits a reduced expression, that is a product of the above form with a minimal number of simple reflections. This number is the \emph{length} of $s$, $l(s)$, such that $\det(s)=(-1)^{l(s)}$.
	In the sequel we will denote by $\epsilon(s)$ the above quantity ---by analogy with the signature of a group of permutations. This notion of signature is key to derive the general form of the reflection principle~\eqref{refl_brown0} for a certain class of diffusion processes in Euclidean spaces on which a reflection group acts~\cite[Theorem 5]{Gr99} (see Proposition~\ref{prop:reflection_killed} below).
	
	
	\subsection{Presentation of the main results}
	The results presented in this document are twofold. In addition to providing integrability results for Toda CFTs via the computation of certain reflection coefficients, we also state purely probabilistic results which are independent of the setting of Toda CFTs.
	
	\subsubsection{A path decomposition for diffusions in Euclidean spaces}
	Our first result is a generalization of the result of Williams on one-dimensional diffusions~\cite{Williams}. Namely having fixed a reflection group $W$ acting on an Euclidean space $\V$ and introduced a natural (\emph{i.e.} adapted to $W$) analog of the minimum of the process $\M$, we describe the law of a diffusion process conditionally on the value of $\M$. This is done by joining $r+1$ independent Markov processes, in the same spirit as for one-dimensional diffusions. This is the content of Theorem~\ref{thm:williams} whose general scope of application will be presented along Section~\ref{sec:Williams}. We illustrate this statement here by providing such a decomposition for a planar, drifted Brownian motion in the case where the Weyl group is associated to the root system $A_2$ (recall Figure~\ref{fig:weyl}):
	\begin{theorem}\label{thm:brownien_drift}
		Take $\V=\R^2$ and assume that $(W,\V)$ is the reflection group associated to the root system $A_2$ as defined above. For $\nu\in\mathcal{C}$, set 
		\begin{equation*}
			h(x)\coloneqq\sum_{s\in W}\epsilon(s)e^{\ps{s\nu-\nu,x}}\quad\text{and}\quad\partial_i\coloneqq\partial_{\ps{x,e_i}},
		\end{equation*}
		and define a diffusion process $\B^\nu$ on $\V$ started from $x\in\mathcal{C}$ by joining:
		\begin{itemize}
			\item a diffusion process with infinitesimal generator 
			\[
			\frac12\Delta+\nabla \log \partial_{12}h\cdot\nabla,
			\] run until hitting $\partial\mathcal{C}$, say at $z_1\in \partial\mathcal{C}_1$. 
			\item an independent diffusion process started from $z_1$ and with infinitesimal generator 
			\[
			\frac12\Delta+\nabla \log \partial_{2}h\cdot\nabla,
			\] and run until it hits $\partial\mathcal{C}_2$ at a random point $z_2$.
			\item an independent process started from $z_2$, whose law is that of $B^\nu$ conditioned on staying inside $\mathcal{C}$, that is to say the diffusion with generator
			\[
			\frac12\Delta+\nabla \log h\cdot\nabla.
			\]
		\end{itemize}
		Then a planar Brownian motion $B^\nu$ with drift $\nu\in\mathcal{C}$ and started from the origin can be realized by sampling:
		\begin{itemize}
			\item a random variable $\M$ defined by 
			\[
			\P\left(\ps{\M,e_i}\geq \bm m_i\quad\forall1\leq i\leq r\right)=h(-\bm m)\mathds1_{\bm m\in\mathcal{C}_-},
			\]
			and which is such that $\ps{\M,e_i}=\inf_{t\geq 0} \ps{B^\nu_t,e_i}$ for $i=1,2$;
			\item the process $\M+\B^\nu$ described above, where $\B^\nu$ is started from $-\M$.
		\end{itemize}
	\end{theorem}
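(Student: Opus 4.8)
The plan is to follow the classical strategy behind Williams' one-dimensional decomposition, but to organize the induction on the sequence of walls $\partial\mathcal{C}_1,\partial\mathcal{C}_2$ hit by the process as it descends to its $W$-minimum. First I would record the basic analytic facts about the function $h$: it is the unique (up to scaling) positive $W$-anti-invariant harmonic-type function for $\tfrac12\Delta+\nu\cdot\bigtriangledown$ vanishing on $\partial\mathcal{C}$ (this is the Weyl character / Karlin--McGregor type identity), and its iterated wall-derivatives $\partial_2 h$ and $\partial_{12}h$ are positive on the appropriate faces. The reflection principle recalled in Section~\ref{subsection_background} (Proposition~\ref{prop:reflection_killed}, from~\cite{Gr99}) gives the transition density of $B^\nu$ killed on $\partial\mathcal{C}$ as an alternating sum $\sum_{s\in W}\epsilon(s)p_t^\nu(x,sy)$, and hence identifies the $h$-transform $\tfrac12\Delta+\bigtriangledown\log h\cdot\bigtriangledown$ as the law of $B^\nu$ conditioned to stay in $\mathcal{C}$ forever; this is the last of the three welded pieces.

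Next I would compute the joint law of the coordinatewise infima. Using the reflection principle together with the explicit drift $\nu\in\mathcal{C}$ (so that $B^\nu$ is transient toward infinity inside $\mathcal{C}$ with positive probability), one gets
\[
\P\left(\ps{B^\nu_t,e_i}\geq \bm m_i\ \forall t\geq 0,\ \forall i\right)=\sum_{s\in W}\epsilon(s)e^{\ps{s\nu-\nu,-\bm m}}=h(-\bm m)
\]
for $\bm m\in\mathcal{C}_-$, which is exactly the claimed law of $\M$; the key input is that the alternating-sum survival probability telescopes to $h$ because the boundary terms cancel pairwise under the $W$-action. This simultaneously shows $\ps{\M,e_i}=\inf_{t}\ps{B^\nu_t,e_i}$ for $i=1,2$, since in rank $2$ every wall is hit (or approached) and the two infima are governed by $h$ restricted to the two faces.

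Then comes the decomposition proper. Conditioning $B^\nu$ on $\M=\bm m$, I would apply the strong Markov property at the successive last-exit / first-hitting times of the translated walls $\bm m+\partial\mathcal{C}_1$ and $\bm m+\partial\mathcal{C}_2$. A Doob $h$-transform computation, exactly parallel to Williams' argument that the pre-minimum process of $B^\nu$ is $B^{-\nu}$ conditioned to reach the minimum, shows that the first welded piece — the process run until it first touches the (translated) boundary — has generator $\tfrac12\Delta+\bigtriangledown\log\partial_{12}h\cdot\bigtriangledown$, the Doob transform with respect to the function whose value at $x$ is the density (in $\bm m$) of the event $\{\M=\bm m\}$ conditioned on the current position; differentiating $h(-\bm m)$ in the two boundary directions produces $\partial_{12}h$. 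Restarting at the hitting point $z_1\in\partial\mathcal{C}_1$, the process must still attain its infimum in the $e_2$-direction, and the same argument with one fewer constraint yields generator $\tfrac12\Delta+\bigtriangledown\log\partial_2 h\cdot\bigtriangledown$ until hitting $\partial\mathcal{C}_2$; thereafter the process never again touches $\partial\mathcal{C}$ and is the $h$-conditioned diffusion of the third bullet. Translating back by $\M$ and checking that the three pieces are conditionally independent given $\M$ (an immediate consequence of the successive applications of the strong Markov property and the Markovian form of each $h$-transform) finishes the proof.

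The main obstacle I expect is precisely the Doob $h$-transform identification of the intermediate generators: one must verify that the ``function'' used in each $h$-transform — morally a derivative of the joint survival probability $h(-\bm m)$ in the directions already ``used up'' — is genuinely harmonic for the killed generator on the relevant region and matches $\partial_{12}h$, resp. $\partial_2 h$, including the correct boundary behavior on the faces not yet hit. Equivalently, one needs the intertwining relation between $B^\nu$ killed on $\partial\mathcal{C}$ and these partially-conditioned diffusions, which requires knowing that $\partial_{12}h$ and $\partial_2 h$ solve the right Dirichlet problems; establishing these positivity and boundary statements for the alternating sum $h$ — and handling the possibility in higher rank that some walls are hit in a different order — is the delicate technical heart, whereas the welding and conditional independence are then formal.
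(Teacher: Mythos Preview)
Your high-level plan is correct and matches the paper's architecture: the law of $\M$ is indeed $h(-\bm m)$ (this is Proposition~\ref{prop:max_drift}), and the three pieces are exactly the Doob transforms by $\partial_{12}h$, $\partial_2 h$, $h$ of the process killed on $\partial\mathcal C$. Where your sketch has a genuine gap is the gluing step, which you call ``formal''. The process $B^\nu$ conditioned on $\M$ is \emph{not} Markov, so you cannot invoke its strong Markov property, and last-exit times are not stopping times for $B^\nu$ either, so the phrase ``strong Markov at the successive last-exit/first-hitting times'' does not name a valid argument. Identifying the three generators is the easy half; showing that the \emph{joined} process actually reproduces the conditional finite-dimensional distributions is the content.

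The paper works in the opposite direction: it \emph{defines} the joined process $\X$ from the three $h$-transformed diffusions and proves directly that its one-time marginals match those of $B^\nu$ given $\M$. The key is Lemma~\ref{lemma:williams_crucial}: for the joined process,
\[
\P_x\bigl(\X_t\in dy,\ \X\text{ has hit }\partial\mathcal C_1,\dots,\partial\mathcal C_i\bigr)=\frac{\partial_{i+1,\dots,r}h^{\M}(y)}{\partial_{1,\dots,r}h^{\M}(x)}\,\partial_{1,\dots,i}\bm p_t^{\M}(x,dy).
\]
Summing over all subsets of walls hit and recognising the Leibniz expansion of $\partial_{1,\dots,r}\bigl[h^{\M}(y)\bm p_t^{\M}(x,dy)\bigr]$ gives $\partial_{1,\dots,r}h^{\M}(x)\,\P_x(B^\nu_t\in dy\mid\M)$. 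The lemma itself is proved by induction on the number of walls hit, using two ingredients that replace your ``formal'' welding: a boundary Taylor expansion $\bm p_t^{\M}(z_n,y)\sim\ps{z-z_n,e_1}\,\partial_1\bm p_t^{\M}(z,y)$ as $z_n\to z\in\partial\mathcal C_1$ (Lemma~\ref{lemma:1-hit}), and the first-passage identity $\int_0^t\int_{\partial\mathcal C}\P_x(T_{\partial\mathcal C}\in du,\,\cdot\in dz)\,\partial_{1,\dots,i}\bm p_{t-u}^{\M}(z,y)=\partial_{1,\dots,i}\bm p_t^{\M}(x,y)$, which holds because $\partial_{1,\dots,i}\bm p_t^{\M}$ vanishes on the faces $j>i$ and because the $\M$-derivative of the alternating sum only sees the $s\neq\mathrm{Id}$ terms, all of which correspond to paths that must cross $\partial\mathcal C$ (Lemmas~\ref{lemma:after1-hit},~\ref{lemma:after-n-hit}). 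This is precisely the ``boundary behavior on the faces not yet hit'' you flag, but the paper's route---computing transition densities of the joined process rather than identifying generators first and welding after---is what makes the argument go through without any last-exit machinery.
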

	See Figures~\ref{fig:planar_BM} and~\ref{fig:planar_BM_decomposed} below that illustrate this decomposition.
	The general formulation of this path decomposition is described in Theorem~\ref{thm:williams}.
	
	We would like to stress that the study of stochastic processes based on root systems has proved to be particularly thriving, for instance via Dunkl processes~\cite{RV_Dunkl} and for the relationships between stochastic processes, Pitman transformations and Littelmann paths~\cite{BBOc}, as well as models for continuous crystals~\cite{BBOc2}. There is also an extensive literature on the topic of Brownian motion in cones and Weyl chambers~\cite{LeGall_cone, Biane_cone, Gr99}.
	
	
	\subsubsection{Vertex Operators: asymptotic expansions and reflection coefficients}
	As stressed in the introduction, Vertex Operators are fundamental in the study of Liouville and Toda CFTs and computing their correlation functions is one of the main issues, for which an extremely powerful method dubbed \emph{conformal bootstrap} has been introduced in the physics literature in~\cite{BPZ}. From a mathematical perspective and in the case of Liouville CFT, the implementation of this machinery ~\cite{GKRV} strongly relies on the fact that these Vertex Operators are (generalized) eigenfunctions of the Liouville Hamiltonian, and (to a lesser extent) on their asymptotic expansion. The asymptotic expansion of the Vertex operators allowed in a subsequent work with Baverez~\cite{BGKRV} to describe the scattering matrix associated to the Liouville Hamiltonian.
	
	Coming back to Toda CFTs, the Toda Vertex Operators can be thought of as functionals over $L^2(\S^1\to\V)$ indexed by weights $\alpha\in\V$. They admit the probabilistic representation 
	\begin{equation}\label{equ:psi_alpha0}
		\psi_{\alpha}(\bm c,\varphi)\coloneqq e^{\ps{\alpha-Q,\bm c}}\E_{\varphi}\left[\exp\left(-\sum_{i=1}^r\mu_ie^{\gamma \ps{\bm c,e_i}}\int_{\mathbb D}\norm{x}^{-\gamma\ps{\alpha,e_i}}M^{\gamma e_i}(d^2x)\right)\right]
	\end{equation}
	where $Q$ is a special vector in $\V$, defined in Equation~\eqref{eq:background_charge} and called the \textit{background charge while} $\left(M^{\gamma e_i}(d^2x)=:e^{\gamma\ps{\X,e_i}}:d^2x\right)_{1\leq i\leq r}$ is a family of GMC measures defined from a vectorial GFF $\X$ with values in $\V$ and $\E_{\varphi}$ is the conditional expectation with respect to $\varphi=\X_{\vert\partial\mathbb{D}}$. Such notions are presented in Section~\ref{sec:whittaker} to which we refer for more details. The process $(B_t)_{t\geq 0}$ defined as the average of the GFF $\X$ on circles centered at the origin and of radii $e^{-t}$ is a Brownian motion over $\V$, so that the decomposition provided by Theorem~\ref{thm:williams} can be applied to (a drift of) this process and is actually the starting point of the derivation of a probabilistic expression for Toda reflection coefficients, thus highlighting a hidden correspondence between the reflection principle in probability and the concept of reflection in (Toda) CFT. 
	Indeed, we prove that Toda reflection coefficients naturally arise in the asymptotic expansion in the $\bm c$ variable of the Toda Vertex Operators, expansion governed by that of the generalized maximum $\M$- of the underlying drifted Brownian motion. We will prove that in the asymptotic where $\bm c\to\infty$ inside $\mathcal{C}_-$ (by which we mean that $\ps{\bm c,e_i}\to-\infty$ for all $1\leq i\leq r$) a remarkable feature of this expansion is that it can be written as a sum over a subset of the Weyl group $W$ in the sense that:
	\begin{theorem}
		For any $\gamma\in(0,\sqrt 2)$, assume that $\alpha\in Q+\mathcal{C}_-$ is sufficiently close to $Q$. Then viewed as functions $\psi_\alpha:\V\times L^2(\S^1\to\V)$, the Toda Vertex Operators admit the expansion
		\begin{equation}\label{equ:expansion_toda}
			\psi_\alpha(\bm c;\varphi)=e^{\ps{\alpha-Q,\bm c}} + \sum_{i=1}^rR_{s_i}(\alpha)e^{\ps{s_i(\alpha-Q),\bm c}}+R_s(\alpha)e^{\ps{s(\alpha-Q),\bm c}}+o\left(e^{\ps{s(\alpha-Q),\bm c}}\right)
		\end{equation}
		as $\bm c\to\infty$ inside $\mathcal{C}_-$ following a certain asymptotic, where $s$ is some element of the Weyl group $W$ with length $2$ while $Q$ is the background charge. The remainder term is such that for any fixed $\varphi$, $\lim\limits_{\bm c\to\infty}e^{-\ps{s(\alpha-Q),\bm c}}o\left(e^{\ps{s(\alpha-Q),\bm c}}\right)\to0$.
	
		The numbers $R_s(\alpha)$ are Toda reflection coefficients and are found to be equal to
		\begin{equation}\label{equ:expression_toda_refl}
			\begin{split}
				R_s(\alpha)&=\epsilon(s)\frac{A\left(s(\alpha-Q)\right)}{A(\alpha-Q)},\quad\text{where}\\
				A(\alpha)&=\prod_{i=1}^r\left(\mu_i\pi l\left(\frac{\gamma^2\ps{e_i,e_i}}{4}\right)\right)^{\frac{\ps{\alpha,\omega_i^\vee}}\gamma}\prod_{e\in\Phi^+}\Gamma\left(1-\frac{\gamma}2\ps{\alpha,e}\right)\Gamma\left(1-\frac{1}\gamma\ps{\alpha,e^\vee}\right).
			\end{split}
		\end{equation}
	\end{theorem}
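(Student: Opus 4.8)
The plan is to express the Vertex Operator $\psi_\alpha$ as an observable of a $\V$-valued drifted Brownian motion and to read the expansion~\eqref{equ:expansion_toda} off the path decomposition of Theorem~\ref{thm:williams}, identifying the coefficients along the way. First I would perform the usual radial reduction of the probabilistic representation~\eqref{equ:psi_alpha0}: writing $x=e^{-t+\mathrm i\theta}$ and splitting the vectorial GFF $\X$ into its circle averages --- which yield a $\V$-valued Brownian motion $(B_t)_{t\geq0}$ with covariance governed by the Gram matrix $\left(\ps{e_i,e_j}\right)_{1\leq i,j\leq r}$ --- and an independent lateral field producing circle-GMC densities $\rho^{(i)}_t$, then absorbing the singular weights $\norm{x}^{-\gamma\ps{\alpha,e_i}}$ into a deterministic drift via the Girsanov theorem, one is led, for $\bm c\in\mathcal{C}_-$, to a representation of the form
\begin{equation*}
\psi_\alpha(\bm c;\varphi)=e^{\ps{\alpha-Q,\bm c}}\,\E\!\left[\exp\!\left(-\sum_{i=1}^r\mu_i\int_0^\infty e^{\gamma\ps{\bm c+\hat B_t,e_i}}\rho^{(i)}_t\,\dd t\right)\right],
\end{equation*}
where $\hat B$ is a $\V$-valued Brownian motion with drift $\alpha-Q$. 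Under the suitable assumptions $\alpha-Q\in\mathcal{C}_-$, so each coordinate $\ps{\hat B_t,e_i}$ tends to $-\infty$; applying Theorem~\ref{thm:williams} to $-\hat B$ then shows that the generalized maximum $\M$ of $\hat B$, defined by $\ps{\M,e_i}=\sup_{t\geq0}\ps{\hat B_t,e_i}$, is a.s.\ finite with an explicit law, and that conditionally on $\M$ the path $\hat B$ is obtained by welding $r+1$ independent $h$-transformed Markov pieces that successively reach the $r$ walls before staying inside $\mathcal{C}$.

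Since $\bm c\to\infty$ inside $\mathcal{C}_-$ makes each $e^{\gamma\ps{\bm c,e_i}}$ small, I would expand the inner exponential into terms indexed by the subsets $S\subseteq\{1,\dots,r\}$ of active simple roots. The term attached to $S$ is governed by the joint lower tail, near $\M$, of the GMC integrals $\bigl(\int_0^\infty e^{\gamma\ps{\hat B_t,e_i}}\rho^{(i)}_t\,\dd t\bigr)_{i\in S}$; by the welding of Theorem~\ref{thm:williams} this joint tail factorizes into an ordered product of one-dimensional tails along the successive walls, and after multiplication by $e^{\ps{\alpha-Q,\bm c}}$ the resulting exponential is exactly $e^{\ps{s(\alpha-Q),\bm c}}$, where $s$ is the product of the simple reflections indexed by $S$, taken in the order prescribed by the approach direction. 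Hence $S=\emptyset$ gives the leading term, the singletons $S=\{i\}$ give $\sum_iR_{s_i}(\alpha)e^{\ps{s_i(\alpha-Q),\bm c}}$, the first two-element $S$ gives the length-two term $R_s(\alpha)e^{\ps{s(\alpha-Q),\bm c}}$, and all remaining subsets feed the $l.o.t.$ for the chosen direction. The coefficients then assemble from three sources: the explicit law of $\M$ (an alternating sum over $W$ built from $h$) contributes the factors $\mu_i\pi\,l\!\left(\gamma^2\ps{e_i,e_i}/4\right)$ and, through the directional derivatives of $h$ in the intermediate generators, the signs $\epsilon(s)$; each passage to a wall contributes a $\Gamma$-factor via a Laplace-type integral $\int_0^\infty e^{-v}v^{p-1}\,\dd v=\Gamma(p)$; and the last $h$-transformed piece, conditioned to stay in $\mathcal{C}$, contributes a negative GMC moment matching the known one-dimensional (Liouville) value. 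Since each simple reflection $s_i$ permutes $\Phi^+\setminus\{e_i\}$ and sends $e_i$ to $-e_i$, all these factors telescope except those attached to the reflected positive roots, which produces exactly the product over $\Phi^+$ in~\eqref{equ:expression_toda_refl} and the formula $R_s(\alpha)=\epsilon(s)A(s(\alpha-Q))/A(\alpha-Q)$.

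The main obstacle is the joint tail estimate for the \emph{correlated} GMC integrals: unlike in the Liouville case the underlying Gaussian fields are correlated through the off-diagonal entries of $\left(\ps{e_i,e_j}\right)$, so their tails do not factorize and the one-dimensional results of Rhodes--Vargas / KRV cannot be invoked directly. Making the expansion rigorous requires uniform control, as $\bm c\to\infty$ in $\mathcal{C}_-$, of all error terms --- discarding the contribution of the path away from the record set of $\M$, and controlling the degeneration when the direction of $\bm c$ approaches a wall of $\mathcal{C}_-$ --- and it is precisely the welding of Theorem~\ref{thm:williams} that reduces this genuinely multidimensional problem to a finite chain of tractable one-dimensional ones. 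Establishing this uniformity, and matching the two-element contribution with the predicted length-two reflection coefficient, is where the bulk of the analytic work lies.
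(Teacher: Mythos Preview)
Your strategy matches the paper's: radial--angular decomposition, expansion of the exponential over subsets $S\subset\{1,\dots,r\}$, the change of variables $I_i=e^{\gamma\ps{\M,e_i}}\mathrm J_i$ via the maximum $\M$, and identification of each term with a product of one-dimensional Liouville-type reflection coefficients evaluated by the known GMC moment formula. Two points deserve correction.

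First, and most importantly, the sentence ``by the welding of Theorem~\ref{thm:williams} this joint tail factorizes into an ordered product of one-dimensional tails'' overstates what the path decomposition gives you. The welding produces $r+1$ pieces, but each piece is still a genuinely $r$-dimensional diffusion whose drift depends on all coordinates through $\nabla\log\partial_{i_1,\dots,i_k}h$. The reduction to independent one-dimensional problems is not automatic: it requires the entire analysis carried out in Section~\ref{section:brown_inf} (culminating in Proposition~\ref{prop:asymptot_J}), where one shows that as the starting point $\bm x\to\infty$ inside $\mathcal C$ along a suitable ray, the drift of each welded piece is asymptotically constant, the hitting times of successive walls diverge so that the correlated lateral processes $(Z^i_t)$ decorrelate, and only then do the $\mathrm J_i$ become asymptotically independent with the law of $J_{\gamma_i}(\ps{s_{i+1}\cdots s_r(\alpha-Q),e_i^*})$. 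This is the substantial analytic step, not a corollary of the welding, and the specific drift parameters $\ps{s_{i+1}\cdots s_r\nu,e_i}$ that emerge are what make the final formula telescope.

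Second, your attribution of the factors is off. The law of $\M$ contributes only the density $\partial_{1,\dots,r}h(-\M)$ (an alternating sum of exponentials in $\nu=\alpha-Q$), which after the shift $\M\mapsto\M-\bm c$ produces the exponents $e^{\ps{\hat s\alpha-\alpha,\bm c}}$ and the prefactors $\prod_i\ps{\hat s\alpha-\alpha,\omega_i^\vee}$; it does not carry $\mu_i$ or $l(\cdot)$. The factors $\bigl(\mu_i\pi\,l(\gamma^2\ps{e_i,e_i}/4)\bigr)^{\ps{\cdot,\omega_i^\vee}/\gamma}$ and the second family of $\Gamma$-ratios come entirely from evaluating the one-dimensional GMC moments $\E[J_{\gamma_i}(\cdot)^{p}]$ via the Liouville formula~\eqref{equ:eval_J}. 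The $\Gamma$-factors from the Laplace integral over $\M$ account for the remaining $\Gamma(1-\frac1\gamma\ps{\hat s\alpha-\alpha,\omega_i^\vee})$ pieces. The identification with $A(s(\alpha-Q))/A(\alpha-Q)$ then follows not from a telescoping over $\Phi^+$ but from the composition rule $R_{s\tau}(\alpha)=R_s(\hat\tau\alpha)R_\tau(\alpha)$, which reduces everything to the rank-one case.
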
  
	The precise setting in which this statement applies is described in Theorem~\ref{thm:primary_reflection} where in particular we explicit the assumptions made on the weight $\alpha$ as well as on the way $\bm c\to\infty$ inside $\mathcal{C}_-$. Based on similar arguments we are also able to provide an alternative probabilistic description of a more general family of Toda reflection coefficients in Theorem~\ref{thm:expression_refl} (to which we refer for more details), and to show that the expression probabilistically derived is still defined by Equation~\eqref{equ:expression_toda_refl} and agrees with predictions from the physics literature~\cite{reflection_simplylaced, refl_non_simply_laced, Fat_refl}. Note that such quantities do not depend on $\varphi$.
	
	This expansion is much more intriguing than that of Liouville Vertex Operators. Indeed it is far from obvious at first glance why such a specific expansion would hold, and why there are no terms in this asymptotic that would correspond to other directions in $\R^r$ (\emph{e.g.} terms of the form $e^{\ps{\Omega(\alpha-Q),c}}$ for $\Omega$ a rotation), and why they are independent of the variable $\varphi$. 
	However such an expansion should not come as a surprise and is actually reminiscent of the one of spherical functions for the Laplace operator in Weyl chambers, that arise from scattering theory in symmetric spaces (see for instance ~\cite[Theorem 6.2]{MV_sym}) and initiated by the works by Harish-Chandra~\cite{HCa, HCb}. This expansion suggests that the Vertex Operators can be constructed using the Liouville Hamiltonian considered in~\cite{GKRV}, which would be a first step towards implementing the \emph{conformal bootstrap} procedure for Toda CFTs.
	
	Providing such an expansion for Toda Vertex Operators is a key input in the analytic continuation of Toda correlation functions, itself a crucial step in the derivation of Theorem~\ref{thm:main_result}.
	The probabilistic derivation of Toda reflection coefficients will be at the core of the proof of Theorem~\ref{thm:main_result}, in that it will allow to provide an analytic extension of Toda correlation functions beyond the bounds prescribed by~\cite[Theorem 3.1]{Toda_construction}, which is one of the main obstructions at the time being preventing to proving Theorem~\ref{thm:main_result}. For instance, we will see in~\cite{Toda_correl2} that an immediate corollary of the asymptotic expansion of the Vertex Operators is the following extension of the main statement from~\cite{KRV_DOZZ} on the DOZZ formula for the probabilistic Liouville three-point correlation functions $C_{L,\gamma}(\alpha_1,\alpha_2,\alpha_3)$:
	\begin{thmcite}[Corollary 1.3 in~\cite{Toda_correl2}]
		In the setting of Liouville CFT, take $\gamma\in(0,2)$ and assume that $\alpha_1,\alpha_2,\alpha_3\in\R$ are such that $\alpha_k<Q\coloneqq\frac\gamma2+\frac2\gamma$ for $1\leq k\leq 3$. Further assume that $s>-\gamma$ where $s\coloneqq\alpha_1+\alpha_2+\alpha_3-2Q$, and set
		\[
		C_{L,\gamma}(\alpha_1,\alpha_2,\alpha_3)\coloneqq \int_\R e^{sc}\expect{e^{-\mu e^{\gamma c}\rho(\alpha_1,\alpha_2,\alpha_3)}-\mathcal{R}_{\bm\alpha}(c)}dc
		\]
		where $\rho(\alpha_1,\alpha_2,\alpha_3)$ has been introduced in~\cite[Equation (2.17)]{KRV_DOZZ} and is defined using the GMC measure $M^\gamma(d^2x)=:e^{\gamma \X(x)}:d^2x$ by
		\begin{align*}
			\rho(\alpha_1,\alpha_2,\alpha_3)&=\int_\C \frac{\norm{x}_+^{\gamma(\alpha_1+\alpha_2+\alpha_3)}}{\norm{x}^{\gamma\alpha_1}\norm{x-1}^{\gamma\alpha_2}}M^\gamma(d^2x),\quad\text{while}\\
			\mathcal{R}_{\bm\alpha}(c)&\coloneqq \sum_{\mathcal{U}\subset\{1,2,3\}}\mathds{1}_{s<\sum_{k\in\mathcal{U}}2(\alpha_k-Q)}\prod_{k\in\mathcal{U}}R(\alpha_k)e^{2(Q-\alpha_k)c}\quad\text{with}\\
			R(\alpha)&=-\left(\pi\mu l\left(\frac{\gamma^2}{4}\right)\right)^{\frac{2(Q-\alpha)}\gamma}\frac{\Gamma\left(\frac{2(\alpha-Q)}{\gamma}\right)\Gamma\Big(\frac{\gamma}2(\alpha-Q)\Big)}{\Gamma\left(\frac{2(Q-\alpha)}{\gamma}\right)\Gamma\Big(\frac\gamma2(Q-\alpha)\Big)}
		\end{align*}
		the Liouville reflection coefficient. Then $C_{L,\gamma}(\alpha_1,\alpha_2,\alpha_3)$ is analytic in a complex neighbourhood of $\{(\alpha_1,\alpha_2,\alpha_3)\in(-\infty,Q),\quad s>-\gamma\}$. In particular
		\begin{equation}
			C_{L,\gamma}(\alpha_1,\alpha_2,\alpha_3)=C_\gamma^{DOZZ}(\alpha_1,\alpha_2,\alpha_3)
		\end{equation}
		where $C_\gamma^{DOZZ}(\alpha_1,\alpha_2,\alpha_3)$ is given by the DOZZ formula~\cite{DO94,ZZ96}.
	\end{thmcite}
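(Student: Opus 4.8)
The plan is to \emph{extend} the probabilistic three-point function of~\cite{DKRV} past the Seiberg range $\{s>0\}$, using a sharp $c\to-\infty$ expansion of the random variable $\rho(\alpha_1,\alpha_2,\alpha_3)$, and then to identify the resulting function with the DOZZ formula by the identity principle, starting from $\{s>0\}$ where~\cite{KRV_DOZZ} already provides it. All the difficulty lies in the behaviour as $c\to-\infty$ of $g_{\bm\alpha}(c)\coloneqq\expect{e^{-\mu e^{\gamma c}\rho(\alpha_1,\alpha_2,\alpha_3)}}$, which is the $\mathfrak{sl}_2$ instance of the Gaussian Multiplicative Chaos tail expansion of the present paper (whose Vertex Operator incarnation is Theorem~\ref{thm:primary_reflection}): I would first establish that, holomorphically and locally uniformly on a complex neighbourhood of $\{\alpha_k<Q\}$,
\begin{equation}\label{eq:proposal-tail}
g_{\bm\alpha}(c)\;=\;\sum_{\mathcal U\subseteq\{1,2,3\}}\ \prod_{k\in\mathcal U}R(\alpha_k)\,e^{2(Q-\alpha_k)c}\;+\;O\!\left(e^{\gamma c}\right)\qquad(c\to-\infty),
\end{equation}
the empty product being $1$, with the key feature that the only exponents strictly below $\gamma$ occurring in the full expansion are the numbers $\theta_{\mathcal U}\coloneqq\sum_{k\in\mathcal U}2(Q-\alpha_k)$, $\mathcal U\subseteq\{1,2,3\}$.

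\emph{Step 1 (the tail expansion; the main obstacle).} The heavy tail of $\rho$ is produced by the mass $M^\gamma$ puts near the three insertions $0,1,\infty$, of weights $\alpha_1,\alpha_2,\alpha_3$. Localising $M^\gamma$ near each point reduces its contribution to the tail of a single-insertion GMC integral $\int|x|^{-\gamma\alpha_k}M^\gamma(d^2x)$, of tail exponent $\tfrac2\gamma(Q-\alpha_k)$ and, through the Tauberian/Mellin dictionary, of tail constant $R(\alpha_k)$ — precisely the data carried by the single-insertion expansion of the Liouville Vertex Operator, i.e. the $\mathfrak{sl}_2$ case of~\eqref{equ:expansion_toda}--\eqref{equ:expression_toda_refl}. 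Since the three singularities decouple asymptotically, the corresponding Laplace transforms multiply, $g_{\bm\alpha}(c)\approx\prod_{k=1}^{3}\big(1+R(\alpha_k)e^{2(Q-\alpha_k)c}+\cdots\big)$, and expanding this product yields the subset sum in~\eqref{eq:proposal-tail}; the bulk of $M^\gamma$ contributes only at order $e^{(4/\gamma)c}=o(e^{\gamma c})$ (as $\gamma<2$ gives $4/\gamma>\gamma$), and each factor's "descendant" corrections cost an extra power of $\mu e^{\gamma c}$, hence are $O(e^{\gamma c})$. Crucially, in rank one the Weyl group has a single non-trivial reflection, so no multi-reflection exponents occur below $\gamma$. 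The delicate ingredients are (i) the refined GMC moment/tail estimates near an insertion, with identification of the constant as $R(\alpha_k)$; (ii) the decorrelation estimate giving the multiplicative structure with controlled error; and (iii) propagating holomorphy in $\bm\alpha$ through an expansion whose very exponents depend on $\bm\alpha$.

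\emph{Step 2 (convergence and holomorphy of $C_\gamma$).} Fix real $\bm\alpha$ with $\alpha_k<Q$, $s>-\gamma$, off the finitely many hyperplanes $\{s=-\theta_{\mathcal U}\}$. As $c\to+\infty$, $g_{\bm\alpha}(c)$ decays faster than any exponential (positivity of $M^\gamma$ together with the lower-tail bound for GMC), while for each monomial of $\mathcal R_{\bm\alpha}$ the factor $e^{sc}$ produces the exponent $s+\theta_{\mathcal U}$, which its indicator $\ind_{s<\sum_{k\in\mathcal U}2(\alpha_k-Q)}$ forces to be negative; hence $c\mapsto e^{sc}\big(g_{\bm\alpha}(c)-\mathcal R_{\bm\alpha}(c)\big)$ is integrable at $+\infty$. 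As $c\to-\infty$, substituting~\eqref{eq:proposal-tail} shows that $g_{\bm\alpha}(c)-\mathcal R_{\bm\alpha}(c)$ is the sum of the reflection monomials that $\mathcal R_{\bm\alpha}$ does \emph{not} subtract --- each with exponent $\theta_{\mathcal U}\geq -s$, so that $e^{sc}$ times it decays, strictly off the hyperplanes --- plus an $O(e^{\gamma c})$ remainder, and $e^{sc}O(e^{\gamma c})=O(e^{(s+\gamma)c})$ is integrable since $s+\gamma>0$; the closure property of Step 1 guarantees that the only monomials with $e^{sc}$-exponent $\leq 0$ are the pure reflection monomials already subtracted by $\mathcal R_{\bm\alpha}$, so nothing is missing. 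Thus the integral converges absolutely; by Morera's theorem together with the locally uniform holomorphy of~\eqref{eq:proposal-tail} (and a routine bound on the $\int_0^{\infty}$ part), $C_\gamma$ is holomorphic in $\bm\alpha$ off those hyperplanes, and crossing one can only produce a simple pole, coming from a boundary term $\int^{c_0}e^{(s+\theta_{\mathcal U})c}\,dc=-e^{(s+\theta_{\mathcal U})c_0}/(s+\theta_{\mathcal U})$; hence $C_\gamma$ extends meromorphically to a complex neighbourhood $\Omega$ of $\{\alpha_k<Q,\ s>-\gamma\}$.

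\emph{Step 3 (identification with DOZZ).} On $\{\alpha_k<Q,\ s>0\}$ all indicators in $\mathcal R_{\bm\alpha}$ vanish, so $\mathcal R_{\bm\alpha}\equiv 0$ and $C_\gamma(\alpha_1,\alpha_2,\alpha_3)=\int_\R e^{sc}g_{\bm\alpha}(c)\,dc$ is exactly the probabilistic three-point function of~\cite{DKRV}, equal to $C_\gamma^{DOZZ}$ there by the main theorem of~\cite{KRV_DOZZ}. The region $\{\alpha_k<Q,\ s>-\gamma\}$ is an intersection of half-spaces of $\R^3$, hence convex, so the neighbourhood $\Omega$ of Step 2 may be taken connected; $C_\gamma^{DOZZ}$ is meromorphic on $\C^3$ by its explicit expression through the function $\Upsilon_\gamma$. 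The meromorphic function $C_\gamma-C_\gamma^{DOZZ}$ on $\Omega$ vanishes on the non-empty open set $\{s>0,\ \alpha_k<Q\}$ (where both are holomorphic), hence vanishes identically on $\Omega$. Therefore $C_\gamma(\alpha_1,\alpha_2,\alpha_3)=C_\gamma^{DOZZ}(\alpha_1,\alpha_2,\alpha_3)$, the right-hand side being the DOZZ formula of~\cite{DO94,ZZ96}. A final elementary step records that the $R(\alpha)$ of the statement is the $\mathfrak{sl}_2$ specialisation of the reflection coefficient built from $A$ in~\eqref{equ:expression_toda_refl}, so that~\eqref{eq:proposal-tail} is genuinely~\eqref{equ:expansion_toda} in this case.
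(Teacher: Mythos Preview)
The paper does not actually prove this statement: Theorem~B is announced as a result of the companion paper~\cite{Toda_correl2}, described here only as ``an immediate corollary of the asymptotic expansion of the Vertex Operators''. So there is no proof to compare against, but your strategy---establish a $c\to-\infty$ expansion of $g_{\bm\alpha}(c)$ with reflection monomials indexed by subsets of $\{1,2,3\}$, check that subtracting $\mathcal R_{\bm\alpha}$ makes the $dc$-integral converge and depend holomorphically on $\bm\alpha$, then invoke~\cite{KRV_DOZZ} on the non-empty open set $\{s>0,\ \alpha_k<Q\}$ and analytically continue---is precisely the route the paper is pointing to, and your Steps~2--3 are clean.

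Two remarks on Step~1. Your parenthetical ``the bulk of $M^\gamma$ contributes only at order $e^{(4/\gamma)c}$'' is not right as stated: the bulk GMC has a finite first moment, so its Laplace transform is $1+O(e^{\gamma c})$; the exponent $4/\gamma^2$ (hence $e^{(4/\gamma)c}$) governs the first \emph{non-analytic} term, not the first correction. This is harmless since you already allow an $O(e^{\gamma c})$ remainder---and in fact Proposition~\ref{prop:asymptot_1_term} only yields $O(e^{(1-\eta)\gamma c})$, which still suffices because $s>-\gamma$ lets you pick $\eta$ small. More substantively, your item~(ii), the decorrelation of the three punctures, is the genuine work and is \emph{not} supplied by this paper: Proposition~\ref{prop:asymptot_1_term} and the path-decomposition machinery of Sections~\ref{sec:Williams}--\ref{section:brown_inf} treat a single insertion (possibly with several GMC directions, i.e.\ rank $r>1$), not three macroscopically separated insertions in rank one. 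Turning the heuristic $g_{\bm\alpha}\approx\prod_k\big(1+R(\alpha_k)e^{2(Q-\alpha_k)c}\big)$ into your expansion~\eqref{eq:proposal-tail} with a uniform, $\bm\alpha$-holomorphic remainder requires a GFF Markov-property/spatial-independence argument near $0,1,\infty$; you correctly flag this as delicate, but it is the step that actually has to be written out, and nothing in the present paper does it for you.
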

	Such an extension of the range of values for which the probabilistic representation make sense is valid for Toda CFTs too as explained in~\cite{Toda_correl2}, but is less easy to formulate due to the presence of different reflection coefficients. In the above the assumption that $\gamma\in(0,2)$ corresponds to the usual range of definition for probabilistic correlation functions in Liouville CFT. This is a consequence of the fact that the GMC measure considered here is defined from the field $\gamma \X$ while in Toda CFTs they are defined from $\ps{\gamma e_i,\X}$ where $e_i$ has norm $\sqrt 2$.
	
	Note that if $s>-\gamma\vee\max\limits_{1\leq k\leq3}2(\alpha_k-Q)$, the only non-zero term in $\mathcal{R}_{\bm\alpha}(c)$ corresponds to $\mathcal{U}=\emptyset$, so that $\mathcal{R}_{\bm\alpha}(c)=\mathds 1_{s<0}$. The expression proposed above is then given by 
	\[
	C_{L,\gamma}(\alpha_1,\alpha_2,\alpha_3)= \int_\R e^{sc}\expect{e^{-\mu e^{\gamma c}\rho(\alpha_1,\alpha_2,\alpha_3)}-\mathds1_{s<0}}dc
	\]
	and therefore coincides with the probabilistic one from~\cite{KRV_DOZZ} in that case. The main novelty in this statement is the extension of the probabilistic representation to the case where $\max\limits_{1\leq k\leq3}2(\alpha_k-Q)>s>-\gamma$.

	Before moving on, we would like to highlight a remarkable feature in the expression of the reflection coefficients is that it is invariant under a substitution in the parameters which unveils a \emph{duality symmetry} of the model, which manifests itself \emph{e.g.} through the invariance of the reflection coefficients $R_s(\alpha)$ under the transformations
	\begin{equation}
		\gamma \leftrightarrow \frac2\gamma;\quad e_i\leftrightarrow e_i^{\vee}\text{ and } \mu_i\leftrightarrow \mu^\vee_i\coloneqq \frac{\left(\mu_i\pi l\left(\frac{\gamma^2\ps{e_i,e_i}}{4}\right)\right)^{\frac{4}{\gamma^2\ps{e_i,e_i}}}}{\pi l\left(\frac{4}{\gamma^2\ps{e_i,e_i}}\right)}\quad\text{for }1\leq i\leq r.
	\end{equation}
	Put differently and up to an appropriate rescaling of the cosmological constant $\bm \mu=(\mu_1,\cdots,\mu_r)$, the Toda CFT based on the simple Lie algebra $\mathfrak g$ and that based on its Langlands dual ${}^L\mathfrak g$ are related one to the other via the transformation of the coupling constant $\gamma\leftrightarrow\frac2\gamma$. 
	
	\subsubsection{Class-one Whittaker functions and Gaussian Multiplicative Chaos}
	This asymptotic behaviour of the Vertex Operators is a consequence of the tail expansion of correlated GMC measures, which are the building blocks of the probabilistic framework for Toda CFT. We explain in Theorem~\ref{thm:tail_expansion_refl} (to which we refer for more details on the Gaussian Free Fields and GMC measures considered) how Toda reflection coefficients naturally arise in the tail expansion of correlated GMC measures: 
	\begin{theorem}
		Given $\X$ a GFF on $\R^r$ and $(e_i)_{1\leq i\leq r}$ a root system on $\R^r$, define for any $\gamma\in(0,\sqrt 2)$ a family of correlated GMC measures by setting $M^{\gamma e_i}(d^2x)=:e^{\gamma\ps{\X(x),e_i}}:d^2x$ for $1\leq i\leq r$. Then for $\alpha\in Q+\mathcal{C}_-$ close enough to $Q$, as $\bm c\to \infty$ inside $\mathcal{C}$ following a certain asymptotic
		\begin{equation}
			\P\left(\int_{\mathbb D} \norm{x}^{-\gamma\ps{\alpha,e_{i}}}M^{\gamma e_i}(d^2x)>e^{\gamma \ps{\bm c,e_{i}}},\quad i=1,\cdots,r\right) \sim \overline{R_s}(\alpha)e^{\ps{\alpha-\hat s\alpha,\bm c}}
		\end{equation}
		for some $s\in W$ of the form $s_{\sigma 1}\cdots s_{\sigma r}$ ($\sigma$ is a permutation of $\{1,\cdots,r\}$) and where
		\begin{equation}
			\epsilon(s)\prod_{i=1}^r\Gamma\left(1-\frac1\gamma\ps{\hat s\alpha-\alpha,\omega_i^\vee}\right)\overline{R_s}(\alpha)=R_s(\alpha).
		\end{equation} 
	\end{theorem}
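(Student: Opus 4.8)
The plan is to turn the joint upper tail of the correlated chaos integrals into a joint \emph{lower} tail of the generalized minimum $\M$ of a single drifted $\V$-valued Brownian motion, to read off the exponential rate from the explicit law of $\M$ supplied by Theorem~\ref{thm:williams}, and to extract the sharp constant from a freezing argument along the path decomposition that theorem provides. First I would write $\X$ around the insertion point $0$ in radial coordinates, splitting the restriction of $\ps{\X,e_i}$ to the circle of radius $e^{-t}$ into its angular average (a Brownian motion in the time $t$) and an independent lateral fluctuation. After Wick renormalization, and once the linear drift coming from the weight $\norm{x}^{-\gamma\ps{\alpha,e_i}}$, the volume form and the background charge $Q$ has been incorporated, the angular averages assemble into $-B^\nu$ for a single $\V$-valued Brownian motion $B^\nu$ with drift $\nu=\nu(\alpha)\in\mathcal C$ (a normalization-dependent affine image of $Q-\alpha$), and, writing $I_i\coloneqq\int_{\mathbb D}\norm{x}^{-\gamma\ps{\alpha,e_i}}M^{\gamma e_i}(d^2x)$,
\begin{equation*}
I_i\ \simeq\ \int_0^\infty e^{-\gamma\ps{B^\nu_t,e_i}}\,\rho_i(dt),\qquad 1\le i\le r,
\end{equation*}
with $\rho_i>0$ a subexponential ``environment'' (a chaos living on circles) carrying the residual correlations between directions. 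The structural gain is that \emph{all} of the correlation between the $r$ chaos integrals is now carried by the single $\V$-valued path $B^\nu$, which is precisely the object governed by Theorem~\ref{thm:williams}; since $\log I_i\approx-\gamma\inf_t\ps{B^\nu_t,e_i}$, the event $\{I_i>e^{\gamma\ps{\bm c,e_i}}\ \forall i\}$ agrees at leading exponential order with $\{\ps{\M,e_i}\le-\ps{\bm c,e_i}\ \forall i\}$.

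Next I would compute the exponential rate from the law of $\M$. By Theorem~\ref{thm:williams}, $\P(\ps{\M,e_i}\ge\bm m_i\ \forall i)=h(-\bm m)\mathds1_{\bm m\in\mathcal C_-}$ with $h(x)=\sum_{s\in W}\epsilon(s)e^{\ps{s\nu-\nu,x}}$, and, letting the remaining coordinates tend to $-\infty$, the analogous formula with $W$ replaced by the parabolic subgroup $W_S=\langle s_i:i\in S\rangle$ governs the sub-joint laws $\P(\ps{\M,e_i}\ge\bm m_i\ \forall i\in S)$. An inclusion--exclusion over the $r$ coordinates then yields, for $\bm c\in\mathcal C$,
\begin{equation*}
\P\big(\ps{\M,e_i}\le-\ps{\bm c,e_i}\ \forall i\big)=(-1)^r\sum_{\substack{s\in W\\ \mathrm{supp}(s)=\{1,\dots,r\}}}\epsilon(s)\,e^{\ps{s\nu-\nu,\bm c}},
\end{equation*}
the lower-support terms cancelling by the identity $\sum_{S\supseteq A}(-1)^{|S|}=(-1)^{|A|}\mathds1_{A=\{1,\dots,r\}}$. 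As $\bm c\to\infty$ inside $\mathcal C$ the extremal term among the full-support elements is a Coxeter element $s=s_{\sigma1}\cdots s_{\sigma r}$ (one checks that the non-Coxeter full-support elements decay strictly faster along every ray of $\mathcal C$), the permutation $\sigma$ recording the order in which the walls of $\mathcal C$ are crossed; since $\epsilon(s)=(-1)^r$ for such $s$, this probability is $(1+o(1))e^{\ps{s\nu-\nu,\bm c}}$, and $\ps{s\nu-\nu,\bm c}=\ps{\alpha-\hat s\alpha,\bm c}$ for the chosen normalization.

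It remains to promote this to the sharp constant $\overline{R_s}(\alpha)$. Conditioning on $\M$ and invoking the path decomposition of Theorem~\ref{thm:williams}: on the tail event $\M$ concentrates near $-\bm c$, and the path below $\M$ splits into $r$ independent Doob-transformed ``descent'' pieces run between consecutive walls of $\mathcal C$, followed by a piece distributed as $B^\nu$ conditioned to stay in $\mathcal C$. Freezing at this conditioned law, with a uniform-integrability control to exchange the limits, should give $\P(\cdots)\sim e^{\ps{\alpha-\hat s\alpha,\bm c}}\times(\text{a renormalized moment of the }I_i\text{'s under the frozen law})$, and by independence of the pieces this moment factorizes over the $r$ descent steps. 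Each factor is a one-dimensional chaos / exponential-functional tail constant, evaluated by the classical Dufresne-type identity to be $\Gamma\!\left(1-\tfrac1\gamma\ps{\hat s\alpha-\alpha,\omega_i^\vee}\right)$, while the terminal $\mathcal C$-confined piece, together with the sign $\epsilon(s)$ produced by the $h$-transform and the overall normalization, assembles the remaining factor $A(s(\alpha-Q))/A(\alpha-Q)$. Collecting the pieces gives $\P(\cdots)\sim\overline{R_s}(\alpha)e^{\ps{\alpha-\hat s\alpha,\bm c}}$ with $\epsilon(s)\prod_{i=1}^r\Gamma\!\left(1-\tfrac1\gamma\ps{\hat s\alpha-\alpha,\omega_i^\vee}\right)\overline{R_s}(\alpha)=R_s(\alpha)$, as claimed.

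The hard part will be this final step: rigorously freezing the conditioned path, showing that the lateral environments $\rho_i$ and the non-extremal terms of $h$ are negligible, controlling uniform integrability so that $\bm c\to\infty$ commutes with the renormalized moments, and matching each per-step constant with the stated Gamma function. The genuine obstruction, compared with $r$ decoupled one-dimensional problems, is that the measures $M^{\gamma e_i}$ remain correlated after the radial reduction --- through the lateral environments and through the shared path $B^\nu$ coupling the descent pieces --- so the factorization of the constant is far from automatic, and essentially all the technical work (fine tail estimates for correlated chaos, control of the Doob-transformed diffusions near the walls) is concentrated there.
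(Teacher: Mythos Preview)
Your overall architecture --- radial decomposition, reduction to a single drifted $\V$-valued Brownian motion, Williams-type path decomposition, identification of a Coxeter element as the extremal full-support term --- matches the paper. But two points in your execution differ from the paper's route in ways that matter.

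First, the inclusion--exclusion over parabolic subgroups to compute $\P(\M_i\le -\ps{\bm c,e_i}\ \forall i)$ is a detour the paper does not take. The paper conditions on $\M$ and integrates directly against its density $\partial_{1,\dots,r}h^{\M}$, which is \emph{already} a sum over $W_{1,\dots,r}$ (the coefficients $\prod_i\ps{\hat s'\alpha-\alpha,\omega_i^\vee}$ kill the lower-support terms automatically). After the change of variable $\M\to\M+\bm c$, each $s'\in W_{1,\dots,r}$ contributes $e^{\ps{\hat s'\alpha-\alpha,\bm c}}\overline{R_{s'}}(\alpha;\bm c)$ with $\overline{R_{s'}}(\alpha;\bm c)$ an integral over $\M\in\mathcal C+\bm c$ that is shown to converge. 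This gives the rate \emph{and} the constant in one stroke, without your two-stage ``rate first, then freeze'' structure.

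Second --- and this is the real gap --- your description of how the constant emerges is not the paper's mechanism. You say ``on the tail event $\M$ concentrates near $-\bm c$'' and then freeze. In the paper, after the change of variable $\M$ ranges over all of $\mathcal C+\bm c$; what goes to infinity is the \emph{starting point} $\bm c-\M$ of the decomposed process $\B^\nu$. The crucial input is Proposition~\ref{prop:asymptot_J}: in that limit the functionals $\mathrm J_i$ decouple and converge in law to independent one-dimensional quantities $J_{\gamma_i}(\ps{s_{i+1}\cdots s_r(\alpha-Q),e_i^*})$. This is where all the work of Section~\ref{section:brown_inf} (the process started from infinity) enters, and it is the result you need to name in place of your vaguer ``freezing / uniform integrability''. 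Also, the $\Gamma$ factors in the relation $R_s=\epsilon(s)\prod_i\Gamma(1-\tfrac1\gamma\ps{\hat s\alpha-\alpha,\omega_i^\vee})\overline{R_s}$ do not come from ``per-step Dufresne-type tail constants'': they arise from the $\M$-integration of the exponential weight against $(\exp(-e^{\gamma\ps{\M,e_i}}\mathrm J_i)-1)$ versus the indicator $\{\mathrm J_i>e^{-\gamma\ps{\M,e_i}}\}$. The per-step constants themselves are the moments $\E[J_{\gamma_i}^{p_i}]$, and in the Toda (chaos) case these are \emph{not} Dufresne --- they are evaluated via the Liouville reflection coefficient computation of~\cite{KRV_DOZZ} (Equation~\eqref{equ:eval_J} in the paper). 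Dufresne only handles the Whittaker analogue without the lateral chaos.
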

	
	The same reasoning gives rise to an asymptotic expansion for class one Whittaker functions. Namely the statement of Theorem~\ref{thm:whittaker_expansion} (additional precisions on the assumptions to be made are detailed there) shows that:
	\begin{theorem}
		For $\mu\in\mathcal{C}$ close enough to zero and  as $\bm x\to\infty$ inside $\mathcal{C}$ following a certain asymptotic, class-one Whittaker functions \[
		\Psi_\mu(\bm x)\coloneqq b(\mu)e^{\ps{\mu, \bm x}}\expect{\exp\left(-\sum_{i=1}^r\frac{\ps{e_i,e_i}}2e^{-\ps{\bm x,e_i}}\int_0^{\infty}e^{-\ps{B^\mu_t, e_i}}dt\right)} \quad \quad\text{satisfy}
		\]
		\begin{equation}
			\Psi_\mu(\bm x)=b(\mu)e^{\ps{\mu,\bm x}}+\sum_{i=1}^r b(s_i\mu)e^{\ps{s_i\mu,\bm x}} + b(s\mu)e^{\ps{s\mu,\bm x}} + o\left(e^{\ps{s\mu,\bm x}}\right)
		\end{equation}
		where $s$ is an element of the Weyl group $W$ while the coefficients $b(\mu)$ are given by
		\begin{equation}
			b(\mu)=\prod_{e\in\Phi^+}\Gamma\left(\ps{\mu,e^\vee}\right).
		\end{equation}
	\end{theorem}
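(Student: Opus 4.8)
The plan is to mirror, in the simpler one-dimensional setting, the proof of the Vertex Operator expansion~\eqref{equ:expansion_toda} in Theorem~\ref{thm:primary_reflection}: the planar Gaussian Free Field and the GMC integrals $\int_{\mathbb D}\norm{x}^{-\gamma\ps{\alpha,e_i}}M^{\gamma e_i}(d^2x)$ appearing there are replaced by the one-dimensional drifted Brownian motion $B^\mu$ and by its exponential functionals $I_i\coloneqq\int_0^\infty e^{-\ps{B^\mu_t,e_i}}\,dt$. Writing
\[
\frac{\Psi_\mu(\bm x)}{b(\mu)}=e^{\ps{\mu,\bm x}}\,\E\!\left[\exp\Big(-\sum_{i=1}^r\tfrac{\ps{e_i,e_i}}{2}\,e^{-\ps{\bm x,e_i}}I_i\Big)\right],
\]
I would first note that as $\bm x\to\infty$ inside $\mathcal{C}$ every factor $e^{-\ps{\bm x,e_i}}$ tends to $0$, so by dominated convergence the expectation converges to $1$ and yields the leading term $b(\mu)e^{\ps{\mu,\bm x}}$. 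The higher-order terms are encoded in the joint upper tail of $(I_1,\dots,I_r)$: since the $I_i$ have only finitely many finite moments, a naive Taylor expansion of the exponential diverges, and one must instead expand the expectation against the law of $(I_1,\dots,I_r)$ in the regime where the $I_i$ are of order $e^{\ps{\bm x,e_i}}$.

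The tool controlling this joint tail is the higher-dimensional Williams path decomposition, Theorem~\ref{thm:williams} (Theorem~\ref{thm:brownien_drift} in the $A_2$ case), applied to $B^\mu$. The functional $I_i$ is large precisely when $\ps{B^\mu_t,e_i}$ stays deeply negative for a long time, and the simultaneous largeness of all the $I_i$ is governed by the generalized minimum $\M$, with $\ps{\M,e_i}=\inf_{t\geq0}\ps{B^\mu_t,e_i}$. Conditioning on $\M$ realizes $B^\mu$ as a concatenation of $r+1$ independent Doob $h$-transformed diffusions, with $\M$ distributed through $\P\big(\ps{\M,e_i}\geq\bm m_i\ \forall i\big)=h(-\bm m)\mathds{1}_{\bm m\in\mathcal{C}_-}$ and $h(x)=\sum_{s\in W}\epsilon(s)e^{\ps{s\mu-\mu,x}}$. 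On each of the $r+1$ pieces the relevant exponential functional concentrates around a deterministic value, so that to leading order the contribution factorizes over the pieces; the sum over $W$ sitting inside $h$ ---and inside its successive wall-derivatives $\partial_i h$, $\partial_{ij}h$--- then materializes as a sum over a subset of $W$ in the expansion of $\Psi_\mu$, in complete parallel with $\psi_\alpha$. Carrying this to three orders produces the identity element, the $r$ simple reflections $s_i$, and a single length-two element $s$ fixed by the order in which the decomposed path visits the walls $\partial\mathcal{C}_1,\dots,\partial\mathcal{C}_r$, everything else being absorbed into $l.o.t.$

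The coefficients are then identified by a Gamma-function computation: each crossing of a wall $\partial\mathcal{C}_i$ by the decomposed path contributes a factor expressible through Gamma functions (the $h$-transform normalizations), and, exactly as in the derivation of $A(\alpha)$ in~\eqref{equ:expression_toda_refl}, the telescoping product collapses ---via $\Gamma(z)\Gamma(1-z)=\pi/\sin(\pi z)$--- to the coefficient $b(s\mu)=\prod_{e\in\Phi^+}\Gamma(\ps{s\mu,e^\vee})$ of $e^{\ps{s\mu,\bm x}}$. In this degenerate setting the GMC-specific prefactors $\big(\mu_i\pi l(\tfrac{\gamma^2\ps{e_i,e_i}}{4})\big)^{\ps{\cdot,\omega_i^\vee}/\gamma}$ and the dual Gamma factors $\Gamma(1-\tfrac\gamma2\ps{\cdot,e})$ are simply absent, and the sign $\epsilon(s)$ is reabsorbed into the probabilistic normalization of $\Psi_\mu$; for $\mathfrak g=\mathfrak{sl}_2$ one recovers the classical near-origin expansion of the modified Bessel function, a useful consistency check.

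The main obstacle is the uniform analytic control underlying the second step: one must prove that the joint tail of $(I_1,\dots,I_r)$ admits a three-term expansion with a genuinely lower-order remainder, uniformly enough in $\bm x$ to transfer to $\Psi_\mu$, and that no spurious directions $e^{\ps{\Omega\mu,\bm x}}$ with $\Omega\notin W$ survive. This requires sharp estimates on the $r+1$ Doob-transformed diffusions near the walls ---integrability of the exponential functionals along each segment together with quantitative concentration bounds--- and an a priori domination: in the one-dimensional projection along $e_i$ the variable $I_i$ is, up to a constant, the reciprocal of a Gamma variable of shape $\ps{\mu,e_i^\vee}$ (a Dufresne-type identity), whence $\P(I_i>u)\asymp u^{-\ps{\mu,e_i^\vee}}$, and this explicit polynomial tail justifies the term-by-term passage to the limit. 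As in the proof of Theorem~\ref{thm:primary_reflection}, the bookkeeping of which elements of $W$ actually contribute ---and the verification that the omitted ones enter only at lower order--- is the delicate combinatorial point.
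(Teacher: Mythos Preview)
Your overall strategy---use the Williams path decomposition to expose the joint tail of the $I_i$, then identify the coefficients via Dufresne---is the paper's strategy. But two concrete steps in your sketch are either missing or mischaracterized, and they are exactly the steps that make the argument go through.

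First, the paper does not attack $\E\big[\exp(-\sum_i e^{-\ps{\bm x,e_i}}I_i)\big]$ directly via the path decomposition. It first performs the algebraic expansion
\[
\exp\Big(-\sum_{i=1}^r a_iI_i\Big)=\prod_{i=1}^r\Big[1+\big(e^{-a_iI_i}-1\big)\Big]=\sum_{K\subset\{1,\dots,r\}}\ \prod_{k\in K}\big(e^{-a_kI_k}-1\big),
\]
and then analyzes each $\E\big[\prod_{k\in K}(e^{-a_kI_k}-1)\big]$ separately. This is how the Weyl-group structure appears cleanly: $K=\emptyset$ gives the constant $1$, the singletons give the $s_i$ terms via a purely one-dimensional Williams decomposition of $\ps{B^\mu,e_i}$, and the pairs give the length-two elements. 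Your proposal never isolates these expectations, so it is unclear in your scheme how one actually extracts the individual terms $e^{\ps{s_i\mu,\bm x}}$ and $e^{\ps{s\mu,\bm x}}$ from the full expectation.

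Second, your sentence ``on each of the $r+1$ pieces the relevant exponential functional concentrates around a deterministic value'' is not how the factorization arises, and taken literally it is false. After conditioning on $\M$ and shifting variables, one sends the starting point of the decomposed process $\B^\mu$ to infinity inside $\mathcal{C}$; the content of Section~\ref{section:brown_inf} (Proposition~\ref{prop:asymptot_J}) is that in this limit the $\mathrm J_i$ converge \emph{in law} to \emph{independent} one-dimensional functionals $J(\ps{s_{i+1}\cdots s_r\mu,e_i^\vee}/2)$---they do not degenerate to constants. This asymptotic decoupling is what turns the multi-dimensional expectation into a product of one-dimensional ones, and the particular drifts $\ps{s_{i+1}\cdots s_r\mu,e_i}$ that appear are precisely what makes the product telescope to $b(s\mu)/b(\mu)$.

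Finally, the coefficient identification does not use the reflection formula $\Gamma(z)\Gamma(1-z)=\pi/\sin(\pi z)$; it uses the explicit moment $\E\big[J(\nu)^{2\nu}\big]=2^{2\nu}/\Gamma(1+2\nu)$ (Proposition~\ref{prop:eval_J}), obtained from Dufresne's identity via the limiting relation~\eqref{equ:identity_refl_whitt1}, together with $z\Gamma(z)=\Gamma(1+z)$. The ``telescoping'' you allude to is then the elementary identity $r'_{s_is_j}(\mu)=r'_{s_i}(s_j\mu)r'_{s_j}(\mu)$, not a Gamma reflection.
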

	This expansion is reminiscent of the invariance under the Weyl group of the Whittaker functions, that is to say the fact that $\psi_{s\mu}=\psi_\mu$ for all $s\in W$. It is also consistent with the expansion of class-one Whittaker functions as a sum of fundamental Whittaker functions~\cite[Proposition 4.2]{BOc}.

	\subsection{Overview of the article and perspectives}
	\subsubsection{Organization of the paper}
	In the next section, Section~\ref{sec:Williams}, we will describe the general framework in which our path decomposition applies. To so we provide some reminders on diffusion processes and Doob's conditioning but also highlight its intrinsic connection with the reflection principle in probability by explaining how Theorem~\ref{thm:williams}, our main result in this perspective, can be proved based on this notion.
	
	In the following section a special attention will be dedicated to the properties of such a path decomposition in the special case where the underlying process is a drifted Brownian motion $B^\nu$. Namely we will focus in Section~\ref{section:brown_inf} on the shape of the process defined by $B^\nu$ conditioned on the value of its generalized minimum $\M$ in the asymptotic where $\M\to\infty$ inside $\mathcal{C}_-$.
	
	Based on the description of such a process we will prove in Section~\ref{sec:whittaker} the asymptotic expansions of class one Whittaker functions, Gaussian Multiplicative Chaos measures and Toda Vertex Operators described above. We will see that computing Toda reflection coefficients is made possible thanks to the special behaviour of the process described in Section~\ref{section:brown_inf} and therefore the generalized path decomposition of Theorem~\ref{thm:williams}, thus establishing a connection between the reflection principle in probability and the reflection phenomenon for Toda CFTs.
	
	\subsubsection{Some future directions of work}
	The present article is at the heart of the computation of a family of probabilistic three-point correlation functions associated to the $\mathfrak{sl}_3$ Toda CFT. Indeed we show in~\cite{Toda_correl2} that proving Theorem~\ref{thm:main_result} strongly relies on the probabilistic derivation of Toda reflection coefficients.

    On a similar perspective it would be interesting to investigate whether it is possible to compute reflection coefficients ---as well as structure constants--- associated to Toda CFTs on a boundary. Indeed in the case of Liouville CFT the boundary reflection coefficients are computed in~\cite{RZ} based on a similar reasoning involving the path decomposition by Williams, so that it seems reasonable to expect that the method developed in the present document can be implemented to address the issue of computing reflection coefficients for Toda CFTs on a boundary. To the best of our knowledge there is no proposed expression for such quantities in the physics literature.
	
	
	\subsection*{Acknowledgements} The author acknowledges that this project has received funding from the European Research Council (ERC) under the European Union’s Horizon 2020 research and innovation programme, grant agreement No 725967.
	
	\section{Path decomposition and a reflection principle}\label{sec:Williams}
	This section is dedicated to the description of our generalized path decomposition for diffusion in Euclidean spaces, which we state in Theorem~\ref{thm:williams} below after having introduced the general setting in which it applies ---framework involving in particular the notion of Doob's conditioning. We will then turn to the proof of this statement and to do so we will shed light on the fact that showing this result relies on an ubiquitous concept in probability theory: the reflection principle.
	
	The concept of \emph{reflection principle} in probability as we now it nowadays is often attributed to André in~\cite{Andre}, who allegedly introduced it in order to address the \emph{ballot problem}. However it seems that its geometrical formulation ---which corresponds to the reflection principle as we know it today--- can be traced back to Mirimanoff~\cite{Mir} following a reasoning developed by Aebly~\cite{Aebly}. The interested reader may consult for instance the article~\cite{Renault} for an account on the historical appearance of the reflection principle in probability theory.
	
	There are indeed many ways to formulate this principle. As originally introduced it might be understood as a one-to-one correspondence between paths starting from the origin and whose endpoint is above $0$ and those with endpoints below $0$. This implies that there are twice as many paths starting from $x>0$ and that reach the origin as paths starting from $x$ and whose endpoints are below $0$.  Additional details on the reflection method in probability are exposed for instance in~\cite[Chapter III]{Feller}.
	
	In the context of the Wiener process, the reflection principle states that for a Brownian motion $B$ started from $x>0$, the probability to reach the origin before time $t$ is given by
	\begin{equation}
		\P_x\left(B_s=0\quad \text{for some }0\leq s\leq t\right)=2\P_x\left(B_t\leq0\right).
	\end{equation} 
	This follows from the observation that for a path that reaches $0$, the process $-B$ reflected after having hit the origin has same law as $B$, and either the reflected process or the original one will be above the threshold $0$. A reformulation of this argument leads to the description of the transition probabilities of the Brownian motion killed upon hitting the origin. Namely if we set $T_0\coloneqq\inf\{t\geq0, B_t=0\}$ to be the hitting time of $0$, then for all positive $x$ and $y$
	\begin{equation}\label{refl_brown0}
		\P_x\left(B_t\in dy, t<T_0\right)=\P_x\left(B_t\in dy\right)-\P_{-x}\left(B_t\in dy\right).
	\end{equation}
	This principle admits a natural generalization for diffusion processes on Euclidean spaces~\cite{Gr99}, based on the notion of \emph{reflection group} presented above, and recalled in Proposition~\ref{prop:reflection_killed} below. 

	
	\subsection{A path decomposition}
	This reflection principle is at the core of the path decomposition we propose. Indeed our first application of this reflection principle is a generalization of Williams' celebrated path decomposition for one-dimensional diffusions~\cite{Williams}, which in particular allows to provide a meaning to a process conditioned on its minimal value, which is done by welding two independent diffusion processes before and after having hit the prescribed minimal value. The argument used by Williams in the proof of his main statement is based on the reflection principle~\eqref{refl_brown0} for the one-dimensional Brownian motion. The main input of our method is that this reflection principle can be generalized for Brownian motions in any dimensions by means of the action of a reflection group.
	The explicit form of the process killed upon hitting the boundary of the Weyl chamber is the starting point for a decomposition of Brownian paths in this higher-dimensional context, which is done in terms of the minimums of the process in the directions of the simple roots. 
	
	We formulate our path decomposition as follows: for a certain class of processes $\X$ with values in $\V$, let us introduce the notation (provided it makes sense)
	\begin{equation}
		\M\coloneqq \sum_{i=1}^r \M_i\omega_i \quad \text{with}\quad\M_i\coloneqq\inf_{t\geq0}\ps{\X_t,e_i},
	\end{equation} 
	that generalizes the minimum of a one-dimensional process. Then the process whose law is that of $\X$ conditionally on the value of $\M$ can be realized by joining $r+1$ diffusion processes, and corresponds to a decomposition of the path of $\X$ between times when the successive minimums are attained. The diffusion processes involved are defined using a Doob $h$-transform.
	This statement is illustrated by Figures~\ref{fig:planar_BM} and~\ref{fig:planar_BM_decomposed} below, where we have represented on the left the path of a drifted planar Brownian motion and on the right the decomposition corresponding to the $A_2$ root system viewed in $\V=\R^2$.
	
	\begin{minipage}[c]{0.45\linewidth}
		\includegraphics[height=8cm]{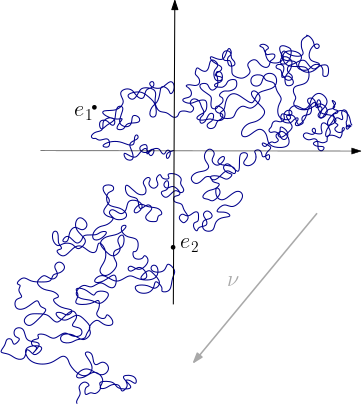}
		\captionof{figure}{Planar Brownian motion with drift $\nu$}
		\label{fig:planar_BM}
	\end{minipage}
	\begin{minipage}[c]{0.45\linewidth}
		\includegraphics[height=8cm]{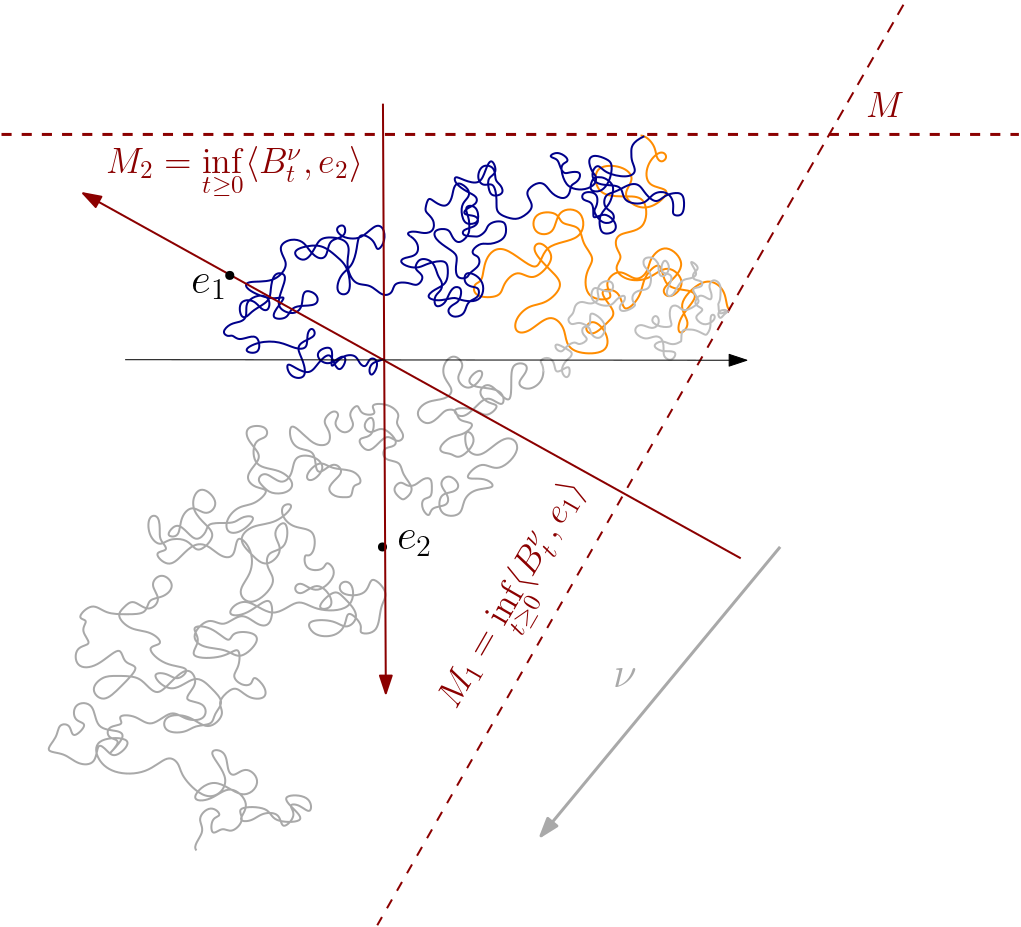}
		\captionof{figure}{Decomposition of the path associated with $A_2$}
		\label{fig:planar_BM_decomposed}
	\end{minipage}

	\subsection{Doob's conditioning and diffusion processes}
	We provide here the necessary notions on conditioning of diffusion processes needed for our purpose, and highlight some requirements needed to be ensured in order to give a meaningful statement. 
	
	\subsubsection{Diffusion processes in Euclidean spaces and Doob's conditioning} 
	The general theory of diffusions is described in great details \emph{e.g.} in the textbooks~\cite{Williams_diff1,Williams_diff2} and~\cite{SV} to which we refer for more details on the definitions and properties of the objects involved. Throughout this document we will consider a (continuous) diffusion process $\X$ with state space $\V$ and transition semigroup $\bm p_t$. We denote its infinitesimal generator by $\mathcal{A}$, which is a second-order differential operator that we assume to be of the form
	\begin{equation}
		\mathcal{A}f(x)=\sum_{i,j=1}^{\text{dim}\V}a_{i,j}(t,x)\partial_{x_i}\partial_{x_j}f(x)+\sum_{i=1}^{\text{dim}\V}b_i(t,x)\partial_{x_i}f(x),
	\end{equation}
	with $a$, $b$ bounded in $(t,x)$ and Lipschitz in their spatial variable $x$. Put differently the killing measure of the process is zero.
	
	Next we recall basic notions about $h$-transforms for our process $\X$ needed for our purpose; we refer \emph{e.g.} to~\cite[Chapter 11]{CW05} for additional details and justifications. Let $h$ be a $C^2$ function that is $\mathcal{A}$-harmonic on $\V$.\footnote{To define the process this hypothesis may be relaxed by taking $h$ excessive, by which is meant that for any $x$ in $\V$ and all time $t$, $\E_x\left[h(\X_t)\right]\leq h(x)$ with $\lim\limits_{t\rightarrow0}\E_x\left[h(\X_t)\right]= h(x)$.} Let us denote $\V_h\coloneqq\left\{x\in\V,\quad 0<h(x)<\infty\right\}$ and $\bm p_t^h(x,y)$ the transition probabilities of the process $\X$ killed upon exiting $\V_h$.
	Then the Doob $h$-transform of $\X$ is the continuous Markov process $\Y$ with transition probabilities given by
	\begin{equation}
		\begin{split}
			\mathfrak{p}_t(x,dy)\coloneqq &\frac{h(y)}{h(x)}\bm p_t^h(x,dy)\quad\text{for $x$ inside $\V_h$}\\
			=&0\quad\text{otherwise}.
		\end{split}
	\end{equation}
	It is a diffusion process with generator 
	\begin{equation}
		\mathcal{A}^h=\mathcal{A}+\sum_{i,j=1}^{\dim\V}\left(\frac{a_{ij}\partial_{x_i}h}h\right)\partial_{x_j}.
	\end{equation}
	It follows from~\cite[Remark 11.4]{CW05} that such a process $\Y$ started inside $\V_h$ will, almost surely, never hit the boundary of $\V_h$. In the special case where $h$ is given by the probability that the process $\X$, started from $x$, never exits a domain of the form $\M+\mathcal{C}$ for some $\M\in\V$, then the process $\Y$ has the law of $\X$ conditioned to stay inside $\M+\mathcal{C}$ at all time.
	
	More generally, one can consider $h_t$ positive and such that 
	\begin{equation}
		\left(\partial_t +\mathcal{A}\right)h_t=0\quad\text{on }\V_h.
	\end{equation}
	The latter implies that the process $h_t(\X_t)$ is a (local) positive martingale. Moreover the $h$-transform of the process $\X$ can still be defined as above. This is done by considering the diffusion process $\Y$ with infinitesimal generator 
	\begin{equation}
		\mathcal{A}^h\coloneqq\mathcal{A}+\sum_{i,j=1}^{\dim\V}\left(\frac{a_{ij}\partial_{x_i}h_t}{h_t}\right)\partial_{x_j}.
	\end{equation}
	This process is such that 
	\[
	d\mathbb P_{\Y}\vert_{\mathcal{F}_t}=\frac{h_t(\X_t)}{h_0(\X_0)}d\mathbb P_{\X}\vert_{\mathcal{F}_t}
	\]
	where $\left(\mathcal{F}_t\right)_{t\geq0}$ is the standard filtration of the processes, and therefore $\Y$ has transition semigroup
	\begin{equation}
		\begin{split}
			\mathfrak{p}_t(x,dy)\coloneqq &\frac{h_t(y)}{h_0(x)}\bm p_t^h(x,dy).
		\end{split}
	\end{equation}
	
	\subsubsection{Exit time from a Weyl chamber and reflection principle}
	Assume that a diffusion process $\X$ as defined above has almost surely a minimum in the direction of the simple roots, by which we mean that almost surely, for any $1\leq i\leq r$ the quantities 
	\begin{equation}\label{equ:finite_M}
		\M_i\coloneqq \inf\limits_{t\geq 0} \ps{\X_t,e_i}
	\end{equation} are finite. 
	The law of this minimum can be computed by noticing that having $\M_i(\X)\geq m_i$ for all $1\leq i\leq r$ means that $\X$ never hits $\bm m + \partial\mathcal{C}$ with $\bm m=\sum_{i=1}^rm_i\omega_i^\vee$. As a consequence 
	\begin{equation}\label{equ:def_h}
		\P_x\left(\forall\text{ }1\leq i\leq r, \text{ }\M_i(\X)\geq m_i\right)=h^{\bm{m}}(x),
	\end{equation}
	where $h^{\bm{m}}$ is a solution to the Dirichlet problem
	\begin{equation}
		\left\{
		\begin{split}
			\mathcal{A}h^{\bm{m}}&=0\quad\text{in }\bm m+\mathcal{C}\\
			h^{\bm{m}}&=0\quad\text{on }\bm m+\partial\mathcal{C}
		\end{split}
		\right.
	\end{equation}
	with the asymptotic $h^{\bm{m}}(x)\rightarrow1$ as $x\rightarrow\infty$ along a ray inside the Weyl chamber. 
	
	We assume that the process $\X$ is such that for any $\bm m\in\mathcal C_-$, such maps $h^{\bm m}$ are of $C^r$ regularity over $\bm m+\mathcal{C}$. In particular for any $1\leq k\leq r$ and $i_1,\cdots,i_k$ distinct in $\left\{1,\cdots,r\right\}$, the quantities defined by 
	\begin{equation}\label{equ:assumption1}
		\partial_{i_1,\cdots,i_k}h^{\bm m}\coloneqq (-1)^k\partial_{\bm m_{i_1}}\cdots\partial_{\bm m_{i_k}}h^{\bm m}
	\end{equation}
	are well-defined in $\bm m+\mathcal{C}$. These maps satisfy 
	\begin{equation}
		\left\{
		\begin{split}
			\mathcal{A}\partial_{i_1,\cdots,i_k}h^{\bm m}&=0\quad\text{in }\bm m+\mathcal{C}\\
			\partial_{i_1,\cdots,i_k}h^{\bm m}&=0\quad\text{on }\bm m+\partial\mathcal{C}\setminus\left(\partial\mathcal{C}_{i_1}\cup\cdots\cup\partial\mathcal{C}_{i_k}\right).
		\end{split}
		\right.
	\end{equation}
	It is also non-negative inside $\bm m+\mathcal{C}$: indeed note that $\partial_{1,\cdots,k}h^{\bm m}$ is given by
	\begin{equation}\label{equ:derivatives_h}
		\begin{split}
			\lim\limits_{\eps_1,\cdots,\eps_r\rightarrow0^+}\frac{1}{\eps_1\cdots\eps_k}\P_x\big(\forall\text{ }1\leq i\leq k, \text{ }m_i\leq \M_i(\Y)&\leq m_i+\eps e_i;\\
			\M_i(\Y)&\geq m_i\text{ for }k+1\leq i\leq r\big).
		\end{split}
	\end{equation} We also stress that $\partial_{1,\cdots,r}h^\M$ represents the probability density function of the random variable $\M$.
	
	In this context it is very natural to introduce the process whose law is that of $\X$ killed when exiting the Weyl chamber $\M+\mathcal{C}$. For this purpose we introduce the reflection group $W^\M$ generated by the reflections centered at $\M$, that is \begin{equation}
		s_i(x)=x-\ps{x-\M,e_i^\vee}e_i,
	\end{equation} and define accordingly
	\begin{equation}\label{equ:stop_proc}
		\bm p^\M_t(x,y)\coloneqq\sum_{s\in W^\M} \epsilon(s)\bm p_t(sx,y)
	\end{equation}
	which vanishes on the boundary of $\M+\mathcal{C}$. 
	For diffusion processes, the reflection principle takes the following form, which generalizes the well-known property of the one-dimensional Brownian motion.
	\begin{proposition}\label{prop:reflection_killed}
		Let $\M\in\V$ and assume that the law of a diffusion process $\X$ is invariant under the action of the reflection group centered at $\M$:
		\begin{equation}
			\forall\text{ }s\in W^\M, \quad s\X\eqlaw\X.
		\end{equation}
		Then the transition probabilities of the process $\X$ killed when exiting the Weyl chamber $\M+\mathcal{C}$ are given by $\bm p^\M_t(x,y)$.
	\end{proposition}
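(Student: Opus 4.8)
The plan is to fix $x$ in the open chamber $D\coloneqq\M+\mathcal{C}$, set $q_t(x,y)\coloneqq\bm p^\M_t(x,y)=\sum_{s\in W^\M}\epsilon(s)\bm p_t(sx,y)$, and show that $y\mapsto q_t(x,y)$ solves the same initial--boundary value problem as the transition density $p^D_t(x,y)$ of $\X$ killed upon exiting $D$, so that the two agree by uniqueness. Writing $\tau$ for the first exit time of $D$, recall that for fixed $x\in D$ the kernel $p^D_t(x,\cdot)$ is characterised as the solution of the forward Kolmogorov equation $\partial_t u=\mathcal{A}^\ast_y u$ on $(0,\infty)\times D$ with Dirichlet condition $u(t,\cdot)=0$ on $\partial D$ and weak initial datum $u(0^+,\cdot)=\delta_x$; equivalently, via Dynkin's formula, $p^D_t$ is pinned down by the well-posedness of the martingale problem for $\mathcal{A}$ killed on exiting $D$. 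Since $\bm p^\M_t(x,\cdot)$ vanishes on $\partial D$ by construction, proving the proposition amounts to identifying $q_t$ with this killed kernel.

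Two of the three defining properties are routine. That $q_t(x,\cdot)$ solves the forward equation is immediate, since for each fixed $z\in\V$ the density $y\mapsto\bm p_t(z,y)$ of $\X$ started from $z$ satisfies $\partial_t\bm p_t(z,\cdot)=\mathcal{A}^\ast_y\bm p_t(z,\cdot)$, and $q_t(x,\cdot)$ is the finite linear combination $\sum_{s\in W^\M}\epsilon(s)\bm p_t(sx,\cdot)$. For the initial condition, for $s\neq\mathrm{Id}$ the point $sx$ lies in the open chamber $\M+s\mathcal{C}$, which is distinct from $D$ and hence at positive distance from $\overline D$; so for any $\phi\in C_c(D)$ one has $\int\phi(y)\bm p_t(sx,y)\,dy=\E_{sx}[\phi(\X_t)]\to\phi(sx)=0$ as $t\to0^+$, while $\int\phi(y)\bm p_t(x,y)\,dy=\E_x[\phi(\X_t)]\to\phi(x)$. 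Hence $q_t(x,\cdot)\rightharpoonup\delta_x$ as $t\to0^+$.

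The heart of the argument --- and the only place where the invariance hypothesis $s\X\eqlaw\X$, $s\in W^\M$, is used --- is the vanishing of $q_t(x,\cdot)$ on $\partial D$. That hypothesis says exactly that the transition kernel is $W^\M$-equivariant: $\bm p_t(sz,sw)=\bm p_t(z,w)$ for all $s\in W^\M$ and $z,w\in\V$. Now $\partial D=\bigcup_{i=1}^r(\M+\partial\mathcal{C}_i)$, and $\M+\partial\mathcal{C}_i$ lies in the hyperplane fixed pointwise by the simple reflection $s_i\in W^\M$. Fixing $y$ in that wall, so $s_iy=y$, I would pair each $s\in W^\M$ with $s_is$: using $s_iy=y$ and equivariance, $\bm p_t((s_is)x,y)=\bm p_t(s_i(sx),s_iy)=\bm p_t(sx,y)$, whereas $\epsilon(s_is)=-\epsilon(s)$, so the contributions of $s$ and $s_is$ cancel. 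Since $s\mapsto s_is$ is a fixed-point-free involution of $W^\M$, summing over pairs gives $q_t(x,y)=0$; as every boundary point lies on some wall, $q_t(x,\cdot)\equiv 0$ on $\partial D$.

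Combining the three points, $q_t(x,\cdot)$ and $p^D_t(x,\cdot)$ solve the same problem, so $q_t=p^D_t$ by uniqueness; in particular $q_t\geq 0$ and $\bm p^\M_t$ is genuinely a (sub-probability) transition kernel, which is the assertion. The main obstacle is not the reflection identity --- essentially algebraic once equivariance is granted --- but making the uniqueness step rigorous, since $D$ is a polyhedral cone and $a,b$ are only assumed Lipschitz in the spatial variable; I would route this through the probabilistic characterisation of $p^D_t$ (well-posedness of the killed martingale problem, together with Dynkin's formula on an increasing sequence of smooth subdomains exhausting $D$) rather than through boundary regularity for the PDE. As an alternative one can give a purely probabilistic proof à la André, conditioning by the strong Markov property at the successive hitting times of the walls and reflecting the post-hitting pieces by the relevant $s_i$; the difficulty then shifts to the inclusion--exclusion bookkeeping over walls and lower-dimensional faces of $D$, which is precisely what the signs $\epsilon(s)$ and reduced expressions in $W^\M$ encode (cf. Grabiner~\cite{Gr99}).
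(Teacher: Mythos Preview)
Your proof is correct but takes a genuinely different route from the paper's. The paper gives exactly the direct André--Grabiner path-reflection argument that you mention as an alternative at the end: for each $s\in W^\M$ consider paths from $sx$ to $y$; to any such path that touches $\M+\partial\mathcal{C}$ associate its reflection across the first wall hit, which has the same probability (by the invariance hypothesis) and opposite sign in the alternating sum, so all boundary-touching contributions cancel and only the term corresponding to paths staying in the chamber survives. Your approach instead verifies that $q_t(x,\cdot)$ solves the forward equation with Dirichlet boundary data and the correct initial condition, then invokes uniqueness for the killed kernel.

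The PDE route is more systematic and makes explicit which analytic fact (well-posedness of the killed martingale problem) is doing the work; your pairing $s\leftrightarrow s_is$ also isolates cleanly the single place where the symmetry hypothesis enters. The path-reflection argument, by contrast, is entirely self-contained: it needs no uniqueness theorem, no boundary regularity for the cone, and no forward/backward distinction --- the alternating sum is shown to \emph{equal} the killed kernel directly rather than merely to satisfy the same characterising problem. For a polyhedral domain with only Lipschitz coefficients this is a real economy, since the uniqueness step you flag as the main obstacle simply never arises.
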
 
	\begin{proof}
		For the sake of completeness, we reproduce the argument of~\cite[Theorem 1]{GZ92} and~\cite[Theorem 5]{Gr99}. Let us consider all paths from $sx$ to $y$ where $s$ ranges over $W^\M$. Then to any path which hits the boundary of a Weyl chamber, we can associate another path obtained by reflecting, across the boundary component being hit, the initial path before having reached the boundary of the Weyl chamber. The assumption made on the process shows that both have same probability to occur, so these probabilities cancel out in the alternating sum~\eqref{equ:stop_proc}. The only remaining term is given by the probability that a path never crosses the boundary of a Weyl chamber, or put differently that the process goes from $x$ to $y$ without exiting the Weyl chamber.
	\end{proof}  
	This formula can also be found \emph{e.g.} in~\cite[Equation (5.1)]{BBOc} in the case where the underlying process is a Brownian motion. The assumption made on the process implies that it is reflectable in the sense of~\cite{Gr99}.

	\subsubsection{The objects considered}\label{sub:assumptions}
	Let us consider a diffusion process $\X_0$ such that 
    \begin{equation}\label{eq:hyp0}
        \text{For any $\M\in\V$ and $s\in W^\M$, we have the identity in law $s\X_0\eqlaw\X_0$}
    \end{equation} so that the assumptions of Proposition~\ref{prop:reflection_killed} are satisfied. We denote its semigroup $\bm p_0(t;x,dy)$ and its generator $\mathcal{A}_0$.  The path decomposition applies to a diffusion process $\X$ with generator of the form
	\begin{equation}\label{eq:hyp1}
        \begin{split}
		  &\mathcal{A}=\mathcal{A}_0+\sum_{i,j=1}^{\dim\V}\left(\frac{a_{ij}\partial_{x_i}f_t}{f_t}\right)\partial_{x_j},\quad\text{where $f_t(x)$ is positive and satisfies}\\
        &\left(\partial_t +\mathcal{A}_0\right)f_t(x)=0\quad\text{on }\V.
        \end{split}
	\end{equation}
    We require this process to satisfy the additional assumption that 
    \begin{equation}\label{eq:hyp2}
        \begin{split}
            &\text{(i) For all }1\leq i\leq r,\quad \M_i\coloneqq\inf\limits_{t\geq 0}\ps{\X_t,e_i}\quad\text{is finite almost surely;}\\
            &\text{(ii) For any $\bm m\in\V$ the map $h^{\bm m}$ introduced in Equation~\eqref{equ:def_h} is $C^r$ over $\bm m+\mathcal{C}$.} 
        \end{split}
    \end{equation}
    We denote by $\mathcal{M}$ the law of the random variable 
	\[
    	\M=\sum_{i=1}^r\M_i\omega_i^\vee
	\] corresponding to the minimums of the process, and which can be described in terms of the map $h^{\bm m}$.
	By doing so the transition semigroup of $\X$ is given by $\bm p_t(x,dy)=\bm p_0(t;x,y)\frac{f_t(y)}{f_0(x)}$, while the transition probabilities of the process $\X$ killed upon hitting the boundary of $\M+\mathcal{C}$ are
	\begin{equation}
		\bm p^\M_t(x,y)\coloneqq\frac{f_t(y)}{f_0(x)}\sum_{s\in W^\M} \epsilon(s)\bm p_0(t;sx,y).
	\end{equation}
	
	Having chosen such a process $\X$, we can define, for any $\M$ in $\V$, the transition probabilities $\mathfrak{p}$\footnote{We omit the dependence in $\M$ for such a process in order to keep the notations concise.} of the process $\X$ conditioned to stay in the Weyl chamber $\M+\mathcal{C}$, defined for any $x,y$ inside it by
	\begin{equation}\label{def:pdf_conditioned}
		\mathfrak{p}_t(x,y)\coloneqq \frac{h^\M(y)}{h^\M(x)}\bm p^\M_t(x,y).
	\end{equation}
	Here $h^\M$ is defined by Equation~\eqref{equ:def_h}.
	We further introduce, for any $1\leq k\leq r$ and $i_1,\cdots,i_k$ distinct in $\left\{1,\cdots,r\right\}$, 
	\begin{equation}\label{def:pdf_conditioned_bis}
		\mathfrak{p}_t^{i_1,\cdots,i_k}(x,y)\coloneqq \frac{\partial_{i_1,\cdots,i_k}h^\M(y)}{\partial_{i_1,\cdots,i_k}h^\M(x)}\bm p^\M_t(x,y)
	\end{equation}
	for $x$ such that $\partial_{i_1,\cdots,i_k}h^\M(x)>0$ and set it to zero otherwise. These represent the transition probabilities of a diffusion process $\X^{i_1,\cdots,i_k}$ with generator 
	\begin{equation}
		\mathcal{A}^{i_1,\cdots,i_k}\coloneqq\mathcal{A}^{\partial{i_1,\cdots,i_k}h}=\mathcal{A}+\sum_{i,j=1}^{\dim\V}\left(a_{ij}\frac{\partial_{x_i}\partial{i_1,\cdots,i_k}h}{\partial{i_1,\cdots,i_k}h}\right)\partial_{x_j},
	\end{equation}
	killed upon hitting the boundary of $\M+\mathcal{C}$. 
	We have already highlighted above that for any $x\in\M+\mathcal{C}$, the process $\X^{i_1,\cdots,i_k}$ started from $x$ will almost surely never reach $\M+\partial\mathcal{C}$ on $\partial\mathcal{C}\setminus\left(\partial\mathcal{C}_{i_1}\cup\cdots\cup\partial\mathcal{C}_{i_k}\right)$. A consequence of our main result below is that such a process will almost surely reach $\M+\partial\mathcal{C}$ at $\partial\mathcal{C}_{i_1}\cup\cdots\cup\partial\mathcal{C}_{i_k}$ (and will be killed upon hitting).
	
	\subsection{The generalized path decomposition}
	We are now in position to provide a precise statement for the path decomposition of the process $\X$. In our next statement we assume that this process meets the requirements of Subsection~\ref{sub:assumptions} and adopt the notations introduced there. In a similar fashion as in the one-dimensional case~\cite{Williams}, we see that the path can be divided between different parts corresponding to times where the process reaches its minimums. 
	\begin{theorem}\label{thm:williams}
        Let $\X$ be a diffusion process satisfying the assumptions prescribed by Equations~\eqref{eq:hyp0},~\eqref{eq:hyp} and~\eqref{eq:hyp2}.
		Pick $\M$ according to its marginal law $\mathcal{M}$ and define a process $\Y$ to be the joining of the following processes:
		\begin{itemize}
			\item Start by sampling a diffusion process $\Y^1$ started from the origin, with generator $\mathcal A^{1,\cdots,r}$, and run it until hitting $\M+\partial\mathcal{C}$, say at $z_1\in \M+\partial\mathcal{C}_1$. 
			\item Then run an independent process $\Y^2$ started from $z_1$ and with generator $\mathcal A^{2,\cdots,r}$, upon hitting $\M+\partial\mathcal{C}$.
			\item Thus define a family of processes $(\Y^1,\cdots,\Y^r)$. When $\Y^r$ reaches the boundary of $\M+\partial\mathcal{C}$, sample $\Y^{r+1}$ with generator $\mathcal{A}^{\emptyset}$.
		\end{itemize}
		Then $\Y$ has the law of $\X$, the diffusion process started from the origin and with generator $\mathcal{A}$.
	\end{theorem}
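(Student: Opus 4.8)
The plan is to establish the path decomposition by an induction on the number of walls the process has yet to touch, using at each step the strong Markov property together with the explicit form of the killed transition probabilities $\bm p^\M_t$ from Proposition~\ref{prop:reflection_killed} and the $h$-transform identities recorded in Subsection~\ref{sub:assumptions}. The conceptual content is a generalized \emph{last-exit / first-passage} decomposition: conditionally on $\M$, the path of $\X$ splits at the (almost surely distinct) times $T_1<T_2<\cdots<T_r$ at which the successive coordinate minima $\M_i=\inf_t\ps{\X_t,e_i}$ are realized, and the key observation is that before $T_1$ the path never touches $\M+\partial\mathcal C$ at all, between $T_{k}$ and $T_{k+1}$ it stays in the interior except for touching $\M+\partial\mathcal{C}_{i_1}\cup\cdots\cup\partial\mathcal{C}_{i_k}$, and after $T_r$ it stays strictly inside $\M+\mathcal{C}$ forever. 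The $h$-functions $\partial_{i_1,\cdots,i_k}h^{\bm m}$ are precisely the densities (in the directions $e_{i_1},\dots,e_{i_k}$) of the event that those minima have already been achieved while the remaining ones have not, by~\eqref{equ:derivatives_h}, and $h^\M$ is the probability of never exiting $\M+\mathcal C$; this is what makes the Doob transforms $\mathcal A^{i_1,\cdots,i_k}$ the correct generators for the successive pieces.

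Concretely, I would first reduce to the base process $\X_0$ by absorbing the martingale factor $f_t(y)/f_0(x)$: since every transition kernel in sight (the killed kernel $\bm p^\M_t$, the conditioned kernels $\mathfrak p_t$ and $\mathfrak p_t^{i_1,\cdots,i_k}$) carries the same factor, and since the $h$-transform used to pass between them only involves ratios $h(y)/h(x)$ that are insensitive to it, it suffices to prove the statement for $\X_0$ — equivalently, one checks the identity of path measures by a Girsanov/Radon–Nikodym argument and reduces to the $f\equiv 1$ case. Then, working with $\X_0$, I would prove the single-step assertion: the law of $\X_0$ run from $x$ until the first hitting time of $\M+\partial\mathcal{C}$, \emph{decomposed according to which wall is hit and the exit location}, together with the law of the process from that location conditioned to stay in $\M+\mathcal C$ thereafter, is exactly the law of $\X_0$ conditioned on $\M$. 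This is a computation with the killed kernel: write $\P_x(\,\cdot\,, \text{first exit at }z\in\M+\partial\mathcal{C}_1)$ in terms of $\bm p^\M_t$ and the harmonic function $\partial_1 h^\M$, recognize the density $\partial_1 h^\M(y)/\partial_{1,\dots,r}h^\M(x)$ appearing in the conditional law of the post-$T_1$ piece given $\M$, and match it with the generator $\mathcal A^{2,\dots,r}$ via~\eqref{def:pdf_conditioned_bis}. Iterating this single-step identity $r$ times peels off $\X^1,\dots,\X^r$, and the final piece is governed by $h^\M$ itself, i.e. generator $\mathcal A^\emptyset$, which is the Williams-type statement that $\X_0$ conditioned on $\M$ and restricted to times after $T_r$ is $\X_0$ conditioned to stay in $\M+\mathcal C$.

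The main obstacle I anticipate is the \emph{regularity and a.s.-finiteness bookkeeping} rather than the algebra: one must justify that the times $T_k$ are a.s. finite and distinct, that the process does not touch several walls simultaneously, that $\partial_{i_1,\cdots,i_k}h^\M$ is genuinely a (smooth, positive) density on $\M+\mathcal C$ so the Doob transforms are well-defined, and — most delicately — that the process $\X^{i_1,\dots,i_k}$ a.s. \emph{does} reach the remaining walls $\partial\mathcal{C}_{i_1}\cup\cdots\cup\partial\mathcal{C}_{i_k}$ (so that the concatenation is well-posed and exhausts all of $[0,\infty)$), which is flagged in Subsection~\ref{sub:assumptions} as a consequence of the theorem. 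The clean way to handle this is to run the iteration and verify at the end that the joined process has the claimed transition semigroup $\bm p_t$ on all of $\V$ — i.e. that the pieces patch to give total mass one — which simultaneously forces each $\X^{i_1,\dots,i_k}$ to hit its prescribed walls a.s.; the consistency of the overall kernel then follows from telescoping the alternating sums in~\eqref{equ:stop_proc}, exactly as in the proof of Proposition~\ref{prop:reflection_killed}. The remaining care is to apply the strong Markov property at each $T_k$ to a diffusion whose generator has unbounded drift near the walls, which is legitimate because the Doob-transformed processes are honest diffusions that never hit the \emph{complementary} walls, so each piece lives in an open region where the standard theory of~\cite{SV} applies.
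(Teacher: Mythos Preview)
Your conceptual plan --- strong Markov at successive wall-hitting times, $h$-transform bookkeeping, reduction to $f_t\equiv 1$ --- is the right skeleton and matches the paper's setup. But you have not identified the mechanism that actually makes the induction close, and your single-step description contains slips.

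The paper's engine is differentiation in the threshold $\bm m$. One proves (Lemma~\ref{lemma:williams_crucial}) that for the \emph{joined} process,
\[
\P_x\bigl(\X_t\in dy,\ \X\text{ has hit walls }1,\cdots,i\bigr)=\frac{\partial_{i+1,\cdots,r}h^\M(y)}{\partial_{1,\cdots,r}h^\M(x)}\,\partial_{1,\cdots,i}\bm p_t^\M(x,dy),
\]
and summing this over all subsets of walls gives, by the Leibniz rule applied to $\partial_{1,\cdots,r}\big[h^\M(y)\,\bm p_t^\M(x,dy)\big]$, exactly $\partial_{1,\cdots,r}h^\M(x)\,\P_x(\Y_t\in dy\mid\M)$. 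The telescoping is in the $\bm m$-derivatives, not in the Weyl-group sum~\eqref{equ:stop_proc} as you suggest. Two pieces of technical content drive Lemma~\ref{lemma:williams_crucial} and are absent from your sketch: (i) a boundary Taylor expansion (Lemma~\ref{lemma:1-hit}) to make sense of $\mathfrak p_t^{2,\cdots,r}(z,dy)$ for $z\in\M+\partial\mathcal{C}_1$, since both $\partial_{2,\cdots,r}h^\M$ and $\bm p_t^\M$ vanish there --- this is precisely where the extra derivative $\partial_1$ on $\bm p_t^\M$ emerges; and (ii) the vanishing $\partial_{1,\cdots,i}\bm p_t^\M(z,y)=0$ for $z$ on a wall $\partial\mathcal{C}_j$ with $j>i$, obtained from the reflection identity~\eqref{equ:ptM_boundary}, which is what allows the first-passage integral (Lemma~\ref{lemma:after-n-hit}) to be restricted to the correct walls. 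Finally, your single-step assertion is misstated: after $T_1$ the process is governed by $\mathcal{A}^{2,\cdots,r}$ (the $\partial_{2,\cdots,r}h^\M$-transform), not ``conditioned to stay in $\M+\mathcal{C}$'' and not transformed by $\partial_1 h^\M$ as you write; and the wall labels are reassigned according to the random hitting order, not fixed in advance, which is why the key lemma must be stated and proved for arbitrary subsets of indices.
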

	In other words, the statement above shows that the law of the process $\X$, conditionally on the value of its minimum $\M$, is the joining of $r+1$ processes. It should be noted that the process $\Y^{r+1}$ has the law of $\X$ conditioned to stay in the Weyl chamber $\M+\mathcal{C}$. One easily checks that this statement is indeed consistent with the one-dimensional result by Williams~\cite{Williams}.
	
	In the planar case (that is when $\V=\R^2$), Theorem~\ref{thm:williams} allows to provide a path decomposition with respect to cones with angles $\frac{\pi}{n}$, $n\geq 1$, that correspond to the Weyl chambers associated to the dihedral groups $\mathcal{D}_n$, $n\geq 1$. This is illustrated by Figure~\ref{fig:planar_BM_decomposed_dihedral} below, where we represent the decomposition, associated to $\mathcal{D}_n$ for different values of $n$, of the path of a Brownian motion with drift $\nu$ .
	
	\includegraphics[scale=0.45]{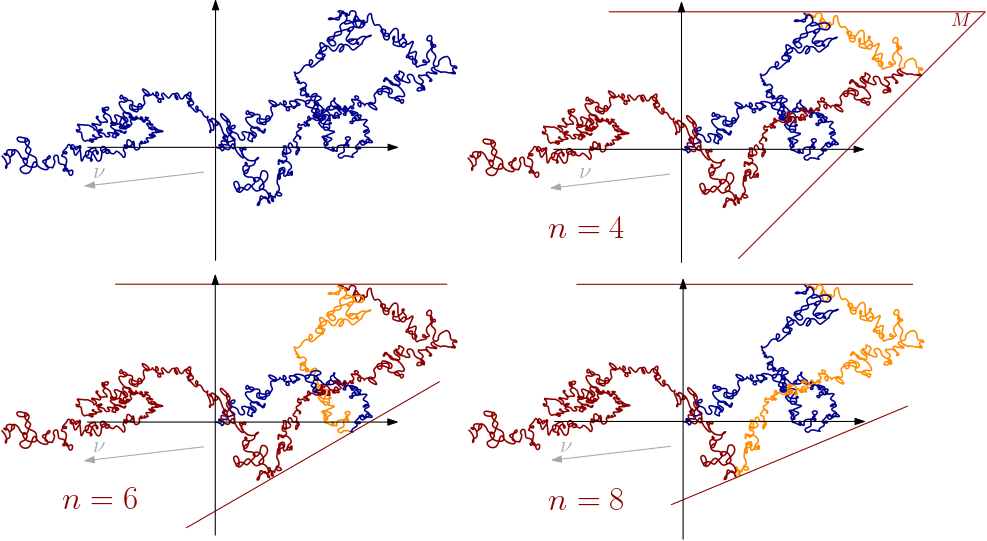}
	\captionof{figure}{Planar Brownian motion with drift $\nu$ and its path decompositions associated to $\mathcal{D}_n$, $n=4,6,8$.}
	\label{fig:planar_BM_decomposed_dihedral}

	As a simple illustration of Theorem~\ref{thm:williams}, we derive the following representation for planar Brownian motions with drift $\nu\in\mathcal{C}$, $B^\nu$, which we have depicted in Figures~\ref{fig:planar_BM} and~\ref{fig:planar_BM_decomposed}. As explained in the introduction, the $A_2$ root system can be realised in $\V=\R^2$ equipped with its standard scalar product and by considering the simple roots given, in the canonical basis, by taking $e_1=\left(-\frac{\sqrt3}{\sqrt2},\frac{1}{\sqrt2}\right)$ and $e_2=\left(0,-\sqrt2\right)$. The Weyl group, generated by the reflections $s_1$ and $s_2$, is then made of six elements. The $h$-function that enters the decomposition of the process is given by the map $h:\mathcal{C}\to\R^+$ defined by
	\begin{equation}\label{equ:max_drift}
		h(x)=\sum_{s\in W}\epsilon(s)e^{\ps{s\nu-\nu,x}}
	\end{equation}
	and which corresponds to the probability that $B^\nu$ started from $x$ never exits the Weyl chamber:
	\begin{proposition}\label{prop:max_drift}
		The minimums of the drifted Brownian motion $B^\nu$ in the direction of the simple roots are described by
		\begin{equation}
			\P\left(\M_i(B^\nu)\geq \bm m_i\quad\forall1\leq i\leq r\right)=h(-\bm m)\mathds1_{-\bm m\in\mathcal{C}}
		\end{equation}
	\end{proposition}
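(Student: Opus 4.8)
The plan is to verify that the function $h(x)=\sum_{s\in W}\epsilon(s)e^{\ps{s\nu-\nu,x}}$, restricted to the fundamental Weyl chamber $\mathcal{C}$, is precisely the function $h^{\bm 0}$ appearing in \eqref{equ:def_h}, i.e.\ the probability that $B^\nu$ started from $x\in\mathcal{C}$ never exits $\mathcal{C}$; the claimed formula then follows immediately by translation invariance of Brownian motion (the event $\{\M_i(B^\nu)\geq \bm m_i\ \forall i\}$ is the event that $B^\nu$ started at $-\bm m$ stays in $\mathcal{C}$, hence has probability $h(-\bm m)\mathds1_{-\bm m\in\mathcal{C}}$, since outside $\mathcal{C}$ the process has already exited). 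So everything reduces to identifying $h$ with the exit probability.

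First I would check the three defining properties of $h^{\bm 0}$ from the Dirichlet problem in \eqref{equ:def_h}. \emph{Harmonicity:} the generator of $B^\nu$ is $\mathcal{A}=\tfrac12\Delta+\ps{\nu,\bigtriangledown}$, and for each $s\in W$ one computes $\mathcal{A}e^{\ps{s\nu-\nu,x}}=\big(\tfrac12\norm{s\nu-\nu}^2+\ps{\nu,s\nu-\nu}\big)e^{\ps{s\nu-\nu,x}}=\tfrac12\big(\norm{s\nu}^2-\norm{\nu}^2\big)e^{\ps{s\nu-\nu,x}}$, which vanishes because $W$ acts by isometries, so $\norm{s\nu}=\norm{\nu}$; hence $\mathcal{A}h=0$ on all of $\V$. \emph{Boundary values:} on the wall $\partial\mathcal{C}_i$ the reflection $s_i$ fixes $x$, so $s\mapsto s s_i$ is an involution of $W$ pairing each $s$ with $s s_i$; since $\epsilon(s s_i)=-\epsilon(s)$ while $\ps{s s_i\nu-\nu,x}=\ps{s\nu-\nu,s_i x}=\ps{s\nu-\nu,x}$ for $x\in\partial\mathcal{C}_i$, the terms cancel in pairs and $h\equiv 0$ on $\partial\mathcal{C}$. \emph{Asymptotics:} for $x\to\infty$ along a ray strictly inside $\mathcal{C}$, the term $s=\mathrm{Id}$ contributes $1$, and every other term has exponent $\ps{s\nu-\nu,x}<0$ (since $\nu\in\mathcal{C}$ and $\nu$ is the unique maximizer of $s'\mapsto\ps{s'\nu,x}$ over the orbit $W\nu$ when $x\in\mathcal{C}$, the orbit-sum being a standard fact of the $W$-action), so $h(x)\to 1$.

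Next, knowing that $h$ solves the boundary value problem with the right growth, I would invoke a uniqueness argument to conclude $h=h^{\bm 0}$ on $\mathcal{C}$: the reflection-principle formula \eqref{equ:stop_proc}, which for Brownian motion with drift reads $\bm p^{\bm 0}_t(x,y)=\sum_{s\in W}\epsilon(s)\bm p_t(sx,y)$ by Proposition~\ref{prop:reflection_killed} (applicable since $s_i B^\nu$ started at $s_i x$ has, by reflection symmetry of Brownian motion and the fact that the drift gets reflected, the same role in the alternating sum — more carefully one reflects the drift too, which is why the $e^{\ps{s\nu-\nu,x}}$ weights appear), gives the killed semigroup, and integrating $\int_{\mathcal{C}}\bm p^{\bm 0}_t(x,y)\,dy=\P_x(t<T_{\partial\mathcal{C}})$; letting $t\to\infty$ and using Fatou/dominated convergence together with the explicit Gaussian computation $\int_{\mathcal{C}}\sum_s\epsilon(s)\bm p_t(sx,y)\,dy\to h(x)$ yields $\P_x(B^\nu\text{ never exits }\mathcal{C})=h(x)$. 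Alternatively and more cleanly, one checks directly that $t\mapsto h(B^\nu_{t\wedge T_{\partial\mathcal{C}}})$ is a bounded martingale (bounded because $0\le h\le 1$ on $\mathcal{C}$, which itself follows from the probabilistic interpretation of each $\epsilon(s)$-grouped pair via the reflection coupling), apply optional stopping at $T_{\partial\mathcal{C}}$, and use $h=0$ on $\partial\mathcal{C}$ together with $h\to1$ at infinity and transience of the drifted motion to identify $h(x)=\P_x(T_{\partial\mathcal{C}}=\infty)$.

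The main obstacle is the last step: justifying rigorously that $h$ is \emph{bounded between $0$ and $1$} on $\mathcal{C}$ and that it genuinely equals the exit probability rather than merely solving the PDE with the stated boundary data — a priori the Dirichlet problem on an unbounded domain with bounded boundary data need not have a unique bounded solution, so one must either exhibit $h$ as a bona fide probability (via carefully pairing up the $W$-sum into reflection-coupled path probabilities, which is exactly the content of the proof of Proposition~\ref{prop:reflection_killed} specialized here) or invoke a maximum-principle/uniqueness theorem for the Weyl chamber together with the transience of $B^\nu$ (valid since $\nu\in\mathcal{C}$ pushes the process into the chamber). I expect the cleanest route is to cite Proposition~\ref{prop:reflection_killed} to get the killed semigroup in closed form and then pass to the $t\to\infty$ limit, all other steps being the short explicit computations above.
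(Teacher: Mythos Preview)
The paper does not actually prove this proposition: immediately after stating it, the authors write ``This is proved for instance in~[BBOc, Section 5] and holds for any finite reflection group.'' So there is no in-paper proof to compare against; your task was essentially to reconstruct the cited argument.

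Your route (a) --- use the reflection principle to write down the killed semigroup for Brownian motion (which \emph{is} $W$-invariant), then pass to the drifted process via the Cameron--Martin weight $e^{\ps{\nu,y-x}-\norm{\nu}^2 t/2}$, integrate over $\mathcal{C}$, and let $t\to\infty$ --- is correct and is exactly the standard argument (this is what BBOc do). Concretely one obtains
\[
\P_x(T_{\partial\mathcal{C}}>t)=\sum_{s\in W}\epsilon(s)\,e^{\ps{s^{-1}\nu-\nu,x}}\,\P_{sx}\big(B^\nu_t\in\mathcal{C}\big),
\]
each probability tends to $1$ since $\nu\in\mathcal{C}$, the sum is finite, and relabelling $s\mapsto s^{-1}$ gives $h(x)$. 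No boundedness of $h$ is needed in advance, so route (a) is strictly preferable to your route (b), where (as you note) the bound $0\le h\le 1$ is hard to justify non-circularly.

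One small slip: in the boundary-vanishing step you pair $s$ with $ss_i$ and claim $\ps{ss_i\nu-\nu,x}=\ps{s\nu-\nu,s_i x}$, but that identity is false in general. The correct involution is left multiplication $s\mapsto s_i s$: then $\ps{s_i s\nu,x}=\ps{s\nu,s_i x}=\ps{s\nu,x}$ for $x\in\partial\mathcal{C}_i$, and the terms cancel as desired. This is cosmetic and in any case unnecessary once you go via route (a), which yields the identification directly without separately verifying the Dirichlet boundary data.
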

	This is proved for instance in~\cite[Section 5]{BBOc} and holds for any finite reflection group. The transition probabilities of the drifted Brownian motion are given by
	\begin{equation}
		p_\nu(t;x,y)\coloneqq p_t(x,y)\exp\left(\ps{\nu,y-x}-\frac{\norm{\nu}^2}{2}t\right)
	\end{equation}
	with $p_t$ the transition probabilities of a standard Brownian motion.
	\begin{corollary}\label{cor:brownien_drift}
		Let $\M$ be distributed according to 
		\[
		\P\left(\M_i\geq \bm m_i\quad\forall1\leq i\leq r\right)=h(-\bm m)\mathds1_{-\bm m\in\mathcal{C}},
		\] and sample three processes by: 
		\begin{itemize}
			\item Starting from the origin with a process $\X^1$ with transition probabilities $\frac{\partial_{12}h(y-\M)}{\partial_{12}h(x-\M)}p^\M_\nu(t;x,y)$ run until hitting $\M+\partial\mathcal{C}$, say at $z_1\in \M+\partial\mathcal{C}_1$. 
			\item Running an independent process $\X^2$ started from $z_1$, with transition probabilities $\frac{\partial_{2}h(y-\M)}{\partial_{2}h(x-\M)}p^\M_\nu(t;x,y)$, upon hitting $\M+\partial\mathcal{C}_2$ at $z_2$.
			\item Running an independent process $\X^3$ started from $z_2$, with transition probabilities $\frac{h(y-\M)}{h(x-\M)}p^\M_\nu(t;x,y)$.
		\end{itemize}
		Then the joining of these three paths has the law of $B^\nu$ started from the origin.
	\end{corollary}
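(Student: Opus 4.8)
The plan is to deduce this directly from Theorem~\ref{thm:williams} by checking that the drifted Brownian motion $B^\nu$ fits into the framework of Subsection~\ref{sub:assumptions}, and then translating the abstract generators $\mathcal{A}^{1,\dots,r}$, $\mathcal{A}^{2,\dots,r}$, $\mathcal{A}^\emptyset$ into the concrete transition probabilities appearing in the statement. First I would write $B^\nu$ as an $f_t$-transform of the standard planar Brownian motion $B$: with $f_t(x) \coloneqq \exp\big(\ps{\nu,x} - \tfrac{\norm{\nu}^2}{2}t\big)$ one has $(\partial_t + \tfrac12\Delta)f_t = 0$, and the Girsanov/Cameron--Martin theorem gives exactly $p_\nu(t;x,y) = p_t(x,y) f_t(y)/f_0(x)$. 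Since standard Brownian motion is invariant under every orthogonal reflection, it meets the hypotheses of Proposition~\ref{prop:reflection_killed}, so $\X_0 = B$ plays the role of the base process and $\X = B^\nu$ is the process with generator $\mathcal{A} = \mathcal{A}_0 + \bigtriangledown \log f_t \cdot \bigtriangledown = \tfrac12\Delta + \nu\cdot\bigtriangledown$ considered in Subsection~\ref{sub:assumptions}.

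Next I would identify the function $h^{\M}$. By Proposition~\ref{prop:max_drift} (equivalently~\cite[Section 5]{BBOc}), the probability that $B^\nu$ started from $x$ never exits $\M + \mathcal{C}$ equals $h(x - \M)$ with $h$ as in~\eqref{equ:max_drift}; this is the solution of the Dirichlet problem~\eqref{equ:def_h} normalized to $1$ at infinity inside the chamber, so $h^{\M}(x) = h(x-\M)$. Differentiating in the wall parameters $\bm m_i$ as in~\eqref{equ:assumption1} shows that $\partial_{i_1,\dots,i_k} h^{\M}(x) = (\partial_{i_1,\dots,i_k} h)(x - \M)$ is well-defined and non-negative on $\M + \mathcal{C}$ — here one uses that $h$ is a finite exponential sum, so all these derivatives exist and the sign/boundary conditions of~\eqref{equ:derivatives_h} hold — so the meaningfulness requirements of~\eqref{equ:finite_M} and~\eqref{equ:assumption1} are satisfied (for $A_2$, $r = 2$). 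Plugging $h^{\M}$ and its derivatives into the defining formulas~\eqref{def:pdf_conditioned} and~\eqref{def:pdf_conditioned_bis} for the $h$-transformed, killed semigroups, and using that the killed semigroup $\bm p_t^{\M}$ of $B^\nu$ is $p_\nu^{\M}(t;x,y) = f_t(y)/f_0(x) \cdot \sum_{s\in W^{\M}}\epsilon(s)\, p_t(sx,y)$, one reads off that $\mathfrak{p}_t^{1,2}(x,y) = \tfrac{\partial_{12}h(y-\M)}{\partial_{12}h(x-\M)} p_\nu^{\M}(t;x,y)$, $\mathfrak{p}_t^{2}(x,y) = \tfrac{\partial_2 h(y-\M)}{\partial_2 h(x-\M)} p_\nu^{\M}(t;x,y)$, and $\mathfrak{p}_t^{\emptyset}(x,y) = \tfrac{h(y-\M)}{h(x-\M)} p_\nu^{\M}(t;x,y)$ — precisely the three transition kernels listed in the Corollary (after the translation $x \mapsto x - \M$ which conjugates $W^{\M}$ to $W = W^0$ and sends $\mathcal{C} \mapsto \M+\mathcal{C}$). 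The marginal law of $\M$ claimed in the Corollary is the one furnished by Proposition~\ref{prop:max_drift}, rewritten as $h(-\bm m)\mathds1_{\bm m \in \mathcal{C}_-}$.

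With these identifications, the Corollary is exactly the specialization of Theorem~\ref{thm:williams} to $\X = B^\nu$, $r = 2$: sample $\M$ from its marginal, run $\X^1$ (generator $\mathcal{A}^{1,2}$) from the origin until it hits $\M + \partial\mathcal{C}$ at some $z_1 \in \M + \partial\mathcal{C}_1$, then run $\X^2$ (generator $\mathcal{A}^{2}$) from $z_1$ until it hits $\M + \partial\mathcal{C}_2$ at $z_2$, then run $\X^3$ (generator $\mathcal{A}^{\emptyset}$) from $z_2$; the concatenation is $B^\nu$ started from the origin. The one point that requires a word of care — and which I expect to be the main (minor) obstacle — is the orientation of the decomposition: Theorem~\ref{thm:williams} is phrased in terms of the \emph{minimum} $\M$ and a process conditioned to stay \emph{above} it in the chamber $\M+\mathcal{C}$, whereas~\eqref{equ:max_drift} and Proposition~\ref{prop:max_drift} are naturally stated with $h$ evaluated at $-\bm m$ and the indicator $\mathds1_{\bm m\in\mathcal{C}_-}$; so I would be careful that the translation and the sign conventions ($\M_i = \inf_{t}\ps{B^\nu_t, e_i}$, started from the origin so that $-\M \in \mathcal{C}$ a.s., and $\X^1$ in the Corollary is started from the origin which equals $-\M + \M$, matching "$\X^1$ started from $-\M$" in Theorem~\ref{thm:brownien_drift}) all line up. Everything else is a direct substitution into the already-proved general theorem, so no genuinely new estimate is needed.
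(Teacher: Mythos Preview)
Your proposal is correct and follows exactly the approach intended by the paper: the Corollary is presented there as a direct illustration of Theorem~\ref{thm:williams}, obtained by specializing to $\X=B^\nu$ via the $f_t$-transform of the standard Brownian motion and identifying $h^{\M}(x)=h(x-\M)$ through Proposition~\ref{prop:max_drift}. The paper does not spell out the verification of the hypotheses of Subsection~\ref{sub:assumptions} in the detail you do, but your checks (that $B$ is reflectable, that the derivatives $\partial_{i_1,\dots,i_k}h$ exist and are non-negative, and that the sign conventions for $\M$ line up) are precisely what is implicitly required.
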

	By means of the function $h$ defined by Equation~\eqref{equ:max_drift} and using Theorem~\ref{thm:williams}, this statement has a natural extension to drifted Brownian motion evolving on an Euclidean space on which acts a reflection group. 
	In Section~\ref{section:brown_inf}, we study in more details the process thus decomposed. In particular we prove that in the asymptotic where $\M\rightarrow\infty$ along a ray inside $\mathcal{C}$, the process can essentially be realised by joining drifted Brownian motions that will reflect on the different walls of the Weyl chamber. For instance in the planar case, we show that asymptotically the process looks like a Brownian motion with drift $s_1s_2\nu$ (resp. $s_2s_1\nu$) until it reaches the boundary of $\mathcal{C}$ on $\partial\mathcal{C}_1$ (resp. $\partial\mathcal{C}_2$), where it will be reflected and behaves like a Brownian motion with drift $s_2\nu$ (resp. $s_1\nu$), run  until it reaches the boundary of $\mathcal{C}$ on $\partial\mathcal{C}_2$ (resp. $\partial\mathcal{C}_1$). This process will again be reflected and look like a Brownian motion with drift $\nu$ conditioned to stay inside $\mathcal{C}$. The initial drift depends on the way $\M\rightarrow\infty$ inside $\mathcal{C}$. This asymptotic is depicted in Figure~\ref{fig:regimes_brown}.
	
	There is also a counterpart result for the analog of \lq\lq Bessel processes\rq\rq in the context of reflection groups. Thanks to its interplays with Littelmann paths~\cite{BBOc} and models for continuous crystals~\cite{BBOc2}, this process has been extensively studied in the literature. This process is defined using the $h$-transform of a Brownian motion made via the function
	\begin{equation}
		h(x)=\prod_{\alpha\in\Phi^+}\ps{x,\alpha},
	\end{equation}
	which is, up to a multiplicative factor, the only harmonic map inside $\mathcal{C}$ that vanishes on $\partial\mathcal{C}$~\cite[Section 5]{BBOc}. By doing so the process with transition probabilities (with $\M=\bm 0$)
	\[
	\frac{h(y)}{h(x)}p^{\bm 0}_t(x,y)
	\]
	has the law of a Brownian motion conditioned to stay inside the Weyl chamber, and the decomposition of Theorem~\ref{thm:williams} applies with such a function $h$ and $p_t$ the transition semigroup of a Brownian motion on $\V$.
	
	\subsection{Proof of Theorem~\ref{thm:williams}}
	The purpose of this subsection is to establish the validity of Theorem~\ref{thm:williams}. A simple observation that we explain below allows to reduce its proof to the following lemma:
	\begin{lemma}\label{lemma:williams_crucial}
		For any $\M=\sum_{i=1}^r\M_i\omega_i\in\V$ with $m_i\geq0$ for all $1\leq i\leq r$, define a process $\X$ as in Theorem~\ref{thm:williams}. Then for any $0\leq i\leq r$ and $x,y$ inside $\M+\mathcal{C}$, 
		\begin{equation}
			\P_x\left(\X_t\in dy, \X\text{ has hit $\partial\mathcal{C}$ at }\partial \mathcal C_1,\cdots,\partial \mathcal C_i\right)=\frac{\partial_{i+1,\cdots,r}h^\M(y)}{\partial_{1,\cdots,r}h^\M(x)}\partial_{1,i}\bm p_t^\M(x,dy)
		\end{equation}
	\end{lemma}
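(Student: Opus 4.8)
\emph{Proof strategy.} The plan is to argue by induction on $i$, peeling off the welded phases of $\X$ one at a time by the strong Markov property; the delicate points will be the behaviour of the Doob-transformed phases at the walls of $\mathcal C$ and a Chapman--Kolmogorov-type identity for the killed kernel.

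For the base case $i=0$ the event that $\X$ has not yet touched $\partial\mathcal C$ coincides with the event that the first phase $\X^1$ has not yet been absorbed, so on it $\X=\X^1$, the Doob $\partial_{1,r}h^\M$-transform of the process killed on $\M+\partial\mathcal C$; by~\eqref{def:pdf_conditioned_bis} its sub-transition kernel is $\tfrac{\partial_{1,r}h^\M(y)}{\partial_{1,r}h^\M(x)}\bm p^\M_t(x,dy)$, which (with $\partial_{1,0}$ the identity) is the asserted expression. One also records that $\X^1$ reaches $\M+\partial\mathcal C$ in finite time almost surely: since $\partial_{1,r}h^\M$ is the density of the well-defined random variable $\M$ it vanishes at infinity inside $\mathcal C$, so that $\partial_{1,r}h^\M(x)=\E_x[\partial_{1,r}h^\M(\X_\tau);\tau<\infty]$ (with $\tau$ the exit time of $\M+\mathcal C$), which forces the exit law of the transformed process to have total mass one.

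Now suppose the statement holds for $i-1$. On the event that $\X$ has hit $\partial\mathcal C$ at $\partial\mathcal C_1,\cdots,\partial\mathcal C_{i-1}$ the process is running its $i$-th phase $\X^i$, the Doob $\partial_{i,r}h^\M$-transform of the killed process, and the inductive hypothesis describes its sub-law at time $u$ as $\phi_u(dw)=\tfrac{\partial_{i,r}h^\M(w)}{\partial_{1,r}h^\M(x)}\,\partial_{1,i-1}\bm p^\M_u(x,dw)$. Using $\mathcal A$-harmonicity of $\partial_{i,r}h^\M$ and the fact that $\partial_{1,i-1}\bm p^\M_u(x,\cdot)$ vanishes on every wall of $\M+\partial\mathcal C$, one checks that $\phi_u$ solves the forward Kolmogorov equation of $\X^i$ and vanishes on the absorbing boundary $\M+(\partial\mathcal C_i\cup\cdots\cup\partial\mathcal C_r)$. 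Computing the probability current of $\X^i$ across $\M+\partial\mathcal C_i$ — where the extra drift $\bigtriangledown\log\partial_{i,r}h^\M$ is bounded, as this function does not vanish on $\partial\mathcal C_i$, so that only the inner normal derivative of $\phi_u$ contributes, and where that normal derivative equals $-\partial_{\M_i}\partial_{1,i-1}\bm p^\M_u=\partial_{1,i}\bm p^\M_u$ restricted to the wall — one identifies the joint law of the time $\sigma_i$ and the location $\X_{\sigma_i}$ at which phase $i$ ends (on our event) as $\tfrac{\partial_{i,r}h^\M(z)}{\partial_{1,r}h^\M(x)}\,\partial_{1,i}\bm p^\M_s(x,dz)$ on $\M+\partial\mathcal C_i$, the extra $\partial_{\M_i}$-derivative playing the role of the one-dimensional first-passage contribution of~\cite{Williams}. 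An entirely parallel computation gives the entrance law of the next phase $\X^{i+1}$, started from a point $z\in\M+\partial\mathcal C_i$ at which its Doob function $\partial_{i+1,r}h^\M$ vanishes: writing $\X^{i+1}$ as a $\partial_{i+1,r}h^\M$-transform and letting the starting point tend to $z$,
\[
\lim_{w\to z}\frac{\partial_{i+1,r}h^\M(y)}{\partial_{i+1,r}h^\M(w)}\,\bm p^\M_v(w,dy)=\frac{\partial_{i+1,r}h^\M(y)}{\partial_{i,r}h^\M(z)}\,\partial_i\bm p^\M_v(z,dy),
\]
both numerator and denominator vanishing linearly in $\mathrm{dist}(w,\M+\partial\mathcal C_i)$, and with $-\partial_{\M_i}\partial_{i+1,r}h^\M=\partial_{i,r}h^\M$.

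Inserting these two formulae into the strong Markov property of $\X$ at $\sigma_i$, the factor $\partial_{i,r}h^\M(z)$ cancels and one is reduced to establishing
\[
\int_0^t\!\!\int_{\M+\partial\mathcal C_i}\partial_{1,i}\bm p^\M_s(x,dz)\;\partial_i\bm p^\M_{t-s}(z,dy)\,ds=\partial_{1,i}\bm p^\M_t(x,dy),
\]
the higher-dimensional analogue of the first-passage decomposition of the killed kernel toward the wall $\partial\mathcal C_i$ (in the direction $e_i^\vee$) underlying the one-dimensional argument of~\cite{Williams}, obtained here by differentiating that decomposition in the parameters $\M_1,\cdots,\M_{i-1}$; combined with the inductive hypothesis this yields the asserted formula and closes the induction. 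I expect the main obstacle to be the rigorous treatment of these entrance laws: one must show that the relevant $0/0$ limits exist, that they are computed by inner normal derivatives, and that those normal derivatives coincide exactly with the mixed parameter-derivatives $\partial_{1,i}\bm p^\M$ and $\partial_{i,r}h^\M$ of the statement — and, since at the successive weldings the path visits not only walls but also lower-dimensional faces of $\partial\mathcal C$, this has to be carried out uniformly along all the strata. This is precisely where the regularity hypotheses of Subsection~\ref{sub:assumptions} on $h^{\bm m}$ and on its $\bm m$-derivatives are used in an essential way.
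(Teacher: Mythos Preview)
Your proposal follows essentially the same approach as the paper, with a minor organizational difference. Both argue by induction on the number of walls hit; both hinge on (i) computing the entrance law of the next phase from a wall as a $0/0$ limit and identifying it with the $\M$-parameter derivative of the killed kernel (the paper's Lemma~\ref{lemma:1-hit}), and (ii) a Chapman--Kolmogorov-type identity for these parameter derivatives (the paper's Lemmas~\ref{lemma:after1-hit} and~\ref{lemma:after-n-hit}). You also correctly isolate the main technical obstacle, namely the rigorous treatment of the $0/0$ limits at the walls and their identification with the $\partial_{\M_i}$-derivatives.

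The difference is in how the recursion is set up. The paper applies the strong Markov property at the \emph{first} hitting time $T_{\M+\partial\mathcal C}$ of the underlying process $\Y$, so that its key identity (Lemma~\ref{lemma:after-n-hit}) integrates the first-exit law of $\Y$ over $\partial\mathcal C_1\cup\cdots\cup\partial\mathcal C_i$; you instead apply it at $\sigma_i$ and integrate only over $\partial\mathcal C_i$. The paper's route is somewhat cleaner: the exit law of each intermediate phase $\X^k$ is read off directly from the Doob relation $\P_x(T\in du,\X^k_u\in dz)=\tfrac{\partial_{k,r}h^\M(z)}{\partial_{k,r}h^\M(x)}\P_x(T\in du,\Y_u\in dz)$, so no probability-current computation in the second spatial variable is needed, and the resulting identity is proved in one line from the observation~\eqref{equ:crossing_boundary} that any path from $x$ to $s'y$ with $s'\neq\mathrm{Id}$ must cross $\M+\partial\mathcal C$. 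Your alternative ``differentiate the first-passage decomposition in $\M_1,\cdots,\M_{i-1}$'' is correct in spirit but would require tracking boundary contributions from the lower-dimensional faces of $\partial\mathcal C$ when the domain of integration depends on $\M$ --- precisely the difficulty you flag at the end.
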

	Of course a similar result also holds when the process has hit different parts of the boundary.
	
	Before proving this claim, let us explain to what extent Theorem~\ref{thm:williams} boils down to this statement. Pick any $x,y\in\M+\mathcal{C}$ and some positive time $t$. Then, provided that Lemma~\ref{lemma:williams_crucial} holds, we can write that
	\begin{align*}
		\partial_{1,\cdots,r}\left(h^\M(y)\bm p_t^\M(x,dy)\right)&=\sum_{k=0}^r \sum_{\substack{i_1,\cdots,i_k\\\text{distinct}}}\partial_{1,\cdots,r\setminus i_1,\cdots, i_k}h^\M(y)\partial_{i_1,\cdots, i_k}\bm p_t^\M(x,dy)\\
		&=\partial_{1,\cdots,r}h^\M(x)\sum_{k=0}^r \sum_{\substack{i_1,\cdots,i_k\\\text{distinct}}}\P_x\left(\X_t\in dy, \X\text{ has hit }\partial \mathcal C_{i_1},\cdots,\partial \mathcal C_{i_k}\right)\\
		&=\partial_{1,\cdots,r}h^\M(x)\P_x\left(\X_t\in dy\right).
	\end{align*}
	On the other hand, one has the property that 
	\begin{equation}\label{equ:Doob_toprove}
		\partial_{1,\cdots,r}\Big[h^{\bm M}(y)\bm p_t^\M(x,dy)\Big]=\partial_{1,\cdots,r}h^\M(x)\P_x\left(\Y_t\in dy\vert \M\right),
	\end{equation}
	where $\Y$ is the diffusion process with generator $\mathcal{A}$.
	Indeed, the event that for all $1\leq i\leq r$, $\M_i(\Y)\geq m_i$ corresponds to the process $\Y$ never hitting $\bm m + \partial\mathcal{C}$ with $\bm m=\sum_{i=1}^rm_i\omega_i$. Therefore by Doob's conditioning
	\[
	\P_x\left(\Y_t\in dy\vert \forall1\leq i\leq r, \M_i\geq m_i\right)=\frac{h^{\bm m}(y)}{h^{\bm m}(x)}\bm p_t^{\bm m}(x,dy),
	\] 
	so Equation~\eqref{equ:Doob_toprove} reduces to 
	\begin{align*}
		&\partial_{1,\cdots,r}\Big[h^{\bm m}(x)\P_x\left(\Y_t\in dy\vert \forall1\leq i\leq r, \M_i\geq m_i\right)\Big]\\
		&=\partial_{1,\cdots,r}h^{\bm m}(x)\P_x\left(\Y_t\in dy\vert \forall1\leq i\leq r, \M_i= m_i\right).
	\end{align*}
	This follows from the fact that $h^{\bm m}(x)=\P_x\left(\forall1\leq i\leq r, \M_i(\Y)\geq m_i\right)$, while $\bm m\mapsto \partial_{1,\cdots,r}h^{\bm m}(x)$ represents the density of $\M$.
	Eventually this shows that for any $x,y\in\M+\mathcal{C}$ and $t>0$:
	\begin{equation}
		\P_x\left(\Y_t\in dy\vert \M\right)=\P_x\left(\X_t\in dy\right).
	\end{equation}
	More generally the same reasoning shows that for any times $t_1,\cdots,t_k$ and $y_1,\cdots,y_k\in \V$ 
	\begin{equation}
		\P_x\left(\Y_{t_l}\in dy_l; 1\leq l\leq k\vert \M\right)=\P_x\left(\X_{t_l}\in dy_l; 1\leq l\leq k\right)
	\end{equation}
	for $k$ a positive integer. This corresponds to the statement of Theorem~\ref{thm:williams}.
	
	Therefore to prove our main statement on path decomposition it is enough to prove that Lemma~\ref{lemma:williams_crucial} does indeed hold. The remaining part of this subsection is dedicated to proving this statement. 
	Throughout the proof we consider the reflection group centered at $M$, $W^\M$, introduced before. Furthermore in the statement of Lemma~\ref{lemma:williams_crucial} we can always assume that $f_t\equiv 1$, which we will do in what follows.
	
	\subsubsection{The case $i=1$} We start by treating the case where the process $\X$ has hit only one part of the boundary of the Weyl chamber. This particular case contains all the ideas used for the general proof.
	
	First of all, note that since $\X^1$ is defined via a Doob's transform from the process $\Y$, we can write that for any $z\in\partial\mathcal{C}$, 
	\[
	\P_x\left(T_{\M+\partial \mathcal C}(\X)\in du, \X_u\in dz\right)=\frac{\partial_{1,\cdots,r}h^\M(z)}{\partial_{1,\cdots,r}h^\M(x)}\P_x\left(T_{\M+\partial \mathcal C}(\Y)\in du, \Y_u\in dz\right).
	\]
	Whence by independence of the processes appearing in the decomposition of $\X$
	\begin{align*}
		\P_x\left(\X_t\in dy, \X\text{ has only hit }\M+\partial \mathcal C_1\right)=\int_0^t\int_{\M+\partial \mathcal{C}_1}\P_x\left(\X_t\in dy, T_{\M+\partial \mathcal C}\in du, \X_u\in dz\right)&\\
		=\int_0^t\int_{\M+\partial \mathcal{C}_1}\frac{\partial_{1,\cdots,r}h^\M(z)}{\partial_{1,\cdots,r}h^\M(x)}\P_x\left(T_{\M+\partial \mathcal C}(\Y)\in du, \Y_u\in dz\right)\P_z\left(\X^2_{t-u}\in dy\right)&.
	\end{align*}
	In the above expression in order to make sense of $\P_z\left(\X^2_{t-u}\in dy\right)$ and because the conditioning is made with respect to the process killed when hitting the boundary, we may rely on the following fact:
	\begin{lemma}\label{lemma:1-hit}
		Let $(z_n)_{n\in\N}$ be a sequence inside the Weyl chamber that converges to $z\in\M+\partial\mathcal{C}_1$. Then
		\begin{equation}\label{equ:1-hit}
			\P_z\left(\X^2_{t}\in dy\right)\coloneqq\lim\limits_{n\rightarrow\infty}\P_{z_n}\left(\X^2_{t}\in dy\right)=\frac{\partial_{2,\cdots,r}h^\M(y)}{\partial_{1,\cdots,r}h^\M(z)}\partial_{1}\bm p_t^\M(z,dy).
		\end{equation}
		A similar statement holds with $z\in\M+\partial\mathcal{C}_i$ for any $1\leq i\leq r$.
	\end{lemma}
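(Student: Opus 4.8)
The plan is to compute the transition density of the killed $h$-transformed process $\X^2$ started from an interior point $z_n$ and to track what happens to it as $z_n$ approaches the wall $\M+\partial\mathcal C_1$. By definition, $\X^2$ is the Doob transform of the process $\X$ (killed on exiting $\M+\mathcal C$) via the excessive function $\partial_{2,\cdots,r}h^\M$, so that for $z_n$ in the interior one has the exact identity
\[
\P_{z_n}\left(\X^2_t\in dy\right)=\frac{\partial_{2,\cdots,r}h^\M(y)}{\partial_{2,\cdots,r}h^\M(z_n)}\bm p_t^\M(z_n,dy).
\]
Both $\partial_{2,\cdots,r}h^\M$ and $z\mapsto\bm p_t^\M(z,dy)$ vanish as $z\to\partial\mathcal C_1$ (the former since, among the walls it is required to vanish on by~\eqref{equ:assumption1}, $\partial\mathcal C_1$ is included; the latter since the killed semigroup vanishes on the whole boundary). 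So the ratio on the right is a $0/0$ indeterminate form, and the natural move is l'Hôpital in the direction normal to $\partial\mathcal C_1$, i.e.\ differentiating numerator and denominator with respect to $\ps{z,e_1}$ (equivalently applying $-\partial_{\bm m_1}$-type derivatives, matching the sign conventions of~\eqref{equ:assumption1}). Differentiating $\partial_{2,\cdots,r}h^\M$ once more in the $e_1$-direction produces exactly $\partial_{1,2,\cdots,r}h^\M=\partial_{1,\cdots,r}h^\M$, and differentiating $\bm p_t^\M(z,dy)$ in the $e_1$-direction produces $\partial_1\bm p_t^\M(z,dy)$ in the notation of~\eqref{equ:assumption1}. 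This yields precisely the claimed formula~\eqref{equ:1-hit}.

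The steps I would carry out, in order, are: (i) record the interior identity above and note that it is the definition of the $h$-transformed killed semigroup; (ii) verify that both numerator and denominator extend continuously to $0$ on $\partial\mathcal C_1$ and are $C^1$ up to that wall in the normal direction, using the regularity assumptions on $h^\M$ from Subsection~\ref{sub:assumptions} (the well-definedness of the iterated normal derivatives~\eqref{equ:assumption1}) and standard boundary regularity for the killed transition density $\bm p_t^\M$, which by~\eqref{equ:stop_proc} is a finite alternating sum of smooth heat-type kernels so its normal derivative at the wall exists; (iii) apply the one-dimensional l'Hôpital rule along the inward normal ray at $z\in\partial\mathcal C_1$, taking $z_n\to z$; (iv) identify the two derivatives as $\partial_{1,\cdots,r}h^\M$ and $\partial_1\bm p_t^\M$ respectively, being careful with the sign $(-1)$ convention in~\eqref{equ:assumption1} so that the ratio of derivatives comes out positive and equals the stated right-hand side; (v) remark that the limit does not depend on the approaching sequence $(z_n)$ since the l'Hôpital limit only sees the normal derivative at $z$, and that the same argument verbatim gives the analogous statement at any wall $\M+\partial\mathcal C_i$ by permuting the index roles.

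The main obstacle I expect is step (ii), namely justifying that the indeterminate ratio genuinely has a finite nonzero limit rather than merely being $0/0$ — concretely, that $\partial_{2,\cdots,r}h^\M$ does not vanish to higher order than first order at $\partial\mathcal C_1$, and symmetrically that $\bm p_t^\M(z,dy)$ vanishes to exactly first order there. The positivity of $\partial_{1,\cdots,r}h^\M$ in the interior (it is a probability density, as noted after~\eqref{equ:derivatives_h}) forces the normal derivative of $\partial_{2,\cdots,r}h^\M$ at the wall to be strictly positive, so the Hopf lemma / boundary point behaviour for the Dirichlet problem satisfied by $\partial_{2,\cdots,r}h^\M$ (namely $\mathcal A\,\partial_{2,\cdots,r}h^\M=0$ in $\M+\mathcal C$, vanishing on the relevant boundary components) is what pins down the exact first-order vanishing; for $\bm p_t^\M$ the reflection formula~\eqref{equ:stop_proc} makes the first-order vanishing at a single wall transparent by pairing the identity reflection $s_1$ with its image. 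Once this order-of-vanishing bookkeeping is in place, the rest is the routine l'Hôpital computation sketched above.
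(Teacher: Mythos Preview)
Your approach is essentially the same as the paper's: both compute the $0/0$ limit by first-order Taylor expansion (equivalently, l'H\^opital) in the direction normal to $\M+\partial\mathcal C_1$, and both use the reflection pairing $s\leftrightarrow ss_1$ in the alternating sum~\eqref{equ:stop_proc} to exhibit the first-order vanishing of $\bm p_t^\M(z_n,dy)$ at the wall.

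One point deserves more care than you give it in step~(iv). The symbol $\partial_1$ in the statement means $-\partial_{\M_1}$, a derivative in the \emph{threshold} variable $\M$, not the spatial derivative of $z\mapsto\bm p_t^\M(z,dy)$ in the $e_1$-direction. These are in general different operations (even under translation invariance, $\bm p_t^\M(x,y)$ depends on both $x-\M$ and $y-\M$), and it is only \emph{at boundary points} $z\in\M+\partial\mathcal C_1$ that they coincide. The paper establishes this identification explicitly: it rewrites $\bm p_t^{\M'}(z,y)$ for $z$ on the wall and $\M'$ near $\M$ via the same $s\leftrightarrow ss_1'$ pairing, obtaining
\[
\bm p_t^{\M'}(z,y)=\tfrac12\bigl(\bm p_t^{\M'}(z,y)-\bm p_t^{\M'}(z+\ps{\M'-\M,e_1^\vee}e_1,y)\bigr),
\]
and then differentiates in $\M_1$ at $\M'=\M$ to match the spatial-gradient expression coming from the Taylor expansion of $\bm p_t^\M(z_n,y)$. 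You have the right tool in hand (the reflection pairing), but your write-up should make clear that this step is where the two kinds of derivative are shown to agree, not merely a sign check. The analogous identification for $\partial_{2,\cdots,r}h^\M$ is more direct, as you note, since $h^\M(x)$ genuinely depends only on $x-\M$ through its probabilistic definition.
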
 
	\begin{proof}
		For any such sequence, we know that 
		\[
		\P_{z_n}\left(\X^2_{t}\in dy\right)=\frac{\partial_{2,\cdots,r}h^\M(y)}{\partial_{2,\cdots,r}h^\M(z_n)}\bm p_t^\M(z_n,dy).
		\]
		As $z_n$ approaches $\M+\partial\mathcal{C}_1$ both $\partial_{2,\cdots,r}h^\M(z_n)$ and $\bm p_t^\M(z_n,y)$ get close to zero. However we claim that their Taylor expansions near $z$ are given by:
		\begin{align*}
			\partial_{2,\cdots,r}h^\M(z_n)&=\ps{z-z_n,e_1}\partial_{1,\cdots,r}h^\M(z)+o(\norm{z_n-z})\\
			\bm p_t^\M(z_n,y)&=\ps{z-z_n,e_1}\partial_{1}\bm p_t^\M(z,y)+o(\norm{z_n-z}),
		\end{align*}
		where $o(\norm{z_n-z})$ denotes a quantity which is negligible compared to $\norm{z_n-z}$ as $z_n\to z$.
		Of course these expansions imply Equation~\eqref{equ:1-hit}.
		
		To see why such expansions are valid, let us start by considering the first one and write $z_n=m_1^n\omega_1+\sum_{i=2}^r\ps{z,e_i}\omega_i$ with $m_1^n=m_1+\eps_n$ converging to $m_1=\ps{z,e_1}$. Then recalling Equation~\eqref{equ:derivatives_h} we deduce that 
		\begin{align*}
			\P_{z_n}\left(\M_i(\Y)\geq m_i, \text{for } 1\leq i\leq r\right)&=\P_{z_n}\left(m_1^n\leq \M_i(\Y)\leq m_1^n+\eps_n,\quad \M_i(\Y)\geq m_i, \text{for } 2\leq i\leq r\right)\\
			&=\eps_n\partial_1h^{\bm m}(z)+o(\eps_n),
		\end{align*}
		and more generally that
		\begin{align*}
			\partial_{2,\cdots,r}h^\M(z_n)&=\ps{z-z_n,e_1}\partial_{\ps{z,e_1}}\partial_{2,\cdots,r} h^\M(z)+o(\norm{z_n-z})\\
			&=\ps{z-z_n,e_1}\left(-\partial_{\ps{M,e_1}}\right)\partial_{2,\cdots,r} h^\M(z)+o(\norm{z_n-z}).
		\end{align*}
		Alternatively we could have argued using the maximum principle for $\partial_{2,\cdots,r}h^\M$, which thus admits an extremum on the hyperplane $\ps{x,e_1}=\M_1$ with its gradient normal to this hyperplane.
		
		As for $\bm p_t^\M$, by the reflection principle of Proposition~\ref{prop:reflection_killed} 
		\begin{align*}
			\bm p_t^\M(z_n,y)&=\sum_{s\in W^\M}\epsilon(s)\bm p_t(sz_n,y)\\
			&=\frac12\sum_{s\in W^\M}\epsilon(s)\Big(\bm p_t(sz_n,y)-\bm p_t(ss_1z_n,y)\Big)\\
			&=\frac12\sum_{s\in W^\M}\epsilon(s)\Big(\bm p_t(sz_n,y)-\bm p_t\left(s(z_n+\ps{z_n-z,e_1^\vee}e_1),y\right)\Big)\\
			&=\ps{z-z_n,e_1^\vee}\times \frac12\sum_{s\in W^\M}\epsilon(s)\ps{\nabla_x\bm p_t,se_1}(sz,y)+o(\norm{z_n-z}).
		\end{align*}
		Now for $z\in\M+\partial\mathcal{C}_1$ and $\M'$ close to $\M$ we can write, with $s'$ similar to $s$ but centered at $\M'$:  
		\begin{align*}
			\bm p_{t}^{\M'}(z,y)&=\sum_{s'\in W^{\M'}}\epsilon(s')\bm p_{t}(s'z,y)\\
			&=\frac12\sum_{s'\in W^{\M'}}\epsilon(s')\Big[\bm p_{t}(s'z,y)-\bm p_{t}(s's'_1z,y)\Big]\\
			&=\frac12\sum_{s'\in W^{\M'}}\epsilon(s')\Big[\bm p_{t}(s'z,y)-\bm p_{t}\left(s'(z+\ps{\M'-\M,e_1^\vee}e_1),y\right)\Big], \quad\text{hence}
		\end{align*}
		\begin{equation}\label{equ:ptM_boundary}
			\bm p_{t}^{\M'}(z,y)=\frac12\left(\bm p_{t}^{\M'}(z,y)-\bm p_{t}^{\M'}(z+\ps{\M'-\M,e_1^\vee}e_1,y)\right).
		\end{equation}
		Therefore taking derivatives with respect to $\ps{\M,e_1}$ of $\bm p_{t}^{\M'}(z,y)$ yields that for $z\in\partial\mathcal{C}_1$, 
		\[
		\frac12\sum_{s\in W^{\M}}\epsilon(s)\ps{\nabla_x\bm p_t,se_1}(sz,y)=\frac{\ps{e_i,e_i}}{2}\partial_{1}\bm p_{u}^\M(z,y)
		\]
		where the prefactor stems from the fact that $\ps{\M'-\M,e_1^\vee}=\frac{2}{\ps{e_i,e_i}}\ps{\M'-\M,e_1}$.
		This shows that $\bm p_t^\M(z_n,y)=\ps{z-z_n,e_1}\partial_{1}\bm p_t^\M(z,y)+o(\norm{z_n-z})$, concluding the proof of Lemma~\ref{lemma:1-hit}.
	\end{proof}
	We can now make use of Lemma~\ref{lemma:1-hit} in our computations to see that the case $i=1$ in Lemma~\ref{lemma:williams_crucial} follows from the equality, that no longer depends on the conditioning:
	\begin{lemma}\label{lemma:after1-hit}
		For any $x,y\in\M+\mathcal{C}$ and $t>0$,
		\begin{equation}
			\int_0^t\int_{\M+\partial \mathcal{C}_1}\P_x\left(T_{\M+\partial \mathcal C}(\Y)\in du, \Y_u\in dz\right)\partial_{1}\bm p_{t-u}^\M(z,y)=\partial_{1}\bm p_{t}^\M(x,y).
		\end{equation}
	\end{lemma}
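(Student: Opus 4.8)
The plan is to realise $\partial_1\bm p_t^\M(x,\cdot)$ as a limit of \emph{ordinary} killed transition densities over a nested family of Weyl chambers and to decompose paths of $\Y$ (the diffusion with generator $\mathcal A$, for which $f_t\equiv1$) at the first crossing of the slightly pushed‑in wall; the identity then comes out of the strong Markov property. Fix $y$ in the interior of $\M+\mathcal{C}$. For $\eta>0$ small put $\M_\eta\coloneqq\M+\eta\omega_1$, so that $D_\eta\coloneqq\M_\eta+\mathcal{C}$ is the chamber $D\coloneqq\M+\mathcal{C}$ with the wall $\M+\partial\mathcal{C}_1$ moved inward (recall $\ps{\omega_1,e_1^\vee}=1$ and $\ps{\omega_1,e_j^\vee}=0$ for $j\neq1$, so only the first parameter changes); write $\Pi_\eta$ for the corresponding wall of $D_\eta$ and $\sigma_\eta\coloneqq\inf\{s\geq0:\Y_s\notin D_\eta\}$. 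The key remark is that every wall of $D_\eta$ other than $\Pi_\eta$ lies in $\partial D$: a point $z$ with $\ps{z,e_j^\vee}=\M_j$ ($j\neq1$) belonging to $\overline{D_\eta}$ automatically satisfies $\ps{z,e_1^\vee}\geq\M_1$ and $\ps{z,e_i^\vee}\geq\M_i$, hence $z\in\M+\partial\mathcal{C}_j\subset\partial D$. Applying the strong Markov property of $\Y$ at $\sigma_\eta$, and using that $\bm p_s^\M(z,\cdot)\equiv0$ for $z\in\partial D$ so that exits of $\Y$ from $D_\eta$ through a wall $j\neq1$ contribute nothing, we get, for $x,y\in D_\eta$,
\begin{equation}\label{equ:nested_after1hit}
\bm p_t^\M(x,y)=\bm p_t^{\M_\eta}(x,y)+\int_0^t\int_{\Pi_\eta}\P_x\left(\sigma_\eta\in du,\Y_u\in dz\right)\bm p_{t-u}^\M(z,y).
\end{equation}

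Rearranging \eqref{equ:nested_after1hit}, dividing by $\eta$ and letting $\eta\downarrow0$, the left‑hand side converges to $\partial_{\M_1}\bm p_t^\M(x,y)=-\partial_1\bm p_t^\M(x,y)$ by the definition \eqref{equ:assumption1} of $\partial_1$. For the right‑hand side I invoke the boundary expansion of the killed kernel established along the proof of Lemma~\ref{lemma:1-hit}: since $z+\eta\omega_1\in\Pi_\eta$ and $\ps{z+\eta\omega_1,e_1^\vee}-\M_1=\eta$ for $z\in\M+\partial\mathcal{C}_1$, that expansion gives $\bm p_{t-u}^\M(z+\eta\omega_1,y)=\eta\,\partial_1\bm p_{t-u}^\M(z,y)+o(\eta)$. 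Together with the facts that $\sigma_\eta\uparrow T_{\M+\partial\mathcal{C}}(\Y)$ as $\eta\downarrow0$, and that the exit measures $\P_x(\sigma_\eta\in du,\Y_{\sigma_\eta}\in dz)$ carried by $\Pi_\eta$, transported back to $\M+\partial\mathcal{C}_1$ via $z\mapsto z-\eta\omega_1$, converge to $\P_x(T_{\M+\partial\mathcal{C}}(\Y)\in du,\Y_u\in dz)$ restricted to $\M+\partial\mathcal{C}_1$ — here one uses that exits of $\Y$ from $D$ through the other walls occur at positive distance from $\M+\partial\mathcal{C}_1$, and that the corners carry no mass — the right‑hand side of the rescaled identity tends to $-\int_0^t\int_{\M+\partial\mathcal{C}_1}\P_x\left(T_{\M+\partial\mathcal{C}}(\Y)\in du,\Y_u\in dz\right)\partial_1\bm p_{t-u}^\M(z,y)$. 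Identifying the two limits yields exactly the asserted equality.

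The only delicate point is this last passage to the limit, i.e.\ interchanging the $o(\eta)$ error with the integral; the required domination comes from standard Gaussian‑type off‑diagonal bounds for $\bm p^\M$ and for the exit density of $\Y$: near the corners of $\M+\partial\mathcal{C}_1$ the exit measure puts vanishing mass while $\bm p^\M$ stays bounded, as $u\uparrow t$ the factor $\bm p_{t-u}^\M(z,y)$ with $z\in\M+\partial\mathcal{C}_1$ and $y$ interior is super‑exponentially small, and as $u\downarrow0$ the factor $\P_x(\sigma_\eta\leq u)$ with $x$ interior is super‑exponentially small. Equivalently, one can run the whole argument through stochastic calculus: the function $\phi_s(x')\coloneqq\partial_1\bm p_s^\M(x',y)$ solves $\partial_s\phi=\mathcal{A}_{x'}\phi$ in the interior (because $\bm p^\M$ does and $\partial_{\M_1}$ commutes with $\partial_s$ and $\mathcal{A}_{x'}$), vanishes on every wall $\M+\partial\mathcal{C}_j$ with $j\neq1$ (perturbing $\M_1$ leaves those walls fixed, so $\bm p^\M\equiv0$ there persists), and satisfies $\phi_0\equiv0$ on $D\setminus\{y\}$ (since $\bm p_0^\M=\delta_y$ does not depend on $\M$); hence $s\mapsto\phi_{(t-s)^+}(\Y_s)$ is a local martingale on $[0,t\wedge T_{\M+\partial\mathcal{C}}(\Y))$, and, once the heat‑kernel estimates above upgrade it to a genuine martingale, optional stopping at $t\wedge T_{\M+\partial\mathcal{C}}(\Y)$ gives $\phi_t(x)=\E_x\left[\mathds 1_{T_{\M+\partial\mathcal{C}}(\Y)<t,\ \Y_{T_{\M+\partial\mathcal{C}}(\Y)}\in\M+\partial\mathcal{C}_1}\,\phi_{t-T_{\M+\partial\mathcal{C}}(\Y)}\big(\Y_{T_{\M+\partial\mathcal{C}}(\Y)}\big)\right]$, which is again the claim.
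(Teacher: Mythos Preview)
Your argument is correct, but it follows a different route from the paper's. The paper exploits the explicit reflection formula $\bm p_t^{\M'}(x,y)=\sum_{s'\in W^{\M'}}\epsilon(s')\bm p_t(s'x,y)$ directly: the identity term $s'=Id$ is independent of $\M'$, while for $s'\neq Id$ the target $s'y$ lies outside the chamber (for $\M'$ near $\M$), so a path from $x$ to $s'y$ must cross $\M+\partial\mathcal{C}$. This gives a first-passage decomposition for every non-identity term, and differentiating in $\M_1$ yields the integral over the whole boundary; one then restricts to $\M+\partial\mathcal{C}_1$ using that $\partial_1\bm p_{t-u}^\M(z,y)$ vanishes on the other walls (Equation~\eqref{equ:ptM_boundary}).

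Your nested-chamber approach instead builds the derivative as a difference quotient: the strong-Markov identity \eqref{equ:nested_after1hit} is an exact relation between killed kernels on $D$ and $D_\eta$, and the lemma drops out upon dividing by $\eta$. This is more analytic in flavour---you need the convergence of exit measures and a domination argument to pass to the limit, which you handle carefully---whereas the paper's proof is a two-line algebraic manipulation once the reflection principle is in hand. What your route buys is generality: it uses only the strong Markov property and the first-order boundary expansion of $\bm p^\M$, so it would apply to killed diffusions in domains where no reflection formula is available. Your alternative martingale formulation (optional stopping applied to $\phi_{t-s}(\Y_s)$) makes this even clearer. The paper's approach, by contrast, is tailored to the reflectable setting of Subsection~\ref{sub:assumptions} and is correspondingly shorter.
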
 
	\begin{proof}
		Note that for $s'\neq Id$ and $\M'$ sufficiently close to $\M$, 
		\begin{equation}\label{equ:crossing_boundary}
			\P_x\left(\Y_t\in d(s'y)\right)=
			\int_0^t\int_{\M+\partial \mathcal{C}}\P_x\left(T_{\M+\partial \mathcal C}(\Y)\in du, \Y_u\in dz\right)\P_z\left(\Y_{t-u}\in d(s'y)\right)
		\end{equation}
		since to reach $s'y$ the path has to cross $\M+\partial\mathcal{C}$ (this is true only for $\M'$ and $\M$ sufficiently close). Moreover the quantity that corresponds to $s'=Id$ in the expression of 
		\[
		\bm p_t^{\M'}(x,y)=\sum_{s'\in W^{\M'}}\epsilon(s')p_t(s'x,y)
		\]
		given by the reflection principle from Proposition~\ref{prop:reflection_killed} does not depend on $\M'$. As a consequence we can write that
		\[
		\partial_{1}\bm p_{t}^\M(x,y)=\int_0^t\int_{\M+\partial \mathcal{C}}\P_x\left(T_{\M+\partial \mathcal C}(\Y)\in du, \Y_u\in dz\right)\partial_{1}\bm p_{t-u}^\M(z,y).
		\]
		To conclude note that $\partial_{1}\bm p_{t}^\M(z,y)$ vanishes when $z\in\M+\partial\mathcal{C}\setminus\partial\mathcal{C}_1$ ---this is a consequence of Equation~\eqref{equ:ptM_boundary} above.
	\end{proof}
	
	\subsubsection{The case $i=2$}
	The approach remains essentially the same for $i=2$; in the same spirit as above, we see that
	\begin{align*}
		&\P_x\left(\X_t\in dy, \X-\M\text{ has hit }\partial \mathcal C_1\text{ before }\partial \mathcal C_2\right)\\
		&=\frac{\partial_{3,\cdots,r}h^\M(y)}{\partial_{1,\cdots,r}h^\M(x)}\int_0^t\int_{t_1}^t\int_{\M+\partial \mathcal{C}_1}\int_{\M+\partial \mathcal{C}_2}\P_x( T_{\M+\partial \mathcal C}(\Y)\in dt_1, \Y_{t_1}\in dz_1)\times\\
		&\hspace{6cm}\lim\limits_{z_n^1\rightarrow z_1}\frac{\P_{z_n^1}\left( T_{\M+\partial \mathcal C}(\Y)\in dt_2, \Y_{t_2}\in dz_2\right)}{\ps{z_1-z_n^1,e_1}}\partial_2\bm p^\M_{t-t_2}(z_2,y)\\
		&=\frac{\partial_{3,\cdots,r}h^\M(y)}{\partial_{1,\cdots,r}h^\M(x)}\int_0^t\int_{\M+\partial \mathcal{C}_1}\P_x( T_{\M+\partial \mathcal C}(\Y)\in dt_1, \Y_{t_1}\in dz_1)\lim\limits_{z_n^1\rightarrow z_1}\frac{\partial_2\bm p^\M_{t-t_1}(z_n^1,y)}{\ps{z_1-z_n^1,e_1}}\\
		&=\frac{\partial_{3,\cdots,r}h^\M(y)}{\partial_{1,\cdots,r}h^\M(x)}\int_0^t\int_{\M+\partial \mathcal{C}_1}\P_x( T_{\M+\partial \mathcal C}(\Y)\in dt_1, \Y_{t_1}\in dz_1)\partial_{1,2}\bm p^\M_{t-t_1}(z_1,y),
	\end{align*}
	where we have used the results from the case $i=1$, and relied on the fact that $\X^1$ will never come back to $\M+\partial \mathcal{C}_1$.
	As a consequence the proof boils down to proving the following analog of Lemma~\ref{lemma:after1-hit}:
	\[
	\int_0^t\int_{\M+\partial \mathcal{C}_1\cup \partial \mathcal{C}_2}\P_x( T_{\partial \mathcal C}(\Y)\in dt_1, \Y_{t_1}\in dz_1)\partial_{1,2}\bm p^\M_{t-t_1}(z_1,y)=\partial_{1,2}\bm p^\M_{t}(x,y).
	\]
	
	\subsubsection{The general case} 
	The proof in the general case relies on the very same computations as above. In the end we see that in order to prove that Theorem~\ref{thm:williams} all one has to do is to check the validity of the lemma below, which we then recursively apply to get the desired statement.
	\begin{lemma}\label{lemma:after-n-hit}
		For any $1\leq i\leq r$, 
		\[
		\int_0^t\int_{\partial \mathcal{C}_1\cup\cdots_\cup\partial \mathcal{C}_i}\P_x( T_{\partial \mathcal C}(\Y)\in du, \Y_{u}\in dz)\partial_{1,\cdots,i}\bm p^\M_{t-u}(z,y)=\partial_{1,\cdots,i}\bm p^\M_{t}(x,y).
		\]
	\end{lemma}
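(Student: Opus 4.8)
The plan is to run the argument of the case $i=1$ (Lemma~\ref{lemma:after1-hit}) essentially verbatim, now carrying the $i$ directional derivatives through the computation and keeping track of the walls of $\M+\partial\mathcal{C}$ on which the resulting quantity $\partial_{1,\cdots,i}\bm p^\M_t$ vanishes; no induction on $i$ is needed. Fix $\M$ and $x,y\in\M+\mathcal{C}$ as in the statement and introduce an auxiliary centre $\M'$ in a small neighbourhood of $\M$. Since $W$ is finite there is such a neighbourhood on which, for every $s'\in W^{\M'}\setminus\{\mathrm{Id}\}$, the point $s'y$ lies outside $\M+\overline{\mathcal{C}}$; a continuous path of $\Y$ from $x$ to $s'y$ must then meet $\M+\partial\mathcal{C}$, and the strong Markov property at the first hitting time $T_{\M+\partial\mathcal{C}}(\Y)$ gives, exactly as in~\eqref{equ:crossing_boundary},
\[
\P_x\left(\Y_t\in d(s'y)\right)=\int_0^t\int_{\M+\partial\mathcal{C}}\P_x\left(T_{\M+\partial\mathcal{C}}(\Y)\in du,\Y_u\in dz\right)\P_z\left(\Y_{t-u}\in d(s'y)\right).
\]
Summing over $s'\in W^{\M'}\setminus\{\mathrm{Id}\}$ with the weights $\epsilon(s')$, using that $\bm p^{\M'}_s(a,w)=\sum_{s'\in W^{\M'}}\epsilon(s')p_s(s'a,w)=\sum_{s'\in W^{\M'}}\epsilon(s')p_s(a,s'w)$ by Proposition~\ref{prop:reflection_killed} together with the symmetry and $W^{\M'}$-invariance of $\Y$, and isolating the term $s'=\mathrm{Id}$ (which contributes the $\M'$-independent quantity $p_s(a,w)$), one obtains the renewal identity
\[
\bm p^{\M'}_t(x,y)-p_t(x,y)=\int_0^t\int_{\M+\partial\mathcal{C}}\P_x\left(T_{\M+\partial\mathcal{C}}(\Y)\in du,\Y_u\in dz\right)\left(\bm p^{\M'}_{t-u}(z,y)-p_{t-u}(z,y)\right),
\]
valid for all $\M'$ in that neighbourhood.

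The next step is to differentiate this identity in the variable $\M'$ at $\M'=\M$, applying the operator $\partial_{1,\cdots,i}$ acting on $\bm p^\M$ through its centre (defined as in~\eqref{equ:assumption1}, i.e. $(-1)^i\partial_{\ps{\M',e_1^\vee}}\cdots\partial_{\ps{\M',e_i^\vee}}$ followed by setting $\M'=\M$). On the right-hand side the first-passage kernel $\P_x(T_{\M+\partial\mathcal{C}}(\Y)\in du,\Y_u\in dz)$ and the functions $p_t(x,y)$ and $p_{t-u}(z,y)$ do not depend on $\M'$ and hence drop out, so that, once the interchange of these derivatives with the time-space integral has been justified,
\[
\partial_{1,\cdots,i}\bm p^\M_t(x,y)=\int_0^t\int_{\M+\partial\mathcal{C}}\P_x\left(T_{\M+\partial\mathcal{C}}(\Y)\in du,\Y_u\in dz\right)\partial_{1,\cdots,i}\bm p^\M_{t-u}(z,y).
\]

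It then remains to reduce the domain of integration from $\M+\partial\mathcal{C}$ to $\bigcup_{l=1}^i(\M+\partial\mathcal{C}_l)$, i.e. to check that $\partial_{1,\cdots,i}\bm p^\M_{t-u}(z,y)$ vanishes whenever $z\in\M+\partial\mathcal{C}_j$ with $j\notin\{1,\cdots,i\}$. Indeed, when $\M'$ varies only along the directions $e_1,\cdots,e_i$ away from $\M$ the scalar product $\ps{z-\M',e_j}$ stays equal to $\ps{z-\M,e_j}=0$, so $z$ remains on the hyperplane $\{x:\ps{x-\M',e_j}=0\}$; on that hyperplane $z$ is fixed by the reflection $s'_j$ centred at $\M'$, and pairing $s'\leftrightarrow s's'_j$ in $\bm p^{\M'}_{t-u}(z,y)=\sum_{s'\in W^{\M'}}\epsilon(s')p_{t-u}(s'z,y)$ shows that this sum vanishes identically in $\ps{\M',e_1},\dots,\ps{\M',e_i}$, hence so do all the derivatives $\partial_{\ps{\M',e_l}}$, $1\leq l\leq i$, and their composition, at $\M'=\M$. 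This is the higher-dimensional counterpart of the consequence of~\eqref{equ:ptM_boundary} used in the case $i=1$. Since the lower-dimensional strata $\partial\mathcal{C}_j\cap\partial\mathcal{C}_k$ are Lebesgue-null on $\partial\mathcal{C}$, this yields Lemma~\ref{lemma:after-n-hit}, which upon recursion in the manner explained before the statement completes the proof of Lemma~\ref{lemma:williams_crucial}, and hence of Theorem~\ref{thm:williams}.

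I expect the genuinely delicate point to be the interchange of the $\M'$-derivatives with the time-space integral and the evaluation at $\M'=\M$: the killed kernel $\bm p^{\M'}_s(z,y)$ and its $\M'$-derivatives degenerate as $z\to\M+\partial\mathcal{C}$, so one must combine the boundary Taylor expansions of Lemma~\ref{lemma:1-hit} with a domination bound uniform for $\M'$ near $\M$, and one also has to be careful that the neighbourhood of $\M$ on which ``$s'y$ lies outside $\M+\overline{\mathcal{C}}$ for every $s'\neq\mathrm{Id}$'' is chosen after $y$ (its radius degenerates as $y$ approaches $\M+\partial\mathcal{C}$). By contrast, the combinatorial part --- determining which walls survive the iterated derivative --- is handled cleanly by the hyperplane-freezing observation above and, case by case, reduces to the computations already carried out for $i=1$ and $i=2$.
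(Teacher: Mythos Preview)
Your argument is correct and follows essentially the same route as the paper: use the renewal identity~\eqref{equ:crossing_boundary} (noting that the $s'=\mathrm{Id}$ term is $\M'$-independent), differentiate in $\M'$ at $\M$, and then observe via the pairing $s'\leftrightarrow s's'_j$ that $\partial_{1,\cdots,i}\bm p^\M_{t-u}(z,y)$ vanishes on the walls $\M+\partial\mathcal{C}_j$ with $j>i$ (the paper phrases this last step via Equation~\eqref{equ:ptM_boundary}).

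One small slip worth correcting: when you write ``$\M'$ varies only along the directions $e_1,\cdots,e_i$'', this should read $\omega_1,\cdots,\omega_i$. Indeed $\partial_{\bm m_l}$ is the derivative in the $\omega_l$-coordinate (since $\bm m=\sum m_l\omega_l$), and it is the orthogonality $\ps{\omega_l,e_j}=0$ for $l\neq j$ that makes $\ps{z-\M',e_j}$ constant; varying along $e_l$ would not in general preserve $\ps{z-\M',e_j}$ since adjacent simple roots need not be orthogonal. With this fix your hyperplane-freezing argument is exactly the content of~\eqref{equ:ptM_boundary}.
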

	\begin{proof}
		To start with, recall from Equation~\eqref{equ:ptM_boundary} that if $z\in\M+\partial\mathcal{C}_j$ for $j>i$ then
		\[
		\bm p_{t}^{\M'}(z,y)=\frac12\left(\bm p_{t}^{\M'}(z,y)-\bm p_{t}^{\M'}(z+\ps{\M'-\M,e_j^\vee}e_j,y)\right),
		\]
		and therefore $\partial_{1,\cdots,i}\bm p^\M_{t-t_1}(z,y)=0$. As a consequence we only need to prove that
		\[
		\int_0^t\int_{\partial \mathcal{C}}\P_x( T_{\partial \mathcal C}(\Y)\in du, \Y_{u}\in dz)\partial_{1,\cdots,i}\bm p^\M_{t-u}(z,y)=\partial_{1,\cdots,i}\bm p^\M_{t}(x,y).
		\]
		This follows from Equation~\eqref{equ:crossing_boundary} along the same lines as in the proof of Lemma~\ref{lemma:after1-hit}.
	\end{proof}
	This allows to wrap up the proof of Lemma~\ref{lemma:williams_crucial} and therefore of Theorem~\ref{thm:williams}.
	

	\section{On the process started from infinity.}\label{section:brown_inf}
	In this section we focus on the case where the process being considered in the statement of Theorem~\ref{thm:williams} is a drifted Brownian motion over $\V$. To be more specific we study in this section the behaviour of the drifted Brownian motion $B^\nu$ when conditioned on having $\M\to\infty$ inside $\mathcal{C}_-$. To do so we will provide a detailed analysis of the process $\B^\nu$, $\nu\in\mathcal{C}$, defined in the same fashion as in Corollary~\ref{cor:brownien_drift}, in the asymptotic where its starting point $\bm x$ will diverge inside the Weyl chamber $\mathcal{C}$ . 
		This process is defined by joining:
		\begin{itemize}
			\item A diffusion process $\X^1$ started from $\bm x\in\mathcal{C}$, with generator $\frac12\Delta+\nabla\log\partial_{1,\cdots,r}h$ and run until it hits $\partial\mathcal{C}$, say at $z_1\in \partial\mathcal{C}_1$. Here $h$ is given by Equation~\eqref{equ:def_h}.
			\item Then run an independent process $\X^2$ started from $z_1$ and with generator $\frac12\Delta+\nabla\log\partial_{2,\cdots,r}h$, upon hitting $\partial\mathcal{C}$.
			\item Thus define a family of processes $(\X^1,\cdots,\X^r)$. When $\X^r$ reaches the boundary of $\partial\mathcal{C}$, sample $\X^{r+1}$ with generator $\frac12\Delta+\nabla\log h$.
		\end{itemize}
		Our goal is to describe the behaviour of such a process $\B^\nu$ when $\bm x\to\infty$ inside $\mathcal{C}$. We will see that, in contrast with the uniqueness of the entrance law from $\infty$ of the process studied in~\cite[Section 6]{BOc}, the process will behave in very different ways according to the way $\bm x$ diverges inside $\mathcal{C}$.  In what follows we will say that $\bm x\to\infty$ inside $\mathcal{C}$ when $\ps{\bm x,e_i}\to+\infty$ for all $1\leq i\leq r$.
	
	Let us describe in details this asymptotic in the case of $A_2$. For this purpose, let us introduce for any $0<\eta<1$ and a process $\Y$ starting inside $\mathcal{C}$ the events $\mathcal{E}^\eta_1(\Y)\coloneqq\left\{\forall t<T_{\partial\mathcal{C}},\quad \ps{\Y,e_2}>(1-\eta)\ps{\Y_0,e_2}\right\}$ and analogously $\mathcal{E}^\eta_2$.
	We then define a process $\Y^1$ by joining:
	\begin{enumerate}
		\item A Brownian motion with drift $s_1s_2\nu$, started from $\bm x$, and conditioned on $\mathcal{E}_1^\eta(\Y^1)$. This process is run until hitting $\partial\mathcal{C}$ (over $z_1\in\partial\mathcal{C}_1$).
		\item A Brownian motion started from $z_1$, with drift $s_2\nu$ and conditioned not to hit $\partial\mathcal{C}_1$ again. By this we mean a diffusion with drift
		\[
			d_2(y)\coloneqq \frac{s_2e^{\ps{s_2\nu,y}}-s_1s_2e^{\ps{s_1s_2\nu,y}}}{e^{\ps{s_2\nu,y}}-e^{\ps{s_1s_2\nu,y}}}=\frac{s_2-s_1s_2e^{\ps{\nu,\rho}\ps{y,e_1}}}{1-e^{\ps{\nu,\rho}\ps{y,e_1}}}\cdot
			\]
		This process will be run upon hitting $\partial\mathcal{C}$ at $z_2\in\partial\mathcal{C}_2$.
		\item A Brownian motion with drift $\nu$, started from $z_2$, and conditioned to stay inside $\mathcal{C}$.
	\end{enumerate}
	Similarly by exchanging $e_1$ and $e_2$ we define a process $\Y^2$. As we will see, the process $\B^\nu$ will behave like $\Y^1$ or $\Y^2$ depending on the variable $x^\bot\coloneqq \ps{s_2s_1\nu-s_1s_2\nu,\bm x}$. 
	
	In what follows we consider $(Z^1_t ,Z^2_t)_{t\geq 0}$ a pair of bounded, continuous processes independent from $\B^\nu$, $\Y^1$ and $\Y^2$, as well as stationary (in the sense that the law of $(Z^1_{t+s},Z^2_{t+s})_{t\geq 0}$ is independent of $s\geq0$). We further demand that the processes $(Z^1_t)_{t\leq s}$ and $(Z^2_{t})_{t\geq h}$ become independent as $h-s\to+\infty$ by requiring that for any bounded continuous functions $f_1,f_2$ 
	\begin{align*}
		\lim\limits_{h-s\to+\infty}\expect{\int_0^{s}f_1(t)Z^1_tdt\int_0^{+\infty}f_2(t)Z^2_{t+h}dt}-\expect{\int_0^{s}f_1(t)Z^1_tdt}\expect{\int_0^{+\infty}f_2(t)Z^2_tdt}=0.
	\end{align*}
	With these notations at hand the following statement can be seen as a reformulation of the above heuristic: 
	\begin{proposition}\label{prop:asymptot_A2}
		For $i=1,2$, set for $\Y$ either $\B^\nu$, $\Y^1$ or $\Y^2$
		\begin{equation}
			\mathrm{J}_i(\Y)\coloneqq \int_0^{+\infty} e^{-\ps{\Y_t,e_i}}Z^i_t dt.
		\end{equation} 
		Then for any bounded, continuous function $F:\R^2\to\R$, in the limit where $\bm x\to\infty$ inside $\mathcal{C}$ 
		\begin{equation}
			\E_{\bm x}\left[F\left(\mathrm{J}_1(\B^\nu),\mathrm{J}_2(\B^\nu)\right)\right]\sim\E_{\bm x}\left[F\left(\mathrm{J}_1(\Y^1),\mathrm{J}_2(\Y^1)\right)\right]\quad\text{if }x^\bot\to-\infty.
		\end{equation}
		Conversely if $x^\bot\to+\infty$ then $\E_{\bm x}\left[F\left(\mathrm{J}_1(\B^\nu),\mathrm{J}_2(\B^\nu)\right)\right]\sim\E_{\bm x}\left[F\left(\mathrm{J}_1(\Y^2),\mathrm{J}_2(\Y^2)\right)\right].$
	\end{proposition}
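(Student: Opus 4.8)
\textbf{Strategy.} The plan is to start from the exact path decomposition of $\B^\nu$ provided by Corollary~\ref{cor:brownien_drift} (equivalently Theorem~\ref{thm:williams} for $A_2$), which realizes $\B^\nu$ as the joining of three independent pieces $\X^1,\X^2,\X^3$ with explicit $h$-transformed generators. The idea is to take the limit $\bm x\to\infty$ inside $\mathcal C$ piece by piece, showing that each $\X^j$ converges (in the sense of the integral functionals $\mathrm J_i$, which only feel the process through $e^{-\ps{\cdot,e_i}}$) to the corresponding piece of $\Y^1$ or $\Y^2$, with the dichotomy controlled by $x^\bot=\ps{s_2s_1\nu-s_1s_2\nu,\bm x}$.

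\textbf{Key steps.} First I would rewrite the transition kernels $\tfrac{\partial_{12}h(y-\M)}{\partial_{12}h(x-\M)}p^\M_\nu$, $\tfrac{\partial_2 h(y-\M)}{\partial_2 h(x-\M)}p^\M_\nu$, $\tfrac{h(y-\M)}{h(x-\M)}p^\M_\nu$ using the explicit formula $h(x)=\sum_{s\in W}\epsilon(s)e^{\ps{s\nu-\nu,x}}$ and its iterated boundary derivatives $\partial_i h$, $\partial_{12}h$; each is again an alternating exponential sum over a subset of $W$. Second, in the regime $\ps{\bm x,e_i}\to+\infty$, one exponential term dominates each such sum: for $\partial_{12}h$ evaluated near the starting point the dominant drift is $s_1s_2\nu$ or $s_2s_1\nu$ according to the sign of $x^\bot$, which is exactly the mechanism by which $\X^1$ looks like a Brownian motion with drift $s_1s_2\nu$ (resp. $s_2s_1\nu$), conditioned on the event $\mathcal E_1^\eta$ (resp. $\mathcal E_2^\eta$) that the transverse coordinate does not drop — this conditioning is what the ratio $\partial_{12}h(y-\M)/\partial_{12}h(x-\M)$ enforces asymptotically, since subleading exponentials encode precisely the probability of the excursion returning toward the other wall. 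Third, for the second piece, the ratio $\partial_2 h(y-\M)/\partial_2 h(x-\M)$ collapses, after normalizing, to the drift $d_2(y)=\tfrac{s_2-s_1s_2 e^{\ps{\nu,\rho}\ps{y,e_1}}}{1-e^{\ps{\nu,\rho}\ps{y,e_1}}}$ displayed in the statement — a genuine (non-degenerate) limit because $\X^2$ lives near $\partial\mathcal C_1$ where $\ps{y,e_1}$ stays bounded, so both competing exponentials survive; this is the Doob transform of $B^{s_2\nu}$ conditioned not to return to $\partial\mathcal C_1$. Fourth, the third piece $\X^3$ has generator $\tfrac12\Delta+\nabla\log h\cdot\nabla$ regardless of $\bm x$, i.e. it is already $B^\nu$ conditioned to stay in $\mathcal C$, matching $\Y^1$'s (and $\Y^2$'s) last leg verbatim. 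Finally I would assemble these three convergences: since $\mathrm J_i(\Y)=\int_0^\infty e^{-\ps{\Y_t,e_i}}Z^i_t\,dt$ and $F$ is bounded continuous, continuity of $F$ plus dominated convergence (the integrands are bounded by $\|Z^i\|_\infty e^{-\ps{\Y_t,e_i}}$, and $\ps{\Y_t,e_i}$ grows linearly with positive probability-one drift, making the integrals a.s.\ finite and uniformly integrable) reduce the claim to joint convergence in law of the triple of pieces. The stationarity and asymptotic-decorrelation hypotheses on $(Z^1,Z^2)$ are used exactly here: the times at which $\X^1$ feeds $\mathrm J_2$ and at which $\X^3$ feeds $\mathrm J_1$ are separated by a window that diverges as $\bm x\to\infty$ (the process spends time of order $\ps{\bm x,\cdot}$ in each leg), so the cross terms in $\expect{F(\mathrm J_1,\mathrm J_2)}$ factor in the limit and one may replace $\B^\nu$ by the spliced process $\Y^1$ (resp.\ $\Y^2$) without changing the limiting expectation.

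\textbf{Main obstacle.} The delicate point is the second step: making rigorous the statement that, as $\bm x\to\infty$, the $h$-transformed first leg $\X^1$ converges to ``$B^{s_1s_2\nu}$ conditioned on $\mathcal E_1^\eta$'' uniformly enough that the functional $\mathrm J_2(\X^1)$ — which integrates over the \emph{entire} lifetime of $\X^1$, including the approach to $\partial\mathcal C_1$ where the dominant exponential in $\partial_{12}h$ nearly degenerates — passes to the limit. One has to control the error terms in the ratio of exponential sums uniformly along the path, not just pointwise, and handle the boundary layer near $\partial\mathcal C_1$ where the Taylor expansion of $\partial_{12}h$ from Lemma~\ref{lemma:1-hit}-type arguments kicks in; a coupling argument, bounding the Radon–Nikodym derivative between $\X^1$ and the claimed limit on the event $\mathcal E_1^\eta$ and showing the complementary event has vanishing contribution, is the route I would take, followed by letting $\eta\to0$ at the very end.
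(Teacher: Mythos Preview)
Your proposal is correct in spirit and shares the paper's core ideas: decompose $\B^\nu$ via Corollary~\ref{cor:brownien_drift}, analyze the drifts $\nabla\log\partial_{12}h$, $\nabla\log\partial_2 h$, $\nabla\log h$ by extracting the dominant exponential in the alternating sum, and use the $(Z^1,Z^2)$ decorrelation to factor. The execution, however, differs in two ways that directly address the obstacle you flag.

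First, rather than coupling $\X^1$ to the first leg of $\Y^1$ uniformly down to $\partial\mathcal C_1$, the paper introduces auxiliary threshold times $T_0,T_1,T_1',T_2$ (defined by $\ps{\B^\nu,e_i}$ crossing levels of the form $\eta\ps{\bm x,e_j}$) and simply discards the contribution to $\mathrm J_i$ from the intervals where both $\ps{\B^\nu,e_1}$ and $\ps{\B^\nu,e_2}$ are large; on the remaining intervals the drift remainder $\mathfrak R$ is uniformly $\mathcal O(e^{-\eta\ps{\bm x,e_2}})$ and a one-dimensional comparison theorem suffices. This avoids the boundary-layer analysis you anticipate. The fact that these discarded intervals are negligible and that the remaining ones are ordered correctly ($T_1'<T_2$ etc.) is established by elementary estimates on hitting times of drifted Brownian motions, together with a separate exact computation (Proposition~\ref{prop:exact_proba_hitting}) of the hitting density on $y+\partial\mathcal C$, which yields the quantitative form $\P_{\bm x}(\mathcal E_1)=1-\mathcal O(e^{-c\ps{\bm x,e_2}})$ via Laplace's method.

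Second, and this is the more substantive difference, the paper does not stop at $\B^\nu\Rightarrow\Y^1$: it shows that \emph{both} $\E_{\bm x}[F(\mathrm J_1(\B^\nu),\mathrm J_2(\B^\nu))\mid\mathcal E_1]$ and $\E_{\bm x}[F(\mathrm J_1(\Y^1),\mathrm J_2(\Y^1))]$ converge to the same explicit factorized limit $\expect{F(J(\ps{\nu,\rho}),J'(\ps{\nu,e_2}))}$ with $J,J'$ independent one-dimensional functionals (this is Lemma~\ref{lemma:J_A2}, and is simultaneously the $A_2$ case of Proposition~\ref{prop:asymptot_J}). Going through the common limit makes the asymptotic equivalence immediate and delivers Proposition~\ref{prop:asymptot_J} for free, whereas your direct-comparison route would require a second, parallel argument to establish that proposition.
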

    As a consequence of the proof of Proposition~\ref{prop:asymptot_A2} we will see that if $x^\bot\to -\infty$, then the processes $\B^\nu$ and $\Y^1$ have asymptotically the same behaviour by showing that their drift functions are asymptotically close in the sense that for any positive $\eta$,
    \[
    \lim\limits_{\substack{\bm x\to\infty\\ x^\bot\to -\infty}}\P_{\bm x}\left(\sup_{0\leq t\leq T_{\partial\mathcal{C}}}\norm{\nabla\log\partial_{1,2}h(\B^\nu)-s_1s_2\nu}\leq Ce^{-\eta\ps{\nu,e_1}\ps{\bm x,e_2}}\right)=1
    \]
    and likewise for the two other components of the path.
    On the contrary if $\ps{s_2s_1\nu-s_1s_2\nu,\bm x}\to +\infty$ then the process will behave like $\Y^2$. In the case where $\ps{s_1s_2\nu-s_2s_1\nu,\bm x}$ remains bounded, then the law of the process $\B^\nu$ will be essentially realized by determining which boundary the process hits first and then running one of the two above processes accordingly.
	These different behaviours are illustrated in Figure~\ref{fig:regimes_brown}. We stress that in the limit where $\bm x\to\infty$ the events $\mathcal{E}_1(\Y^1)$ and $\mathcal{E}_2(\Y^2)$ have probability asymptotically $1$ so the conditioning becomes unnecessary.
	
	\begin{minipage}[c]{0.5\linewidth}
		\includegraphics[scale=0.2]{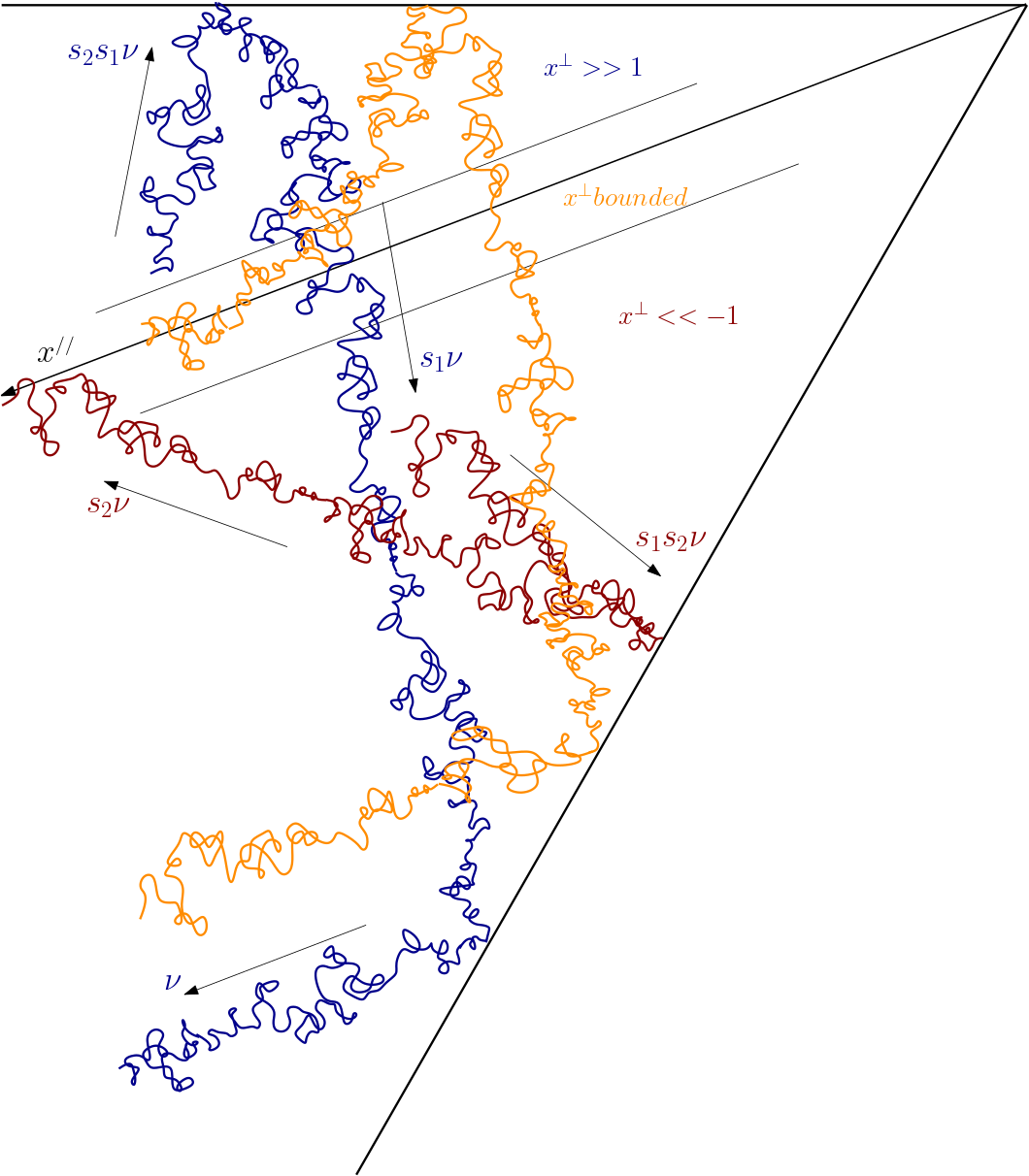}
		\captionof{figure}{The three different behaviours of the process $\B^\nu$ when $\bm x\rightarrow+\infty$.}
		\label{fig:regimes_brown}
	\end{minipage}
	\begin{minipage}[c]{0.5\linewidth}
		\includegraphics[scale=0.2]{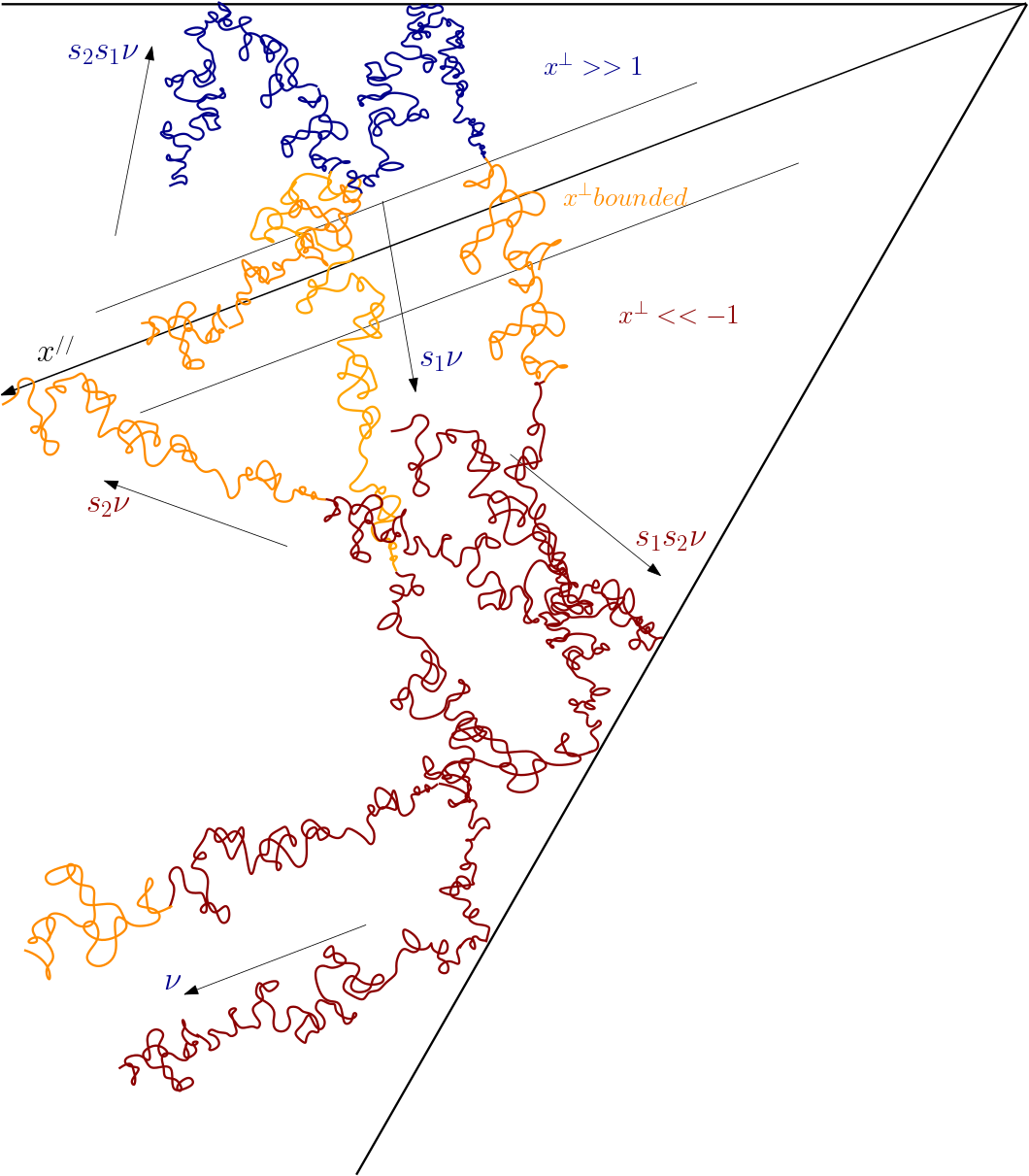}
		\captionof{figure}{Components of the path depending on whether $\ps{\B^\nu,e_i}$ is close to zero or large.}
		\label{fig:time_decomp}
	\end{minipage}
	
	In the general case a similar asymptotic behaviour for the process does hold, but takes more effort to be properly stated. For this purpose, let us consider $(Z^1,\cdots, Z^r)$ as above with the additional property that $Z^i$ and $Z^j$ are independent as soon as $\ps{e_i,e_j}=0$. We also introduce the subset $W_{1,\cdots,r}$ of $W$ defined as the set of the $s\in W$ that admit a reduced expression containing all the reflections $ s_{1},\cdots, s_{r}$ and set for $i=1,\cdots,r$ the notation 
	\begin{equation}\label{eq:def_J}
		J_i(\mu)\coloneqq \int_0^{+\infty}e^{-B^\mu_t}Z^i_tdt
	\end{equation}
	for $\mu\in\R$ positive, where $B^\mu$ is the process started from $+\infty$ whose law is realized by joining a Brownian motion with negative drift $-\mu$ and variance $\ps{e_i,e_i}$ upon hitting the origin and a Brownian motion with positive drift $\mu$ and variance $\ps{e_i,e_i}$ conditioned to stay positive after. An alternative way of writing this integral is to use that $J_i(\mu)$ has same law as
	\[
	\int_{-\infty}^{+\infty}e^{-\mathcal B^\mu_t}Z_t^i dt
	\]
	where $\mathcal B^\mu$ is a two-sided Brownian motion with positive drift $\mu$ and conditioned to stay positive. For more details see Subsection~\ref{subsec:refl_J}.
	\begin{proposition}\label{prop:asymptot_J}
		Set $s=s_1\cdots s_r$ and assume that $\bm x\rightarrow \infty$ inside $\mathcal{C}$ in the asymptotic where $\ps{s\nu- s'\nu,\bm x}\to+\infty$ for all $s'\neq s\in W_{1,\cdots,r}$. Then for $F_1,\cdots, F_r$ bounded continuous over $\R^+$
		\begin{equation}
			\lim\limits_{\bm x\to\infty}\E_{\bm x}\left[\prod_{i=1}^r F_i(\mathrm J_i(\B^\nu))\right]=\prod_{i=1}^r\expect{F_i\Big(J_i\left(\ps{s_{i+1}\cdots s_r\nu,e_i}\right)\Big)}.
		\end{equation}
	\end{proposition}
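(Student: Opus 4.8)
The plan is to iterate the path decomposition from Theorem~\ref{thm:williams} (in the drifted Brownian form of Corollary~\ref{cor:brownien_drift}) and then take the limit $\bm x \to \infty$ component by component, using independence of the pieces and of the noise processes $Z^i$. First I would set $\M$ to equal the starting point: since $\B^\nu$ started from $\bm x$ is the process $B^\nu$ conditioned on $\M = -\bm x$, the decomposition produces $r+1$ independent legs governed by the successive Doob transforms $\mathcal A^{2,\cdots,r}, \mathcal A^{3,\cdots,r},\dots,\mathcal A^{\emptyset}$ of the drifted Brownian generator. The key computational input is the explicit $h$-function $h(x)=\sum_{s\in W}\epsilon(s)e^{\ps{s\nu-\nu,x}}$ together with its iterated derivatives $\partial_{i+1,\cdots,r}h$: these are alternating sums over the subset $W_{1,\cdots,i}^c$ of Weyl elements whose reduced expressions avoid $s_1,\dots,s_i$, and as $\ps{\bm x,e_j}\to\infty$ in the prescribed regime the sum is dominated by a single exponential $e^{\ps{s^{(i)}\nu,x}}$ where $s^{(i)} = s_{i+1}\cdots s_r$. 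Consequently the drift $\nabla\log\partial_{i+1,\cdots,r}h$ of the $i$-th leg converges, on compact sets in the relevant coordinates, to the constant $s_{i+1}\cdots s_r\nu$, and the leg degenerates (after recentering at the hyperplane it starts from) to a Brownian motion with that drift, killed on the wall $\partial\mathcal C_i$. This is exactly the heuristic already spelled out after Proposition~\ref{prop:asymptot_A2} and made rigorous for $A_2$ in Proposition~\ref{prop:asymptot_A2}; here I would invoke and iterate that argument.

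The second step is to match these asymptotic legs with the one-dimensional objects $J_i(\mu)$. The functional $\mathrm J_i(\B^\nu)=\int_0^\infty e^{-\ps{\B^\nu_t,e_i}}Z^i_t\,dt$ only accumulates mass while $\ps{\B^\nu_t,e_i}$ is $O(1)$; by the time-decomposition picture of Figure~\ref{fig:time_decomp}, the coordinate $\ps{\B^\nu_t,e_i}$ stays bounded precisely during the two legs adjacent to the wall $\partial\mathcal C_i$ — namely the $i$-th leg (which hits $\partial\mathcal C_i$ with drift $s_{i+1}\cdots s_r\nu$, so its $e_i$-projection behaves like a Brownian motion with drift $-\ps{s_{i+1}\cdots s_r\nu, e_i}$ run until hitting $0$) welded to the $(i+1)$-st leg (drift $s_{i+2}\cdots s_r\nu$, whose $e_i$-projection is a Brownian motion with the opposite drift conditioned to stay positive, since the remaining legs never return near $\partial\mathcal C_i$). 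Projecting onto $e_i$ and using $\ps{s_{i+1}\cdots s_r\nu, e_i} = \ps{s_{i+2}\cdots s_r\nu, s_{i+1}e_i} = -\ps{s_{i+2}\cdots s_r\nu, e_i}$ (here one uses that $s_{i+1}e_i$ is a negative root, which holds because $s_1\cdots s_r$ is reduced) shows the two welded projections assemble exactly into the two-sided description of $J(\ps{s_{i+1}\cdots s_r\nu, e_i})$ given before the proposition. The convergence $\mathrm J_i(\B^\nu)\to J(\ps{s_{i+1}\cdots s_r\nu,e_i})$ in law then follows by continuity of $\mu\mapsto$ (law of the relevant SDE solution) in the drift parameter and boundedness of $Z^i$, plus the standard fact that the contribution to $\mathrm J_i$ from the legs where $\ps{\B^\nu_t,e_i}$ diverges is negligible.

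The third step is the asymptotic independence across $i=1,\dots,r$. Each $\mathrm J_i$ is asymptotically supported on the time window between the $i$-th and $(i+1)$-st wall hits, and for $i<j$ these windows are ordered and their separation in time tends to $+\infty$ as $\bm x\to\infty$ (the intermediate legs have lengths diverging, since they traverse regions whose $e_k$-coordinates grow); combining this with the mixing hypothesis imposed on $(Z^1,\dots,Z^r)$ — and the extra assumption that $Z^i\perp Z^j$ when $\ps{e_i,e_j}=0$, needed because in that case the two windows may overlap — the vector $(\mathrm J_1(\B^\nu),\dots,\mathrm J_r(\B^\nu))$ converges in law to a product of the one-dimensional limits, giving the displayed factorization. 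The main obstacle, as in the $A_2$ case, is controlling the legs \emph{uniformly}: one must show that the Doob-transformed drifts converge to their constant limits not just pointwise but strongly enough (e.g. uniformly on the time-space regions that carry the mass of $\mathrm J_i$), that the walls are hit in the expected order with probability tending to $1$ (this is where the regime $\ps{s\nu-s'\nu,\bm x}\to+\infty$ for $s'\in W_{1,\cdots,r}$, $s'\neq s=s_1\cdots s_r$, is exactly what is needed to select the chamber traversal $s_1\cdots s_r$), and that the conditionings implicit in the higher Doob transforms (staying on the correct side of the not-yet-hit walls) do not distort the adjacent-leg picture. I expect this uniform leg-control to be the bulk of the work, and it should be handled by the same Girsanov-comparison and hitting-time estimates used to prove Proposition~\ref{prop:asymptot_A2}, applied inductively over the walls.
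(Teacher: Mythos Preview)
Your overall strategy---localize each $\mathrm J_i$ to the time window where $\ps{\B^\nu_t,e_i}$ is $O(1)$, identify the local behaviour there with the one-dimensional process defining $J$, and decouple across $i$ via time separation plus the mixing hypothesis on $Z$---is essentially the paper's approach. There are, however, two concrete problems.

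First, your leg indexing is off by one and the drift identity you invoke is false. The first leg of the Williams decomposition carries generator $\mathcal A^{1,\ldots,r}$ (missing from your list), and the $k$-th leg has asymptotic drift $s_k\cdots s_r\nu$, not $s_{k+1}\cdots s_r\nu$. The sign flip of the $e_i$-projection between the leg reaching $\partial\mathcal C_i$ and the next one then comes from $s_ie_i=-e_i$:
\[
\ps{s_i\cdots s_r\nu,e_i}=\ps{s_{i+1}\cdots s_r\nu,s_ie_i}=-\ps{s_{i+1}\cdots s_r\nu,e_i}.
\]
Your claimed identity $\ps{s_{i+1}\cdots s_r\nu,e_i}=-\ps{s_{i+2}\cdots s_r\nu,e_i}$ via ``$s_{i+1}e_i$ is a negative root'' is simply wrong: $s_{i+1}e_i=e_i-\ps{e_i,e_{i+1}^\vee}e_{i+1}$, which is not $-e_i$ in general and need not even be a negative root.

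Second, and more substantively, your scheme tacitly assumes the walls are visited in the exact order $\partial\mathcal C_1,\ldots,\partial\mathcal C_r$, one per leg. This fails whenever two consecutive reflections in the word $s_1\cdots s_r$ commute. The paper handles this by splitting $s=w_0\cdots w_p$ into maximal blocks of pairwise commuting reflections: during the $k$-th time window the process visits \emph{all} walls $\partial\mathcal C_j$ with $j$ in block $k$ (in an order that is not determined), and the corresponding $\mathrm J_j$ are all supported on that common window. Independence of those $\mathrm J_j$ within a block then comes from orthogonality of the associated Brownian components (together with the hypothesis $Z^j\perp Z^l$ when $\ps{e_j,e_l}=0$), not from time separation; time separation only decouples distinct blocks. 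You gesture at the overlap issue, but your leg-by-leg argument does not actually accommodate it---organizing the proof around the block decomposition, as the paper does, is what makes the general case go through.
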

	A similar statement does hold when different asymptotics are considered, that is when $s$ is an element of $W_{1}$ of the form $s_{\sigma 1}\cdots s_{\sigma r}$ for some permutation $\sigma$ of $\{1,\cdots,r\}$.
	Proposition~\ref{prop:asymptot_J} provides an alternative formulation of the result described above for the $A_2$ case. Proving these two statements is key in the derivation of the asymptotics of Toda Vertex Operators and class one Whittaker functions. In order to prove these statements it will be convenient to start with the $A_2$ case to settle the ideas and then proceed to the general proof and explain how the arguments developed can be adapted in the general framework. Namely in Subsections~\ref{subsec:A21} and~\ref{subsec:A22} we provide a detailed study of the process when the reflection group being considered is associated to $A_2$ and prove Proposition~\ref{prop:asymptot_J} under this assumption. The general proof of Proposition~\ref{prop:asymptot_J} will be carried in Subsection~\ref{subsec:proof_prop} below.
	
	For future purpose we introduce for $s\in W_{1,\cdots,r}$ the shorthand
	\[
	\lambda_s\coloneqq\prod_{i=1}^r\ps{s\nu-\nu,\omega_i^\vee}.
	\]
	
	\subsection{Location of the first hitting point of $\partial\mathcal{C}$}
	Before actually describing the different components of the path as explained above, we start by showing that with high probability in the asymptotic considered, we can assume that the process stays inside a certain subdomain of $\mathcal{C}$. This fact is itself a consequence of the following Proposition, that describes the location of the first hitting point of the boundary of $\mathcal{C}$ by the process $\B^\nu$:
	\begin{proposition}\label{prop:exact_proba_hitting}
		Assume that $\bm x\in\mathcal{C}$ and take any $y\in\V$ such that $\ps{y,e_i}<\ps{\bm x,e_i}$ for all $1\leq i\leq r$. Then for all $z\in y+\partial\mathcal{C}_1$
		\begin{equation}\label{eq:hit_loc_time}
			\P_{\bm x}\left(T_{y+\partial\mathcal{C}}\in dt; \B^\nu_t\in dz\right)=\frac{1}{2}\frac{U(z)}{U(\bm x)}\sum_{s\in W}\epsilon(s)\frac{\ps{s(\bm x-y),e_1^*}}{t(2\pi t)^{\frac r2}}e^{-\frac{\norm{s(\bm x-y)+y-z}^2}{2t}-\frac{\norm{\nu}^2t}{2}}dtdz,
		\end{equation}
		where recall that $e_1^*=\frac{e_1}{\sqrt{\ps{e_1,e_1}}} $ and with
		\[
		U(z)\coloneqq\sum_{s\in W_{1,\cdots,r}}\epsilon(s)\lambda_se^{\ps{s\nu,z}}.
		\]
	\end{proposition}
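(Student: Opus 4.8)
The plan is to obtain the joint law $\P_x(T_{y+\partial\mathcal{C}}\in dt;\ \B^\nu_t\in dz)$ from the path decomposition (Corollary~\ref{cor:brownien_drift}) by identifying $\B^\nu$ with the driven process $\X$ whose first leg has transition kernel $\frac{\partial_{12}h(\cdot-y)}{\partial_{12}h(\cdot-y)}p^{y}_\nu(t;\cdot,\cdot)$, and then letting the endpoint approach the wall $y+\partial\mathcal{C}_1$ exactly as in Lemma~\ref{lemma:1-hit}. Concretely, first I would recall that $h(x)=\sum_{s\in W}\epsilon(s)e^{\ps{s\nu-\nu,x}}$ and compute $\partial_{12}h$ by applying $(-1)^2\partial_{\bm m_1}\partial_{\bm m_2}$ to $h^{\bm m}(x)=h(x-\bm m)$; this produces, up to the prefactor $\prod_i\ps{s\nu-\nu,\omega_i^\vee}=\lambda_s$, the sum $\sum_{s\in W}\epsilon(s)\lambda_s e^{\ps{s\nu-\nu,x-\bm m}}$. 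One then checks that only the $s\in W_{1,\cdots,r}$ (those whose reduced word uses both $s_1$ and $s_2$) contribute with a nonzero coefficient $\lambda_s$, since for the other elements one of the factors $\ps{s\nu-\nu,\omega_i^\vee}$ vanishes; after factoring out $e^{-\ps{\nu,x}}$ (which cancels in the ratio) this is precisely $e^{-\ps{\nu,x-y}}U(x-y)$ up to the overall $e^{\ps{\nu,y}}$ normalization, and one sees $U(x)=\sum_{s\in W_{1,\cdots,r}}\epsilon(s)\lambda_s e^{\ps{s\nu,x}}$ up to the harmless shift.

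Next I would write the hitting density of the first leg. Since that leg is a Doob transform of $B^\nu$ killed on exiting $y+\mathcal{C}$ by the function $\partial_{12}h(\cdot-y)$, its density of $(T_{y+\partial\mathcal{C}},\B^\nu_{T})$ at $(t,z)$ equals $\frac{\partial_{12}h(z-y)}{\partial_{12}h(x-y)}$ times the corresponding quantity for $B^\nu$ killed on exiting $y+\mathcal{C}$. By the reflection principle (Proposition~\ref{prop:reflection_killed}), the killed transition density of $B^\nu$ is $\sum_{s\in W^{y}}\epsilon(s)p_\nu(t;sx,z)$; differentiating in the normal direction at the wall $y+\partial\mathcal{C}_1$ — as in the Taylor expansions inside the proof of Lemma~\ref{lemma:1-hit} — turns the bulk density into a boundary flux $\propto \sum_{s}\epsilon(s)\ps{s(x-y),e_1^*}\,p_\nu(t;sx,z)/t$, the factor $1/t$ and the inner-product weight coming from the standard heat-kernel normal derivative $\partial_n p_t(a,b)=\frac{\ps{b-a,n}}{t}p_t(a,b)$. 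Writing $p_\nu(t;sx,z)=p_t(sx,z)e^{\ps{\nu,z-sx}-\frac{\norm\nu^2}{2}t}$ and using that $p_t$ is a function of $\norm{sx-z}^2/t$, together with $W^{y}$-invariance of $\norm{\cdot}$ and the substitution $s(x-y)\mapsto$ reflection centered at $y$, yields the Gaussian factor $e^{-\frac{\norm{s(x-y)+y-z}^2}{2t}}$ in~\eqref{eq:hit_loc_time}. The drift exponentials $e^{\ps{\nu,\cdot}}$ then recombine with the ratio $\frac{\partial_{12}h(z-y)}{\partial_{12}h(x-y)}$: the $e^{-\ps{\nu,\cdot}}$ pieces from $\partial_{12}h$ cancel the $e^{\ps{\nu,\cdot}}$ pieces from $p_\nu$ except for what is needed to leave behind exactly $\frac{U(z)}{U(x)}$ (here one uses $z\in y+\partial\mathcal{C}_1$ so that $\ps{s\nu-\nu,z-y}$ collapses appropriately). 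The factor $\frac12$ is the same one appearing in~\eqref{equ:ptM_boundary} and in Lemma~\ref{lemma:1-hit}, arising from pairing $s$ with $ss_1$.

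The step I expect to be the main obstacle is the bookkeeping that isolates $U$ from the differentiated $h$: one must verify carefully that $\partial_{12}h$, after factoring the common drift exponential, is proportional to $U(\cdot - y)$ with the correct $\lambda_s$ weights and that the vanishing of $\lambda_s$ for $s\notin W_{1,\cdots,r}$ is exactly what restricts the sum, and then that the ratio of two such expressions, evaluated once in the bulk at $x$ and once on the wall at $z$, reduces to $U(z)/U(x)$ rather than to a more complicated $s$-dependent ratio. This requires using that $z$ lies on the specific wall $\partial\mathcal{C}_1$ (so that the exponents align) and a short computation with the identity $\ps{s\nu-\nu,\omega_i^\vee}\in\{0\}\cup(\text{nonzero})$ governed by whether $s_i$ appears in a reduced word for $s$; everything else (the heat-kernel normal derivative, the reflection-principle sum, the drift tilt) is routine once Lemma~\ref{lemma:1-hit} and Proposition~\ref{prop:reflection_killed} are in hand.
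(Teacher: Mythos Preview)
Your overall strategy---Doob transform to peel off the $\partial_{1,\cdots,r}h$ weight and the drift exponential, then a reflection-principle computation of the hitting density of the undrifted Brownian motion on $y+\partial\mathcal{C}_1$---is the same as the paper's, and your observation that the ratio $\frac{\partial_{1,\cdots,r}h(z)}{\partial_{1,\cdots,r}h(x)}e^{\ps{\nu,z-x}}$ collapses to $U(z)/U(x)$ is exactly what the paper records in one line.

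Where you diverge is in how you propose to extract the hitting density itself. You suggest computing the normal derivative of the killed transition kernel $\sum_{s}\epsilon(s)p_t(sx,z)$ at $z\in y+\partial\mathcal{C}_1$, invoking the Taylor expansions from the proof of Lemma~\ref{lemma:1-hit}. The paper instead works directly with the first-passage density of plain Brownian motion: it applies the reflection argument from the proof of Proposition~\ref{prop:reflection_killed}, but to paths from $s(x-y)+y$ to $z$ that do not cross the single hyperplane $\{\ps{\cdot-y,e_1}=0\}$, and then plugs in the explicit one-dimensional formula $\P_a(T_0(\ps{B,e_1})\in dt;\,B_t\in dz)=\frac{\ps{a,e_1^*}}{t(2\pi t)^{r/2}}e^{-\norm{a-z}^2/2t}\,dt\,dz$. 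The factor $\tfrac12$ then comes from the pairing $s\leftrightarrow s_1 s$ via $\epsilon(s_1s)\ps{s_1sx,e_1}=\epsilon(s)\ps{sx,e_1}$, just as you anticipated. Your normal-derivative route is perfectly valid (and equivalent, since the hitting density is the outward normal derivative of the Dirichlet heat kernel), but the paper's route is a bit more direct: it never passes through the killed kernel at all, and avoids having to justify the interchange of limit and the alternating sum. One small caution: Lemma~\ref{lemma:1-hit} concerns the limit as the \emph{starting} point approaches the wall, whereas here you need the \emph{endpoint} to approach the wall; the two are related by time-reversal but are not literally the same computation, so if you pursue your route you should state the endpoint version cleanly rather than citing Lemma~\ref{lemma:1-hit} verbatim.
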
 
	\begin{proof}
		Since the process $\B^\nu$ is defined using a Doob transform
		\begin{align*}
			\P_{\bm x}\left(T_{y+\partial\mathcal{C}}\in dt; \B^\nu_t\in dz\right)=\frac{\partial_{1,\cdots,r}h^{\M}(z)}{\partial_{1,\cdots,r}h^{\M}(\bm x)}e^{\ps{\nu,z-\bm x}-\frac{\norm{\nu}^2t}{2}}\P_{\bm x}\left(T_{y+\partial\mathcal{C}}\in dt; B_t\in dz\right)
		\end{align*}
		where $B$ is a Brownian motion over $\V$.
		Direct computations show that 
		\[
		\frac{\partial_{1,\cdots,r}h^{\M}(z)}{\partial_{1,\cdots,r}h^{\M}(\bm x)}e^{\ps{\nu,z-x}}=\frac{U(z)}{U(\bm x)}
		\] while thanks to the reasoning presented in the proof of Proposition~\ref{prop:reflection_killed}, adapted by considering paths from $s \bm x$ to $z$ for $s\in W^y$ with $\ps{s(\bm x-y),e_1}>0$ and that do not cross the hyperplane $\ps{\cdot-y,e_1}=0$,
		\begin{align*}
			\P_{\bm x}\left(T_{y+\partial\mathcal{C}}\in dt; B_t\in dz\right)=\sum_{\underset{\ps{s(\bm x-y),e_1}>0}{s\in W}}\epsilon(s)\P_{s(\bm x-y)+y}\left(T_{\ps{y,e_1}}(\ps{B,e_1})\in dt; B_t\in dz\right).
		\end{align*}
		Now these probabilities are well-known and given by
		\[
		\P_{\bm x}\left(T_0(\ps{B,e_1})\in dt; B_t\in dz\right)=\frac{\ps{\bm x,e_1^*}}{t(2\pi t)^\frac r2}e^{-\frac{\norm{\bm x-z}^2}{2t}}dtdz.
		\]
		Using the fact that for all $s\in W$, $\epsilon(s_1s)\ps{s_1s\bm x,e_1}=\epsilon(s)\ps{s\bm x,e_1}$, the latter sum can be rewritten in a more elegant fashion as
		\[
		\frac{1}{2}\sum_{s\in W}\epsilon(s)\frac{\ps{s(\bm x-y),e_1^*}}{t(2\pi t)^\frac r2}e^{-\frac{\norm{s(\bm x-y)+y-z}^2}{2t}-\frac{\norm{\nu}^2t}{2}}dtdz.
		\]
	\end{proof}
	Thanks to this proposition we can deduce that the Brownian trajectory will stay within a given region, with high probability in the asymptotic where $\bm x\to\infty$ inside $\mathcal{C}$.
	
	
	\subsubsection{The $A_2$ case}
	To start with, given a positive $\eta<1$ one can split $\mathcal{C}$ between the domains $\mathcal{U}_1$, $\mathcal{U}_2$ and $\mathcal{C}\setminus\mathcal{U}$ where for $(i,j)\in\{(1,2),(2,1)\}$,
	\begin{align*}
		\mathcal{U}_i\coloneqq \left\{z\in\mathcal{C}\text{ s.t. } \ps{z,e_j}> (1-\eta)\ps{\bm x,e_j}\right\}\quad\text{and}\quad \mathcal U\coloneqq \mathcal{U}_1\cup\mathcal{U}_2.
	\end{align*}
	The events  $\mathcal E_{i}$, $i=1,2$ are then the events that the process $\B^{\nu}$ stays inside $\mathcal{U}_{i}$ until it hits the boundary of $\mathcal{C}$. 
	\begin{proposition}\label{prop:asym_prob_A2}
		Assume that $\bm x\rightarrow\infty$ inside $\mathcal{C}$. Then 
		\begin{equation}
			U(\bm x)\P_{\bm x}\left(\mathcal E_{1}\right)-\lambda_{s_1s_2}e^{\ps{s_1s_2\nu,\bm x}}=\mathcal{O}(e^{\ps{s_1s_2\nu,\bm x}-\eta\ps{\bm x,e_2}\ps{\nu,e_1}})\cdot
		\end{equation}
	\end{proposition}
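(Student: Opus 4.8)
The plan is to start from the exact formula~\eqref{eq:hit_loc_time} provided by Proposition~\ref{prop:exact_proba_hitting} with $y=\bm 0$, integrate it over $t\in(0,\infty)$ and $z\in\partial\mathcal{C}_1$, and then carefully control which terms survive in the asymptotic $\bm x\to\infty$ inside $\mathcal{C}$ after one \emph{also} restricts the trajectory to the subdomain $\mathcal{U}_1$. The starting point is the identity
\[
U(\bm x)\P_{\bm x}\left(T_{\partial\mathcal C}\in dt;\;\B^\nu_t\in dz\right)=\frac12 U(z)\sum_{s\in W}\epsilon(s)\frac{\ps{s\bm x,e_1^*}}{t(2\pi t)^{r/2}}e^{-\frac{\norm{s\bm x-z}^2}{2t}-\frac{\norm\nu^2 t}{2}}\,dt\,dz,
\]
valid for $z\in\partial\mathcal C_1$, where $U(\bm x)=\sum_{s\in W_{1,2}}\epsilon(s)\lambda_s e^{\ps{s\nu,\bm x}}$. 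First I would perform the $t$-integral against the one-dimensional hitting density, which is classical: for fixed source $w$ and target $z$ with $\ps{w,e_1}>0$, $\int_0^\infty \frac{\ps{w,e_1^*}}{t(2\pi t)^{r/2}}e^{-\frac{\norm{w-z}^2}{2t}-\frac{\norm\nu^2 t}{2}}\,dt$ is a Bessel-type integral producing something of the order $e^{-\norm\nu\,\norm{w-z}}$ up to polynomial factors (and more precisely, after the $z$-integration over the hyperplane, one recovers exactly the corresponding term of $U$ evaluated on the reflected point). The upshot is that $U(\bm x)\P_{\bm x}(T_{\partial\mathcal C}\in\partial\mathcal C_1)=U(\bm x)$ up to the contribution of the unwanted reflections; what we must do is show that, after intersecting with the event $\mathcal E_1=\{\B^\nu$ stays in $\mathcal U_1$ until hitting $\partial\mathcal C\}$, the \emph{only} term of size comparable to the leading $\lambda_{s_1s_2}e^{\ps{s_1s_2\nu,\bm x}}$ survives, all others being exponentially smaller by a factor $e^{-\eta\ps{\bm x,e_2}\ps{\nu,e_1}}$.

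The key structural observation is about the exponents. Among $s\in W=\{Id,s_1,s_2,s_1s_2,s_2s_1,s_1s_2s_1\}$, the quantity $\ps{s\nu,\bm x}$ is maximized (for $\bm x$ deep in $\mathcal C$, hence $-\bm x$ deep in $\mathcal C_-$) by... actually one must be careful: here $U$ sums over $W_{1,2}$ (reduced expressions containing both $s_1$ and $s_2$), namely $\{s_1s_2,s_2s_1,s_1s_2s_1\}$, and for $\bm x\to\infty$ inside $\mathcal C$ the dominant exponent among these is $\ps{s_1s_2\nu,\bm x}$ precisely when $x^\perp=\ps{s_2s_1\nu-s_1s_2\nu,\bm x}\to-\infty$, which is the regime where $\mathcal E_1$ is the typical event. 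So I would organize the argument as: (i) express $U(\bm x)\P_{\bm x}(\mathcal E_1)$ as the restricted integral over paths confined to $\overline{\mathcal U_1}$; (ii) bound the confined hitting density by inserting the indicator that the path never leaves $\mathcal U_1$, which forces $\ps{\B^\nu_t,e_2}>(1-\eta)\ps{\bm x,e_2}$ throughout — combined with the Girsanov/Doob reweighting by $e^{\ps{\nu,\cdot}}/U(\cdot)$ this produces an extra exponential decay $e^{-\eta\ps{\bm x,e_2}\ps{\nu,e_1}}$ in every ``wrong'' term because the drift $\nu$ has positive pairing $\ps{\nu,e_1}>0$ and the missing mass in the $e_2$-direction is at least $\eta\ps{\bm x,e_2}$; (iii) identify the one term that does \emph{not} decay — it corresponds to the reflection channel whose effective drift is $s_1s_2\nu$, namely a Brownian bridge-type trajectory that hugs the far wall — and show that integrating it reproduces exactly $\lambda_{s_1s_2}e^{\ps{s_1s_2\nu,\bm x}}$ plus genuinely lower-order corrections of the stated size.

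Concretely, I would reflect the Brownian motion so that instead of the Doob-conditioned $\B^\nu$ confined to $\mathcal C$ one works with the plain drifted Brownian motion $B^{s_1s_2\nu}$ near the wall $\partial\mathcal C_1$, using that $U(z)/U(\bm x)\cdot e^{\ps{\nu,z-\bm x}}$ is, on $\partial\mathcal C_1$ and in the relevant regime, asymptotically $\lambda_{s_1s_2}e^{\ps{s_1s_2\nu,z}}/U(\bm x)$ times $(1+O(e^{-\eta\ps{\bm x,e_2}\ps{\nu,e_1}}))$; the error comes precisely from the subleading terms $\lambda_{s_2s_1}e^{\ps{s_2s_1\nu,\cdot}}$ and $\lambda_{s_1s_2s_1}e^{\ps{s_1s_2s_1\nu,\cdot}}$ in $U$, whose ratio to the leading term on $\mathcal U_1$ is controlled by $\eta\ps{\bm x,e_2}$ times the relevant root pairing, giving exactly the exponent $\ps{s_1s_2\nu,\bm x}-\eta\ps{\bm x,e_2}\ps{\nu,e_1}$ announced (here one uses $\ps{s_2s_1\nu-s_1s_2\nu,\cdot}$ restricted to $\mathcal U_1$ is bounded below by $\eta\ps{\bm x,e_2}$ times a positive constant absorbed into the pairing with $e_1$). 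The main obstacle, as I see it, is step (ii)–(iii): making the ``confinement to $\mathcal U_1$ costs $e^{-\eta\ps{\bm x,e_2}\ps{\nu,e_1}}$'' estimate rigorous and uniform, because one is simultaneously (a) reflecting across $\partial\mathcal C_1$, (b) discarding the $\partial\mathcal C_2$ portion of the boundary, and (c) tracking the Doob $h$-transform weight $U(z)/U(\bm x)$ along paths that stay in a moving domain whose far wall recedes to infinity. Handling this cleanly will likely require a reflection-coupling or a ballot-type estimate for the $e_2$-component of the drifted Brownian motion conditioned to stay above a high level, together with the explicit exponential form of $U$ to read off the precise rate; the rest of the computation (the $t$- and $z$-integrals) is routine Gaussian calculus.
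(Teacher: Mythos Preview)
Your proposal has the right intuition about the dominant exponent and the scale of the error, but it misses the one structural observation that makes the paper's proof clean, and without it your argument becomes exactly as vague as you yourself worry in your ``main obstacle'' paragraph.

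The point you miss is the choice of base point $y$ in Proposition~\ref{prop:exact_proba_hitting}. You take $y=\bm 0$; the paper takes $y=(1-\eta)\ps{\bm x,e_2}\omega_2$. With that choice of $y$, the shifted boundary $y+\partial\mathcal{C}$ has two walls: $y+\partial\mathcal{C}_1=\{\ps{\cdot,e_1}=0,\ \ps{\cdot,e_2}\ge(1-\eta)\ps{\bm x,e_2}\}$ and $y+\partial\mathcal{C}_2=\{\ps{\cdot,e_2}=(1-\eta)\ps{\bm x,e_2},\ \ps{\cdot,e_1}\ge 0\}$. Therefore the event $\mathcal E_1$ --- ``the process stays in $\mathcal U_1$ until hitting $\partial\mathcal C$'' --- is \emph{exactly} the event ``$\B^\nu$ first hits $y+\partial\mathcal C$ on the component $y+\partial\mathcal C_1$''. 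The confinement condition is absorbed into a first-hitting-location problem, and Proposition~\ref{prop:exact_proba_hitting} gives an exact Gaussian integral for it. The paper then writes
\[
U(\bm x)\P_{\bm x}(\mathcal E_1)=\sum_{w\in W_{1,2}}\epsilon(w)\lambda_w\,\E_{\bm x}\Big[e^{\ps{w\nu,B_{T}}-\frac{\norm\nu^2}{2}T}\mathds 1_{T=T_{y+\partial\mathcal C_1}}\Big],\qquad T=T_{y+\partial\mathcal C},
\]
splits the $w=s_1s_2$ term as $\lambda_{s_1s_2}e^{\ps{s_1s_2\nu,\bm x}}$ minus its restriction to $\{T=T_{y+\partial\mathcal C_2}\}$, and bounds that remainder together with the $w\in\{s_2s_1,s_1s_2s_1\}$ terms by Laplace's method on the explicit $(t,z)$-integral, locating the minimum of $F(z,t)=\frac{\norm{\bm x-y-z+w\nu t}^2}{2t}$ on the relevant wall. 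This is where the precise rate $e^{\ps{s_1s_2\nu,\bm x}-\eta\ps{\bm x,e_2}\ps{\nu,e_1}}$ comes from: it is $F(z_0,t_0)\ge\ps{\bm x-y,e_2}\ps{\nu,e_1}=\eta\ps{\bm x,e_2}\ps{\nu,e_1}$ for the $s_1s_2$ remainder, and the other two remainders are of order $e^{\ps{s_1s_2s_1\nu,\bm x}}$, which is even smaller.

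With your choice $y=\bm 0$, the event ``first hit is on $\partial\mathcal C_1$'' is strictly larger than $\mathcal E_1$ (the trajectory could dip below $(1-\eta)\ps{\bm x,e_2}$ in the $e_2$-direction and come back), so the exact formula does not compute $\P_{\bm x}(\mathcal E_1)$ and you are forced into the path-confinement / reflection-coupling argument you sketch in (ii)--(iii). That argument is not wrong in spirit, but you have not made it precise, and there is no need to: shifting $y$ removes the difficulty entirely. Also, your intermediate claim ``$U(\bm x)\P_{\bm x}(T_{\partial\mathcal C}\in\partial\mathcal C_1)=U(\bm x)$ up to the contribution of the unwanted reflections'' is not correct as stated; the probability is not close to $1$ in general, and in any case it is not the quantity you want.
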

	\begin{proof}
		First note that the event $\mathcal{E}_1$ simply means that the process $\B^\nu$ first hits $y+\partial\mathcal{C}$ over its component $y+\partial\mathcal{C}_1$, where $y=(1-\eta)\ps{\bm x,e_2}\omega_2$. Therefore thanks to Proposition~\ref{prop:exact_proba_hitting} and its proof we can write that
		\begin{align*}
			&U(\bm x)\P_{\bm x}\left(\mathcal E_{i}\right)=\sum_{w\in W_{1,2}}\epsilon(w)\lambda_{w}\E_{\bm x}\left[e^{\ps{w\nu,B_{T_{y+\partial\mathcal{C}}}}-\frac{\ps{w\nu}^2T_{y+\partial\mathcal{C}}}{2}}\mathds 1_{T_{y+\partial\mathcal{C}}=T_{y+\partial\mathcal{C}_1}}\right]\\
			&=\lambda_{s_1s_2}e^{\ps{s_1s_2\nu,\bm x}}-\mathfrak{R}_{s_1s_2}+\mathfrak{R}_{s_2s_1}+\mathfrak{R}_{s_1s_2s_1}
		\end{align*}
		where we have used that $W_{1,2}$ is in that case reduced to the elements $s_1s_2,s_2s_1$ and $s_1s_2s_1$, and where we have set
		\begin{align*}
			\mathfrak{R}_{s_1s_2}\coloneqq \lambda_{s_1s_2}\E_{\bm x}\left[e^{\ps{s_1s_2\nu,B_{T_{y+\partial\mathcal{C}}}}-\frac{\ps{s_1s_2\nu}^2T_{y+\partial\mathcal{C}}}{2}}\mathds 1_{T_{y+\partial\mathcal{C}}=T_{y+\partial\mathcal{C}_2}}\right],\\
			\mathfrak{R}_{w}\coloneqq \epsilon(w)\lambda_{w}\E_{\bm x}\left[e^{\ps{w\nu,B_{T_{y+\partial\mathcal{C}}}}-\frac{\ps{w\nu}^2T_{y+\partial\mathcal{C}}}{2}}\mathds 1_{T_{y+\partial\mathcal{C}}=T_{y+\partial\mathcal{C}_1}}\right].
		\end{align*}
		
		Let us start by considering the first remainder term $\mathfrak{R}_{s_1s_2}$. In virtue of Proposition~\ref{prop:exact_proba_hitting}
		\begin{align*}
			&\mathfrak{R}_{s_1s_2}=e^{\ps{\bm x,s_1s_2\nu}}\sum_{s\in W}\epsilon(s)\int_{\R_+\times\partial\mathcal{C}_2}\frac{\ps{s(\bm x-y),e_2^*}}{2\pi t^2}e^{\ps{s(\bm x-y)+y-\bm x,z}}e^{-\frac{\norm{\bm x-y-z+s_1s_2\nu t}^2}{2t}}dtdz.
		\end{align*}
		These integrals are maximal for $s\in\{Id,s_2\}$ so it suffices to bound them in this case. Then the behaviour of the integral is governed by the minimum of the function $F:\partial\mathcal{C}_2\times\R_+\to\R_+$ given by
		\[
		F(z,t)\coloneqq \frac{\norm{\bm x-y-z+s_1s_2\nu t}^2}{2t}\cdot
		\]
		We need to distinguish between two possibilities: first of all if $\ps{\bm x-y,\omega_1}\ps{\nu,e_1}>\ps{\bm x-y,e_2}\ps{\nu,\omega_2}$, then the minimum of $F$ is attained at $t_0=\frac{\ps{\bm x-y,e_2}}{\ps{\nu,e_1}}$ and $z_0=\ps{\bm x-y+s_1s_2\nu t_0,\omega_1}\omega_1$, where $F$ is found to be equal to $\ps{\bm x-y,e_2}\ps{\nu,e_1}$. If $\ps{\bm x-y,\omega_1}\ps{\nu,e_1}>\ps{\bm x-y,e_2}\ps{\nu,\omega_2}$ then this minimum is reached at $t_0=\frac{\norm{x-y}}{\norm{\nu}}$ and $z_0=0$, and $F(z_0,t_0)=\norm{x-y}\norm{\nu}+\ps{x-y,s_1s_2\nu}$, which in the worst-case scenario where $s_1s_2s_1\nu$ and $y-\bm x$ are colinear, can be bounded below by $\ps{\bm x-y,e_2}\ps{\nu,e_1}$.
		In both cases we have the bound
		\[
		F(z_0,t_0)\geq\ps{\bm x-y,e_2}\ps{\nu,e_1}.
		\] 
		By Laplace's method we can therefore estimate the whole integral and check that 
		\[
		\int_{\R_+\times\partial\mathcal{C}_2}\frac{\ps{\bm x-y,e_2^*}}{2\pi t^2}e^{-F(z,t)}dtdz\leq Ce^{-F(z_0,t_0)}
		\]
		where $C$ is uniformly bounded as $\bm x\to\infty$. Therefore the term $\mathfrak{R}_{s_1s_2}$ is indeed a $\mathcal{O}\left(e^{\ps{\bm x,s_1s_2\nu}-\eta\ps{\bm x,e_2}\ps{\nu,e_1}}\right)$, which is as desired.
		
		We can now turn to the other remainder terms, and follow the approach just developed for $\mathfrak{R}_{s_1s_2}$. Namely we need to evaluate the minimum of the map $F:\partial\mathcal{C}_1\times\R_+\to\R_+$ defined by setting
		\[
		F(z,t)\coloneqq \frac{\norm{\bm x-y-z+w\nu t}^2}{2t}
		\]
		for $w=s_2s_1$ and $w=s_1s_2s_1$. Like before we have $F(z_0,t_0)\geq\ps{\bm x-y,e_1}\ps{\nu,e_2}$ for $w=s_2s_1$, but it can occur that $F(z_0,t_0)=0$ depending on the values of $\bm x$ and $\nu$. Nevertheless we can bound the integrals by a $\mathcal{O}\left(e^{-F(z_0,t_0)}\right)$ by Laplace's method, which yields the upper bound on the remainders
		\[
		\mathfrak{R}_{s_2s_1}\leq Ce^{\ps{\bm x,s_1s_2s_1\nu}}\quad\text{and}\quad\mathfrak{R}_{s_1s_2s_1}\leq Ce^{\ps{\bm x,s_1s_2s_1\nu}}
		\]
		where we have used that $\ps{\bm x,s_2s_1\nu}-\ps{\bm x-y,e_1}\ps{\nu,e_2}=\ps{\bm x,s_1s_2s_1\nu}$.
		All together we see that
		\[
		\mathfrak{R}_{s_1s_2}+\mathfrak{R}_{s_2s_1}+\mathfrak{R}_{s_1s_2s_1}=\mathcal{O}(e^{\ps{s_1s_2\nu,\bm x}+\eta\ps{\bm x,e_2}\ps{\nu,e_1}}).
		\]
	\end{proof}
	Thanks to this Proposition we see that, with high probability, the process $\B^\nu$ will stay in the domain $\mathcal{U}_1\cup\mathcal{U}_2$. Put differently, in the asymptotic where $\bm x\to\infty$ inside $\mathcal{C}$, at any time at most one component of the process along the simple roots $(e_1,e_2)$ will not be very positive. 
	
	
	\subsubsection{The general case}
	Up to reordering the simple roots $e_1,\cdots, e_r$ we consider $s=s_1\cdots s_r$ in what follows. Let us denote by $i_0$ the first index such that $s_{i_0}$ does not commute with all the $s_j$ for $j\leq i_0$ (note that $s_a$ and $s_b$ commute means that $\ps{e_a,e_b}=0$, and also that $s_a e_b=e_b$). For a positive $\eta>0$, let us introduce the subset of $\mathcal{C}$ 
	\begin{align*}
		\mathcal{U}_s\coloneqq \left\{z\in\mathcal{C}\text{ s.t. } \forall i_0\leq j\leq r,\quad \ps{z,e_j}> (1-\eta)\ps{\bm x,e_j}\right\},
	\end{align*}
	and the corresponding event $\mathcal E_s$ that the process $\B^{\nu}$ stays inside $\mathcal{U}_{s}$ until it hits the boundary of $\mathcal{C}$. In particular under $\mathcal{E}_s$ the process will hit $\partial\mathcal{C}$ over its boundary component $\partial\mathcal{C}_s\coloneqq \bigcup_{j=1}^{i_0-1}\partial\mathcal{C}_j$. 
	\begin{proposition}\label{prop:asym_prob}
		Assume that $\bm x\rightarrow\infty$ inside $\mathcal{C}$ in the asymptotic where $\ps{s\nu-s'\nu,\bm x}\to+\infty$ for all $s'\neq s\in W_{1,\cdots ,r}$. Then 
		\begin{equation}
			\lim\limits_{\bm x\to\infty}\P_{\bm x}\left(\mathcal{E}_s\right)=1.
		\end{equation}
	\end{proposition}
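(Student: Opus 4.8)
The plan is to reduce Proposition~\ref{prop:asym_prob} to the estimate of Proposition~\ref{prop:asym_prob_A2}, carried out essentially at the level of a single ``first non-commuting'' reflection and then bootstrapped. Concretely, $\P_{\bm x}(\mathcal E_s)$ is, by the very definition of the event, the probability that $\B^\nu$ started from $\bm x$ first hits $y+\partial\mathcal C$ over the component $y+\partial\mathcal C_s=\bigcup_{j<i_0}(y+\partial\mathcal C_j)$, where $y\coloneqq(1-\eta)\sum_{i_0\le j\le r}\ps{\bm x,e_j}\omega_j$. Using Proposition~\ref{prop:exact_proba_hitting} and the notation $U(\bm x)=\sum_{w\in W_{1,\cdots,r}}\epsilon(w)\lambda_w e^{\ps{w\nu,\bm x}}$, I would write
\begin{equation*}
U(\bm x)\,\P_{\bm x}(\mathcal E_s)=\sum_{w\in W_{1,\cdots,r}}\epsilon(w)\lambda_w\,\E_{\bm x}\!\left[e^{\ps{w\nu,B_{T_{y+\partial\mathcal C}}}-\frac{\norm{w\nu}^2}{2}T_{y+\partial\mathcal C}}\mathds 1_{T_{y+\partial\mathcal C}=T_{y+\partial\mathcal C_s}}\right],
\end{equation*}
where $B$ is the underlying Brownian motion. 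The leading contribution comes from $w=s=s_1\cdots s_r$ and equals $\lambda_s e^{\ps{s\nu,\bm x}}(1+o(1))$, since under the stated asymptotic $\ps{s\nu,\bm x}$ dominates $\ps{s'\nu,\bm x}$ for every other $s'\in W_{1,\cdots,r}$; all that then has to be shown is that every other summand is $o(e^{\ps{s\nu,\bm x}})$.

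For the remainder terms I would run the same Laplace-method analysis as in the proof of Proposition~\ref{prop:asym_prob_A2}. For a fixed $w\in W_{1,\cdots,r}$, $w\ne s$, Proposition~\ref{prop:exact_proba_hitting} expresses $\mathfrak R_w\coloneqq\epsilon(w)\lambda_w\,\E_{\bm x}[e^{\ps{w\nu,B_{T_{y+\partial\mathcal C}}}-\frac{\norm{w\nu}^2}{2}T}\mathds 1_{\{T=T_{y+\partial\mathcal C_j}\}}]$ (summed over $j<i_0$) as a finite sum over $W$ of explicit Gaussian integrals
\begin{equation*}
e^{\ps{w\nu,\bm x}}\!\int_{\R_+\times(y+\partial\mathcal C_j)}\!\frac{\ps{s'(\bm x-y),e_j^*}}{t(2\pi t)^{r/2}}\,e^{\ps{s'(\bm x-y)+y-\bm x,\,z}}\,e^{-\frac{\norm{\bm x-y-z+w\nu t}^2}{2t}}\,dt\,dz,
\end{equation*}
and one checks that the dominant terms occur for $s'$ in the parabolic subgroup fixing $e_j$. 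Minimizing $F(z,t)=\norm{\bm x-y-z+w\nu t}^2/(2t)$ over $(z,t)$, one finds in every case a lower bound $F(z_0,t_0)\ge \ps{\bm x-y,e_j}\ps{\nu,e_{j'}}$ for an appropriate pairing $(j,j')$, which after the identity $\ps{w\nu,\bm x}-\ps{\bm x-y,e_j}\ps{\nu,e_{j'}}=\ps{s_{j'}w\nu,\bm x}+\text{(terms linear in }y)$ rewrites the estimate as $\mathfrak R_w=\mathcal O(e^{\ps{w'\nu,\bm x}-\eta\,c(\bm x)})$ for some $w'\in W_{1,\cdots,r}$ and some positive quantity $c(\bm x)$ that diverges as $\bm x\to\infty$ inside $\mathcal C$. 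Since $\ps{w'\nu,\bm x}\le\ps{s\nu,\bm x}+o(\ps{s\nu,\bm x})$ (again because $s$ maximizes $\ps{\cdot\,\nu,\bm x}$ over $W_{1,\cdots,r}$ in the regime considered) and $c(\bm x)\to\infty$, every $\mathfrak R_w$ with $w\ne s$ is $o(e^{\ps{s\nu,\bm x}})$; combined with the leading term this yields $U(\bm x)\P_{\bm x}(\mathcal E_s)=\lambda_s e^{\ps{s\nu,\bm x}}(1+o(1))$, and dividing by $U(\bm x)=\lambda_s e^{\ps{s\nu,\bm x}}(1+o(1))$ gives $\P_{\bm x}(\mathcal E_s)\to 1$.

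The main obstacle is the bookkeeping in the Laplace-method step: in the $A_2$ case there are only the three elements $s_1s_2,s_2s_1,s_1s_2s_1$ of $W_{1,2}$ to handle, whereas in general $W_{1,\cdots,r}$ can be large, the boundary $\partial\mathcal C_s$ has several components $\partial\mathcal C_j$, and for each $(w,j)$ one must identify which reflected copies $s'(\bm x-y)$ dominate and carry out the constrained minimization of $F$ over the wall $y+\partial\mathcal C_j$ uniformly in $\bm x$. The key structural input that makes this tractable is the choice of $i_0$: the reflections $s_j$ with $j<i_0$ all commute with each other and with the later ones in the relevant way, so that the geometry near the walls $\partial\mathcal C_j$, $j<i_0$, decouples and the one-dimensional hitting estimates of Proposition~\ref{prop:exact_proba_hitting} can be applied coordinate by coordinate; the factor $\ps{\bm x-y,e_j}=\eta\ps{\bm x,e_j}+\text{(bounded)}$ then provides, in each direction $j\ge i_0$ entering $y$, the uniform exponential gain $e^{-\eta\,c(\bm x)}$ needed to kill all the non-leading terms. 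Verifying that no new dominant direction (e.g. a ``rotated'' exponent not of the form $\ps{w'\nu,\bm x}$) can appear is exactly the content of this decoupling, and is the point at which the argument genuinely uses the reflection-group structure rather than just soft estimates.
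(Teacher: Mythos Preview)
Your strategy---rewrite $U(\bm x)\P_{\bm x}(\mathcal E_s)$ via Proposition~\ref{prop:exact_proba_hitting} as a sum over $W_{1,\cdots,r}$, isolate the $w=s$ term, and control the rest by Laplace's method---is exactly the paper's. But you have inverted where the work lies.

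The summands with $w\neq s$ are easier than you make them. By the exponential-martingale change of measure the $w$-th summand equals $\epsilon(w)\lambda_w e^{\ps{w\nu,\bm x}}\,\P^{w\nu}_{\bm x}\bigl(T_{y+\partial\mathcal C}=T_{y+\partial\mathcal C_s}\bigr)$, where $\P^{w\nu}$ denotes the law of Brownian motion with drift $w\nu$; since the probability is at most $1$ this is $O(e^{\ps{w\nu,\bm x}})=o(e^{\ps{s\nu,\bm x}})$ directly from the hypothesis. No constrained minimisation, no pairing $(j,j')$, and no identity of the form $\ps{w\nu,\bm x}-\ps{\bm x-y,e_j}\ps{\nu,e_{j'}}=\ps{s_{j'}w\nu,\bm x}+\cdots$ is required. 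The ``bookkeeping'' you flag as the main obstacle is not one.

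The genuine gap is the $w=s$ summand. You assert it equals $\epsilon(s)\lambda_s e^{\ps{s\nu,\bm x}}(1+o(1))$ with no justification beyond ``$\ps{s\nu,\bm x}$ dominates'', but that only says $e^{\ps{s\nu,\bm x}}$ is the largest exponential---it does not rule out that the indicator $\mathds 1_{T=T_{y+\partial\mathcal C_s}}$ removes almost all the mass. By the same rewriting this summand is $\epsilon(s)\lambda_s e^{\ps{s\nu,\bm x}}\,\P^{s\nu}_{\bm x}\bigl(T_{y+\partial\mathcal C}=T_{y+\partial\mathcal C_s}\bigr)$, so one must show that Brownian motion with drift $s\nu$ started at $\bm x$ exits $y+\mathcal C$ through a wall $\partial\mathcal C_j$ with $j<i_0$, with probability $\to 1$. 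Equivalently (Laplace on the remainder $\mathfrak R_s$ over $\partial\mathcal C\setminus\partial\mathcal C_s$), the half-line $t\mapsto\bm x+s\nu t$ must cross $y+\partial\mathcal C$ on $\partial\mathcal C_s$, which amounts to $\ps{s\nu,e_j}>0$ for all $j\ge i_0$. This inequality is precisely where the reflection-group structure and the asymptotic hypothesis enter: if $\ps{s\nu,e_j}\le 0$ for some $j$, then $\ps{s\nu-s_js\nu,\bm x}=\ps{s\nu,e_j^\vee}\ps{\bm x,e_j}\le 0$, forcing $s_js\notin W_{1,\cdots,r}$; one then checks that $s_js\in W_{1,\cdots,r}$ whenever $j\ge i_0$, a contradiction. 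Your final paragraph invokes commutation at $i_0$ and a ``decoupling'' into one-dimensional hitting problems, but that is not the mechanism (and the $s_j$ with $j<i_0$ need not commute with the later ones); the crucial input is this sign condition on $\ps{s\nu,e_j}$.
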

	\begin{proof}
		Like before thanks to Proposition~\ref{prop:exact_proba_hitting} we can write
		\[
		U(\bm x)\P_{\bm x}\left(\mathcal{E}_s\right)=\epsilon(s)\lambda_s e^{\ps{s\nu,\bm x}}+\sum_{w\in W_{1,\cdots r}}\epsilon(w)\lambda_w\mathfrak{R}_w
		\]
		where, with $y$ such that $\ps{y,e_j}=0$ for $j<i_0$ and $\ps{y,e_j}=(1-\eta)\ps{\bm x,e_j}$ for $i_0\leq j\leq r$,
		\begin{equation}\label{equ:proba_int}
			\begin{split}
				\mathfrak R_s&\coloneqq e^{\ps{\bm x,s\nu}}\sum_{\tau\in W}\epsilon(\tau)\int_{\R_+\times\partial\mathcal{C}\setminus\partial\mathcal{C}_s}\frac{\ps{\tau(\bm x-y),e_n^*}}{2\pi t^2}e^{\ps{\tau(\bm x-y)+y-\bm x,z}}e^{-\frac{\norm{\bm x-y-z+s\nu t}^2}{2t}}dtdz,\\
				\mathfrak R_w&\coloneqq e^{\ps{\bm x,w\nu}}\sum_{\tau\in W}\epsilon(\tau)\int_{\R_+\times\partial\mathcal{C}_s}\frac{\ps{\tau(\bm x-y),e_n^*}}{2\pi t^2}e^{\ps{\tau(\bm x-y)+y-\bm x,z}}e^{-\frac{\norm{\bm x-y-z+w\nu t}^2}{2t}}dtdz\quad w\neq s,
			\end{split}
		\end{equation}
		where $e_n^*$ denotes the (normalized) simple root normal orthogonal to the boundary component where $z$ lies. Therefore the proof of Proposition~\ref{prop:asym_prob} boils down to studying these integrals. 
		
		To start with, we note that for $z\in \partial\mathcal C$ we have $e^{\ps{\tau(\bm x-y)+y-\bm x,z}}\leq 1$ so the asymptotic is governed by the integrals with $\tau=I_d$. Then by Laplace's method the behaviour of the integrals that appear in $\mathfrak R_w$, $w\neq s$, is governed by the minimum of the map $F:\partial\mathcal{C}_s\times\R_+\to\R_+$ given by
		\[
		F(z,t)\coloneqq \frac{\norm{\bm x-y-z+w\nu t}^2}{2t}.
		\]
		This allows to show that the remainders $\mathfrak R_w$ for $w\neq s$ are lower-order terms in the asymptotic considered, since $e^{\ps{w\nu-s\nu,\bm x}}\to 0$. 
		
		Likewise the term $\mathfrak R_s$ is also a lower-order term in the asymptotic considered, and for this by Laplace's method it is enough to prove that the map $F:\partial\mathcal{C}\times\R_+\to\R_+$ with $w=s$ attains its minimum at some $z_0\in\partial\mathcal{C}_s$ and $t_0>0$ with $F(z_0,t_0)=0$. Now because $s\nu\not\in\mathcal{C}$ we know that this amounts to saying that the half-line $t\mapsto \bm x + s\nu t$ crosses $y+\partial\mathcal{C}$ on $y+\partial\mathcal{C}_s$, which follows from the fact that for $j\geq i_0$ we have $\ps{s\nu,e_j}>0$. To see why, note that $\ps{s\nu,e_j}\leq0$ implies that $s_js\not\in W_{1,\cdots,r}$ since otherwise we would have
		\[
		\ps{s\nu-s_js\nu,\bm x}=\ps{s\nu,e_j^\vee}\ps{\bm x,e_j^\vee}\leq 0,
		\]
		which contradicts our assumptions on the asymptotic of $\bm x$. Moreover explicit computations show that $\ps{s_js_1\cdots s_r\nu,\omega_j^\vee}>0$ as soon as $j\geq i_0$. As a consequence having $s_js\not\in W_{1,\cdots,r}$ implies that $j\leq i_0$. Therefore having $\ps{s\nu,e_j}\leq0$ implies that $j\leq i_0$.
	\end{proof}
	
	
	\subsection{On the decomposition of the path: the $A_2$ case}~\label{subsec:A21}
	With Proposition~\ref{prop:asym_prob_A2} at hand we are now in position to describe the path of the process $\B^\nu$ itself in the asymptotic where the starting point of the process $\bm x$ diverges inside the Weyl chamber $\mathcal{C}$, and to start with we will focus on the $A_2$ case. As explained above, it is very natural to consider the subsets $\mathcal{U}_1$ and $\mathcal{U}_2$ of $\mathcal{C}$, and associated events $\mathcal{E}_1$ and $\mathcal{E}_2$. Because thanks to Proposition~\ref{prop:asym_prob_A2} we know that in the asymptotic where $\bm x\to\infty$ we have that $\P_{\bm x}\left(\mathcal{E}_1\right)+\P_{\bm x}\left(\mathcal{E}_2\right)=1-\mathcal{O}\left(e^{-\ps{\lambda,\bm x}}\right)$ for some $\lambda\in\mathcal{C}$, we can look at the process $\B^\nu$ conditioned on one of the events $\mathcal{E}_i$ and retain all the necessary information. Without loss of generality we will consider the process $\B^\nu$ conditioned on the event $\mathcal{E}_1$.
	
	In order to prove Proposition~\ref{prop:asymptot_A2} we will look at each component of the path separately, according to the moments where the process contribute or not to the integrals $\mathrm J_i$ as described in Figure~\ref{fig:time_decomp}. To be more specific we will introduce stopping times $T_1,T_1'$ and $T_2$ such that:
		\begin{itemize}
			\item the portion in blue in Figure~\ref{fig:time_decomp} corresponds to the path of the process in the time interval $(T_1,T_1')$;
			\item the component in red there is associated to the time interval $(T_2,+\infty)$;
			\item the portion in orange corresponds to $(0,T_1)\cup(T_1',T_2)$.
	\end{itemize}
	
	\subsubsection{The process before $T_{\partial\mathcal{C}}$} 
	Recall that before hitting $\partial\mathcal{C}$ for the first time, the process $\B^\nu$ has a drift given by 
	\[
		\nabla\log\partial_{1,2} h(z)= \sum_{w\in W_{1,2}}w\frac{\epsilon(w)\lambda_we^{\ps{w\nu,z}}}{U(z)},\quad U(z)=\sum_{w\in W_{1,2}}\epsilon(w)\lambda_we^{\ps{w\nu,z}}\cdot
	\]
    The latter can be rewritten under the form
	\begin{align*}
		s_1s_2\nu+(s_2s_1\nu-s_1s_2\nu)\frac{\lambda_{s_2s_1}e^{\ps{s_2s_1\nu,z}}}{\sum_{w\in W_{1,2}}\epsilon(w)\lambda_we^{\ps{w\nu,z}}}+(s_1s_2\nu-s_1s_2s_1\nu)\frac{\lambda_{s_1s_2s_1}e^{\ps{s_1s_2s_1\nu,z}}}{\sum_{w\in W_{1,2}}\epsilon(w)\lambda_we^{\ps{w\nu,z}}}\cdot
	\end{align*}
	Under the assumption that the process stays within the domain $\mathcal{U}_1$, this drift is equal to
	\begin{align*}
		s_1s_2\nu+\mathfrak{R}_1(z),\quad\text{with}\quad\norm{\mathfrak{R}_1(z)}\leq Ce^{\ps{\nu,e_2}\ps{z,e_1}-(1-\eta)\ps{\nu,e_1}\ps{\bm x,e_2}}
	\end{align*}
	where $C$ only depends on $\nu$.
	
	Now for $0<\eta<\frac12$ let us consider $T_1$ to be the first time where the $e_1$ component of the process reaches $\eta\ps{\bm x,e_2}$, that is 
		\begin{align*}
			&T_1\coloneqq \inf\limits_{t\geq0} \ps{\B^\nu_t,e_1}<\eta\ps{\bm x,e_2}\quad \text{(which is finite almost surely)}, \\
			\text{and also set}\quad&T_0\coloneqq\inf\limits_{t\geq T_1} \ps{\B^\nu_t,e_1}>(1-2\eta)\frac{\ps{\nu,e_1}}{\ps{\nu,e_2}}\ps{\bm x,e_2}.
		\end{align*}
		The following lemma explains that as soon as the $e_1$ component has reached the level $\eta\ps{\bm x,e_2}$ (that is for $t\geq T_1$), it will likely stay small until $\B^\nu$ hits the boundary of the Weyl chamber $\mathcal C$:
	\begin{lemma}
		For $\eta>0$ small enough, as $\bm x\to\infty$ inside $\mathcal{C}$
		\begin{equation}
			\lim\limits_{\bm x\to\infty}\P_{\bm x}(T_{\partial\mathcal{C}}<T_0)=1.
		\end{equation}
	\end{lemma}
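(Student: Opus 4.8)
The plan is to reduce the statement to a one‑dimensional gambler's‑ruin estimate for the coordinate $Y_t\coloneqq\ps{\B^\nu_t,e_1}$ of the process conditioned on $\mathcal{E}_1$ (as stipulated above; the case $\mathcal{E}_2$ is obtained by exchanging $e_1$ and $e_2$). First, on $\mathcal{E}_1$ the process stays in $\mathcal{U}_1$ until $T_{\partial\mathcal{C}}$ and is killed on the wall $\partial\mathcal{C}_1$, so $Y$ decreases from $\ps{\bm x,e_1}$ down to $0$; in particular $T_1<T_{\partial\mathcal{C}}<\infty$ and $Y_{T_1}\le\eta\ps{\bm x,e_2}$. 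Writing $L\coloneqq(1-2\eta)\tfrac{\ps{\nu,e_1}}{\ps{\nu,e_2}}\ps{\bm x,e_2}$, the hypothesis ``$\eta$ small enough'' is exactly what guarantees $\eta\ps{\bm x,e_2}<L$, so that $Y_{T_1}<L$ and the event $\{T_{\partial\mathcal{C}}\ge T_0\}$ to be ruled out says precisely that, after $T_1$, the coordinate $Y$ climbs back up to the level $L$ before it reaches $0$.

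The input is the pathwise drift identity recalled just above: before $T_{\partial\mathcal{C}}$ and on $\mathcal{U}_1$ the drift of $\B^\nu$ equals $s_1s_2\nu+\mathfrak{R}_1$ with $\norm{\mathfrak{R}_1(y)}\le Ce^{\ps{\nu,e_2}\ps{y,e_1}-(1-\eta)\ps{\nu,e_1}\ps{\bm x,e_2}}$, hence $\norm{\mathfrak{R}_1(y)}\le Ce^{-\eta\ps{\nu,e_1}\ps{\bm x,e_2}}=o(1)$ uniformly on the band $\mathcal{K}\coloneqq\{y\in\mathcal{U}_1:\ 0\le\ps{y,e_1}\le L\}$ (as $\ps{\nu,e_2}L=(1-2\eta)\ps{\nu,e_1}\ps{\bm x,e_2}$). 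Since $\ps{s_1s_2\nu,e_1}=-(\ps{\nu,e_1}+\ps{\nu,e_2})<0$, on the interval $[T_1,\sigma]$ — where $\sigma$ is the first time after $T_1$ at which $\B^\nu$ leaves $\mathcal{K}$ — the semimartingale $Y$ has drift $\le-\kappa'$, with $\kappa'\coloneqq\ps{\nu,e_1}+\ps{\nu,e_2}-o(1)>0$, and quadratic‑variation rate $\ps{e_1,e_1}$. I would then invoke that $M_t\coloneqq\exp(\lambda\,Y_{(T_1+t)\wedge\sigma})$, with $\lambda\coloneqq 2\kappa'/\ps{e_1,e_1}>0$, is a bounded supermartingale after time $T_1$; optional stopping gives $\E[e^{\lambda Y_\sigma}\mid\mathcal F_{T_1}]\le e^{\lambda Y_{T_1}}$ on $\{T_1<\infty\}$, and since $Y_\sigma=L$ on $\mathcal{E}_1\cap\{T_{\partial\mathcal{C}}\ge T_0\}$,
\[
\P_{\bm x}\big(\mathcal{E}_1\cap\{T_{\partial\mathcal{C}}\ge T_0\}\big)\le\P_{\bm x}(Y_\sigma=L)\le e^{-\lambda L}\,\E\big[e^{\lambda Y_{T_1}}\mathds 1_{T_1<\infty}\big]\le e^{-\lambda(L-\eta\ps{\bm x,e_2})}=\mathcal O\big(e^{-c\ps{\bm x,e_2}}\big)
\]
for some $c>0$, because $L-\eta\ps{\bm x,e_2}$ is a fixed positive multiple of $\ps{\bm x,e_2}$ while $\lambda$ stays bounded away from $0$. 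Dividing by $\P_{\bm x}(\mathcal{E}_1)$, whose order is given by Proposition~\ref{prop:asym_prob_A2}, then yields $\P_{\bm x}(T_{\partial\mathcal{C}}\ge T_0\mid\mathcal{E}_1)\to0$.

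The hard part is that the bound $\norm{\mathfrak{R}_1}=o(1)$ degrades as $\ps{\cdot,e_1}$ grows and is only usable inside $\mathcal{K}$, so one cannot compare $\B^\nu$ to a constant‑drift diffusion globally: one must stop the comparison at the exit time $\sigma$ of $\mathcal{K}$ and argue — by the very same exponential‑supermartingale (gambler's‑ruin) computation — that this exit occurs through the wall $\partial\mathcal{C}_1$, and not through the far face $\{\ps{\cdot,e_1}=L\}$, with overwhelming probability. Keeping $\eta$ small is what makes this non‑degenerate, placing $Y_{T_1}$ strictly inside $\mathcal{K}$ and far from the face to be avoided, and what keeps the error $\mathcal O(e^{-c\ps{\bm x,e_2}})$ negligible against the denominator $\P_{\bm x}(\mathcal{E}_1)$ coming from Proposition~\ref{prop:asym_prob_A2}.
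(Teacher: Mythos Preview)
Your proposal is correct and follows essentially the same approach as the paper: both reduce the question to a one-dimensional gambler's-ruin estimate for the $e_1$-coordinate, using that on the relevant band the drift is $\ps{s_1s_2\nu,e_1}=-\ps{\nu,\rho}$ up to an exponentially small error. The paper phrases this as a comparison with a Brownian motion of constant drift $-\ps{\nu,\rho}+C'e^{-\eta\ps{\bm x,e_2}}$ and invokes the standard hitting-probability estimate, whereas you make the same computation explicit via the exponential supermartingale $e^{\lambda Y}$ stopped at the exit of $\mathcal K$; these are two formulations of the same argument, and your version is somewhat more careful about the localization to $\mathcal K$ and about passing from the unconditioned bound $\P_{\bm x}(Y_\sigma=L)$ to the conditional statement on $\mathcal E_1$ (a step the paper leaves implicit).
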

	\begin{proof}
		Between $T_1$ and $T_0$ the $e_1$ component of the drift of $\B^\nu$ is given by $-\ps{\nu,\rho}+\ps{\mathfrak R_1(z),e_1}$ with $\norm{\ps{\mathfrak R_1(z),e_1}}\leq C'e^{-\eta\ps{\bm x,e_2}}$. As a consequence the probability that $T_0<T_{\partial\mathcal{C}}$ is smaller than the probability that a Brownian motion with drift $-\ps{\nu,\rho}+C'e^{-\eta\ps{\bm x,e_2}}$ (and variance $\ps{e_1,e_1}$) reaches  $\left((1-2\eta)\frac{\ps{\nu,e_1}}{\ps{\nu,e_2}}-\eta\right)\ps{\bm x,e_2}$ before reaching $-\eta\ps{\bm x,e_2}$, which is readily seen to converge to $0$ as $\ps{\bm x,e_2}\to+\infty$ provided that $\eta$ is chosen small enough.
	\end{proof}
	Therefore we see that between $T_1$ and $T_{\partial\mathcal{C}}$, with high probability (under the event $\mathcal{E}_1$) the drift of the process $\B^\nu$ will be given by that of $\Y^1$ plus a remainder term $\mathfrak R_1$ uniformly bounded by $Ce^{-\eta\ps{\bm x,e_2}}$.
	
	\subsubsection{The process between $T_{\partial\mathcal{C}}$ and $T_{\partial\mathcal{C}_2}$}
	After having hit the boundary of $\mathcal{C}$, the process $\B^\nu$ will have a drift given by
	\[
	\nabla \log \partial_{2}h^{\M}(z)=\sum_{s\in W_{2}}\frac{\epsilon(s) \lambda_s e^{\ps{s\nu,z}}}{U_2(z)}s\nu,\quad U_2(z)=\sum_{s\in W_{2}}\epsilon(s)\lambda_s e^{\ps{s\nu,z}}.
	\]
	Like before this drift admits the expression
	\begin{equation}\label{equ:drift_approx1'}
		\nabla \log \partial_{2}h^{\M}(z)=\frac{s_2\nu-s_1s_2\nu e^{\ps{s_1s_2\nu-s_2\nu,z}}}{1-e^{\ps{s_1s_2\nu-s_2\nu,z}}}+\mathfrak R_{2}(z)
	\end{equation}
	where the remainder term can be bounded as $\norm{\mathfrak{R}_2(z)}\leq Ce^{-\ps{\nu,e_1}\ps{z,\rho}}$. With high probability this remainder is negligible between $T_{\partial\mathcal{C}}$ and $T_{\partial\mathcal{C}_2}$. Indeed, we already know that at time $T_{\partial\mathcal{C}}$ under the event $\mathcal{E}^\eta_1$ we have $\ps{\B^\nu,e_2}>(1-\eta)\ps{\bm x,e_2}$, and therefore that $\norm{\mathfrak R_2(\B^\nu)}\leq Ce^{-\eta\ps{\nu,e_1}\ps{\bm x,e_2}}$ between $T_{\partial\mathcal{C}}$ and $T_2$, where \begin{align*}
			T_2\coloneqq\inf_{t\geq 0}\ps{\B_t^\nu,e_2}<\eta\ps{\bm x,e_2}\text{. Likewise if }T_1'\coloneqq\sup\limits_{t\leq T_{\partial\mathcal{C}_2}}\ps{\B_t^\nu,e_1}>\eta\ps{\bm x,e_2}
	\end{align*}
	then for $t$ between $T_1'$ and $T_{\partial\mathcal{C}_2}$, $\norm{\mathfrak R_2(\B^\nu)}\leq Ce^{-\eta\ps{\nu,e_1}\ps{\bm x,e_2}}$. Therefore for such times the process will behave like $\Y^1$, and all we need to check is that with probability $1-o(1)$ we have $T_1'<T_2$:
	\begin{lemma}
		For $\eta>0$ small enough, as $\bm x\to\infty$ inside $\mathcal{C}$
		\begin{equation}\label{eq:conv_proba_T12}
			\lim\limits_{\bm x\to\infty}\P_{\bm x}\left(T_1'<T_2\vert \mathcal{E}_1^\eta\right)=1.
		\end{equation}
	\end{lemma}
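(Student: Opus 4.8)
The plan is to reduce, via the strong Markov property at $T_{\partial\mathcal{C}}$, to controlling the process started afresh from a point $z\in\partial\mathcal{C}_1$ with $\ps{z,e_1}=0$ and $\ps{z,e_2}>(1-\eta)\ps{\bm x,e_2}$ --- the only starting configurations compatible with $\mathcal{E}_1^\eta$, which is $\mathcal{F}_{T_{\partial\mathcal{C}}}$-measurable --- and to bound $\P_z(T_1'\geq T_2)$ by $o(1)$ \emph{uniformly} over such $z$; this makes the conditioning on $\mathcal{E}_1^\eta$ cost-free regardless of the (possibly exponentially small) value of $\P_{\bm x}(\mathcal{E}_1^\eta)$. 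From $T_{\partial\mathcal{C}}$ onwards I write $\B^\nu_t=z+\int_0^t d_2(\B^\nu_s)\,ds+W_t$ with $W$ a standard planar Brownian motion and $d_2$ the drift recalled just above, and I record that the error term satisfies $\norm{\mathfrak{R}_2(x)}\leq Ce^{-\ps{\nu,e_1}\ps{x,\rho}}$, which is exponentially small in $\ps{\bm x,e_2}$ on the region $\{\ps{x,e_2}>\tfrac{\eta}{2}\ps{\bm x,e_2}\}\cup\{\ps{x,e_1}>\tfrac{\eta}{2}\ps{\bm x,e_2}\}$ because $\ps{x,\rho}\geq c\ps{x,e_1+e_2}$ on $\mathcal{C}$.

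The deterministic heart of the argument is an elementary computation on $d_2$. With $u:=e^{\ps{s_1s_2\nu-s_2\nu,x}}\in(0,1)$ (a negative multiple of $\ps{x,e_1}$ in the exponent), using $s_2e_1=e_1+e_2$ and $s_1e_2=e_1+e_2$ one finds
\[
\ps{d_2(x)-\mathfrak{R}_2(x),e_1}=\ps{\nu,e_1+e_2}\,\frac{1+u}{1-u}\geq\ps{\nu,e_1+e_2},\qquad \ps{d_2(x)-\mathfrak{R}_2(x),e_1+e_2}=\frac{\ps{\nu,e_1}+\ps{\nu,e_2}u}{1-u}\geq\ps{\nu,e_1},
\]
whereas $\ps{d_2(x)-\mathfrak{R}_2(x),e_2}$ is merely bounded \emph{above} by $-\ps{\nu,e_2}$ and blows up to $-\infty$ near $\partial\mathcal{C}_1$. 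The key point is that I only ever need \emph{lower} bounds on drifts --- on the $e_1$-component and on the auxiliary process $\phi_t:=\ps{\B^\nu_t,e_1+e_2}$, which is therefore a submartingale with drift bounded below by $\tfrac12\ps{\nu,e_1}>0$ on the above region --- and never upper bounds, so the blow-up at $\partial\mathcal{C}_1$ only helps. Heuristically: the $e_1$-component, starting at $0$, reaches level $\tfrac12\ps{\bm x,e_2}$ within time $\lesssim\ps{\bm x,e_2}/\ps{\nu,e_1+e_2}$; over such a time $\phi$ has only increased, so $\ps{\B^\nu,e_2}=\phi-\ps{\B^\nu,e_1}\gtrsim(1-\eta)\ps{\bm x,e_2}-\tfrac12\ps{\bm x,e_2}=(\tfrac12-\eta)\ps{\bm x,e_2}>\eta\ps{\bm x,e_2}$ once $\eta<\tfrac13$, so $T_2$ has not yet occurred; and afterwards the $e_1$-component keeps a positive drift, so it a.s.\ never returns below $\eta\ps{\bm x,e_2}$, giving $T_1'<T_2$.

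To make this rigorous I would set $\tau:=\inf\{t:\ps{\B^\nu_t,e_1}\geq\tfrac12\ps{\bm x,e_2}\}$ and produce three exceptional events, each of probability $o(1)$ uniformly in $z$: (i) $\{\tau>T^*\}$ with $T^*:=\ps{\bm x,e_2}/\ps{\nu,e_1+e_2}$, controlled by comparing $\ps{\B^\nu_{\cdot},e_1}$ with a one-dimensional Brownian motion of drift $\tfrac12\ps{\nu,e_1+e_2}$ --- this is where one stops at the exit time of the region $\{\ps{x,e_2}>\tfrac{\eta}{2}\ps{\bm x,e_2}\}$ and checks, via the same submartingale bound on $\phi$, that this exit cannot precede $\tau$, a short bootstrap; (ii) $\{\inf_{t\leq T^*}\ps{W_t-W_0,e_1+e_2}<-(\tfrac12-2\eta)\ps{\bm x,e_2}\}$, which has Gaussian-exponentially small probability over the horizon $T^*$ and on whose complement $\phi_\tau\geq(1-\eta)\ps{\bm x,e_2}+\inf_{t\leq T^*}\ps{W_t-W_0,e_1+e_2}$ forces $\ps{\B^\nu_\tau,e_2}>\eta\ps{\bm x,e_2}$, i.e.\ $\tau<T_2$; (iii) the event that, after $\tau$, $\ps{\B^\nu_{\cdot},e_1}$ ever descends to $\eta\ps{\bm x,e_2}$ before $T_{\partial\mathcal{C}_2}$, which by the standard hitting estimate for a Brownian motion of positive drift $\tfrac12\ps{\nu,e_1+e_2}$ started at $\tfrac12\ps{\bm x,e_2}$ is at most $e^{-c\eta\ps{\bm x,e_2}}$. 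Off (i)--(iii) one has $T_1'<\tau<T_2$; summing the three bounds, all uniform in $z$ by the strong Markov reduction, yields $\P_{\bm x}(T_1'<T_2\mid\mathcal{E}_1^\eta)\to1$.

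The main obstacle is precisely that $d_2$ is unbounded near $\partial\mathcal{C}_1$ and that $\mathfrak{R}_2$ stops being negligible once $\ps{x,e_2}$ is no longer of order $\ps{\bm x,e_2}$, so none of the comparisons may be run globally: every estimate has to be localised to the time window during which $\ps{\B^\nu,e_2}$ remains macroscopic, and the bootstrap verifying that this window actually contains $\tau$ is the only genuinely delicate bookkeeping. The rest reduces to one-dimensional hitting-time analysis for drifted Brownian motion, in the spirit of the preceding lemma.
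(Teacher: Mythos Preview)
Your argument is correct in structure and takes a genuinely different route from the paper's. The paper first sets $\mathfrak{R}_2=0$, observes that then the $e_1$-component is a Brownian motion of drift $\ps{\nu,\rho}$ conditioned to stay positive while the $e_2$-component is treated as an independent drifted Brownian motion; it then invokes Williams' time-reversal to identify the law of $T_1'-T_{\partial\mathcal{C}}$ with an explicit first-passage time, reads off that $T_1'$ concentrates near $\eta\ps{\bm x,e_2}/\ps{\nu,\rho}$ while $T_2>(1-3\eta)\ps{\bm x,e_2}/\ps{\nu,\rho}$ with high probability, and finally absorbs $\mathfrak{R}_2$ by a perturbation argument. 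Your approach, by contrast, never invokes the decoupling or time-reversal: you work directly with the true drift, extract the lower bounds $\ps{d_2,e_1}\geq\ps{\nu,\rho}$ and $\ps{d_2,e_1+e_2}\geq\ps{\nu,e_1}$, and run a submartingale comparison on the auxiliary coordinate $\phi_t=\ps{\B^\nu_t,e_1+e_2}$ together with one-dimensional hitting estimates, localising everything to the region where $\mathfrak{R}_2$ is exponentially small. This is more robust --- in particular it sidesteps the slightly delicate independence claim the paper makes, which is not literally true since the $e_2$-drift depends on $\ps{x,e_1}$ --- at the cost of the extra bookkeeping with the stopping time $\tau$ and the bootstrap.

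Two small points. First, your bound for (ii) should really be stated for all $t\leq\tau$, not just at $t=\tau$: you need $\ps{\B^\nu_t,e_2}>\eta\ps{\bm x,e_2}$ throughout $[0,\tau]$ to conclude $\tau<T_2$, and your submartingale bound on $\phi_t$ combined with $\ps{\B^\nu_t,e_1}\leq\tfrac12\ps{\bm x,e_2}$ for $t\leq\tau$ does give exactly this --- but say so. Second, there is a harmless off-by-a-constant in (i): if you compare with a Brownian motion of drift $\tfrac12\ps{\nu,e_1+e_2}$, its mean hitting time of level $\tfrac12\ps{\bm x,e_2}$ is exactly your $T^*$, so $\P(\tau>T^*)$ does not go to zero. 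Either use the full drift lower bound $\ps{\nu,e_1+e_2}$ (valid up to the exponentially small remainder), which halves the mean, or double $T^*$; either fix is immediate and the rest of the argument is unaffected.
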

	\begin{proof}
		First assume that $\mathfrak R_2=0$. In that case the $e_1$ component of $\B^\nu$ has the law of a Brownian motion with positive drift $\ps{\nu,\rho}$ and variance $\ps{e_1,e_1}=2$ conditioned to stay positive, while the $e_2$ component of the path is an independent Brownian motion with negative drift lower bounded by $-\ps{\nu,\rho}$ and variance $\ps{e_2,e_2}=2$. As a consequence by the time-reversal property of drifted Brownian motion~\cite{Williams} $T_1'-T_{\partial\mathcal{C}}$ has same law as
		$\tau_1'\coloneqq\inf\limits_{t\geq 0}\sqrt 2B_t+\ps{\nu,\rho} t>\eta\ps{\bm x,e_2}$ where $B$ has the law of a standard  one-dimensional Brownian motion started from $0$. The law of $\tau_1'$ is well known~\cite[Equation (5.12)]{KS} and given by \[
		\mathbb P\left(\tau_1'\in dt\right)=\frac{\ps{\nu,\rho}}{\sqrt{4\pi t^3}}e^{-\frac{\left(\eta\ps{\bm x,e_2}-\ps{\nu,\rho}t\right)^2}{4t}},
		\]
		which concentrates around $\eta\frac{\ps{\bm x, e_2}}{\ps{\nu,\rho}}$ as the latter diverges. Likewise we see that with probability $1-o(1)$ we have $\tau_2-T_{\partial\mathcal{C}}>(1-3\eta)\frac{\ps{\bm x, e_2}}{\ps{\nu,\rho}}$, so by taking $\eta$ small enough so that $\frac{1-3\eta}{\eta}>2$ we see that Equation~\eqref{eq:conv_proba_T12} does indeed hold for $\mathfrak{R}_2=0$.
		
		Now it is readily seen that for $T_{\partial\mathcal{C}}\leq t\leq T_1'\wedge T_2$ we have $\norm{\mathfrak R_2(\B^\nu)}\leq Ce^{-\eta\ps{\nu,e_1}\ps{\bm x,e_2}}$. Therefore for any positive $\eps$ and with $\eta$ small enough like above, we can assume that $(1-\eps)T_1'<\tau_1'<2\tau_2<(2+\eps)T_2$ with asymptotic probability $1$, showing Equation~\eqref{eq:conv_proba_T12} in the general case.
	\end{proof}
	Therefore between $T_{\partial\mathcal{C}}$ and $T_{\partial\mathcal{C}_2}$ the process will behave like $\Y^1$, in the sense that $\B^\nu$ will have a drift given by that of $\Y^1$ plus a remainder term $\mathfrak{R}_2$, where $\norm{\mathfrak R_2}\leq C e^{-\ps{\nu,e_1}\ps{\bm x,e_2}}$. 
	
	\subsubsection{The process after $T_{\partial\mathcal{C}_2}$}
	The third component of the process, \emph{i.e.} after having hit both components of $\partial\mathcal{C}$, has the law of $B^\nu$ conditioned to stay inside $\partial\mathcal{C}$. This means that its drift is equal to
	\[
	\nabla \log h^{\M}(z)=\sum_{s\in W_{2}}\frac{\epsilon(s) \lambda_s e^{\ps{s\nu,z}}}{U_3(z)}s\nu,\quad U_3(z)=\sum_{s\in W_{2}}\epsilon(s)\lambda_s e^{\ps{s\nu,z}},
	\]
	which coincides with the drift of the process $\Y^1$.
	
	For future reference and in order to prove Proposition~\ref{prop:asymptot_J} we stress that
	\begin{equation}\label{equ:drift_approx2'}
		\nabla \log h^{\M}(z)=\frac{\nu-s_2\nu e^{\ps{s_2\nu-\nu,z}}}{1-e^{\ps{s_2\nu-\nu,z}}}+\mathfrak R_{3}(z),
	\end{equation}
	where $\norm{\mathfrak{R}_3(z)}\leq Ce^{-\ps{\nu,e_1}\ps{z,e_1}}$. Like before we can use the explicit expression of the drift to show that with probability asymptotically $1$ the $e_1$ component of the process will never come back to $\eta\ps{\bm x,e_2}$ after $T_{\partial\mathcal{C}}$, so that the $e_2$ component of $\B^\nu$ after $T_{\partial\mathcal{C}_2}$ has the law of a Brownian motion with positive drift $\ps{\nu,e_2}$ and variance $2$ conditioned to stay positive. 
	
	\subsection{Proof of Propositions~\ref{prop:asymptot_A2} and~\ref{prop:asymptot_J} for $A_2$}~\label{subsec:A22}
	We are now in position to prove Propositions~\ref{prop:asymptot_A2} and~\ref{prop:asymptot_J} under the assumption that the root system being considered is associated to $A_2$. Let us decompose the integrals $\mathrm{J}_i$, $i=1,2$, as 
	\begin{align*}
		\mathrm J_i&=\int_0^{T_1}e^{-\ps{\B^\nu_t,e_i}}Z^i_tdt+\int_{T_1}^{+\infty}e^{-\ps{\B^\nu_t,e_i}}Z^i_tdt.
	\end{align*}
	As illustrated in Figure~\ref{fig:time_decomp} the integrals over the subset $[0,T_1]$ will become negligible in the $\bm x\to\infty$ limit. Indeed, in this region the $e_1$ and $e_2$ components of the process are bounded below by $(1-2\eta)\ps{\bm x,e_2}$, so that
	\[
	\int_0^{T_1}e^{-\ps{\B^\nu_t,e_i}}Z^i_tdt\leq e^{(1-3\eta)\ps{\bm x,e_2}}\int_{0}^{T_1}e^{-\left(\ps{\B^\nu_t,e_i}-(1-3\eta)\ps{\bm x,e_2}\right)}Z^i_tdt
	\]
	where the integral on the right-hand side is uniformly bounded in $\bm x$.
	As a consequence for any bounded continuous function $F:\R^2\to\R$
	\begin{align*}
		\E_{\bm x}\left[F(\mathrm  J_1,\mathrm  J_2)\vert\mathcal{E}_1\right]=\E_{\bm x}\left[F\left(\int_{T_1}^{+\infty}e^{-\ps{\B^\nu_t,e_1}}Z^1_tdt,\int_{T_1}^{+\infty}e^{-\ps{\B^\nu_t,e_2}}Z^2_tdt\right)\vert\mathcal{E}_1\right]+o(1).
	\end{align*}
	By the Markov property of $\B^\nu$, we know that the process $(\B^\nu_t)_{t\geq T_1}$ only depends on $(\B^\nu_t)_{t<T_1}$ via the location of $\B^\nu_{T_1}$. Therefore the latter can be rewritten as
	\begin{align*}
		\int_{u+\partial\mathcal{C}_2}\P_{\bm x}\left(\B^\nu_{T_1}\in dz\right)\E_z\left[ F\left(\int_{0}^{+\infty}e^{-\ps{\B^\nu_t,e_1}}Z^1_{t+T_1}dt,\int_{0}^{+\infty}e^{-\ps{\B^\nu_t,e_2}}Z^2_{t+T_1}dt\right)\vert\mathcal{E}_1\right]
	\end{align*}
	where $u=u(\bm x)$ is such that $\ps{u,e_2}=(1-\eta)\ps{\bm x,e_2}$ and $\ps{u,e_1}=\eta\ps{\bm x,e_1}$.
	
	Now we have seen that after $T_1$ the drift of the process $\B^\nu$ and that of $\Y^1$ only differ by a remainder term $\mathfrak R(x)$ which becomes negligible in the $\bm x\to\infty$ limit. Using continuity of $F$ together with a comparison result such as~\cite[Chapter 5-Proposition 2.18]{KS} to compare the processes $\B^\nu$ and $\Y^1$ we see that the latter is asymptotically equivalent to
	\begin{align*}
		\int_{u+\partial\mathcal{C}_2}\P_{\bm x}\left(\B^\nu_{T_1}\in dz\right)\E_z\left[ F\left(\int_{0}^{+\infty}e^{-\ps{\Y^1_t,e_1}}Z^1_{t}dt,\int_{0}^{+\infty}e^{-\ps{\Y^1_t,e_2}}Z^2_{t}dt\right)\vert\mathcal{E}_1\right]
	\end{align*}
	where we have used stationarity of the process $(Z^1,Z^2)$.
	To conclude for the proof of Proposition~\ref{prop:asym_prob_A2} it remains to check that the expectation that appears in the integral becomes independent of $z$ in the limit where $\bm x$ diverges inside $\mathcal{C}$. More precisely we prove that
	\begin{lemma}\label{lemma:J_A2}
		As $\bm x\to\infty$ inside $\mathcal{C}$, for any $z\in \partial\mathcal{C}_2$
		\[
		\lim\limits_{\bm x\to\infty}\E_{u(\bm x)+z}\left[ F\left(\int_{0}^{+\infty}e^{-\ps{\Y^1_t,e_1}}Z^1_tdt,\int_{0}^{+\infty}e^{-\ps{\Y^1_t,e_2}}Z^2_tdt\right)\right]=\expect{F(J(\ps{\nu,\rho}),J'(\ps{\nu,e_2})}
		\]
		where $J$ and $J'$ are independent and with law described by Equation~\eqref{eq:def_J}.
	\end{lemma}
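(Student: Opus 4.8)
The plan is to locate, for each integral $\mathrm{J}_i(\Y^1)=\int_0^{+\infty}e^{-\ps{\Y^1_t,e_i}}Z^i_tdt$, the random time window on which it concentrates, to show that these two windows drift infinitely far apart as $\bm x\to\infty$, and to identify the local behaviour of $\Y^1$ on each window with the two-sided conditioned Brownian motion of~\eqref{eq:def_J}. Throughout one drops the conditioning on $\mathcal{E}_1^\eta(\Y^1)$, which by Proposition~\ref{prop:asym_prob_A2} has probability $1-o(1)$, and uses that the $e_1$-coordinate of the starting point $u(\bm x)+z$ equals $\eta\ps{\bm x,e_2}+O(1)\to+\infty$, so the bounded contribution of $z$ is invisible in the limit.

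First I would record where the relevant stopping times sit. Write $T_1:=T_{\partial\mathcal{C}}(\Y^1)$ for the end of the first phase (hitting $\partial\mathcal{C}_1$) and $T_2:=T_{\partial\mathcal{C}_2}(\Y^1)$ for the end of the second. From the explicit drifts of $\Y^1$ — $s_1s_2\nu$, with $\ps{s_1s_2\nu,e_1}=-\ps{\nu,\rho}$, on $[0,T_1]$; the $d_2$-drift on $[T_1,T_2]$; the drift of~\eqref{equ:drift_approx2'} afterwards — together with the comparison arguments used in this section and the explicit hitting-time densities of drifted Brownian motion~\cite[Ch.~5]{KS}, one obtains $T_1=\eta\ps{\bm x,e_2}/\ps{\nu,\rho}\,(1+o(1))$ and $T_2-T_1$ of order $\ps{\bm x,e_2}$, so that $T_2-T_1\to+\infty$ in probability. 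Since $\ps{\Y^1_t,e_1}$ decreases from $\sim\eta\ps{\bm x,e_2}$ to $0$ with drift $-\ps{\nu,\rho}+o(1)$ on $[0,T_1]$, then stays positive and is driven back to $+\infty$ afterwards (Bessel-type repulsion near $\partial\mathcal{C}_1$, then positive drift in the third phase), a routine Gaussian tail estimate gives, for each $\eps>0$, a fixed $A<\infty$ with $\norm{\mathrm{J}_1(\Y^1)-\int_{T_1-A}^{T_1+A}e^{-\ps{\Y^1_t,e_1}}Z^1_tdt}\leq\eps$ off an event of probability $\leq\eps$, for $\bm x$ large. Symmetrically, $\ps{\Y^1_t,e_2}$ starts at $(1-\eta)\ps{\bm x,e_2}$, grows on $[0,T_1]$, decreases to $0$ with drift $-\ps{\nu,e_2}+o(1)$ on $[T_1,T_2]$, and grows afterwards, so $\mathrm{J}_2(\Y^1)$ agrees up to $\eps$ with its restriction to $[T_2-A,T_2+A]$.

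The heart of the argument is the identification of the two local limits. On $[0,T_1]$ the coordinate $\ps{\Y^1,e_1}$ is, up to lower-order drift corrections, a Brownian motion with drift $-\ps{\nu,\rho}$ and variance $\ps{e_1,e_1}=2$ started from a level tending to $+\infty$ and stopped at $0$; on $[T_1,+\infty)$, because the conditioning event $\{\ps{\cdot,e_1}>0\}$ is measurable for the single coordinate $\ps{\cdot,e_1}$, it is exactly a Brownian motion with drift $+\ps{\nu,\rho}$ and variance $2$ conditioned to stay positive, started from $0$ at time $T_1$. By the time-reversal identity for drifted Brownian motion at its hitting time — the input used by Williams~\cite{Williams} and already invoked above for $\tau_1'$ — the recentred process $(\ps{\Y^1_{T_1+s},e_1})_{|s|\leq A}$ then converges in law to $(\mathcal{B}^{\ps{\nu,\rho}}_s)_{|s|\leq A}$, the two-sided Brownian motion with drift $\ps{\nu,\rho}$ conditioned positive of~\eqref{eq:def_J}. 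Since $(Z^1,Z^2)$ is independent of $\Y^1$ and stationary, a dominated-convergence argument (the integrand is dominated, uniformly in $\bm x$, by a constant times $e^{-\ps{\Y^1_{T_1+s},e_1}}$, which is integrable over $\R$ in the limit) gives, after $A\to+\infty$, that $\mathrm{J}_1(\Y^1)$ converges in law to $\int_{-\infty}^{+\infty}e^{-\mathcal{B}^{\ps{\nu,\rho}}_s}Z^1_sds\eqlaw J(\ps{\nu,\rho})$; the same reasoning applied to $\ps{\cdot,e_2}$ on the window around $T_2$ — Brownian motion with drift $-\ps{\nu,e_2}$, variance $2$, stopped at $0$, then drift $+\ps{\nu,e_2}$ conditioned positive — gives that $\mathrm{J}_2(\Y^1)$ converges in law to $J'(\ps{\nu,e_2})$.

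It remains to upgrade these marginal limits to a joint one with independence, which I expect to be the main difficulty. Fixing $A$ and taking $\bm x$ large enough that $T_1+A<T_2-A$ off an $\eps$-event, the strong Markov property of $\Y^1$ at the stopping time $T_1+A$ makes $\Y^1|_{[T_1+A,\infty)}$ conditionally independent of $\Y^1|_{[0,T_1+A]}$ given $\Y^1_{T_1+A}$; moreover, from any point whose $e_2$-coordinate is of order $\ps{\bm x,e_2}$, the conditional law of $\mathrm{J}_2(\Y^1)$ converges to that of $J'(\ps{\nu,e_2})$ uniformly in the starting point, since the Williams-type limit above does not depend on it. On the $(Z^1,Z^2)$ side, conditioning on $\Y^1$ turns the $e^{-\ps{\Y^1_\cdot,e_i}}$ into deterministic weights and the stated decorrelation of $(Z^1_t)_{t\leq s}$ against $(Z^2_t)_{t\geq h}$ as $h-s\to+\infty$ applies with $s\approx T_1+A$, $h\approx T_2-A$. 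Writing a general bounded continuous $F$ on $\R_+^2$ as a limit of sums of products $g\otimes h$ by Stone--Weierstrass (valid since both integrals are nonnegative and tight) and combining the two decouplings yields $\E_{u(\bm x)+z}[F(\mathrm{J}_1(\Y^1),\mathrm{J}_2(\Y^1))]\to\expect{F(J(\ps{\nu,\rho}),J'(\ps{\nu,e_2}))}$ with $J,J'$ independent of law~\eqref{eq:def_J}, as wanted. The delicate points are: (i) justifying rigorously the time-reversal identification of the local limit near $T_1$ (resp.\ $T_2$) with $\mathcal{B}^{\ps{\nu,\rho}}$ (resp.\ $\mathcal{B}^{\ps{\nu,e_2}}$), which requires discarding the subleading drift corrections — the $\mathcal{E}_1^\eta$-conditioning of the first phase, the near-boundary Bessel corrections, the remainders of~\eqref{equ:drift_approx1'} and~\eqref{equ:drift_approx2'} — by comparison, and checking that the genuinely two-dimensional second-phase diffusion has the claimed one-dimensional conditioned marginal; (ii) the uniformity in the starting point needed in the previous step; and (iii) reconciling the randomness of $T_1$ (a functional of $\Y^1$) with the independent time-shift of $(Z^1,Z^2)$, handled by conditioning on $\Y^1$ first. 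The localisation and hitting-time estimates are of the same routine Gaussian/Laplace flavour as those already carried out in this section.
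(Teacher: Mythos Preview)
Your approach is essentially the same as the paper's: localize each integral $\mathrm{J}_i$ to a time window around the hitting time of the corresponding wall, identify the one-dimensional local limit via Williams' time-reversal, and decouple using the strong Markov property of $\Y^1$ together with the asymptotic decorrelation of $(Z^1,Z^2)$. The only cosmetic differences are that the paper localizes using the level-crossing times $T_1',T_2$ (first/last times a coordinate crosses the threshold $\eta\ps{\bm x,e_2}$) rather than fixed windows $[T_{\partial\mathcal{C}_i}\pm A]$, and applies Markov directly to the general bounded continuous $F$ rather than reducing to tensor products via Stone--Weierstrass; neither change affects the argument.
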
 
	Assuming for now that such a statement does indeed hold, we see recovering terms that 
	\[
	\E_{\bm x}\left[F(\mathrm  J_1,\mathrm  J_2)\vert\mathcal{E}_1\right]\sim \expect{F(J(\ps{\nu,\rho}),J'(\ps{\nu,e_2})},
	\]
	and $\E_{\bm x}\left[F(\mathrm  J_1,\mathrm  J_2)\vert\mathcal{E}_1\right]\sim\E_{\bm x}\left[F(\mathrm  J_1(\Y^1),\mathrm  J_2(\Y^1))\right]$.
	As a consequence using Proposition~\ref{prop:asym_prob_A2} we can write that
	\[
	\E_{\bm x}\left[F(\mathrm  J_1,\mathrm  J_2)\right]=\P_{\bm x}(\mathcal{E}_1)\expect{F(J(\ps{\nu,\rho}),J'(\ps{\nu,e_2})}+\P_{\bm x}(\mathcal{E}_2)\expect{F(J(\ps{\nu,\rho}),J'(\ps{\nu,e_2})}+o(1),
	\]
	where besides $\P_{\bm x}(\mathcal{E}_1)\to1$ and $\P_{\bm x}(\mathcal{E}_2)\to 0$ in the asymptotic where $\ps{s_1s_2\nu-s_2s_1\nu,\bm x}\to+\infty$. Therefore in this asymptotic we both have that 
	\[
	\E_{\bm x}\left[F(\mathrm  J_1,\mathrm  J_2)\right]\sim \expect{F(J(\ps{\nu,\rho}),J'(\ps{\nu,e_2})}\quad\text{and}\quad \E_{\bm x}\left[F(\mathrm  J_1,\mathrm  J_2)\right]\sim \expect{F(\mathrm J_2(\Y^1),\mathrm J_2(\Y^1)},
	\]
	concluding for the proof of Proposition~\ref{prop:asymptot_A2} and~\ref{prop:asymptot_J} for the $A_2$ case.
	\begin{proof}[Proof of Lemma~\ref{lemma:J_A2}]
		Let us split the integrals involved as
		\begin{align*}
			\int_{0}^{T_1'}e^{-\ps{\Y^1_t,e_1}}Z^1_tdt+\int_{T_1'}^{+\infty}e^{-\ps{\Y^1_t,e_1}}Z^1_tdt,\quad
			\int_{0}^{T_2}e^{-\ps{\Y^1_t,e_2}}Z^2_tdt+\int_{T_2}^{+\infty}e^{-\ps{\Y^1_t,e_2}}Z^2_tdt.
		\end{align*}
		Like before the integrals $\int_{T_1'}^{+\infty}e^{-\ps{\Y^1_t,e_1}}Z^1_tdt$ and $\int_{0}^{T_2}e^{-\ps{\Y^1_t,e_2}}Z^2_tdt$ will become negligible in the $\bm x\to\infty$ limit, whence by the Markov property for $\Y^1$
		\begin{align*}
			&\E_z\left[ F\left(\int_{0}^{+\infty}e^{-\ps{\Y^1_t,e_1}}Z^1_tdt,\int_{0}^{+\infty}e^{-\ps{\Y^1_t,e_2}}Z^2_tdt\right)\right]\\
			&=\int_{v+\partial\mathcal{C}_1}\mathbb{P}_z\left(\Y^1_{T_2}\in dz'\right)\E_{z,z'}\left[ F\left(\int_{0}^{T_1'}e^{-\ps{\Y^1_t,e_1}}Z^1_tdt,\int_{0}^{+\infty}e^{-\ps{\tilde{\Y}^1_t,e_2}}Z^2_{t+T_2}dt\right)\right]+o(1)
		\end{align*}
		where $v=\eta\ps{\bm x,e_2}\omega_2$ and under $\E_{z,z'}$ the processes $\Y^1$, $\tilde{\Y}^1$ are independent and started respectively from $z$ and $z'$. We have seen before that after $T_2$, the $e_1$ component of the process $\Y^1$ was with high probability very large and therefore that the $e_2$ component of the process for $t>T_2$ behaves like a one-dimensional Brownian motion with negative drift $-\ps{\nu,e_2}$ and variance $2$ upon hitting the origin and positive drift $\ps{\nu,e_2}$ conditioned to stay positive after having hit $0$, and only depends of $z'$ through $\ps{z',e_2}=\eta\ps{\bm x,e_2}$. The same applies for the $e_1$ component of the process before time $T_1'$. Moreover we have already seen that $T_2-T_1'\to\infty$ almost surely, whence the processes $(Z^1_t)_{t\leq T_1}$ and $(Z^2_t)_{t\geq T_2}$ decorrelate as $\bm x\to\infty$.
		As a consequence and based on~\cite[Chapter 5-Proposition 2.18]{KS} we see that by stationarity of $Z$ 
		\begin{align*}
			&\E_z\left[ F\left(\int_{0}^{+\infty}e^{-\ps{\Y^1_t,e_1}}Z^1_tdt,\int_{0}^{+\infty}e^{-\ps{\Y^1_t,e_2}}Z^2_tdt\right)\right]\\
			&\sim\int_{v+\partial\mathcal{C}_1}\mathbb{P}_z\left(\Y^1_{T_2}\in dz'\right)\E_{\eta\ps{\bm x,e_2},\eta\ps{\bm x,e_2}}\left[ F\left(\int_{0}^{T_1'}e^{-B_t^{\ps{\nu,\rho}}}Z^1_tdt,\int_{0}^{+\infty}e^{-\tilde{B}^{\ps{\nu,e_2}}_t}Z^2_tdt\right)\right]+o(1)
		\end{align*} 
		which is independent of $z$ and converges to $\expect{F(J(\ps{\nu,\rho}),J'(\ps{\nu,e_2})}$ as desired.
	\end{proof}
	
	
	\subsection{The general case}\label{subsec:proof_prop}
	In the general case the main ideas remain unchanged and the arguments are the same. Therefore in this subsection we mostly shed light on the results that differ from the study of the $A_2$ case. We will consider $s=s_1\cdots s_r$ in the sequel.

	\subsubsection{Decomposition of the path}
	In the same way as above, we see that the we are able to provide another decomposition of the path, based on the analogs of times $T_1,T_1',T_2$. The main difference will be that in the general case we have to split $s$ as 
	\[
	s=(s_1\cdots s_{i_0-1})(s_{i_0}\cdots s_{i_1-1})\cdots (s_{i_{p-1}}\cdots s_r)=:w_0\cdots w_p
	\] 
	where for all $0\leq k\leq p$ and $i_{k-1}\leq j,l\leq i_{k}-1$, the reflections $s_{j}$ and $s_l$ commute (via the convention $i_{-1}=1$ and $i_p=r+1$). In particular we have $s e_j=-w_1\cdots w_p e_j$ for all $j<i_0$.
	
	Namely, we already know from Proposition~\ref{prop:asym_prob} that the event $\mathcal{E}_s$ that the process stays inside the domain $\mathcal{U}_s$ has probability $1-o(1)$. Therefore before reaching $\partial\mathcal{C}$ the process has a drift
	\[
	\nabla\log\partial_{1,\cdots,r}h^\M(z)=\sum_{w\in W_{1,\cdots,r}}\frac{\epsilon(w)\lambda_w e^{\ps{w\nu,z}}}{U(z)}w\nu,\quad U(z)=\sum_{w\in W_{1,\cdots,r}}\epsilon(w)\lambda_w e^{\ps{w\nu,z}},
	\]
	which can be put under the form $s\nu+\mathfrak R_1(z)$ where $\norm{\mathfrak R_1(z)}\leq Ce^{-\eta\ps{\bm x,u}}$ for any $x\in\mathcal{U}_s$ and where $u$ is some vector inside $\mathcal{C}$, up to an event with probability asymptotically $0$. 
	The $e_i$ component of this drift is negative and given by $-\ps{w_1\cdots w_p\nu,e_j}$ for $j<i_0$, but positive equal to $\ps{w_0\cdots w_p\nu,e_j}$ for $j\geq i_0$. We stress that before $T_{\partial\mathcal{C}}$ the $e_j$ components of the path are bounded below for $j\geq i_0$, but this is not the case for $j<i_0$.
	
	After having hit $\partial\mathcal{C}$ (say over $\partial\mathcal{C}_1$) for the first time, the drift of the process will be given by
	$\nabla\log\partial_{2,\cdots,r}h^\M$. Denoting by $\B^\nu_{T_{\partial\mathcal{C}}}$ the location of the process when first hitting $\partial\mathcal{C}$ it is readily seen that it satisfies $\ps{s_2\cdots s_r\nu-s'\nu,\B^\nu_{T_{\partial\mathcal{C}}}}\to+\infty$ for all $s'\in W_{2,\cdots,r}$ as $\bm x\to\infty$ with probability tending to $1$. Indeed let us set $r_1$ to be equal to $s_1$ if $s'\not\in W_{1,\cdots,r}$ and $\mathrm Id$ if $s'\in W_{1,\cdots,r}$, so that $r_is'\in W_{1,\cdots,r}$. Then we can write that
	\[
	\ps{s_{2}\cdots s_r\nu-s'\nu,\B^\nu_{T_{\partial\mathcal{C}}}}=\ps{s_1\cdots s_r\nu-r_1 s'\nu,\B^\nu_{T_{\partial\mathcal{C}}}},
	\]
	since $s_1\B^\nu_{T_{\partial\mathcal{C}}}=\B^\nu_{T_{\partial\mathcal{C}}}$. We can further write 
	\[
	\ps{s_{2}\cdots s_r\nu-s'\nu,\B^\nu_{T_{\partial\mathcal{C}}}}=\ps{s\nu-r_1 s'\nu,\bm x}+\ps{s\nu-r_1 s'\nu,\B^\nu_{T_{\partial\mathcal{C}}}-\bm x},
	\]
	where the first term diverges to $+\infty$ since $r_1s'\in W_{1,\cdots,r}$, while the second one is positive (since $\B^\nu_{T_{\partial\mathcal{C}}}-\bm x$ stays between $s\nu t_0\pm C\sqrt{t_0}^{1+\eps}$ with probability $1-o(1)$).
	
	Therefore we can proceed in the same way after $T_{\partial\mathcal{C}}$: up to an event of probability $1-o(1)$, before hitting $\partial\mathcal{C}$ for the second time the process will have a drift given by \[
	\nabla\log\partial_{2,\cdots,r}h^\M(z)=\frac{s_2\cdots s_r\nu-s_1\cdots s_r\nu e^{-\ps{s_2\cdot s_r\nu,e_1}\ps{z,\nu}}}{1-e^{-\ps{s_2\cdot s_r\nu,e_1}\ps{z,\nu}}}+\mathfrak R_2(z)
	\]
	with $\norm{\mathfrak R_2(z)}\leq Ce^{-\eta\ps{\bm x,u}}$ close to $\B^\nu_{T_{\partial\mathcal{C}}}$. Now for $j< i_0$ the $e_j$ component of the drift is negative and given by $\ps{s_2\cdots s_r\nu,e_j}<0$ but is still positive for $j\geq i_0$ as soon as $i_0>2$.
	Therefore before hitting $\partial\mathcal{C}$ for the second time and under the assumption that $i_0>2$ we see that the event that $\ps{\B^\nu,e_j}>\eta\ps{\bm x,e_j}$ for $j\geq i_0$ has probability asymptotically $1$.
	
	A similar scheme will be carried out until the process has hit all the boundary components $\partial\mathcal{C}_j$ for $j<i_0$. After this event there will be a time $T_1'$ such that after $T_1'$ the $e_j$ components of the path will be bounded below by $\eta\ps{\bm x,e_j}$ for $j<i_0$. In addition the time $T_2=\inf\limits_{t\geq 0}\left\{\ps{\B^\nu_t,e_j}<\eta\ps{\bm x,e_j}\text{ for some }j\geq i_0\right\}$ will be such that $T_2-T_1'\to+\infty$, and $T_2$ the drift of the process is given by $w_1\cdots w_p\nu$. The components of the drift corresponding to $j\geq i_1$ or $j<i_0$ will remain bounded below but that corresponding to $i_0\leq j < i_1$ will not be necessary bounded below. The process will then hit all the components of $\partial\mathcal{C}$ of the form $\partial\mathcal{C}_j$ for $i_0\leq j< i_1$, and after having hit all these parts of $\partial\mathcal{C}$ there will be a time $T_2'$ such that after $T_2'$ the components associated to the roots $e_j$ for $i_0\leq j<i_{1}$ are bounded below, and $T_3-T_2'\to+\infty$ with $T_3=\inf\limits_{t\geq 0}\left\{\ps{\B^\nu_t,e_j}<\eta\ps{\bm x,e_j}\text{ for some }j\geq i_1\right\}$. The same scheme will be repeated until the process has hit all the boundary components; after that the process will have drift $\nu$ conditioned to stay inside $\mathcal{C}$.
	
	In a nutshell, we see that the path can be divided between times $T_1<T_1'<T_2<\cdots<T_p<T_p'=+\infty$ such that
	\begin{itemize}
		\item Between $T_{k}$ and $T_{k}'$, the process will contribute only to the integrals $\mathrm{J}_j$ where $j$ ranges over $\{i_{k-1},\cdots,i_k-1\}$, since the $e_j$ components of the process for $j\not\in\{i_{k-1},\cdots,i_k-1\}$ will be bounded below by some constants of the form $\eta\ps{\bm x,e_j}$. Between $T_{k}'$ and $T_{k+1}$ all the $e_j$ components of the path are bounded below by $\eta\ps{\bm x,e_j}$
		\item For such $j\in\{i_{k-1},\cdots,i_k-1\}$, the $e_j$ component of the process $\B^\nu$ between $T_k$ and $T_{k}'$ will behave like a Brownian motion with negative drift $-\ps{w_{k+1}\cdots w_p\nu,e_i}$ and variance $\ps{e_i,e_i}$ upon hitting the origin, where it will be reflected and will have the law of a one-dimensional Brownian motion with positive drift $\ps{w_{k+1}\cdots w_p\nu,e_i}$ and conditioned to stay positive. The $e_j$ components for $j\not\in\{i_{k-1},\cdots,i_k-1\}$ will have a positive drift.
		\item With probability tending to $1$ as $\bm x\to\infty$, all the time increments $T_{k}'-T_k$ and $T_{k+1}-T_k'$ will diverge to $+\infty$.
	\end{itemize}
	
	\subsubsection{Proof of Proposition~\ref{prop:asymptot_J}}
	We are now ready to address the proof of Proposition~\ref{prop:asymptot_J}. Based on the above decomposition for the path, we can split the integrals involved, for $1\leq j\leq r$, as:
	\[
	\mathrm{J}_j=\int_{T_{k}}^{T_{k}'}e^{-\ps{\B^\nu_t,e_j}}Z_t^j dt + \int_{[0,T_{k})\cup(T_k',+\infty)}e^{-\ps{\B^\nu_t,e_j}}Z_t^j dt
	\]
	where $k$ is such that $i_{k-1}\leq j\leq i_k-1$. 
	Like in the rank two case the second integral will vanish in the limit since for $t\not\in[T_{k-1},T_k']$ the $e_j$ component of the process can be bounded below by $\eta\ps{\bm x,e_j}$.
	
	Now between time $T_k$ and $T_k'$, the $e_j$ components of the process $\B^\nu$ for $i_{k-1}\leq j\leq i_k-1$ can be approximated by a Brownian motion with drift $-\ps{w_{k+1}\cdots w_p,e_j}$ and variance $\ps{e_j,e_j}$ (joined with its reflection on the origin). Moreover since for such $j,l$ we have $\ps{e_j,e_l}=0$ we know that these Brownian motions are independent.
	Likewise, because $T_{k+1}-T_k'$ diverges to $+\infty$ with probability asymptotically $1$, we can use the Markov property in the same way as in the $A_2$ case to see that the integrals that run over distinct intervals decorrelate in the limit. 
	With these two decorrelations at hand we see that in the end for any bounded continuous functions $F_j:\R\to\R$
	\[
	\E_{\bm x}\left[\prod_{j=1}^rF_j\left(\mathrm J_j\right)\right]\sim \prod_{j=1}^r\E_{x_j}\left[F_j\left(\int_{0}^{+\infty}e^{-B^j_t}Z_t^j dt\right)\right],
	\]
	where $x_j$ is some positive number that diverges to $+\infty$ as $\bm x\to\infty$, and where $B^j$ is a Brownian motion with variance $\ps{e_j,e_j}$, and negative drift $-\ps{w_{k+1}\cdots w_p,e_j}$ upon hitting the origin and positive drift  $\ps{w_{k+1}\cdots w_p,e_j}$ conditioned to stay positive after. The latter does indeed converge towards
	\[
	\prod_{j=1}^r\E_{+\infty}\left[F_j\left(\int_{0}^{+\infty}e^{-B^j_t}Z_t^j dt\right)\right],
	\]
	concluding the proof of Proposition~\ref{prop:asymptot_J}.
	


	\section{Asymptotic expansions: class one Whittaker functions, Gaussian Multiplicative Chaos measures and Toda Vertex Operators}\label{sec:whittaker}
	In the two previous sections we have described a path decomposition for a drifted Brownian motion $B^\nu$ over an Euclidean space and investigated the behaviour of this process in the asymptotic where its generalized minimum $\M$ diverges inside the negative Weyl chamber $\mathcal{C}_-$. Based on the properties of the process thus conditioned we infer in this section the asymptotic expansions of some probabilistic objects defined from $B^\nu$: class one Whittaker functions, Gaussian Multiplicative Chaos measures and Toda Vertex Operators.
	
	As explained in the introduction, class one Whittaker functions as well as Toda CFTs can be constructed from semisimple and complex Lie algebras $\mathfrak{g}$, so that the framework presented in Subsection~\ref{subsection_background} naturally arises in this context. Namely to $\mathfrak{g}$ is naturally attached an Euclidean vector space $\V$ equipped with a scalar product $\ps{\cdot,\cdot}$. This space corresponds to $\mathfrak{a}$, the real part of the Cartan subalgebra of $\mathfrak{g}$, while the scalar product is inherited from the Killing form of $\mathfrak{g}$. One way to realize it is to take $\V=\R^r$ equipped with its standard scalar product, where $r$ is the rank of $\mathfrak{g}$, the reflection group $W$ that acts on $\V$ being generated by the reflections with respect to the simple roots $(e_1,\cdots,e_r)$ of $\mathfrak{g}$. We represent them as elements in $\R^r$, normalized by assuming that the longest roots have norm $\sqrt2$ and satisfy
	\begin{equation}
		\ps{e_i^\vee,e_j}=A_{i,j}
	\end{equation} 
	with $A$ the Cartan matrix of $\mathfrak{g}$. For future convenience we set $e_i^*\coloneqq \frac{e_i}{\sqrt{\ps{e_i,e_i}}}\cdot$
	
	\subsection{The probabilistic framework}
	We provide here the definition of class one Whittaker functions, Gaussian Multiplicative Chaos measures and Toda Vertex Operators. To do so we describe the probabilistic environment in which they evolve.
	
	\subsubsection{Class one Whittaker functions}
	Class one Whittaker functions $\Psi_\mu$ enjoy the remarkable feature that they admit a probabilistic representation~\cite[Proposition 5.1]{BOc} when $\mu$ lies in the fundamental Weyl chamber $\mathcal{C}$. This representation takes the form
	\begin{equation}
		\Psi_\mu(\bm x)=b(\mu)e^{\ps{\mu, \bm x}}\expect{\exp\left(-\sum_{i=1}^r\frac{\ps{e_i,e_i}}2e^{-\ps{\bm x,e_i}}\int_0^{\infty}e^{-\ps{B^\mu_t, e_i}}dt\right)}
	\end{equation}
	where $B^\mu$ is a Brownian motion on $\V$ with drift $\mu$, and with the coefficient $b(\mu)$ appearing in this expression equal to
	\begin{equation}
		b(\mu)\coloneqq \prod_{e\in\Phi^+}\Gamma\left(\ps{\mu,e^\vee}\right).
	\end{equation} 
	The above representation of the class one Whittaker functions is very much similar to the probabilistic expression of Toda Vertex Operators. In order to provide a meaningful definition of the latter, we first need to recall some background on Gaussian Free Fields (GFFs) and Gaussian Multiplicative Chaos (GMC hereafter). 
	\subsubsection{Gaussian Free Fields}\label{subsec:GFF}
	GFFs are random distributions which, in the study of the Toda CFTs, are defined over $\C$ and have values in the Euclidean space $\V$. These Gaussian random distributions are characterized by their covariance kernels, which in the present case is given by
	\begin{equation}
		\E[\ps{u,\X(x)}\ps{v,\X(y)}]=\ps{u,v}  G(x,y),\quad x\neq y\in\C
	\end{equation}
	for any $u,v\in\V$, and where $G$ is the Green kernel of the Laplace operator on $\C$ in the metric $\frac{\norm{d^2z}}{\norm{z}_+^4}$:
	\begin{equation}
		G(x,y)=\ln\frac1{\norm{x-y}}+\ln\norm{x}_ + + \ln\norm{y}_+.
	\end{equation}
	Here we denoted $\norm{x}_+\coloneqq\max\left(\norm{x},1\right)$. More details on this object can be found \emph{e.g.} in~\cite{dubedat, She07}.
	
	One remarkable property of the GFF defined above (and which allows to make the connection with Whittaker functions) is its relationship with Brownian motions. Indeed, let us consider the expression of the GFF in polar coordinates and write 
	\begin{equation}\label{eq:rad_ang_dec}
		\X(e^{-t+i\theta})= B_t+Y(t,\theta)
	\end{equation}
	where 
	\[
	B_t\coloneqq \frac{1}{2\pi}\int_0^{2\pi}\X(e^{-t+i\theta})d\theta\quad\text{and}\quad Y(t,\theta)\coloneqq \X(e^{-t+i\theta})- B_t.
	\]
	Then from their covariance kernels we see that the process $(B_t)_{t\geq0}$ is a Brownian motion on $\R^r$ started from $0$  and independent of $Y$. Moreover $Y$ has covariance kernel given by 
	\begin{equation}\label{eq:cov_Y}
		\E[\ps{u,Y(t,\theta)}\ps{v,Y(t',\theta')}]=\ps{u,v}  \ln\frac{e^{-t}\vee e^{-t'}}{\norm{e^{-t+i\theta}-e^{-t'+i\theta'}}}
	\end{equation}
	for any $u,v\in\V$. In the sequel we may consider the restriction of the field to the unit disk $\D$, which corresponds to the domain $t\geq0$.
	
	There is yet another decomposition of the GFF that we may be interested in. It consists in setting
	\begin{equation}\label{equ:decomp_dir}
		\X=\X_0+P\varphi
	\end{equation} 
	where $\varphi$ is the trace of the field $\X$ on $\partial\D$ and $P\varphi$ its harmonic extension inside $\D$. In this decomposition $\X_0$ has the law of a GFF inside $\D$ with Dirichlet boundary conditions and is independent from $\varphi$. Since both $\X$ and $\X_0$ have zero-mean on $\partial\D$, harmonicity of $P\varphi$ inside $\D$ implies that $P\varphi(0)=0$.
	
	\subsubsection{Gaussian Multiplicative Chaos}\label{subsec:GMC}
	The probabilistic interpretation of the path integral defining Toda CFTs requires the introduction of the exponential of the field $\X$. However the latter being highly non-regular a regularization procedure known as Gaussian Multiplicative Chaos~\cite{Kah,RV_GMC} is necessary to give it a rigorous meaning.
	
	To do so, one first introduces a regularization $\X_\eps$ of the GFF by setting
	\begin{equation}\label{regularization}
		\X_\eps\coloneqq \X*\eta_\eps=\int_\C \X(\cdot-z)\eta_\eps(z)d^2z
	\end{equation}
	where $\eta_\eps\coloneqq \frac1{\eps^2}\eta(\frac{\cdot}{\eps})$ is a smooth, compactly supported mollifier. The field $\X_\eps$ thus obtained is now a smooth function, so we can take its exponential and set a random measure over $\D$ via
	\begin{equation}
		M_\eps^{\gamma e_i}(d^2x)\coloneqq \:  e^{  \langle\gamma e_i,  \X_\eps(x) \rangle-\frac12\expect{\langle\gamma e_i, \X_\eps(x) \rangle^2}} d^2x.
	\end{equation}
	The limit as $\eps\rightarrow0$ of this random measure is well-defined and is called a \emph{Gaussian Multiplicative Chaos measure}. More precisely, assume that $\gamma<\sqrt 2$\footnote{The assumption that $\gamma<\sqrt 2$ is the optimal one to ensure that all the GMC measures for $1\leq i\leq r$ are well-defined (at least in the subcritical regime). The fact that we need to assume that $\gamma<\sqrt 2$ instead of the usual bound $\gamma<2$ stems from the fact that the longest roots have squared norm $2$. To recover the usual range of values one would need to rescale $\gamma$ by a multiplicative factor $\sqrt 2$ to take into account the fact that some roots are not normalized.}. Then for $1\leq i\leq r$, within the space of Radon measures equipped with the weak topology, the following convergence holds in probability
	\begin{equation}
		M^{\gamma e_i}(d^2x)\coloneqq \underset{\eps \to 0}{\lim} M_\eps^{\gamma e_i}(d^2x),
	\end{equation} 
	where  $M^{\gamma e_i}$ is a non-trivial random measure~\cite{Ber,Sha}.
	
	The polar decomposition~\eqref{eq:rad_ang_dec} of the field $\X$ provides an alternative way of defining the GMC measure defined above. Namely the measure $M^{\gamma e_i}$ can be put under the form
	\begin{equation}\label{equ:radial_angular}
		M^{\gamma e_i}(d^2x)=e^{\gamma\ps{B_t-Qt,e_i}} M_Y^{\gamma e_i}(dt,d\theta)
	\end{equation}
	with $M_Y$ the GMC measure defined from $Y$. For $1\leq i\leq r$, we further introduce the measure which corresponds to the average of $M_Y^{\gamma e_i}$ over the circle of radius $e^{-t}$,
	\begin{equation}\label{equ:radial_angular_Z}
		Z_t^i\coloneqq \int_{0}^{2\pi}M_Y^{\gamma e_i}(t,\theta)d\theta.
	\end{equation}
	From~\cite{RV_GMC} we recall that the integral of $Z$ is a well-defined random variable with finite moments up to order $p<\frac{2}{\gamma^2}$:
	\begin{lemma}\label{lemma:moments_Z}
		For $i=1,\cdots,r$ and any bounded and non-trivial interval $I$
		\begin{equation}
			\expect{\left(\int_I Z_t^i\right)^p}<\infty
		\end{equation}
		for $-\infty<p<\frac{2}{\gamma^2}$.
	\end{lemma}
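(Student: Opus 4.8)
The plan is to identify $\int_I Z^i_t$ with the mass assigned by the Gaussian Multiplicative Chaos measure $M_Y^{\gamma e_i}$ to a fixed bounded region of the unit disk, and then to invoke the classical moment bounds for subcritical planar GMC recalled in~\cite{RV_GMC}.

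By~\eqref{equ:radial_angular_Z}, for a bounded non-trivial interval $I\subset[0,\infty)$ one has, under the parametrization $(t,\theta)\mapsto e^{-t+i\theta}$,
\[
\int_I Z^i_t\,dt \;=\; M_Y^{\gamma e_i}\big(A_I\big),\qquad A_I\coloneqq\left\{e^{-t+i\theta}\,:\, t\in I,\ \theta\in[0,2\pi)\right\},
\]
and $\overline{A_I}$ is a compact subset of $\overline{\D}\setminus\{0\}$. By~\eqref{eq:cov_Y} the Gaussian field $\ps{\gamma e_i,Y}$ restricted to $A_I$ is centered with covariance
\[
\gamma^2\ps{e_i,e_i}\left(-\ln\norm{x-y}+g(x,y)\right),\qquad g(x,y)\coloneqq\ln\big(\norm{x}\vee\norm{y}\big),
\]
where $g$ is continuous and bounded on $\overline{A_I}\times\overline{A_I}$ since $\norm{x}\vee\norm{y}$ ranges over a compact subset of $(0,\infty)$. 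Thus $M_Y^{\gamma e_i}$ restricted to $A_I$ is a GMC measure of parameter $\gamma'\coloneqq\gamma\sqrt{\ps{e_i,e_i}}$ built from a covariance kernel equal to a logarithmic singularity plus a bounded continuous function. The normalization fixed in Section~\ref{sec:whittaker} (longest roots of norm $\sqrt2$) gives $\ps{e_i,e_i}\leq2$, and together with $\gamma<\sqrt2$ this yields $\gamma'<2$: we are in the subcritical regime.

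For $p\geq 0$ the classical multifractal moment estimate for subcritical planar GMC (see~\cite{RV_GMC}, after~\cite{Kah}) gives $\E\big[M_Y^{\gamma e_i}(A_I)^p\big]<\infty$ for every $0\leq p<\frac{4}{\gamma'^2}=\frac{4}{\gamma^2\ps{e_i,e_i}}$, and since $\ps{e_i,e_i}\leq 2$ this range contains $[0,\tfrac{2}{\gamma^2})$. For $p<0$ one uses that the GMC mass of a fixed open set is almost surely strictly positive and admits finite moments of every negative order (again~\cite{RV_GMC}), so $\E\big[M_Y^{\gamma e_i}(A_I)^{-q}\big]<\infty$ for all $q>0$. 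Combining the two ranges gives the claim for every $-\infty<p<\frac{2}{\gamma^2}$.

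This is a direct reduction to known GMC estimates; the only point requiring attention is that the intensity of the relevant field is $\gamma^2\ps{e_i,e_i}$ rather than $\gamma^2$, which is why the stated threshold is $\frac{2}{\gamma^2}$ — sharp exactly for long roots, where $\ps{e_i,e_i}=2$, and automatically valid (though not sharp) for short roots. No genuine obstacle is expected.
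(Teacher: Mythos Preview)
Your argument is correct and matches the paper's treatment: the paper does not give a proof but simply records the lemma as a recall from~\cite{RV_GMC}, and what you have written is exactly the reduction one performs to see that this citation applies, including the key observation that the effective GMC parameter is $\gamma\sqrt{\ps{e_i,e_i}}\leq\gamma\sqrt2<2$. Your remark that the threshold $2/\gamma^2$ is sharp only for long roots is a nice additional observation not made explicit in the paper.
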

	
	\subsubsection{Toda Vertex Operators}
	The parameter $\gamma\in(0,\sqrt 2)$ that enters the definition of the GMC corresponds to the so-called \emph{coupling constant} of Toda CFTs.  Toda CFTs depend as well on the so-called cosmological constants $\mu_1,\cdots,\mu_r>0$. The \emph{background charge} $Q$ is also key in their studies; it is an element of $\mathfrak a$ defined as
	\begin{equation}\label{eq:background_charge}
		Q\coloneqq \gamma\rho+\frac2\gamma\rho^\vee
	\end{equation}
	where recall that $\rho$ is the Weyl vector of $\mathfrak g$, and $\rho^\vee$ is defined as the half-sum of \emph{dual} positive roots. For future convenience we also consider the reflection group generated by the reflections associated to the simple roots but centered at $Q$. More precisely for $s\in W$ we set  
	\begin{equation}\label{eq:hat_s}
		\hat s(\alpha)\coloneqq Q+s(\alpha-Q).
	\end{equation}
	Within the probabilistic framework of Toda CFT, the primary fields admit a probabilistic expression involving the GMC measures introduced above. Namely recall that in agreement with Equation~\eqref{equ:decomp_dir} we can write the GFF $\X$ under the form $\X=\X_0+P\varphi$. Based on this writing we can define Toda primary fields for $\alpha$ belonging to the shifted Weyl chamber $Q+\mathcal{C}_-$ as the functionals
	\begin{equation}\label{equ:psi_alpha}
		\psi_{\alpha}(\bm c,\varphi)\coloneqq e^{\ps{\alpha-Q,\bm c}}\E_{\varphi}\left[\exp\left(-\sum_{i=1}^r\mu_ie^{\gamma \ps{\bm c,e_i}}I_i\right)\right]
	\end{equation}
	where $\bm c$ is any vector in $\V$ while $I_i$ is given by the GMC measure with respect to the GFF $\X=\X_0+P\varphi$
	\begin{equation}
		I_i\coloneqq\int_{\mathbb D}\norm{x}^{-\gamma\ps{\alpha,e_i}}M^{\gamma e_i}(d^2x).
	\end{equation}
	Finally $\E_{\varphi}$ is the conditional expectation with respect to $\varphi$ (thus over $\X_0$) 
	From the polar decomposition~\eqref{equ:radial_angular} the above integral can be put in the form:\begin{equation}\label{equ:int_radial_angular}
		\int_{\mathbb D}\norm{x}^{-\gamma\ps{\alpha,e_i}}M^{\gamma e_i}(d^2x)=\int_0^{\infty}e^{\gamma\ps{B^\nu_t,e_i}}Z_t^idt,
	\end{equation}
	where $B^\nu$ is a Brownian motion on $\R^r$ with drift $\nu=\alpha-Q\in\mathcal{C}_-$.
	For future convenience we set $\frac{1}{\gamma}\ln\bm\mu\coloneqq\frac1\gamma\sum_{i=1}^r\ln(\mu_i)\omega_i^\vee$. 
	
	
	\subsection{Statement of the results}
	Thanks to these probabilistic representations and based on our generalized path decomposition, we are able to provide an asymptotic expansion of class one Whittaker functions (resp. Toda Vertex Operators) in the limit where $\bm x\to\infty$ (resp. $\bm c\to\infty$) inside the Weyl chamber $\mathcal{C}$ (resp. $\mathcal{C}_-$). 
	
	Our first statement is concerned with Whittaker functions. In particular it is reminiscent of the invariance of the Whittaker functions under the action of the Weyl group: $\Psi_{\mu}=\Psi_{s\mu}$ for $s\in W$.
	\begin{theorem}\label{thm:whittaker_expansion}
		Assume that $\mu\in\mathcal{C}$ is such that $\ps{s\mu-\mu,\omega_i^\vee}>-1$ for all $1\leq i\leq r$ and $s\in W$. 
		Then, in the limit where $\bm x\to\infty$ inside $\mathcal{C}$ with $\ps{s_1s_2\mu-s'\mu,\bm x}\to+\infty$ for all $s'\neq s_1s_2\in W$ with length greater than or equal to two, the class one Whittaker functions admit the asymptotic expansion
		\begin{equation}
			\Psi_\mu(\bm x)=b(\mu)e^{\ps{\mu,\bm x}}+\sum_{i=1}^r b(s_i\mu)e^{\ps{s_i\mu,\bm x}} + b(s_1s_2\mu)e^{\ps{s_1s_2\mu,\bm x}} + u 
		\end{equation}
		where $e^{-\ps{s_1s_2\mu,\bm x}}u\to 0$.
	\end{theorem}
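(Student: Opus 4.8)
The plan is to start from the probabilistic representation displayed in the statement, which we rewrite as
\[
\Psi_\mu(\bm x)=b(\mu)e^{\ps{\mu,\bm x}}\,\E\Big[\exp\Big(-\sum_{i=1}^r c_i\,e^{-\ps{\bm x,e_i}}J_i\Big)\Big],\qquad c_i:=\tfrac{\ps{e_i,e_i}}{2},\quad J_i:=\int_0^\infty e^{-\ps{B^\mu_t,e_i}}\,dt,
\]
with $B^\mu$ a Brownian motion on $\V$ of drift $\mu\in\mathcal C$, so that each $J_i$ is a.s.\ finite with a polynomial tail. Since $\ps{\bm x,e_i}\to+\infty$ for all $i$ as $\bm x\to\infty$ inside $\mathcal C$, the exponential inside the expectation tends to $1$ and produces the leading term $b(\mu)e^{\ps{\mu,\bm x}}$; the subleading terms are governed entirely by the joint heavy tail of the vector $(J_1,\dots,J_r)$, which is where the material of Section~\ref{section:brown_inf} enters.

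The first step is the exact inclusion--exclusion identity, with $a_i:=c_ie^{-\ps{\bm x,e_i}}J_i$,
\[
\frac{\Psi_\mu(\bm x)}{b(\mu)e^{\ps{\mu,\bm x}}}=\sum_{S\subseteq\{1,\dots,r\}}(-1)^{|S|}\,\E\Big[\prod_{i\in S}\big(1-e^{-a_i}\big)\Big],
\]
combined with the elementary identity $\E[\prod_{i\in S}(1-e^{-\lambda_iJ_i})]=\big(\prod_{i\in S}\lambda_i\big)\int e^{-\sum_{i\in S}\lambda_iu_i}\,\P(J_i>u_i\ \forall i\in S)\prod du_i$, which turns each summand into a Laplace transform of the joint tail of $(J_i)_{i\in S}$. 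The plan is to show that the $S$-term contributes, at leading order, a constant multiple of $e^{\ps{s_S\mu-\mu,\bm x}}$ where $s_S$ is the ordered product $\prod_{i\in S}s_i$ of the corresponding simple reflections, with constant exactly $b(s_S\mu)/b(\mu)$; then $S=\varnothing$ gives $b(\mu)e^{\ps{\mu,\bm x}}$, the singletons give $\sum_i b(s_i\mu)e^{\ps{s_i\mu,\bm x}}$, and — because the prescribed regime forces $e^{\ps{s'\mu,\bm x}}=o\!\big(e^{\ps{s_1s_2\mu,\bm x}}\big)$ for every $s'\neq s_1s_2$ of length $\ge 2$ — among all remaining subsets only $S=\{1,2\}$ produces a term above the declared remainder threshold, all other subsets being absorbed into $u$.

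The heart of the matter is the joint tail of $(J_i)_{i\in S}$. For $1-e^{-a_i}$ to be non-negligible one needs $J_i\gtrsim e^{\ps{\bm x,e_i}}$, i.e.\ $\inf_t\ps{B^\mu_t,e_i}\approx-\ps{\bm x,e_i}$; hence the relevant event pins the generalized minimum $\M(B^\mu)$ near $-\bm x$. Conditioning on $\M$ via Proposition~\ref{prop:max_drift} and Corollary~\ref{cor:brownien_drift}, the process becomes $\M+\B^\mu$ with $\B^\mu$ started from $-\M\to\infty$ inside $\mathcal C$ along the ray dictated by $\bm x$ — precisely the regime of Proposition~\ref{prop:asymptot_J}, which we apply with the (admissible) deterministic weights $Z^i\equiv 1$. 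That proposition gives the asymptotic decorrelation of the coordinates and the convergence of the $i$-th one, after the appropriate reindexing, to an independent one-dimensional exponential functional $J(\beta_i)$ of a Brownian motion started from $+\infty$, of drift $\beta_i:=\ps{w_i^S\mu,e_i^\vee}$ and variance $\ps{e_i,e_i}$, with $w_i^S$ the relevant partial product of the simple reflections indexed by $S$. Using Dufresne's identity to evaluate $\E[1-e^{-c_ie^{-\ps{\bm x,e_i}}J(\beta_i)}]$ in closed form — the normalization $c_i=\ps{e_i,e_i}/2$ is exactly what cancels the variance, so this equals $\E[1-e^{-e^{-\ps{\bm x,e_i}}/\gamma_{\beta_i}}]\sim\Gamma(1-\beta_i)\,\Gamma(1+\beta_i)^{-1}e^{-\beta_i\ps{\bm x,e_i}}$ — together with the telescoping identity $\sum_{i\in S}\beta_i\ps{\bm x,e_i}=\ps{\mu-s_S\mu,\bm x}$, produces the exponent $e^{\ps{s_S\mu-\mu,\bm x}}$; the constants then collapse to $b(s_S\mu)/b(\mu)$ because $\Gamma(1-x)/\Gamma(1+x)=-\Gamma(-x)/\Gamma(x)$ and the inversion set of $s_S=s_{i_1}\cdots s_{i_{|S|}}$ is precisely $\{\,s_{i_{|S|}}\cdots s_{i_{k+1}}e_{i_k}\,\}_{k}$, which matches the reindexed directions in the product formula $b(\mu)=\prod_{e\in\Phi^+}\Gamma(\ps{\mu,e^\vee})$. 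The hypothesis $\ps{s\mu-\mu,\omega_i^\vee}>-1$ for all $s\in W$ is exactly what keeps all tail exponents involved in the range where these asymptotics and the attached integrability hold.

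The main obstacle is to make the passage from Proposition~\ref{prop:asymptot_J}, stated for fixed bounded continuous test functions, to the $\bm x$-dependent observables $1-e^{-a_i}$, while at the same time carrying out the full $\M$-integration rather than merely replacing the density of $\M$ by its dominant exponential: as in the proof of Proposition~\ref{prop:asym_prob_A2}, the cancellations inside the alternating exponential-polynomial density of $\M$ are essential (the naive replacement yields a divergent integral), and they must be combined with a Laplace-type analysis and a uniform-integrability bound resting on the finiteness of moments of exponential functionals (cf.\ Lemma~\ref{lemma:moments_Z}) in order to justify exchanging the limit with the expectation and to establish $e^{-\ps{s_1s_2\mu,\bm x}}u\to 0$. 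Once this is in place, together with the constant bookkeeping above, the remaining steps are a routine adaptation of the one-dimensional Williams-type computation.
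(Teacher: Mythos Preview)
Your overall architecture matches the paper's: inclusion--exclusion into $\E\big[\prod_{i\in S}(1-e^{-a_i})\big]$, conditioning on the generalized minimum $\M$ via the path decomposition, and invoking Proposition~\ref{prop:asymptot_J} with $Z^i\equiv 1$ to decorrelate in the limit where the starting point diverges. The paper does exactly this (Subsections~4.4--4.6), including the analogue of Proposition~\ref{prop:asymptot_1_term} for the length-one terms and of Lemma~\ref{lemma:conv_refl} for the length-two terms.

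There is, however, a genuine gap in your evaluation of the constants for $|S|\ge 2$. You assert that the limiting functional $J(\beta_i)$ from Proposition~\ref{prop:asymptot_J} satisfies $c_iJ(\beta_i)\stackrel{(\mathrm{law})}{=}1/\gamma_{\beta_i}$ via Dufresne, whence $\E[1-e^{-\lambda J(\beta_i)}]\sim\Gamma(1-\beta_i)\Gamma(1+\beta_i)^{-1}\lambda^{\beta_i}$. This is not correct: Dufresne's identity describes $\int_0^\infty e^{-B^\nu_t}dt$ for a drifted Brownian motion \emph{started at the origin}, whereas the $J(\beta_i)$ produced by Proposition~\ref{prop:asymptot_J} is the exponential functional of the process $\mathcal B^{\beta_i}$ started \emph{from $+\infty$} (equivalently, of a two-sided Brownian motion conditioned to stay positive). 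The latter has finite positive moments well beyond order $\beta_i$, so $\E[1-e^{-\lambda J(\beta_i)}]$ is asymptotically \emph{linear} in $\lambda$, not $\lambda^{\beta_i}$; in particular $J(\beta_i)$ is not inverse-Gamma. For $|S|=1$ your computation is accidentally correct because one never passes to the process from infinity: marginally $c_iJ_i=1/\gamma_{\ps{\mu,e_i^\vee}}$ by Dufresne applied directly to $\ps{B^\mu,e_i}$, and a Tauberian argument gives the $\Gamma$-ratio with $\beta_i=\ps{\mu,e_i^\vee}$ (not $\ps{w_i^S\mu,e_i^\vee}$).

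What the paper does instead for $|S|\ge 2$ is to keep the $\M$-integration explicit, change variables $\M\mapsto\M+\bm x$, apply Proposition~\ref{prop:asymptot_J} \emph{inside} the integrand (by dominated convergence), and then observe that the resulting $\M$-integral \emph{factorises} into one-dimensional integrals of the form
\[
\int_\R e^{-\beta m}\,\E\!\big[1-e^{-e^{m}J(\beta)}\big]\,dm=-\Gamma(-\beta)\,\E\!\big[J(\beta)^{\beta}\big],
\]
i.e.\ a \emph{moment} of the process-from-infinity functional rather than a small-$\lambda$ Laplace asymptotic. That moment is evaluated by Proposition~\ref{prop:eval_J}, which the paper proves precisely by a limiting argument \emph{from} Dufresne. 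In short, Dufresne enters the story, but only indirectly, through the moment identity $\E\big[(\int_0^\infty e^{-\mathcal B^\nu_t}dt)^{2\nu}\big]=2^{2\nu}/\Gamma(1+2\nu)$; your shortcut of substituting $1/\gamma_{\beta_i}$ for $J(\beta_i)$ conflates two distinct random variables and yields wrong intermediate asymptotics, even though the final $\Gamma$-ratios happen to agree once the computation is carried out correctly.
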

	In what follows and in analogy with Toda Vertex Operators we will call the numbers $r_{s}(\alpha)\coloneqq \frac{b(s\mu)}{b(\mu)}$ Whittaker reflection coefficients.
	There is a counterpart statement for Toda Vertex Operators. This expansion allows to make sense of Toda reflection coefficients, which are key in the understanding of these theories as we will explain along the proof of Theorem~\ref{thm:main_result}.
	\begin{theorem}\label{thm:primary_reflection}
		Assume that $\alpha-Q\in\mathcal{C}_-$ satisfies $\ps{\hat s\alpha-\alpha,\omega_i^\vee}<\gamma$ for all $1\leq i\leq r$ and $s\in W$ (where recall the notation $\hat s$ from Equation~\eqref{eq:hat_s}).
		Then there exist non-zero real numbers $R_s(\alpha)$ such that the Vertex Operators admit the asymptotic expansion
		\begin{equation}\label{equ:primary_reflection1}
			\psi_\alpha(\bm c,\varphi)=e^{\ps{\alpha-Q,\bm c}}+\sum_{i=1}^rR_{s_i}e^{\ps{s_i(\alpha-Q),\bm c}}+R_{s_1s_2}(\alpha)e^{\ps{s_1s_2(\alpha-Q),\bm c}}+u
		\end{equation}
		in the asymptotic where $\ps{\hat s_1\hat s_2\alpha-\hat s'\alpha,\bm c}\to +\infty$ for all $s'\neq s_1s_2\in W$ with length greater than or equal to two, and where $u$ is such that $e^{-\ps{s_1s_2(\alpha-Q),\bm c}}u\to 0$.
	\end{theorem}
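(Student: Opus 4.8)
The plan is to transport the asymptotic expansion obtained for class one Whittaker functions (Theorem~\ref{thm:whittaker_expansion}) to the setting of Toda Vertex Operators, by exploiting the structural parallel between the probabilistic representations~\eqref{equ:psi_alpha} and the Whittaker representation: both are expectations of exponentials of integrals of the form $\int_0^\infty e^{\gamma\ps{B^\nu_t,e_i}}Z^i_t\,dt$ against a drifted Brownian motion, the only difference being that the $Z^i_t$ here are the GMC averages from~\eqref{equ:radial_angular_Z} rather than deterministic constants. The key input is Proposition~\ref{prop:asymptot_J}, which tells us that in the prescribed asymptotic regime (here $\ps{\hat s_1\hat s_2\alpha - \hat s'\alpha,\bm c}\to+\infty$, which after the substitution $\nu = \alpha - Q$ becomes exactly the regime $\ps{s_1s_2\nu - s'\nu,\bm c}\to+\infty$ of that proposition), the joint law of the vector $\left(\int_0^\infty e^{\gamma\ps{B^\nu_t,e_i}}Z^i_t\,dt\right)_{1\le i\le r}$ — once suitably renormalized by the dominant exponential factors coming from the path decomposition — converges to a product of independent copies of the single-variable functionals $J_i(\mu)$ of~\eqref{eq:def_J}. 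This reduces the whole computation to a product over $i$ of one-dimensional problems.

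First I would fix $\nu = \alpha - Q \in \mathcal{C}_-$ and rewrite $\psi_\alpha(\bm c,\varphi)$ using~\eqref{equ:int_radial_angular}: modulo the harmonic-extension piece $P\varphi$ (which contributes only a bounded multiplicative perturbation to each $Z^i$ and, crucially, has $P\varphi(0)=0$ so does not affect the leading exponential orders), we have
\[
\psi_\alpha(\bm c,\varphi) = e^{\ps{\alpha-Q,\bm c}}\,\E_\varphi\!\left[\exp\!\left(-\sum_{i=1}^r \mu_i e^{\gamma\ps{\bm c,e_i}}\int_0^\infty e^{\gamma\ps{B^\nu_t,e_i}}Z^i_t\,dt\right)\right].
\]
Next I would perform the path decomposition of $B^\nu$ exactly as in Section~\ref{section:brown_inf}: the walk first behaves like a drifted Brownian motion with drift $s_1s_2\nu$ until hitting $\partial\mathcal{C}$, then with drift $s_2\nu$, then with drift $\nu$ conditioned to stay in $\mathcal{C}$; equivalently one applies the entrance-from-$\infty$ description, writing each $\int_0^\infty e^{\gamma\ps{B^\nu_t,e_i}}Z^i_t\,dt$ as $e^{\gamma\ps{w_i\nu,\bm c}}$ times a functional that converges in law to $J_i(\ps{s_{i+1}\cdots s_r\nu, e_i})$ as $\bm c\to\infty$, for the appropriate Weyl element $w_i$ attached to the $i$-th coordinate. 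Substituting and using dominated convergence (the exponential has a negative argument, and the finite-moment bound of Lemma~\ref{lemma:moments_Z} plus continuity of $F(x)=e^{-x}$ on $[0,\infty)$ give the needed uniform integrability), the expectation factorizes in the limit into a product of one-variable Laplace-transform-type quantities $\E\left[\exp(-\mu_i^\vee\text{-type constant}\cdot J_i)\right]$, whose $\bm c\to\infty$ asymptotics — read off term by term in powers of $e^{-\ps{\cdot,\bm c}}$ — produce precisely the three-term expansion~\eqref{equ:primary_reflection1} with the coefficients $R_{s_i}(\alpha)$ and $R_{s_1s_2}(\alpha)$ identified as products over $i$ of the one-dimensional reflection constants for the $J_i$. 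One then reads off that these coefficients are non-zero (each one-dimensional factor is a ratio of Gamma values, finite and non-zero under the stated inequalities $\ps{\hat s\alpha - \alpha,\omega_i^\vee}<\gamma$), and that the remainder $u$ collects all strictly lower-order exponentials, i.e. $e^{-\ps{s_1s_2(\alpha-Q),\bm c}}u\to 0$.

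The main obstacle I anticipate is twofold and concentrated in the passage from "convergence in law of the rescaled GMC functionals" to "convergence of the nonlinear expectation $\E_\varphi[\exp(-\sum_i \mu_i e^{\gamma\ps{\bm c,e_i}}I_i)]$ with its subleading terms identified." Convergence in distribution of the $I_i$ is not by itself enough: one needs enough control to expand $\E[\exp(-\sum\dots)]$ to the precision of the second reflection term $e^{\ps{s_1s_2(\alpha-Q),\bm c}}$, which requires knowing the \emph{joint} tail behaviour of the correlated GMC integrals $I_i$ (this is exactly the content of Theorem~\ref{thm:tail_expansion_refl} and its relation $\epsilon(s)\prod_i\Gamma(1-\tfrac1\gamma\ps{\hat s\alpha-\alpha,\omega_i^\vee})\overline{R_s}(\alpha)=R_s(\alpha)$), together with the fact that the cross-correlations of the $Z^i$ for non-orthogonal roots $e_i,e_j$ do not spoil the asymptotic factorization. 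The standard way to handle this is to split each integral at the hitting times $T_k, T_k'$ of the path decomposition, show the boundary pieces are negligible (as done in the $A_2$ analysis), invoke the decorrelation of $(Z^i_t)_{t\le T_k}$ and $(Z^j_t)_{t\ge T_{k+1}}$ as the time increments diverge, and then appeal to the known one-dimensional tail expansions for functionals $\int_0^\infty e^{-B^\mu_t}Z^i_t\,dt$ of the GMC-averaged process. Making the error terms uniform in $\varphi$ on the relevant event (so that integrating back over $\varphi$ is harmless) is the delicate bookkeeping; I expect this to occupy the bulk of the detailed proof, with everything else being a transcription of Section~\ref{section:brown_inf} into the GMC language.
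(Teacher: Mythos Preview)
Your proposal has a genuine structural gap: you try to obtain the multi-term expansion~\eqref{equ:primary_reflection1} by first letting the whole expectation factorize via Proposition~\ref{prop:asymptot_J} and then reading off subleading terms from the one-dimensional expansions of each factor. This does not work, for two linked reasons.

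First, the integrals $I_i=\int_0^\infty e^{\gamma\ps{B^\nu_t,e_i}}Z^i_t\,dt$ do not depend on $\bm c$ at all, so the sentence ``writing each $I_i$ as $e^{\gamma\ps{w_i\nu,\bm c}}$ times a functional that converges in law'' is not meaningful as stated. The variable $\bm c$ enters only through the prefactors $e^{\gamma\ps{\bm c,e_i}}$ in $\psi_\alpha$, while the asymptotic regime of Proposition~\ref{prop:asymptot_J} concerns the starting point $-\M$ of the decomposed process $\B^\nu$, where $\M$ is the random generalized maximum. These are different objects, and the link between them is made only after integrating over the law of $\M$ and performing the change of variable $\M\mapsto\M-\bm c$; you never set this up.

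Second, and more seriously, Proposition~\ref{prop:asymptot_J} is a \emph{leading-order} convergence statement: it tells you the limit of $\E_{\bm x}[\prod_i F_i(\mathrm J_i)]$, not an expansion with remainder. Even if the factorized limit held, multiplying out one-dimensional expansions $\prod_i(1+R_ie^{\lambda_i\bm c}+\cdots)$ produces cross terms $e^{(\lambda_i+\lambda_j)\bm c}$, whereas the correct second-order term scales like $e^{\ps{s_1s_2(\alpha-Q)-(\alpha-Q),\bm c}}$, and $s_1s_2(\alpha-Q)-(\alpha-Q)\neq (s_1-1)(\alpha-Q)+(s_2-1)(\alpha-Q)$ when $s_1,s_2$ do not commute. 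The paper avoids this by first writing the tautological inclusion--exclusion
\[
\E_\varphi\Big[\exp\Big(-\sum_i e^{\gamma\ps{\bm c,e_i}}I_i\Big)\Big]=\sum_{S\subset\{1,\dots,r\}}\E_\varphi\Big[\prod_{i\in S}\big(\exp(-e^{\gamma\ps{\bm c,e_i}}I_i)-1\big)\Big],
\]
which isolates each order of the expansion as a separate expectation. The $|S|=0$ term gives $1$, the $|S|=1$ terms are handled by a direct one-dimensional argument (Proposition~\ref{prop:asymptot_1_term}) with an explicit remainder $\mathcal{O}(e^{(1-\eta)\gamma\ps{\bm c,e_i}})$, and the $|S|=2$ term is exactly Theorem~\ref{thm:expression_refl}: one integrates over $\M$, changes variables $\M\mapsto\M-\bm c$ so that the density produces the factor $e^{\ps{\hat s\alpha-\alpha,\bm c}}$ indexed by $s\in W_{1,2}$, and only \emph{then} is the inner expectation $\E_{\bm c-\M}[\cdots]$ in the regime where Proposition~\ref{prop:asymptot_J} applies. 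Terms with $|S|\geq3$ are shown to be $o(e^{\ps{s_1s_2(\alpha-Q),\bm c}})$ by the same mechanism. Your sketch skips this decomposition entirely, and without it there is no way to extract the $R_{s_1s_2}$ term with the correct exponent or to control the remainder $u$.
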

	In both statements, the assumption that $\ps{\hat s_1\hat s_2\alpha-\hat s'\alpha,\bm c}\to +\infty$ for all $s'\neq s_1s_2\in W$ simply amounts to selecting the term corresponding to $e^{\ps{s_1s_2(\alpha-Q),\bm c}}$ as the last term in the expansion, and one could replace $s_1s_2$ by any $s\in W$ with length two and get a similar statement. The first assumption corresponds to the range of values for which the probabilistic interpretation of the $b$ function or reflection coefficients makes sense.
	
	The proof of Theorem~\ref{thm:primary_reflection} relies on the fact that Toda reflection coefficients can be obtained via a limiting procedure. More precisely, we will see that as $\bm c\to\infty$ inside $\mathcal{C}_-$ with $\ps{\hat s_i\hat s_j\alpha-\hat s_j\hat s_i\alpha,\bm c}\to+\infty$:
	\begin{align*}
		& \E_{\varphi}\left[\exp\left(-e^{\gamma \ps{\bm c,e_i}}I_i\right)-1\right]\sim e^{\ps{\hat s_i\alpha-\alpha,\bm c}}R_{s_i}(\alpha)\\
		&\E_{\varphi}\left[\left(\exp\left(-e^{\gamma \ps{\bm c,e_i}}I_i\right)-1\right)\left(\exp\left(-e^{\gamma \ps{\bm c,e_j}}I_j\right)-1\right)\right]\sim e^{\ps{\hat s_i\hat s_j\alpha-\alpha,\bm c}}R_{s_is_j}(\alpha).
	\end{align*}
	This property extends to other elements of the Weyl group as well, and allows to make sense of Toda reflection coefficients for $s$ belonging to a subset of the Weyl group. Namely, let us introduce the subset $W_{1,\cdots,r}$ of $W$ defined as the set of the $s\in W$ that admit a reduced expression containing all the reflections $ s_{1},\cdots, s_{r}$.
	\begin{theorem}\label{thm:expression_refl}
		Assume that $\alpha-Q\in\mathcal{C}_-$ satisfies $\ps{\hat s\alpha-\alpha,\omega_i^\vee}<\gamma$ for all $1\leq i\leq r$ and $s\in W_{1,\cdots,r}$. Further assume that there exists $s\in W_{1,\cdots, r}$ with length $r$ such that $\bm c\to\infty$ with $\ps{\hat s\alpha-\hat s'\alpha,\bm c}\to+\infty$ for all $s'\neq s\in W_{1,\cdots,r}$. 
		Then there exists a non-zero real number $R_s(\alpha)$ such that
		\begin{equation}
			\E_{\varphi}\left[\prod_{k=1}^p\left(\exp\left(-e^{\gamma \ps{\bm c,e_{i_k}}}I_{i_k}\right)-1\right)\right]\sim e^{\ps{\hat s\alpha-\alpha,\bm c}}R_{s}(\alpha).
		\end{equation}
		$R_s(\alpha)$ is a \emph{Toda reflection coefficient}, and is equal to
		\begin{equation}\label{eq:expression_refl_1}
			\begin{split}
				R_s(\alpha)&=\epsilon(s)\frac{A\left(s(\alpha-Q)\right)}{A(\alpha-Q)},\quad\text{where}\\
				A(\alpha)&=\prod_{i=1}^r\left(\mu_i\pi l\left(\frac{\gamma^2\ps{e_i,e_i}}{4}\right)\right)^{\frac{\ps{\alpha,\omega_i^\vee}}\gamma}\prod_{e\in\Phi^+}\Gamma\left(1-\frac{\gamma}2\ps{\alpha,e}\right)\Gamma\left(1-\frac{1}\gamma\ps{\alpha,e^\vee}\right)\cdot
			\end{split}
		\end{equation}
	\end{theorem}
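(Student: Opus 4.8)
Expanding $\exp\big(-\sum_{i}\mu_i e^{\gamma\ps{\bm c,e_i}}I_i\big)=\prod_{i}\big(1+(\exp(-\mu_ie^{\gamma\ps{\bm c,e_i}}I_i)-1)\big)$ over subsets turns~\eqref{equ:psi_alpha} into
\[
\psi_\alpha(\bm c,\varphi)=e^{\ps{\alpha-Q,\bm c}}\sum_{\mathcal{U}\subset\{1,\cdots,r\}}\E_\varphi\Big[\prod_{i\in\mathcal{U}}\big(\exp(-\mu_i e^{\gamma\ps{\bm c,e_i}}I_i)-1\big)\Big],
\]
so~\eqref{equ:primary_reflection1} follows once each mixed expectation over a subset $\{i_1,\cdots,i_p\}$ behaves like $R_s(\alpha)e^{\ps{\hat s\alpha-\alpha,\bm c}}$ for the relevant $s$; this is Theorem~\ref{thm:expression_refl} (with the $\mu_{i_k}$ absorbed into $\bm c$ in the normalisation used there), whose proof I now plan. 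The case $p=r$ is the decisive one and $p<r$ is identical with $W_{1,\cdots,r}$ replaced by the analogous subset indexed by $\{i_1,\cdots,i_p\}$. Writing $\lambda_j\coloneqq\mu_j e^{\gamma\ps{\bm c,e_j}}\to0$ and using $e^{-\lambda u}-1=-\int_0^\infty\lambda e^{-\lambda v}\mathds1_{v<u}\,dv$,
\[
\E_\varphi\Big[\prod_{k=1}^p\big(e^{-\lambda_{i_k}I_{i_k}}-1\big)\Big]=(-1)^p\int_{(\R_+)^p}\Big(\prod_{k=1}^p\lambda_{i_k}e^{-\lambda_{i_k}v_k}\Big)\,\P_\varphi\big(I_{i_k}>v_k,\ 1\leq k\leq p\big)\,d\bm v,
\]
and since $\lambda_{i_k}e^{-\lambda_{i_k}v_k}$ concentrates near $v_k\sim\lambda_{i_k}^{-1}\to\infty$, everything reduces to the joint upper tail of $(I_1,\cdots,I_r)$; Lemma~\ref{lemma:moments_Z} together with Hölder gives the domination needed to substitute the tail asymptotics under the integral and to handle the region where some $v_k$ stays bounded.

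\textbf{Step 2: the joint tail (the crux).} By~\eqref{equ:int_radial_angular}, $I_i\eqlaw\int_0^\infty e^{\gamma\ps{B^\nu_t,e_i}}Z^i_t\,dt$ with $\nu=\alpha-Q\in\mathcal{C}_-$. Reflecting $B^\nu\mapsto-B^\nu$ turns this into a Brownian motion $B^\mu$ with $\mu=Q-\alpha\in\mathcal{C}$, and the event $\{I_i>v_i\ \forall i\}$ becomes, to leading order, the event that the vector minimum $\M(B^\mu)$ along the simple roots diverges inside $\mathcal{C}_-$ --- precisely the regime of Section~\ref{section:brown_inf}. Conditionally on this, Theorem~\ref{thm:williams} realises $B^\mu$ as a concatenation of $r+1$ $h$-transformed diffusions, and Proposition~\ref{prop:asymptot_J} (Proposition~\ref{prop:asymptot_A2} for $A_2$) identifies their limiting law: a succession of drifted Brownian motions with the Weyl-reflected drifts $s\nu,\,s_{i_2}\cdots s_{i_r}\nu,\cdots,\nu$ reflecting off the successive walls, with the GMC densities $Z^i$ on mutually orthogonal legs decorrelating. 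Cutting each $I_i$ along this decomposition turns it into one of the one-dimensional integrals $J_i(\cdot)$ of~\eqref{eq:def_J}, whose single-variable power-law tail and tail constant are the classical Rhodes--Vargas tail expansion of Gaussian multiplicative chaos (the Liouville reflection coefficient transported to the $i$-th leg). Multiplying the $r$ legs and summing over the cone of directions that pins down $s$ --- where the hypothesis $\ps{\hat s\alpha-\hat s'\alpha,\bm c}\to+\infty$, $s'\neq s$ in $W_{1,\cdots,r}$, enters --- yields, in the relevant regime,
\[
\P_\varphi\big(I_i>v_i,\ 1\leq i\leq r\big)\ \sim\ \overline{R_s}(\alpha)\prod_{i=1}^r v_i^{-\frac1\gamma\ps{\hat s\alpha-\alpha,\omega_i^\vee}},
\]
with $\overline{R_s}(\alpha)$ an explicit product over the legs; the $\varphi$-dependence disappears in the limit because the large values of $I_i$ are produced while $B^\nu$ sits high near the centre ($t\to\infty$), far from the boundary circle carrying $\varphi$. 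This is the content of Theorem~\ref{thm:tail_expansion_refl}.

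\textbf{Step 3: integration and the explicit constant.} Feeding the product power law into Step 1 factorises the integral, each factor being $\int_0^\infty\lambda e^{-\lambda v}v^{-\kappa_i}\,dv=\Gamma(1-\kappa_i)\lambda^{\kappa_i}$ with $\kappa_i=\frac1\gamma\ps{\hat s\alpha-\alpha,\omega_i^\vee}$; the hypothesis $\ps{\hat s\alpha-\alpha,\omega_i^\vee}<\gamma$ is exactly $\kappa_i<1$, which is what makes this term dominate the competing linear contribution $-\lambda_i\E_\varphi[I_i]$. Using $\ps{e_i,\omega_j^\vee}=\delta_{ij}$ one has $\prod_i\lambda_i^{\kappa_i}=\big(\prod_i\mu_i^{\kappa_i}\big)e^{\ps{\hat s\alpha-\alpha,\bm c}}$, and $(-1)^r=\epsilon(s)$ since $l(s)=r$, so
\[
\E_\varphi\Big[\prod_{k=1}^r\big(e^{-\lambda_{i_k}I_{i_k}}-1\big)\Big]\ \sim\ \epsilon(s)\Big(\prod_{i=1}^r\Gamma(1-\kappa_i)\,\mu_i^{\kappa_i}\Big)\,\overline{R_s}(\alpha)\,e^{\ps{\hat s\alpha-\alpha,\bm c}},
\]
which is the claimed asymptotic with $R_s(\alpha)=\epsilon(s)\big(\prod_i\Gamma(1-\kappa_i)\mu_i^{\kappa_i}\big)\overline{R_s}(\alpha)$, in agreement with the relation quoted in Theorem~\ref{thm:tail_expansion_refl}. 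Finally, the closed form~\eqref{eq:expression_refl_1} follows by substituting the Rhodes--Vargas value of the tail constant on each leg --- which supplies the $\pi l(\tfrac{\gamma^2\ps{e_i,e_i}}4)$ factors and the $\Gamma$-ratios at arguments $1-\tfrac\gamma2\ps{\cdot,e}$ and $1-\tfrac1\gamma\ps{\cdot,e^\vee}$ --- and checking that the product telescopes into $A(s(\alpha-Q))/A(\alpha-Q)$: because $s$ permutes $\Phi$, all factors of $\prod_{e\in\Phi^+}$ cancel except those for the $l(s)$ roots $s$ sends to $\Phi^-$, and one uses $\ps{s\alpha,e}=\ps{\alpha,se}$ and $\rho=\sum_i\omega_i$.

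\textbf{Main obstacle.} The weight of the proof is entirely in Step 2: beyond the higher-dimensional Williams decomposition one needs the quantitative control of the conditioned-from-infinity process (Propositions~\ref{prop:asymptot_A2}--\ref{prop:asymptot_J}), and above all the structural fact that the $r$ reflected legs --- each a correlated GMC integral built from the \emph{same} Gaussian Free Field --- contribute \emph{multiplicatively}, and that summing over the Weyl-direction cones collapses into the single closed form $A(s(\alpha-Q))/A(\alpha-Q)$. By comparison Steps 1 and 3 are soft; the only genuinely technical point there is the uniform integrability needed to interchange the $\bm c\to\infty$ limit with the $\bm v$-integral, which Lemma~\ref{lemma:moments_Z} delivers.
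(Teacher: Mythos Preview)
Your approach is sound but organised differently from the paper, and the detour you take hides the fact that you end up doing exactly the same work.

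The paper does \emph{not} pass through the joint tail of the $I_i$'s. Instead it conditions directly on the vector maximum $\M$ of $B^\nu$ and writes $I_i=e^{\gamma\ps{\M,e_i}}\mathrm J_i$ with $\mathrm J_i$ built from the decomposed process $\B^\nu$ of Theorem~\ref{thm:williams}. The expectation then becomes an integral over $d\P(\M)=\partial_{1,\cdots,r}h(-\M)\,d\M$, which is itself an alternating sum over $W_{1,\cdots,r}$; the change of variable $\M\mapsto\M-\bm c$ extracts the factor $e^{\ps{\hat s\alpha-\alpha,\bm c}}$ for each $s'\in W_{1,\cdots,r}$, and Proposition~\ref{prop:asymptot_J} makes the remaining expectation factorise into independent one--dimensional pieces which are evaluated by~\eqref{equ:eval_J}. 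The tail expansion (Theorem~\ref{thm:tail_expansion_refl}) is then obtained \emph{afterwards}, by the same $\M$--integration with $\exp(-\lambda_i I_i)-1$ replaced by $\mathds 1_{I_i>v_i}$.

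Your route reverses this order: you first reduce to the joint tail via the identity $e^{-\lambda u}-1=-\int_0^\infty\lambda e^{-\lambda v}\mathds1_{v<u}\,dv$, then invoke what is essentially Theorem~\ref{thm:tail_expansion_refl}, and finally recover the expectation via a multivariate Abelian theorem. This is legitimate, but note two things. First, your Step~2 \emph{is} the paper's argument: to compute $\P_\varphi(I_i>v_i\ \forall i)$ you still have to integrate over $\M$ and use Proposition~\ref{prop:asymptot_J}, so nothing is saved. Second, you now carry an additional burden: the substitution of the product power law under the $\bm v$--integral in Step~3 needs a genuine multivariate Karamata argument, and the fact that the tail asymptotic of $\P(I_i>v_i)$ is not a single product law but depends on the cone of directions (different $s'\in W_{1,\cdots,r}$ dominate in different sectors) makes this more delicate than the one--dimensional Abel step you write down. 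Lemma~\ref{lemma:moments_Z} alone does not supply the needed Potter--type domination; you would need uniform control of the ratio $\P(I_i>w_i/\lambda_i)/\prod_i\lambda_i^{\kappa_i}$ across a neighbourhood of the dominant sector.

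In short: the paper's direct $\M$--integration is both shorter and avoids the Abelian step entirely; your reorganisation is correct in spirit but trades one clean change of variable for a tail computation \emph{plus} a nontrivial Tauberian justification, with no net gain.
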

	This statement allows to make sense of Toda reflection coefficients when $s=s_{\sigma 1}\cdots s_{\sigma r}$ for $\sigma$ a permutation of $\{1,\cdots,r\}$.
	The value of the Toda reflection coefficients thus computed is in agreement with predictions from the physics literature~\cite{reflection_simplylaced, refl_non_simply_laced, Fat_refl}. The feature that Toda reflection coefficients arise in the asymptotic expansion of the Vertex Operators somewhat appears in~\cite{reflection_simplylaced, refl_non_simply_laced}.
	We stress the remarkable property that Liouville reflection coefficients $R_L(\alpha)$ (see Equation~\eqref{equ:refl_lio} below) can be recovered from Toda reflection coefficients via the identification
		\[
		R_{s_i,\gamma}(\alpha)=R_{L,\tilde\gamma}\left(\ps{\alpha,e_i}\sqrt{\frac{\ps{e_i,e_i}}{2}}\right),\quad\tilde\gamma=\sqrt{\frac{2}{\ps{e_i,e_i}}}\gamma
		\]
		where with these notations we have stressed the dependence in the coupling 
		constant.
		Moreover Toda reflection coefficients can be computed recursively thanks to the noticeable identity, valid for any $s$ and $\tau$ in $W$:
		\[
		R_{s\tau}(\alpha)=R_{s}(\tau\alpha)R_{\tau}(\alpha),
		\]
		which in particular allows to reduce the computation of Toda reflection coefficients to that of Liouville. Using this relation one checks that in the cases we consider we have an alternative representation of the Toda reflection coefficients:
	\begin{equation}\label{equ:expression_toda_refl_bis}
		\begin{split}
			R_s(\alpha)=\epsilon(s)\prod_{i=1}^r\left(\mu_i\pi l\left(\frac{\gamma^2\ps{e_i,e_i}}{4}\right)\right)^{\frac{\ps{\hat s\alpha-\alpha,\omega_i^\vee}}\gamma}\frac{\Gamma\left(1-\frac{\gamma}2\ps{\hat s\alpha-\alpha,\omega_i}\right)\Gamma\left(1-\frac{1}\gamma\ps{\hat s\alpha-\alpha,\omega_i^\vee}\right)}{\Gamma\left(1+\frac{\gamma}2\ps{\hat s\alpha-\alpha,\omega_i}\right)\Gamma\left(1+\frac{1}\gamma\ps{\hat s\alpha-\alpha,\omega_i^\vee}\right)}
		\end{split}
	\end{equation}
	where the $\omega_i^\vee$ are defined via $\ps{\omega_i^\vee,e_j}=\delta_{ij}$.
	
	Eventually, we show that these reflection coefficients naturally arise in the tail expansion of the GMC measures. To this end, let us introduce the \emph{unit volume reflection coefficient} $\overline{R_s}(\alpha)$ by setting $\mu_i=1$ for all $1\leq i\leq r$ and
	\begin{equation}\label{equ:unit_vol_refl2}
		\begin{split}
			\overline{R_s}(\alpha)\coloneqq \epsilon(s)\prod_{i=1}^r\Gamma\left(1-\frac1\gamma\ps{\hat s\alpha-\alpha,\omega_i^\vee}\right)^{-1}R_s(\alpha).
		\end{split}
	\end{equation}
	These quantities allow to describe the tail expansion of the GMC measures: 
	\begin{theorem}\label{thm:tail_expansion_refl}
		Under the same assumptions as in Theorem~\ref{thm:expression_refl},
		\begin{equation}
			\P\left(\int_{\mathbb D} \norm{x}^{-\gamma\ps{\alpha,e_{i_k}}}M^{\gamma e_i}(d^2x)>e^{-\gamma \ps{\bm c,e_{i_k}}},\quad k=1,\cdots,p\right) \sim \overline{R_s}(\alpha)e^{\ps{\hat s\alpha-\alpha,\bm c}}.
		\end{equation}
	\end{theorem}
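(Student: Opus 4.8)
The plan is to reproduce the argument behind Theorem~\ref{thm:expression_refl}, but read at the level of the tail event rather than of the Laplace functional, and to reassemble the constant $\overline{R_s}(\alpha)$ out of $r$ essentially independent one-dimensional Liouville tail expansions. First I would move to the radial--angular picture: by~\eqref{equ:int_radial_angular}, with $\nu\coloneqq\alpha-Q\in\mathcal{C}_-$ and $\mu_i=1$, the event in the statement is $\{\int_0^\infty e^{\gamma\ps{B^\nu_t+\bm c,e_{i_k}}}Z_t^{i_k}\,dt>1,\ k=1,\dots,p\}$, where $B^\nu$ is the drifted Brownian motion from~\eqref{equ:radial_angular} and the $Z^{i}$ are the circle-averaged correlated chaoses. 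Since each $\ps{\nu,e_i}<0$ while $\bm c\to\infty$ inside $\mathcal{C}_-$, this event forces $\sup\limits_{t\geq0}\ps{B^\nu_t,e_{i_k}}>-\ps{\bm c,e_{i_k}}\to+\infty$ for all $k$; equivalently, writing $\M_i(-B^\nu)=\inf\limits_{t\geq0}\ps{-B^\nu_t,e_i}$ for the drifted Brownian motion $-B^\nu$ (drift $-\nu\in\mathcal{C}$), up to lower-order corrections in the remaining coordinates the event sits inside $\{\M_{i_k}(-B^\nu)\leq\ps{\bm c,e_{i_k}}\ \forall k\}$. Conditioning on $\M(-B^\nu)=\bm m$ with $\bm m$ near $\bm c$ and applying the path decomposition of Theorem~\ref{thm:williams} / Corollary~\ref{cor:brownien_drift} to $-B^\nu$, the conditioned trajectory, once translated by $-\bm m$, is exactly the process $\B^\nu$ of Corollary~\ref{cor:brownien_drift} started from $-\bm m\to\infty$ inside $\mathcal{C}$, which is the object analysed throughout Section~\ref{section:brown_inf}. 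This whole reduction is unconditional and never sees the boundary field $\varphi$, so the discrepancy between the conditional functionals of Theorem~\ref{thm:expression_refl} and the unconditional probability here is immaterial: the relevant heavy tail is produced by the radial motion deep inside the disk.

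Next I would feed in the results of Section~\ref{section:brown_inf}. By Proposition~\ref{prop:asym_prob} (Proposition~\ref{prop:asym_prob_A2} in rank two) the conditioned process stays in the sub-chamber $\mathcal U_s$ with probability $1-o(1)$ in the prescribed regime, so its law is the welding of $r+1$ diffusions described there; and by Proposition~\ref{prop:asymptot_J} (Proposition~\ref{prop:asymptot_A2} for $A_2$) the functionals $\mathrm J_{i_k}(\B^\nu)=\int_0^\infty e^{-\ps{\B^\nu_t,e_{i_k}}}Z_t^{i_k}\,dt$ become asymptotically \emph{independent}, each converging in law to the one-dimensional integral $J(\cdot)$ of~\eqref{eq:def_J} against a drifted Brownian motion conditioned to stay positive, with drift read off the decomposition. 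The decorrelation uses that distinct directions $e_{i_k}$ are active on disjoint time windows of the decomposed path, together with the exact independence of $Z^i,Z^j$ when $\ps{e_i,e_j}=0$, and Lemma~\ref{lemma:moments_Z} supplies the moment bounds needed for the ensuing uniform integrability. In the limit this factorizes the conditional probability into a product of one-dimensional tail probabilities $\P(J(\mu)>u)$.

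Then I would assemble the constant. The one-dimensional tail $\P(\int_0^\infty e^{-B^\mu_t}Z_t\,dt>u)$ is the Liouville/GMC tail expansion --- up to the $\ps{e_i,e_i}$-normalisation recorded in the identity $R_{s_i}(\alpha)=R_L(\ps{\alpha,e_i}/\sqrt2)$ noted after Theorem~\ref{thm:expression_refl} --- and carries a $\Gamma^{-1}$ coming from the law of the exponential functional of a drifted Brownian motion; running the $r$ such factors produces $\prod_i(\pi l(\frac{\gamma^2\ps{e_i,e_i}}{4}))^{\ps{\hat s\alpha-\alpha,\omega_i^\vee}/\gamma}$, the product of $\Gamma$-factors over $\Phi^+$ entering $A$ in~\eqref{eq:expression_refl_1}, and the factor $\prod_i\Gamma(1-\frac1\gamma\ps{\hat s\alpha-\alpha,\omega_i^\vee})^{-1}$. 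The density of $\M(-B^\nu)$ at $\bm m\approx\bm c$ --- governed, exactly as in the proof of Proposition~\ref{prop:exact_proba_hitting}, by the dominant exponential $\epsilon(s)\lambda_s e^{\ps{s\nu-\nu,\bm c}}$ of the harmonic function attached to $-B^\nu$ and by the asymptotics of its successive derivatives entering Theorem~\ref{thm:williams} --- supplies the rate $e^{\ps{\hat s\alpha-\alpha,\bm c}}$ (recall $\hat s\alpha-\alpha=s\nu-\nu$) and the sign $\epsilon(s)$. Comparing with~\eqref{equ:unit_vol_refl2} and~\eqref{eq:expression_refl_1} identifies the total constant with $\overline{R_s}(\alpha)$. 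As a consistency check, setting $u_k=e^{\gamma\ps{\bm c,e_{i_k}}}\to0$ one has $(-1)^p\E_\varphi[\prod_k(e^{-u_kI_{i_k}}-1)]=(\prod_k u_k)\int_{(0,\infty)^p}e^{-\sum_k u_kt_k}\P_\varphi(I_{i_k}>t_k\ \forall k)\,d\bm t$, so Theorems~\ref{thm:tail_expansion_refl} and~\ref{thm:expression_refl} are Abelian/Tauberian counterparts, the constants differing exactly by $\prod_k\Gamma(1-\frac1\gamma\ps{\hat s\alpha-\alpha,\omega_{i_k}^\vee})$ and the sign $(-1)^p=\epsilon(s)$ (since $l(s)=r=p$).

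The main obstacle is upgrading the soft limits of Section~\ref{section:brown_inf} to matching tails. Propositions~\ref{prop:asymptot_A2}--\ref{prop:asymptot_J} are phrased for bounded continuous test functions, whereas a tail probability is the expectation of an indicator $\mathds{1}_{(u,\infty)}$ whose threshold $u$ itself diverges as $\bm c\to\infty$; one therefore needs uniform, quantitative control of the joint law of $(\mathrm J_{i_1}(\B^\nu),\dots,\mathrm J_{i_p}(\B^\nu))$ in the relevant region, so that the errors coming from conditioning on $\M\approx\bm c$, from the translation by $-\bm m$, and from approximating the welded diffusions by reflected drifted Brownian motions do not swamp the leading constant. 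If one instead routes the proof through Theorem~\ref{thm:expression_refl}, this same difficulty resurfaces as the regularity hypothesis of the multivariate Tauberian theorem --- a monotone-density-type estimate, available here from the smoothness of the law of the $I_i$ and the a priori power bounds implied by Lemma~\ref{lemma:moments_Z}.
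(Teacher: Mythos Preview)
Your approach is essentially the paper's: condition on the maximum $\M$, invoke the path decomposition, change variables, and read off the constant from the factorised one-dimensional limits. The paper's proof in Section~4.6 literally says it ``follows the very same lines'' as Theorem~\ref{thm:expression_refl}, replacing the factors $\exp(-e^{\gamma\ps{\bm c,e_i}}I_i)-1$ by indicators $\mathds1_{I_i>e^{-\gamma\ps{\bm c,e_i}}}$, and you have reconstructed that argument accurately, including the identification of the constant via~\eqref{equ:unit_vol_refl2}.

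Where you diverge from the paper is in the ``main obstacle'' paragraph. You worry that Propositions~\ref{prop:asymptot_A2}--\ref{prop:asymptot_J} only handle bounded continuous $F$ while the tail event has a diverging threshold, and you suggest either a quantitative uniformity upgrade or a multivariate Tauberian theorem. The paper avoids both: after conditioning on $\M$ and shifting $\M\leftrightarrow \M-\bm c$, the integrand becomes
\[
\prod_i\ps{\hat s\alpha-\alpha,\omega_i^\vee}\,e^{\ps{\alpha-\hat s\alpha,\M}}\,\P_{\bm c-\M}\bigl(\mathrm J_i>e^{-\gamma\ps{\M,e_i}},\ 1\le i\le p\bigr),
\]
and now $\M$ is the integration variable. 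For the pointwise limit in $\M$, the thresholds $e^{-\gamma\ps{\M,e_i}}$ are \emph{fixed} while only the starting point $\bm c-\M$ diverges; Proposition~\ref{prop:asymptot_J} then applies directly, since indicators at a fixed level are squeezed between bounded continuous functions and the limiting laws $J_{\gamma_i}(\cdot)$ are atomless. The domination needed for the $\M$-integral is exactly the one used in Lemma~\ref{lemma:conv_refl}: bound the conditional probability by $\min(1,e^{\gamma\ps{\M,e_i}}\E[\mathrm J_i])$ via Markov's inequality. So no Tauberian step or uniform tail control is required; the change of variable is what dissolves the difficulty you flag.
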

	This extends the analog statement of~\cite[Section 7]{KRV_DOZZ} which corresponds to the case $r=1$ ---see also the works~\cite{RV_tail,MDW_tail} for more general results in the $r=1$ case.
	
	The rest of this Section is dedicated to proving such asymptotic expansions.
	
	
	\subsection{Reflection coefficients and path decompositions}\label{subsec:refl_J}
	To start with, we collect some important facts about these reflection coefficients and more precisely on their probabilistic interpretation.
	
	\subsubsection{Brownian motions conditioned to stay positive}\label{subsec:brown_cond_neg}
	In the definition of Toda and Whittaker reflection coefficients, we will need to consider a certain diffusion process defined thanks to the one-dimensional path decomposition by Williams~\cite{Williams}. Namely for positive $\nu$, we introduce the process $\mathcal{B}^\nu$ started from $x>0$ to be the junction of:
	\begin{itemize}
		\item a one-dimensional Brownian motion with negative drift $-\nu$, upon hitting $0$;
		\item an independent one-dimensional Brownian motion with positive drift $\nu$, conditioned to stay positive. This process is a diffusion with generator
		\[
		\frac12\frac{d^2}{dx^2}+\nu\text{coth}(\nu x)\frac{d}{dx}\cdot
		\]
	\end{itemize}
	Whittaker reflection coefficients will be expressed in terms of the random variable $J(\nu)$, where for positive $\nu$ we have set
	\begin{equation}\label{eq:def_J_nu}
		J(\nu)\coloneqq \int_{0}^{+\infty}e^{-\mathcal B^\nu_t} dt.
	\end{equation}
	In the above equation the process $\mathcal{B}^\nu$ is (formally) started from $+\infty$. A proper way to make sense of the random variable thus defined is via a limiting procedure by setting for any positive, bounded and continuous map $F:\R^+\to\R^+$
	\[
		\expect{F\left(J(\nu)\right)}\coloneqq \lim\limits_{x\to+\infty}\expect{F\left(\int_{0}^{+\infty}e^{-\mathcal B^\nu_t}dt\right)}.
	\]
	Alternatively one can use that $J(\nu)$ has same law as
	\[
		\int_{-\infty}^{+\infty}e^{-\tilde{\mathcal B}^\nu_t} dt
	\]
	where $(\tilde{\mathcal B}^\nu_t)_{t\geq 0}$ and $(\tilde{\mathcal B}^\nu_{-t})_{t\geq 0}$ are two independent Brownian motions with positive drift $\nu$ and conditioned to stay positive (this follows from the time-reversal property of such a process~\cite[Theorem 2.5]{Williams}, see for instance~\cite[Section 3]{KRV_DOZZ} for more details).
	
	Likewise, Toda reflection coefficients will be defined using the random variable $J_\gamma(-\nu)$, where for positive $\nu$
	\begin{equation}\label{eq:def_J_gamma}
		J_\gamma(-\nu)\coloneqq \int_{0}^{+\infty}e^{-\gamma \mathcal B^\nu_t}Z_t dt.
	\end{equation}
	Here $Z_tdt$ is ithe random measure independent of $\mathcal{B}^\nu$ and whose law is that of $\int_0^{2\pi}M^{\gamma}_Y(t,\theta)d\theta$ with $Y$ the Gaussian field from Equation~\eqref{eq:cov_Y} in the case where $r=1$. Like before this random variable is properly defined via
	\[
	\expect{F\left(J_\gamma(-\nu)\right)}\coloneqq \lim\limits_{x\to+\infty}\expect{F\left(\int_{0}^{+\infty}e^{-\gamma \mathcal B^\nu_t}Z_t dt\right)},
	\]
	or equivalently by noting that, by stationarity of the process $Z_t$, $J_\gamma(-\nu)$ has same law as
	\[
	\int_{-\infty}^{+\infty}e^{-\gamma \tilde{\mathcal B}^\nu_t}Z_t dt
	\]
	with $\tilde{\mathcal B}^\nu_t)_{t\in\R}$ as above. We recall the statement of~\cite[Lemma 3.3]{KRV_DOZZ} which provides an analog result of Lemma~\ref{lemma:moments_Z}.
	\begin{lemma}\label{lemma:moments_cond}
		For any $\nu\in(-\frac2\gamma,0)$ and $-\infty<p<\frac{4}{\gamma^2}$,
		\begin{equation}
			\expect{J_\gamma(\nu)^p}<\infty.
		\end{equation}
	\end{lemma}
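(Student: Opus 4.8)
The plan is to deduce Lemma~\ref{lemma:moments_cond} from its one-dimensional prototype \cite[Lemma 3.3]{KRV_DOZZ}, after two harmless normalization adjustments. First, the Brownian motion underlying $\mathcal B^\nu$ here has variance $\ps{e_i,e_i}$ rather than $1$; this is absorbed by the deterministic time change $t\mapsto \ps{e_i,e_i}\,t$, which simultaneously rescales the drift $\nu$ and the exponent thresholds $\tfrac2\gamma,\tfrac4{\gamma^2}$ by the same factor, so the stated ranges are preserved. Second, the field $Z_t$ in $J_\gamma(\nu)$ is $\ps{e_i,e_i}^{-1/2}Z^i_t$, and by Lemma~\ref{lemma:moments_Z} the integral of $Z^i$ (hence of $Z$) over any bounded interval has finite moments of every order $p<\tfrac2{\gamma^2}$ and every negative order. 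Throughout I would work with the two-sided representation: writing $\beta\coloneqq-\nu\in(0,\tfrac2\gamma)$, one has $J_\gamma(\nu)=\int_0^{+\infty}e^{-\gamma\mathcal B^\beta_t}Z_tdt\eqlaw\int_{-\infty}^{+\infty}e^{-\gamma\tilde{\mathcal B}^\beta_t}Z_tdt$, with $\tilde{\mathcal B}^\beta$ a two-sided Brownian motion of drift $\beta$ conditioned to stay positive and independent of $Z$, and the key geometric input is the linear growth $\tilde{\mathcal B}^\beta_t\sim\beta|t|$ as $|t|\to\infty$ with Gaussian-type fluctuations.

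For the negative moments $p=-q<0$: restrict the integral to $t\in[-1,1]$ and use $Z_t\geq0$ to bound $J_\gamma(\nu)\geq e^{-\gamma M}\int_{-1}^{1}Z_tdt$ with $M\coloneqq\sup_{|t|\le1}\tilde{\mathcal B}^\beta_t$. Then $J_\gamma(\nu)^{-q}\le e^{\gamma qM}\big(\int_{-1}^{1}Z_tdt\big)^{-q}$, and by independence of $\tilde{\mathcal B}^\beta$ and $Z$ together with Cauchy--Schwarz, $\expect{J_\gamma(\nu)^{-q}}\le \expect{e^{2\gamma qM}}^{1/2}\,\expect{\big(\int_{-1}^{1}Z_tdt\big)^{-2q}}^{1/2}$. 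The first factor is finite for every $q>0$ because the supremum on a bounded interval of the drifted conditioned diffusion has at worst a Gaussian upper tail (standard, see \cite[Section 3]{KRV_DOZZ}), and the second is finite by Lemma~\ref{lemma:moments_Z}. This already yields $J_\gamma(\nu)>0$ a.s.

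For the positive moments $0<p<\tfrac4{\gamma^2}$: decompose $\R=\bigcup_{n\in\Z}[n,n+1]$, write $J_\gamma(\nu)=\sum_{n}\int_n^{n+1}e^{-\gamma\tilde{\mathcal B}^\beta_t}Z_tdt$, and apply Minkowski's integral inequality, so that it suffices to bound each $\big\|\int_n^{n+1}e^{-\gamma\tilde{\mathcal B}^\beta_t}Z_tdt\big\|_{L^p}$ by $Ce^{-c|n|}$. On the high-probability event $A_n\coloneqq\{\inf_{t\in[n,n+1]}\tilde{\mathcal B}^\beta_t\geq \tfrac\beta2|n|\}$ one bounds the integral by $e^{-\frac{\gamma\beta}2|n|}\int_n^{n+1}Z_tdt$, whose $L^p$ norm is uniformly bounded in $n$ by Lemma~\ref{lemma:moments_Z} and stationarity of the increments of $Z$; on $A_n^c$, whose probability is super-exponentially small by a Gaussian tail estimate on $\tilde{\mathcal B}^\beta$, one uses H\"older's inequality to trade the small probability against a slightly higher moment of the integrand.

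The main obstacle is sharpening the threshold from the naive $\tfrac2{\gamma^2}$ coming directly from Lemma~\ref{lemma:moments_Z} to the optimal $\tfrac4{\gamma^2}$: the difficulty is localized near $t=0$, where the conditioned process $\tilde{\mathcal B}^\beta$ sits close to its past infimum, and a crude bound loses a factor $2$ in the exponent. To recover it, I would follow \cite[Section 3]{KRV_DOZZ} and tilt the law of the Brownian motion by a Girsanov change of measure (equivalently, exploit the explicit $\coth$-drift of the conditioned diffusion), reweighting the contribution of $e^{-\gamma\tilde{\mathcal B}^\beta_t}$ near $0$ so that it matches the multifractal exponent of $Z$; this is exactly the step at which $p<\tfrac4{\gamma^2}$, rather than merely $p<\tfrac2{\gamma^2}$, becomes accessible. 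Since after the time change every remaining estimate is literally that of \cite{KRV_DOZZ}, the cleanest write-up is to record the two normalization adjustments above and then invoke \cite[Lemma 3.3]{KRV_DOZZ} for the core of the argument.
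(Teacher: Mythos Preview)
The paper does not prove this lemma at all: it is stated as a direct recollection of \cite[Lemma~3.3]{KRV_DOZZ}. Your proposal reaches the same endpoint---invoke \cite[Lemma~3.3]{KRV_DOZZ}---so the approaches agree in substance, and your sketch of negative/positive moments is simply the argument behind that cited result rather than anything the present paper carries out.

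One minor misreading: the process $\mathcal B^\nu$ in Subsection~\ref{subsec:brown_cond_neg} is a \emph{standard} one-dimensional Brownian motion (variance $1$), not one with variance $\ps{e_i,e_i}$. The root-length factor is absorbed not by time-changing the Brownian motion but by replacing $\gamma$ with $\gamma_i\coloneqq\gamma\sqrt{\ps{e_i,e_i}}$ when the lemma is applied (see the appearance of $J_{\gamma_i}(\ps{\nu,e_i^*})$ in the proof of Proposition~\ref{prop:asymptot_1_term}). So your ``first normalization adjustment'' is unnecessary; only the constant rescaling of $Z_t$ by $\ps{e_i,e_i}^{-1/2}$ remains, and that is harmless for moment bounds.
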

	In the above since $-\nu$ is assumed to be positive the notation $J_\gamma(\nu)$ simply corresponds to taking $J_\gamma\Big(-(-\nu)\Big)$ in Equation~\eqref{eq:def_J_gamma}.
	
	\subsubsection{Liouville reflection coefficient}
	In $\mathfrak{sl}_2$ Toda CFT, \emph{i.e.} Liouville theory, the reflection coefficient arises in the asymptotic expansion of the Vertex operators. This asymptotic expansion is performed in terms of the constant mode $\bm c$ of the Liouville field. More precisely, it follows from the proof of~\cite[Lemma 7.1]{KRV_DOZZ} (see also~\cite[Proposition 4.1]{BGKRV} where it is proved that such an expansion allows to compute a scattering coefficient associated to the Liouville Hamiltonian introduced in~\cite{GKRV}) that, for $\ps{\alpha-Q,e_1^\vee}<0$ sufficiently small,
	\[
	\psi_\alpha(\bm c;\varphi)=e^{\ps{\alpha-Q,\bm c}}+R_{s_1}(\alpha)e^{\ps{s_1(\alpha-Q),\bm c}}+o\left(e^{\ps{s_1(\alpha-Q),\bm c}}\right)
	\]
	in the asymptotic where $\ps{\bm c,e_1}\rightarrow-\infty$, and with $R_L$ given by the reflection coefficient of Liouville theory. The remainder term satisfies $\lim\limits_{\bm c\to\infty}e^{-\ps{s_1(\alpha-Q),\bm c}}o\left(e^{\ps{s_1(\alpha-Q),\bm c}}\right)=0$ for any fixed $\varphi$.
	The reflection coefficient that arises in such an expansion admits a probabilistic expression involving the random variable $J_\gamma(\nu)$ introduced above. Indeed it was proved in~\cite{KRV_DOZZ} that for $\alpha-Q\in(-\frac2\gamma,0)$, this reflection coefficient can be written under the form
	\begin{equation}\label{eq:refl_expect}
		R_L(\alpha)=mu_L^{\frac{Q-\alpha}\gamma}\frac{Q-\alpha}{\gamma}\Gamma\left(\frac{\alpha-Q}{\gamma}\right)\expect{J_{\sqrt 2\gamma}\left(\frac{\alpha-Q}{\sqrt2}\right)^{\frac{Q-\alpha}{\gamma}}}
	\end{equation}
	where the $\sqrt 2$ terms come from the fact that the simple roots are not normalized, and with $\mu_L$ Liouville cosmological constant.
	One of the achievements of the aforementioned article is the evaluation of this reflection coefficient, which is in agreement with predictions from the physics literature. Namely~\cite[Theorem 3.5]{KRV_DOZZ} shows that
	\begin{equation}\label{equ:refl_lio}
		R_L(\alpha)=-\left(\pi\mu_L l\left(\frac{\gamma^2}{2}\right)\right)^{\frac{Q-\alpha}\gamma}\frac{\Gamma\left(\frac{\alpha-Q}{\gamma}\right)\Gamma\Big(\frac{\gamma}2(\alpha-Q)\Big)}{\Gamma\left(\frac{Q-\alpha}{\gamma}\right)\Gamma\Big(\frac\gamma2(Q-\alpha)\Big)}
	\end{equation}
	for $\alpha-Q\in(-\frac2\gamma,0)$, where $l$ is the special function
	\[
	l(x)\coloneqq\frac{\Gamma(x)}{\Gamma(1-x)}\cdot
	\]
	In particular this allows to evaluate the expectation term as
	\begin{equation}\label{equ:eval_J}
		\expect{J_{\gamma}\left(\nu\right)^{-\frac{2\nu}\gamma}}=\left(\pi l\left(\frac{\gamma^2}{4}\right)\right)^{-\frac{2\nu}\gamma}\frac{\Gamma\Big(1+\frac{\gamma}2\nu\Big)}{\Gamma\left(1-\frac2\gamma\nu\right)\Gamma\Big(1-\frac{\gamma}2\nu\Big)}\cdot
	\end{equation}
	for $\nu\in(-\frac{2}{\gamma},0)$ and $\gamma\in(0,2)$.
	
	\subsubsection{Whittaker reflection coefficients}
	As we will see, a similar picture holds for class one Whittaker functions. Indeed the reflection coefficients in that case will be defined using the following analog of Equation~\eqref{eq:refl_expect}:
	\begin{equation}\label{equ:refl_whitt}
		r_{s_i}(\mu)=-2^{-\ps{\mu,e_i^\vee}}\Gamma(1-\ps{\mu,e_i^\vee})\expect{J\left(\frac12\ps{\mu,e_i^\vee}\right)^{\ps{\mu,e_i^\vee}}}
	\end{equation}
	where recall that $J$ is defined in Equation~\eqref{eq:def_J_nu} by 
		\[
		J(\mu)=\int_0^\infty e^{-\mathcal{B}_t^\mu}dt
		\] from the process $\mathcal{B}^\mu$ started from $-\infty$.
	In the same way as in Equation~\eqref{equ:eval_J}, we are able to evaluate the expectation that appears in the right-hand side:
	\begin{proposition}\label{prop:eval_J}
		For positive $\mu$,
		\begin{equation}\label{equ:identity_refl_whitt0}
			\expect{\left(\int_0^\infty e^{-\mathcal{B}_t^\mu}dt\right)^{2\mu}}=\frac{2^{2\mu}}{\Gamma(1+2\mu)}\cdot
		\end{equation}
	\end{proposition}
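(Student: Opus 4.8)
The plan is to compute the moment by first pinning down the probabilistic content of $J(\mu)=\int_0^\infty e^{-\mathcal B^\mu_t}\,dt$ and then reducing it to a one–dimensional boundary–value problem. The first point to get right is the internal structure of this \emph{single} integral: $\mathcal B^\mu$ runs down from its maximum with drift $-\mu$ until it hits $0$ and only afterwards performs the diffusion conditioned to stay positive, so splitting the time integral at the hitting time $T_0$ of $0$ gives $J(\mu)=\int_0^{T_0}e^{-\mathcal B^\mu_t}\,dt+\int_{T_0}^\infty e^{-\mathcal B^\mu_t}\,dt$. By the time–reversal underlying Williams' decomposition the descending piece, read backwards, is itself a copy of the drift-$\mu$ diffusion conditioned to stay positive and started from $0$; hence $J(\mu)\eqlaw V_1+V_2$, where $V_1,V_2$ are \emph{independent} copies of $V:=\int_0^\infty e^{-R_t}\,dt$, with $R$ the conditioned diffusion of generator $\tfrac12\partial_{xx}+\mu\coth(\mu\,\cdot)\partial_x$ started from $0$. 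I stress that this is the decomposition of one integral at its minimum, and \emph{not} a doubling of $J(\mu)$ itself into two independent copies of the full functional.

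Next I would obtain the law of $V$, hence of $V_1+V_2$, by the Feynman–Kac route for perpetuities. Realising $R$ as the Doob $h$–transform of the drift-$\mu$ Brownian motion with $h(x)=1-e^{-2\mu x}$ (the probability of never hitting $0$), the weighted Laplace transform $\Psi(\lambda,x)=\mathbb{E}_x\!\left[e^{-\lambda\int_0^\infty e^{-\beta_t}dt}\mathds{1}_{T_0=\infty}\right]=h(x)\,\mathbb{E}^h_x[e^{-\lambda V}]$ solves the \emph{constant–coefficient} equation $\tfrac12\Psi''+\mu\Psi'=\lambda e^{-x}\Psi$ with $\Psi(0)=0$ and $\Psi(+\infty)=1$. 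The substitution $t=e^{-x}$ turns this into a modified Bessel equation $t\Psi_{tt}+(1-2\mu)\Psi_t-2\lambda\Psi=0$, whose solution matching the two boundary behaviours is an explicit combination of $t^{\mu}I_{2\mu}(2\sqrt{2\lambda t})$ and $t^{\mu}K_{2\mu}(2\sqrt{2\lambda t})$; reading off $\mathbb{E}^h_x[e^{-\lambda V}]$ as $x\to0$ gives the Laplace transform of $V$ in closed form. Throughout, the unconditioned functional is anchored by Dufresne's identity $\int_0^\infty e^{-(W_t+\mu t)}dt\eqlaw 2/\gamma_{2\mu}$ with $\gamma_{2\mu}\sim\mathrm{Gamma}(2\mu)$, which fixes normalisations and the range of finite moments.

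Finally I would assemble $\mathbb{E}[(V_1+V_2)^{2\mu}]$ from $\mathbb{E}[e^{-\lambda(V_1+V_2)}]=\big(\mathbb{E}[e^{-\lambda V}]\big)^2$ by a Mellin transform in $\lambda$, using the Mellin–Barnes representation of a product of Bessel functions to collapse the answer to a ratio of Gamma functions, and check that it equals $2^{2\mu}/\Gamma(1+2\mu)$; equivalently one can verify the identity on integer moments $\mathbb{E}[(V_1+V_2)^n]=\sum_k\binom{n}{k}m_k m_{n-k}$, with $m_s=\mathbb{E}[V^s]=\Phi_s'(0)/2\mu$ produced by the recursion $\tfrac12\partial_{xx}\Phi_s+\mu\partial_x\Phi_s=-s\,e^{-x}\Phi_{s-1}$ under the Dirichlet conditions $\Phi_s(0)=\Phi_s(+\infty)=0$ (one checks $m_1=\tfrac{2}{1+2\mu}$ and, e.g., $m_2=\tfrac59$ at $\mu=1$, giving $\mathbb{E}[(V_1+V_2)^2]=2$), and then extend to $s=2\mu$ by analytic continuation in the spirit of Carlson's theorem. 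The main obstacle is exactly the value of the exponent: $s=2\mu$ is the critical one at which the \emph{unconditioned} functional $2/\gamma_{2\mu}$ already has an infinite moment, so both the finiteness of $\mathbb{E}[J(\mu)^{2\mu}]$ and the precise constant come entirely from the conditioning to stay positive. Capturing this needs the genuine special–function solution above (or the fine joint law of the functional and its infimum): a naive first–order expansion of the $h$–transform produces a cancellation of the leading terms and a misleading $0/0$, so the borderline finiteness and the analytic continuation in $s$ must be handled with care.
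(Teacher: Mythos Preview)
Your decomposition $J(\mu)\eqlaw V_1+V_2$ with $V_1,V_2$ i.i.d.\ copies of $\int_0^\infty e^{-R_t}\,dt$ is correct (this is the two-sided representation the paper also records), and the Feynman--Kac/Bessel route you outline is in principle workable: the ODE you write down does indeed have explicit Bessel solutions, and the integer-moment checks you performed are right. But this is a much more laborious path than the one the paper takes, and several of the steps you leave at the level of ``then collapse via Mellin--Barnes'' or ``extend by Carlson'' would each require real work to close (growth bounds for Carlson, a concrete identity for the product of Bessel functions at the right argument, etc.).

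The paper's argument is far shorter and avoids all special-function manipulation. It never tries to compute the law or Laplace transform of $V$; instead it observes that by the one-dimensional Williams decomposition (integrating over the minimum $m$ of $B^\mu$, which is exponential with rate $2\mu$),
\[
\E\Big[\Big(\int_0^\infty e^{-B_t^\mu}\,dt\Big)^{2\mu-\eps}\Big]
=\frac{1}{\eps}\cdot\frac{1}{2\mu}\int_0^\infty e^{-m}\,\E_{m/\eps}\Big[\Big(\int_0^\infty e^{-\mathcal{B}_t^\mu}\,dt\Big)^{2\mu-\eps}\Big]\,dm,
\]
so $\eps\,\E[(\int e^{-B^\mu})^{2\mu-\eps}]\to \frac{1}{2\mu}\,\E[J(\mu)^{2\mu}]$ by dominated convergence as the starting point $m/\eps\to\infty$. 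The left-hand side is then evaluated \emph{directly} by Dufresne's identity $\int_0^\infty e^{-B_t^\mu}\,dt\eqlaw 2/\gamma_{2\mu}$, which gives $\E[(\int e^{-B^\mu})^{p}]=2^p\,\Gamma(2\mu-p)/\Gamma(2\mu)$; taking $p=2\mu-\eps$ and using $\eps\,\Gamma(\eps)\to1$ finishes the computation. In short: rather than conditioning and solving an ODE, the paper extracts the critical moment as the residue of the \emph{unconditioned} moment at $p=2\mu$, where Dufresne makes everything explicit. Your approach would eventually land on the same number, but the paper's trick turns a special-function computation into a three-line limit.
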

	In order to prove this identity we will use the analogy between Toda Vertex Operators and class one Whittaker functions, and more precisely we rely on the strategy developed in~\cite{KRV_DOZZ} to evaluate the Liouville reflection coefficient. Namely we prove that the above expression can be obtained via a limiting procedure:
	\begin{lemma}
		For positive $\eps$, we have that
		\begin{equation}\label{equ:identity_refl_whitt1}
			\lim\limits_{\eps\to 0}\eps\expect{\left(\int_0^\infty e^{-B_t^\mu}dt\right)^{2\mu-\eps}}=\frac{1}{2\mu}\expect{\left(\int_0^\infty e^{-\mathcal{B}_t^\mu}dt\right)^{2\mu}}
		\end{equation}
	where $B^\mu$ is a one-dimensional Brownian motion with drift $\mu$ and started from the origin.
	\end{lemma}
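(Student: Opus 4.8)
The approach follows the derivation of the Liouville reflection coefficient in~\cite{KRV_DOZZ}: I would realise the left-hand side as an Abelian limit of a Laplace transform in $\eps$, the limit being governed by the behaviour of the drifted Brownian motion near its global minimum --- which is exactly where $\mathcal{B}^\mu$ enters, through Williams' one-dimensional path decomposition~\cite{Williams} (recovered as the rank-one case of Theorem~\ref{thm:williams}).

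\emph{Disintegration over the minimum.} Let $\mathfrak m\coloneqq\inf_{t\geq0}B^\mu_t\leq 0$. Since $e^{-2\mu B^\mu_t}$ is a martingale, optional stopping gives $\P(\mathfrak m\leq-y)=e^{-2\mu y}$ for $y\geq0$, so $-\mathfrak m$ is exponential with density $2\mu e^{-2\mu y}$ on $\R_+$. By Williams' theorem, conditionally on $\{\mathfrak m=-y\}$ the path $B^\mu$ has the law of $-y+\mathcal{B}^\mu$, where $\mathcal{B}^\mu$ is started from $y$ and is the concatenation of a drift $-\mu$ Brownian motion from $y$ until it hits $0$ with an independent drift $\mu$ Brownian motion conditioned to stay positive. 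Consequently, conditionally on $\{\mathfrak m=-y\}$, $\int_0^\infty e^{-B^\mu_t}dt$ has the law of $e^{y}\int_0^\infty e^{-\mathcal{B}^\mu_t}dt$. Writing $\phi_\eps(y)\coloneqq\E_y\big[\big(\int_0^\infty e^{-\mathcal{B}^\mu_t}dt\big)^{2\mu-\eps}\big]$ and integrating against the law of $\mathfrak m$, the exponential factors combine and, after the substitution $u=\eps y$,
\begin{equation*}
\eps\,\expect{\Big(\int_0^\infty e^{-B^\mu_t}dt\Big)^{2\mu-\eps}}=2\mu\,\eps\int_0^\infty e^{-\eps y}\phi_\eps(y)\,dy=2\mu\int_0^\infty e^{-u}\phi_\eps(u/\eps)\,du.
\end{equation*}

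\emph{Passing to the limit.} The point is to let $\eps\to0$ inside the last integral. For fixed $u>0$ one has $u/\eps\to+\infty$, so one must show that $\phi_\eps(y_\eps)$ converges, along any $y_\eps\to+\infty$, to $\expect{J(\mu)^{2\mu}}$, where $J(\mu)$ is the entrance-from-infinity version of $\int_0^\infty e^{-\mathcal{B}^\mu_t}dt$ (equivalently the two-sided representation recalled in Subsection~\ref{subsec:refl_J}). At fixed exponent this is the convergence in law, as $y\to\infty$, of the functional of $\mathcal{B}^\mu$ started from $y$; it follows from time-reversal of the drifted Brownian motion killed at $0$, which identifies the descending part of $\mathcal{B}^\mu$ run from a high level with an independent copy of the conditioned-to-stay-positive diffusion. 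Upgrading this to convergence of the $2\mu$-th moments, uniformly for $\eps$ small, requires uniform integrability; it is supplied by the fact that the exponential functionals of the \emph{conditioned} process $\mathcal{B}^\mu$ have finite moments of all orders, bounded uniformly in the starting point (the process cannot linger near $0$), an analogue of Lemmas~\ref{lemma:moments_Z} and~\ref{lemma:moments_cond}. The resulting bound $\sup_{y\geq0,\,\eps\ \text{small}}\phi_\eps(y)<\infty$ then justifies dominated convergence, and one obtains $\lim_{\eps\to0}\eps\,\expect{(\int_0^\infty e^{-B^\mu_t}dt)^{2\mu-\eps}}$ as an explicit multiple of $\expect{J(\mu)^{2\mu}}$; keeping track of the exponential rate of $-\mathfrak m$, the space-translation factor and the normalisation of $\mathcal{B}^\mu$ produces the constant $\tfrac1{2\mu}$ and hence~\eqref{equ:identity_refl_whitt1}.

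The main obstacle is precisely this last step: one works exactly at the critical exponent $2\mu$, at which the functional of the \emph{unconditioned} Brownian motion already has infinite $2\mu$-th moment, so the interchange of the limits $y\to\infty$ and $\eps\to0$ must be controlled carefully --- through uniform-in-starting-point moment estimates for the conditioned diffusion $\mathcal{B}^\mu$ together with the time-reversal/entrance-law description that identifies its limiting law.
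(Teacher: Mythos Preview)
Your approach is essentially identical to the paper's: disintegrate over the minimum via Williams' decomposition, make the substitution $u=\eps y$, and pass to the limit by dominated convergence using the entrance-from-infinity behaviour of $\mathcal{B}^\mu$. You discuss the dominated-convergence justification (uniform moment bounds for the conditioned process) more carefully than the paper, which simply asserts it, but the skeleton of the argument is the same.
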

	\begin{proof}
		Using the one-dimensional path decomposition by Williams~\cite{Williams} we see that
		\begin{align*}
			\expect{\left(\int_0^\infty e^{-B_t^\mu}dt\right)^{2\mu-\eps}}&=\frac{1}{2\mu}\int_0^{\infty}e^{-2\mu m}\E_{m}\left[\left(\int_0^\infty e^{m}e^{-\mathcal B_t^\mu}dt\right)^{2\mu-\eps}\right]dm
		\end{align*}
		where under $\E_{m}$ the process $\mathcal{B}^\mu$ is started from $m$. The latter is equal to
		\begin{align*}
			&=\frac{1}{2\mu}\int_0^{\infty}e^{-\eps m}\E_{m}\left[\left(\int_0^\infty e^{-\mathcal B_t^\mu}dt\right)^{2\mu-\eps}\right]dm\\
			&=\frac1\eps\frac{1}{2\mu}\int_0^{\infty}e^{-m}\E_{\frac m\eps}\left[\left(\int_0^\infty e^{-\mathcal B_t^\mu}dt\right)^{2\mu-\eps}\right]dm.
		\end{align*}
		Because $\E_{\frac m\eps}\left[\left(\int_0^\infty e^{-\mathcal B_t^\mu}dt\right)^{2\mu-\eps}\right]\to \expect{\left(\int_0^\infty e^{-\mathcal{B}_t^\mu}dt\right)^{2\mu}}$ for all positive $m$, by dominated convergence
		$\eps\expect{\left(\int_0^\infty e^{-B_t^\mu}dt\right)^{2\mu-\eps}}$ goes to $\frac{1}{2\mu}\expect{\left(\int_0^\infty e^{-\mathcal{B}_t^\mu}dt\right)^{2\mu}}$ in the $\eps\to0$ limit.
	\end{proof}
	Now the quantity that appears on the right-hand side in Equation~\eqref{equ:identity_refl_whitt1} is easily computed since the law of the random variable is completely explicit:
	\begin{lemma}
		For positive $p<2\mu$, 
		\begin{equation}\label{equ:identity_refl_whitt2}
			\expect{\left(\int_0^\infty e^{-B_t^\mu}dt\right)^p}=2^p\frac{\Gamma(2\mu-p)}{\Gamma(2\mu)}\cdot
		\end{equation}
	\end{lemma}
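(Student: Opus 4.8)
The statement to prove is the identity
\[
\expect{\left(\int_0^\infty e^{-B_t^\mu}dt\right)^p}=2^p\frac{\Gamma(2\mu-p)}{\Gamma(2\mu)}
\]
for $0<p<2\mu$, where $B^\mu$ is a one-dimensional Brownian motion with positive drift $\mu$ (started from $0$, with the standard variance-one normalization implicit in this part of the paper). The plan is to identify the law of $A_\infty\coloneqq\int_0^\infty e^{-B_t^\mu}dt$ explicitly and then compute its moments directly. The key classical input is Dufresne's identity: the perpetual integral functional of Brownian motion with drift has an inverse-gamma law. Concretely, if $W$ is a standard Brownian motion, then $\int_0^\infty e^{-2(W_t+\nu t)}dt \overset{(law)}{=}\frac{1}{2\gamma_\nu}$ where $\gamma_\nu$ is a Gamma random variable with shape parameter $\nu$. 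I would first record this fact (citing Dufresne, or deriving it via the time-reversal / Lamperti correspondence between exponential functionals and squared Bessel processes), and then perform the change of variables needed to match our normalization.

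**The change of variables.** In the convention used here $B^\mu_t = \beta_t + \mu t$ with $\beta$ a standard Brownian motion of variance one, so $B^\mu_t = \sqrt{2}\,\widetilde W_{t/2} + \mu t$ for a standard Brownian motion $\widetilde W$. Substituting $s=t/2$ gives
\[
\int_0^\infty e^{-B^\mu_t}\,dt = 2\int_0^\infty e^{-\sqrt2\,\widetilde W_s - 2\mu s}\,ds = 2\int_0^\infty e^{-2(\widetilde W'_s + \mu s)}\,ds,
\]
where $\widetilde W'_s\coloneqq \widetilde W_s/\sqrt2$ is again standard Brownian motion. By Dufresne's identity the last integral is $\frac{1}{2\gamma_\mu}$ in law, hence $A_\infty\overset{(law)}{=}\frac{1}{\gamma_\mu}$, an inverse-gamma variable of shape $\mu$ (and unit rate). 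This is the crucial structural step; everything after is bookkeeping.

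**Computing the moments.** Given $A_\infty\overset{(law)}{=}\gamma_\mu^{-1}$ with $\gamma_\mu$ having density $\frac{1}{\Gamma(\mu)}x^{\mu-1}e^{-x}$ on $(0,\infty)$, one has for any real $q$ with $q<\mu$
\[
\expect{\gamma_\mu^{-q}} = \frac{1}{\Gamma(\mu)}\int_0^\infty x^{\mu-1-q}e^{-x}\,dx = \frac{\Gamma(\mu-q)}{\Gamma(\mu)},
\]
the integral converging precisely when $\mu-q>0$. Here we need $\expect{A_\infty^{p}}$ with $A_\infty\overset{(law)}{=}\frac12\cdot\frac{2}{\gamma_\mu}$; tracking the factor of $2$ carefully: $A_\infty = 2\cdot\frac{1}{2\gamma_\mu}\overset{(law)}{=}\gamma_\mu^{-1}$ is not quite right with the constants, so the safe route is to keep $A_\infty\overset{(law)}{=}2\int_0^\infty e^{-2(\widetilde W'+\mu s)}ds \overset{(law)}{=} 2\cdot\frac{1}{2\gamma_\mu}=\gamma_\mu^{-1}$ after the Dufresne normalization $\int_0^\infty e^{-2(W+\nu t)}dt\overset{(law)}{=}(2\gamma_\nu)^{-1}$; then a residual factor $2^p$ appears only if one's Dufresne normalization differs, so I would instead state Dufresne in the form $\int_0^\infty e^{-(W_t+\nu t)}\,dt\overset{(law)}{=}\frac{2}{\gamma_\nu}$ (integrand without the $2$), giving directly $A_\infty\overset{(law)}{=}2\gamma_{2\mu}^{-1}$ after rescaling time by $1/2$ (which doubles the shape to $2\mu$). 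Then
\[
\expect{A_\infty^p} = 2^p\,\expect{\gamma_{2\mu}^{-p}} = 2^p\,\frac{\Gamma(2\mu-p)}{\Gamma(2\mu)},
\]
valid for $0<p<2\mu$, which is exactly the claim. The main obstacle is purely notational rather than conceptual: nailing down the normalization conventions (variance $1$ versus $1/2$ of the Brownian motion, the factor in the exponent, and the shape parameter that emerges after time-rescaling) so that the constants $2^p$ and $2\mu$ land correctly. Once the correct form of Dufresne's identity is fixed to match the paper's conventions, the computation is immediate; I would double-check the constant by independently verifying the case $p=1$, $\mu=1$ (where $\expect{A_\infty}=\int_0^\infty e^{-t}\,\expect{e^{-\beta_t}}\,dt=\int_0^\infty e^{-t}e^{t/2}\,dt=2$, matching $2^1\Gamma(1)/\Gamma(2)=2$).
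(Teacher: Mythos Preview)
Your approach is exactly the one the paper takes: cite Dufresne's identity to obtain $\int_0^\infty e^{-B_t^\mu}\,dt \overset{\text{(law)}}{=} 2/\gamma_{2\mu}$ and then compute the negative moments of the Gamma variable. One slip to fix in the write-up: the process $\widetilde W'_s\coloneqq\widetilde W_s/\sqrt2$ is \emph{not} a standard Brownian motion (it has variance $s/2$), so that intermediate rescaling step is wrong as stated; since you ultimately abandon it and quote Dufresne directly in the correct form (as the paper does), just delete the change-of-variables digression and keep the clean route plus your $p=1,\mu=1$ sanity check.
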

	\begin{proof}
		The law of the whole integral is known to be given by an inverse Gamma law. More precisely it is a classical fact~\cite{D90} that
		\begin{equation}
			\int_0^\infty e^{-B_t^\mu}dt\overset{\text{(law)}}{=}\frac{2}{\gamma_{2\mu}}
		\end{equation}
		where $\gamma_{2\mu}$ follows the Gamma law with index $2\mu$. This implies the result.
	\end{proof}
	Via the identity $z\Gamma(z)=\Gamma(1+z)$, combining Equations~\eqref{equ:identity_refl_whitt1} and~\eqref{equ:identity_refl_whitt2} we see that~\eqref{equ:identity_refl_whitt0} does indeed hold.
	
	
	\subsection{Asymptotic expansion of the Vertex Operators: proof of Theorem~\ref{thm:primary_reflection}}
	Assuming Theorem~\ref{thm:expression_refl} to hold, we provide in this subsection a proof of Theorem~\ref{thm:primary_reflection} and derive a precise asymptotic expansion for reflection coefficients associated with elementary reflections. 
	
	Let us first explain how the path decomposition unveiled in the previous sections can be used to understand the asymptotic behaviour as $\bm c\to\infty$ inside $\mathcal{C}_-$ of the quantity $	\E_{\varphi}\left[\exp\left(-\sum_{i=1}^re^{\gamma \ps{\bm c,e_i}}I_i\right)\right]$.
	We start with a tautology: write that
	\begin{equation*}
		\begin{split}
			\E_{\varphi}\left[\exp\left(-\sum_{i=1}^re^{\gamma \ps{\bm c,e_i}}I_i\right)\right]=\sum_{k=0}^r\sum_{i_1<\cdots<i_k}\E_{\varphi}\left[\prod_{j=1}^k\left(\exp\left(-e^{\gamma \ps{\bm c,e_{i_k}}}I_{i_j}\right)-1\right)\right].
		\end{split}
	\end{equation*}
	In this equation we can rewrite the integrals involved using the radial-angular decomposition from~\eqref{equ:int_radial_angular} as
	\begin{align*}
		I_i=\int_{\mathbb D}\norm{x}^{-\gamma\ps{\alpha,e_i}}M^{\gamma e_i}(d^2x)=\int_0^{\infty}e^{\gamma\ps{B^\nu_t,e_i}}Z_t^idt
	\end{align*}for $i=1,\cdots, r$ and with $\nu=\alpha-Q$. The probability for these integrals to be large is governed by the maximums of the processes $\left(\ps{B^\nu_t,e_i}\right)_{t\geq 0}$ so that the path decomposition of Theorem~\ref{thm:williams} naturally applies within this setting. To be more specific we can rewrite the random variables $I_i$ under the form
	\[
	I_i=e^{\gamma\ps{\M,e_i}}\int_0^{\infty}e^{\gamma\ps{\B^\nu_t,e_i}}Z_t^idt=:e^{\gamma\ps{\M,e_i}}\mathrm J_i
	\]
	where $\M$ has its law described by 
	\[
	\P\left(\M_i\leq \bm m_i\quad\forall1\leq i\leq r\right)=h(\bm m)\mathds1_{\bm m\in\mathcal{C}},
	\]
	while the independent diffusion process $\B^\nu$ that appears there is defined by joining
	\begin{itemize}
		\item A diffusion process $\X^1$ started from $-\M\in\mathcal{C}_-$, with generator $\frac12\Delta+\nabla\log\partial_{1,\cdots,r}h$ and run until it hits $\partial\mathcal{C}_-$, say at $z_1\in\partial\mathcal{C}_1$. 
		\item Then run an independent process $\X^2$ started from $z_1$ and with generator $\frac12\Delta+\nabla\log\partial_{2,\cdots,r}h$, upon hitting $\partial\mathcal{C}_-$.
		\item Thus define a family of processes $(\X^1,\cdots,\X^r)$. When $\X^r$ reaches the boundary of $\partial\mathcal{C}_-$, sample $\X^{r+1}$ with generator $\frac12\Delta+\nabla\log h$.
	\end{itemize}
	In the above $h$ is defined in Equation~\eqref{equ:max_drift}. By doing so we reformulate our problem by writing that 
	\begin{align*}
	\E_{\varphi}\left[\prod_{i=1}^r\left(\exp\left(-e^{\gamma \ps{\bm c,e_{i}}}I_{i}\right)-1\right)\right]=\int_{\mathcal C}d\P(\M)\E_{\varphi,-\M}\left[\prod_{i=1}^r\left(\exp\left(-e^{\gamma \ps{\bm c+\M,e_{i}}}\mathrm J_{i}\right)-1\right)\right]
	\end{align*}
	where $d\P(\M)$ is the density of the random variable $\M$ while with the notation $\E_{\varphi,-\M}$ we indicate that the process that enters the definition of the $(\mathrm{J}_{i})_{1\leq i\leq r}$ is started from $-\M$. Note that Theorem~\ref{thm:williams} and therefore such a decomposition can be adapted when we rather consider expressions of the form $\E_{\varphi}\left[\prod_{j=1}^k\left(\exp\left(-e^{\gamma \ps{\bm c,e_{i_k}}}I_{i_j}\right)-1\right)\right]$ since the $(e_{i_j})_{1\leq j\leq k}$ still form a root system in the Euclidean space $\text{span}(e_{i_j})_{1\leq j\leq k}$. The Weyl chamber would then be a subset of this vector space while the $(e_{i_j})_{1\leq j\leq k}$ components of the drifted Brownian motion would still form a drifted Brownian motion over it.
	
	Assuming for now Theorem~\ref{thm:expression_refl} to be true we see that in the above expression the terms that involve a product of more than two terms will be lower order terms in the asymptotic studied in Theorem~\ref{thm:primary_reflection}. More precisely
	\begin{equation*}
		\begin{split}
			\E_{\varphi}\left[\exp\left(-\sum_{i=1}^re^{\gamma \ps{\bm c,e_i}}I_i\right)\right]=1&+\sum_{i=1}^r\E_{\varphi}\left[\left(\exp\left(-e^{\gamma \ps{\bm c,e_{i}}}I_{i}\right)-1\right)\right]\\
			&+\E_{\varphi}\left[\left(\exp\left(-e^{\gamma \ps{\bm c,e_{1}}}I_{1}\right)-1\right)\left(\exp\left(-e^{\gamma \ps{\bm c,e_{2}}}I_{2}\right)-1\right)\right]+u
		\end{split}
	\end{equation*}
	where $u$ is as in the statement of Theorem~\ref{thm:primary_reflection}. 
	Using Theorem~\ref{thm:expression_refl} to evaluate the last expectation term we see that all we need to do is to provide a precise asymptotic expansion of the first sum. This issue is addressed thanks to the following:
	\begin{proposition}\label{prop:asymptot_1_term}
		Assume that $\ps{\alpha-Q,e_i^\vee}\in(-\gamma,0)$. Then for any positive $\eta<1$,
		\begin{equation}
			\E_{\varphi}\left[\exp\left(-e^{\gamma \ps{\bm c,e_{i}}}I_{i}\right)\right]=1+R_{s_i}\left(\alpha\right)e^{\ps{\hat s_i\alpha-\alpha,\bm c}}+\mathcal{O}\left(e^{(1-\eta)\gamma\ps{\bm c, e_i}}\right).
		\end{equation}
	\end{proposition}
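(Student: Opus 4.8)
The plan is to reduce the statement to a purely one-dimensional computation via the Williams path decomposition from Corollary~\ref{cor:brownien_drift}, specialized to the component along $e_i$. First I would use the radial--angular decomposition $I_i=\int_0^\infty e^{\gamma\ps{B^\nu_t,e_i}}Z_t^i\,dt$ with $\nu=\alpha-Q\in\mathcal{C}_-$, and write $\ps{B^\nu_t,e_i}$ as a one-dimensional Brownian motion with drift $\ps{\nu,e_i}<0$ and variance $\ps{e_i,e_i}$, together with the field part $Z_t^i$ which is independent of it. Applying the one-dimensional Williams decomposition (the $r=1$ instance of Corollary~\ref{cor:brownien_drift}), conditionally on the minimum $\M_i\coloneqq\inf_{t\ge0}\ps{B^\nu_t,e_i}$ the process $\ps{B^\nu_t,e_i}-\M_i$ has the law of the process $\mathcal{B}^{\ps{-\nu,e_i}}$ from Subsection~\ref{subsec:brown_cond_neg}, and $\M_i$ is distributed (before exponentiation) as an exponential of parameter $\tfrac{2}{\ps{e_i,e_i}}\ps{-\nu,e_i}=\ps{-\nu,e_i^\vee}$. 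Hence, writing $I_i=e^{\gamma\M_i}\,\mathrm J_i$ with $\mathrm J_i\coloneqq\int_0^\infty e^{\gamma\mathcal B^{\ps{-\nu,e_i}}_t}Z_t^i\,dt$ independent of $\M_i$, and recalling that $\ps{\hat s_i\alpha-\alpha,\bm c}=\ps{s_i\nu-\nu,\bm c}=-\ps{\nu,e_i^\vee}\ps{\bm c,e_i}$, we obtain
\begin{equation*}
\E_\varphi\!\left[\exp\!\left(-e^{\gamma\ps{\bm c,e_i}}I_i\right)-1\right]=\ps{-\nu,e_i^\vee}\int_0^\infty e^{-\ps{-\nu,e_i^\vee}m}\,\E_\varphi\!\left[\exp\!\left(-e^{\gamma\ps{\bm c,e_i}+\gamma m}\mathrm J_i\right)-1\right]dm.
\end{equation*}

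Next I would perform the change of variables $u=e^{\gamma\ps{\bm c,e_i}+\gamma m}$, so that the integral becomes (up to the prefactor) $\int_{u_0}^\infty u^{-1-\frac{\ps{-\nu,e_i^\vee}}{\gamma}}\big(\E_\varphi[e^{-u\mathrm J_i}]-1\big)\,du$ with $u_0=e^{\gamma\ps{\bm c,e_i}}\to0$. Since $-\ps{\nu,e_i^\vee}\in(0,\gamma)$, the exponent $\beta\coloneqq\tfrac{\ps{-\nu,e_i^\vee}}{\gamma}\in(0,1)$, and the integrand near $u=0$ behaves like $u^{-1-\beta}\cdot(-u\,\E_\varphi[\mathrm J_i])$ which is integrable at $0$ (using $\E[\mathrm J_i]<\infty$ from Lemma~\ref{lemma:moments_cond}, valid since $\beta<\tfrac{4}{\gamma^2}$), while at $\infty$ the factor $\E_\varphi[e^{-u\mathrm J_i}]-1\to-1$ and $u^{-1-\beta}$ is integrable. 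Splitting the integral at $u_0$: the piece $\int_{u_0}^\infty$ extended to $\int_0^\infty$ is a finite constant which, after multiplying by the Mellin-type prefactor, produces exactly $R_{s_i}(\alpha)e^{\ps{\hat s_i\alpha-\alpha,\bm c}}$ — here one identifies, via the standard Mellin/Gamma manipulation (the same as in the proof of \cite[Lemma 7.1]{KRV_DOZZ}), the constant with $R_L\big(\tfrac{\ps{\alpha,e_i}}{\sqrt2}\big)$ up to the normalization $\sqrt2$, giving $R_{s_i}(\alpha)$ as in Theorem~\ref{thm:expression_refl}. The leftover piece $-\int_0^{u_0}u^{-1-\beta}\big(\E_\varphi[e^{-u\mathrm J_i}]-1\big)\,du$ is $\mathcal O(u_0^{1-\beta})=\mathcal O\big(e^{(1-\beta)\gamma\ps{\bm c,e_i}}\big)$, which is absorbed in $\mathcal O(e^{(1-\eta)\gamma\ps{\bm c,e_i}})$ for $\eta$ small once $\eta<1-\beta$ — and for $\eta\ge1-\beta$ the bound is only weaker, so it holds for all $\eta\in(0,1)$ as stated.

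The main obstacle is controlling the error term uniformly, i.e.\ justifying that $\E_\varphi[e^{-u\mathrm J_i}]-1+u\,\E_\varphi[\mathrm J_i]$ does not spoil the $\mathcal O(u_0^{1-\beta})$ estimate and, more importantly, handling the $\varphi$-dependence: the identity $I_i=e^{\gamma\M_i}\mathrm J_i$ must be understood conditionally on $\varphi$ (equivalently, on $\X_0$ only through the radial Brownian motion $B$), and one must check that the constant term is indeed deterministic — this is where the independence structure from Subsection~\ref{subsec:GFF} (the radial Brownian motion $B$ being independent of $Y$, hence of the $Z_t^i$'s and of $\varphi$ which lives on $\partial\mathbb D$) is essential. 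Concretely, $\M_i$ is built from the radial part $B$, which is independent of $\varphi$, so the conditioning on $\varphi$ only affects $\mathrm J_i$ through the boundary harmonic extension, and the finiteness of moments in Lemma~\ref{lemma:moments_cond} together with dominated convergence gives the required uniformity. A secondary technical point is the precise matching of the Mellin transform constant with the Liouville reflection coefficient, but this is exactly the computation carried out in \cite{KRV_DOZZ} and can be invoked rather than redone.
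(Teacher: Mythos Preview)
Your reduction via Williams is the right starting point and mirrors the paper, but the claim that $\mathrm J_i$ is \emph{independent} of $\M_i$ is false, and this is where the proof breaks. After the one-dimensional Williams decomposition, the process $\ps{B^\nu_\cdot,e_i}-\M_i$ (here $\M_i$ is the supremum, not the infimum, since $\ps{\nu,e_i}<0$) is the process $\mathcal B$ from Subsection~\ref{subsec:brown_cond_neg} \emph{started from $-\M_i$}: the pre-maximum piece is a drifted Brownian motion run from $-\M_i$ until it hits $0$, and only the post-maximum piece is independent of $\M_i$. Hence $\mathrm J_i=\int_0^\infty e^{\gamma(\ps{B^\nu_t,e_i}-\M_i)}Z_t^i\,dt$ carries an $\M_i$-dependence through this starting point, and the identity $I_i=e^{\gamma\M_i}\mathrm J_i$ does not factor as a product of independent variables. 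Consequently, after your change of variables $u=e^{\gamma\ps{\bm c,e_i}+\gamma m}$ the integrand $\E_\varphi[e^{-u\mathrm J_i}]-1$ still depends on $m$ (equivalently on $u$ and on $\bm c$), so you cannot extend the integral to $\int_0^\infty$ of a fixed function and read off a Mellin transform.

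What the paper actually does is keep track of this dependence: it writes the integral over $M$ with the expectation $\E_{\varphi,-M+\ps{\bm c,e_i}}[\cdots]$ (the subscript is the starting point of $\mathcal B$), splits at $(1-\eta)\ps{\bm c,e_i}$, bounds the near-boundary piece crudely by $e^{\gamma M}\E[\mathrm J_i]$, and on the far piece shows that as the starting point $-M+\ps{\bm c,e_i}\to-\infty$ the expectation converges to the $\M_i$-free, $\varphi$-free limit $\E[\exp(-e^{\gamma M}J_{\gamma_i}(\ps{\nu,e_i^*}))]$. The remainder $\mathfrak R$ is then controlled by a time-reversal argument for $\mathcal B$ (comparing starting points $x<x'$ and bounding the pre-$T_{x'}$ contribution), yielding $|\mathfrak R|\le C e^{\gamma\ps{\bm c,e_i}}$; the $\varphi$-independence of the limit uses $P\varphi(0)=0$. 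This is precisely the content you flag as ``main obstacle'' but misdiagnose as a $\varphi$-issue: the primary difficulty is the $\M_i$-dependence of $\mathrm J_i$, and the $\varphi$-independence only emerges once the starting point has been sent to $-\infty$.
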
 
	\begin{proof}
		As explained above we can rewrite $I_i$ as $e^{\gamma \ps{\M,e_i}}\mathrm J_i$, and since only a one-dimensional Brownian motion is involved in the computation of $\E_{\varphi}\left[\exp\left(-e^{\gamma \ps{\bm c,e_i}}I_i\right)-1\right]$ we can readily use the one-dimensional path decomposition. Using this decomposition yields
		\begin{equation*}
			\begin{split}
				&\E_{\varphi}\left[\exp\left(-e^{\gamma \ps{\bm c,e_i}}I_i\right)-1\right] = \int_{0}^{+\infty}\ps{\nu,e_i^\vee}e^{\ps{\nu,e_i^\vee}M}\E_{\varphi,-M}\left[1-\exp\left(-e^{\gamma \ps{\bm c+M\omega_i^\vee,e_i}}\mathrm J_i\right)\right]dM
			\end{split}
		\end{equation*}
		since the random variable $M=\sup_{t\geq0}\ps{B^\nu,e_i}$ has law given by $d\P(M)=\ps{\nu,e_i^\vee}e^{\ps{\nu,e_i^\vee}M}\mathds 1_{M>0}$.
		By making the change of variable $M\leftrightarrow M+\ps{\bm c,e_i}$ we end up with
		\begin{equation*}\label{equ:starting_point_proof0}
			\begin{split}
				&\E_{\varphi}\left[\exp\left(-e^{\gamma \ps{\bm c,e_i}}I_i\right)-1\right] = e^{-\ps{\nu,e_i^\vee}{\ps{\bm c,e_i}}}R_{s_i}(\alpha;\bm c,\varphi)
			\end{split}
		\end{equation*}
		with 
		\begin{equation}\label{equ:approx_reflL}
			\begin{split}
				&R_{s_i}(\alpha;\bm c,\varphi)\coloneqq\int_{\ps{\bm c,e_i}}^{+\infty}\ps{\nu,e_i^\vee}e^{\ps{\nu,e_i^\vee}M}\E_{\varphi,-M+\ps{\bm c,e_i}}\left[1-\exp\left(-e^{\gamma M}\mathrm J_i\right)\right]dM.
			\end{split}
		\end{equation}
		To evaluate this term, let us split the integral in Equation~\eqref{equ:approx_reflL} as
		\[
		\int_{\ps{\bm c,e_i}}^{\ps{(1-\eta)\bm c,e_i}}+\int_{\ps{(1-\eta)\bm c,e_i}}^{+\infty}\ps{\nu,e_i^\vee}e^{\ps{\nu,e_i^\vee}M}\E_{\varphi,-M+\ps{\bm c,e_i}}\left[1-\exp\left(-e^{\gamma M}\mathrm J_i\right)\right]dM.
		\]
		In the first integral above we can bound the expectation term as
		\[
		\E_{\varphi,-M+\ps{\bm c,e_i}}\left[1-\exp\left(-e^{\gamma M}\mathrm J_i\right)\right]\leq e^{\gamma M}\E_{\varphi,-M+\ps{\bm c,e_i}}\left[\mathrm J_i\right].
		\]
		Note that thanks to Lemma~\ref{lemma:moments_cond} and because $\varphi$ is uniformly bounded over $\D$ the expectation on the right-hand side is uniformly bounded in $\varphi, M$. This implies that
		\begin{align*}
			&\int_{\ps{\bm c,e_i}}^{\ps{(1-\eta)\bm c,e_i}}\ps{\nu,e_i^\vee}e^{\ps{\nu,e_i^\vee}M}\E_{\varphi,-M+\ps{\bm c,e_i}}\left[1-\exp\left(-e^{\gamma M}\mathrm J_i\right)\right]dM\\
			\leq&\int_{\ps{\bm c,e_i}}^{\ps{(1-\eta)\bm c,e_i}}\ps{\nu,e_i^\vee}e^{\left(\gamma+\ps{\nu,e_i^\vee}\right)M}\E_{\varphi,-M+\ps{\bm c,e_i}}\left[\mathrm J_i\right]dM\\
			\leq &Ce^{\left(\gamma+\ps{\nu,e_i^\vee}\right)\ps{(1-\eta)\bm c,e_i}}
		\end{align*}
		for some positive constant $C$. 
		
		Let us now turn to the other integral. It is readily seen that since $-M+\ps{\bm c,e_i}$ will diverge to $-\infty$, the integral $\mathrm{J}_i$ will converge to
		\[
		\int_0^{\infty}e^{\gamma\ps{\mathcal{B}^\nu_t,e_i}}Z_t^idt\overset{\text{(law)}}{=}J_{\gamma_i}\left(\ps{\nu,e_i^*}\right)
		\]
		where $\gamma_i^2=\ps{e_i,e_i}\gamma^2$. This allows us to write $\E_{\varphi,-M+\ps{\bm c,e_i}}\left[1-\exp\left(-e^{\gamma M}\mathrm J_i\right)\right]$ under the form
		\[
		\E_{\varphi,-M+\ps{\bm c,e_i}}\left[1-\exp\left(-e^{\gamma M}\mathrm J_i\right)\right]=\E\left[1-\exp\left(-e^{\gamma M} J_{\gamma_i}\left(\ps{\nu,e_i^*}\right)\right)\right]+\mathfrak{R}(\varphi;M-\ps{\bm c,e_i}).
		\]
		Using the same estimates as above we see that 
		\begin{align*}
			&\int_{\ps{(1-\eta)\bm c,e_i}}^{+\infty}\ps{\nu,e_i}e^{\ps{\nu,e_i}M}\E_{\varphi,-M+\ps{\bm c,e_i}}\left[1-\exp\left(-e^{\gamma M}\mathrm J_i\right)\right]dM\\
			=&\int_{-\infty}^{+\infty}\ps{\nu,e_i^\vee}e^{\ps{\nu,e_i^\vee}M}\E\left[1-\exp\left(-e^{\gamma M} J_{\gamma_i}\left(\ps{\nu,e_i^*}\right)\right)\right]dM\\
			+&\int_{\ps{(1-\eta)\bm c,e_i}}^{+\infty}\ps{\nu,e_i^\vee}e^{\ps{\nu,e_i^\vee}M}\mathfrak{R}(\varphi;M-\ps{\bm c,e_i})dM+\mathcal{O}\left(e^{\left(\gamma+\ps{\nu,e_i^\vee}\right)\ps{(1-\eta)\bm c,e_i}}\right).
		\end{align*}
		The integral that appears in the second line can be evaluated and is found to be equal to 
		\[
		-\frac{\ps{\nu,e_i^\vee}}\gamma\Gamma\left(\frac{\ps{\nu,e_i^\vee}}\gamma\right)\displaystyle\E\left[J_{\gamma_i}\left(\ps{\nu,e_i^*}\right)^{-\frac{\ps{\nu,e_i^\vee}}\gamma}\right].
		\]
		Using Equation~\eqref{equ:eval_J} and noting that $\frac{\ps{\nu,e_i^\vee}}\gamma=2\frac{\ps{\nu,e_i^*}}{\gamma_i}$ we recover the expression~\eqref{eq:expression_refl_1} given for the reflection coefficient, provided that $\ps{\nu,e_i^*}\in(-\frac2{\gamma_i},0)$ (which does hold via our assumptions) and using the identity $z\Gamma(z)=\Gamma(1+z)$
		\[
		R_{s_i}(\alpha)=-\left(\pi\mu l\left(\frac{\gamma^2}{2}\right)\right)^{\frac{\ps{Q-\alpha,e_i^\vee}}\gamma}\frac{\Gamma\left(1+\frac{\ps{\alpha-Q,e_i^\vee}}{\gamma}\right)\Gamma\Big(1+\frac{\gamma}2\ps{\alpha-Q,e_i}\Big)}{\Gamma\left(1+\frac{\ps{Q-\alpha,e_i^\vee}}{\gamma}\right)\Gamma\Big(1+\frac\gamma2\ps{Q-\alpha,e_i}\Big)}\cdot
		\]
		
		As a consequence we need to focus on the integral of the remainder term. We will rely on the fact that since $M\geq \ps{(1-\eta)\bm c,e_i}$, the quantity $x\coloneqq -M+\ps{\bm c,e_i}$ (the starting point of the process $\mathcal B^\nu$) will diverge to $-\infty$. Now for any negative $x'>x$, we can rewrite $J_i$ under the form  
		\[
		\mathrm J_i=\int_{0}^{T_{x'}}+\int_{T_{x'}}^{\infty} e^{\gamma\mathcal B_t^{\ps{\nu,e_i}}}Z_t^idt
		\]
		where $T_{x'}$ is the first time when the process $\mathcal B^{\ps{\nu,e_i}}$ reaches $x'$. Then by the Markov property the process $\left(\mathcal B_t^{\ps{\nu,e_i}}\right)_{t\geq T_{x'}}$ has the law of $\mathcal B^\nu$ started from $x'$ and is independent of $\left(\mathcal B_t^{\ps{\nu,e_i}}\right)_{0\leq t\leq T_{x'}}$. Therefore $E_{\varphi,x'}\left[\exp\left(-e^{\gamma M}J_i\right)\right]-\E_{\varphi,x}\left[\exp\left(-e^{\gamma M}J_i\right)\right]$ is equal to
		\begin{align*}
			&\E_{\varphi,x}\left[\exp\left(-e^{\gamma M}\int_{T_{x'}}^{\infty} e^{\gamma\mathcal B_t^{\ps{\nu,e_i}}}Z_t^idt\right)\left(1-\exp\left(-e^{\gamma M}\int_{0}^{T_{x'}} e^{\gamma\mathcal B_t^{\ps{\nu,e_i}}}Z_t^idt\right)\right)\right]\\
			&\leq e^{\gamma M}\E_{\varphi,x}\left[\int_{0}^{T_{x'}} e^{\gamma\mathcal B_t^{\ps{\nu,e_i}}}Z_t^idt\exp\left(-e^{\gamma M}\int_{T_{x'}}^{\infty} e^{\gamma\mathcal B_t^{\ps{\nu,e_i}}}Z_t^idt\right)\right]\\
			&\leq Ce^{\gamma \left(M+x'\right)}
		\end{align*}
		where $C$ is uniformly bounded in $M,x',\varphi$ by Lemma~\ref{lemma:moments_Z}. Indeed the term that appears in the exponential can be bounded by $1$ while \[
		\E_{\varphi,x}\left[\int_{0}^{T_{x'}} e^{\gamma\mathcal B_t^{\ps{\nu,e_i}}}Z_t^idt\right]=e^{\gamma x'}\E_{\varphi,0}\left[\int_{-L_{x-x'}}^{0} e^{\gamma\mathcal B_{-t}^{\ps{\nu,e_i}}}Z_t^idt\right]
		\]
		where $L_{x-x'}$ is the last hitting-time of $x-x'$ (this follows from the time-reversal property of the process $\mathcal B^{\ps{\nu,e_i}}$, see~\cite[Theorem 2.5]{Williams}). 
		By letting $x\to-\infty$, we end up with
		\[
		\norm{\mathfrak{R}(\varphi; M-\ps{\bm c,e_i})}\leq  C e^{\gamma\ps{\bm c, e_i}}
		\]
		provided that $\lim\limits_{x\to-\infty} \E_{\varphi,x}\left[\exp\left(-e^{\gamma M}\mathrm J_i\right)\right]=\expect{\exp\left(-e^{\gamma M}J_{\gamma_i}\left(\frac{\ps{\alpha-Q,e_i^\vee}}{2\gamma}\right)\right)}$. This follows from the proof of~\cite[Lemma 7.1]{KRV_DOZZ}, by using the fact that $\varphi(0)=0$. All together this implies that the integral of this remainder term can be bounded by
		\[
		C'e^{\left(\gamma+(1-\eta)\ps{\nu,e_i^\vee}\right)\ps{\bm c, e_i}}.
		\]
		Finally we have proved the desired result: for any positive $\eta$,
		\begin{equation*}
			R_{s_i}(\alpha;\bm c,\varphi)=R_L\left(\ps{\alpha,e_i}\right)+\mathcal{O}\left(e^{(1-\eta)\left(\gamma+\ps{\nu,e_i^\vee}\right)\ps{\bm c, e_i}}\right).
		\end{equation*}
	\end{proof}
	We are now in position to wrap up the proof of Theorem~\ref{thm:primary_reflection}. Namely because the assumptions of Theorem~\ref{thm:primary_reflection} require that $\ps{\hat s_1\hat s_2\alpha-\alpha-(1-\eta)\gamma e_i,\bm c}\rightarrow+\infty$ for some positive $\eta$ the statement of Proposition~\ref{prop:asymptot_1_term} shows that in this asymptotic
	\[
	\E_{\varphi}\left[\left(\exp\left(-e^{\gamma \ps{\bm c,e_{i}}}I_{i}\right)-1\right)\right]=R_{s_i}(\alpha)e^{\ps{\hat s_i\alpha-\alpha,\bm c}}+o\left(e^{\ps{\hat s_1\hat s_2\alpha-\alpha,\bm c}}\right).
	\]
	Therefore we can write that 
	\begin{align*}
		\E_{\varphi}\left[\exp\left(-\sum_{i=1}^re^{\gamma \ps{\bm c,e_i}}I_i\right)\right]=1+\sum_{i=1}^rR_{s_i}(\alpha)e^{\ps{\hat s_i\alpha-\alpha,\bm c}}+R_{s_1s_2}(\alpha)e^{\ps{\hat s_1\hat s_2\alpha-\alpha,\bm c}}+o\left(e^{\ps{\hat s_1\hat s_2\alpha-\alpha,\bm c}}\right).
	\end{align*}
	This concludes the proof of Theorem~\ref{thm:primary_reflection}, provided that Theorem~\ref{thm:expression_refl} does indeed hold.
	
	\subsection{Probabilistic Toda reflection coefficients: proof of Theorem~\ref{thm:expression_refl}}\label{subsec:all_refl}
	Building on the reasoning developed above, we now turn to the proof of Theorem~\ref{thm:expression_refl} for the general case where the element of the Weyl group being considered is not necessarily assumed to be an elementary reflection. 
	Without loss of generality we assume that $s=s_1\cdots s_r$ and $\ps{\hat s\alpha-\hat s'\alpha,\bm c}\to+\infty$ for all $s'\neq s\in W_{1\cdots,r}$. Recall that we investigate the asymptotics of
	\begin{align*}
		\E_{\varphi}\left[\prod_{k=1}^p\left(\exp\left(-e^{\gamma \ps{\bm c,e_{k}}}I_{k}\right)-1\right)\right]
	\end{align*}
	in the limit where $\bm c\to\infty$ inside $\mathcal{C}$.
	
	\subsubsection{The case of length $2$}
	As a warm-up we first consider the case where $s$ has length $2$, that is to say only two integrals are involved.  Then we can work in $\V\simeq\R^2$ in which lie the two roots $(e_1,e_2)$ (up to reordering the roots we assume that $(i,j)=(1,2)$ to simplify the notations). Since $\ps{e_1,e_2}=0$ implies that the components $e_1$, $e_2$ of $B^\nu$ are independent so the expectations factorize, this case reduces to the case of length one. Therefore we may assume that it is not the case. In doing so we can write
	\begin{equation*}
		\begin{split}
			&\E_{\varphi}\left[\left(\exp\left(-e^{\gamma \ps{\bm c,e_1}}I_1\right)-1\right)\left(\exp\left(-e^{\gamma \ps{\bm c,e_2}}I_2\right)-1\right)\right] \\
			&= \int_{\mathcal{C}}d\P(\M)\E_{\varphi,-\M}\left[\left(\exp\left(-e^{\gamma \ps{\bm c+\M,e_1}}\mathrm J_1\right)-1\right)\left(\exp\left(-e^{\gamma \ps{\bm c+\M,e_2}}\mathrm J_2\right)-1\right)\right].
		\end{split}
	\end{equation*}
	In the above equation $d\P(\M)$ is the density of the random variable $\M$, given by
	\begin{equation*}
		d\P(\M)=\partial_{1,2}h(-\M)d\M=\sum_{s\in  W}\epsilon(s)\ps{\hat s\alpha-\alpha,\omega_1^\vee}\ps{\hat s\alpha-\alpha,\omega_2^\vee}e^{\ps{\alpha-\hat s\alpha,\M}}d\M.
	\end{equation*}
	Here the coefficients $\ps{\hat s\alpha-\alpha,\omega_1^\vee}\ps{\hat s\alpha-\alpha,\omega_2^\vee}$ vanish when $s\in \{Id,s_1,s_2\}$, so the sum actually ranges over $s\in W_{1,2}$. Hence a change in variable $\M\leftrightarrow\M-\bm c$ yields
	\begin{equation*}
		\E_{\varphi}\left[\left(\exp\left(-e^{\gamma \ps{\bm c,e_1}}I_1\right)-1\right)\left(\exp\left(-e^{\gamma \ps{\bm c,e_1}}I_1\right)-1\right)\right] = \sum_{ s\in W_{1,2}}e^{\ps{\hat s\alpha-\alpha,\bm c}}R_s(\alpha;\bm c,\varphi)
	\end{equation*}
	where we have introduced 
	\begin{equation}\label{equ:approx_refl}
		\begin{split}
			R_s(\alpha;\bm c,\varphi)\coloneqq \epsilon(s)&\int_{\mathcal{C}+\bm c}\ps{\hat s\alpha-\alpha,\omega_1^\vee}\ps{\hat s\alpha-\alpha,\omega_2^\vee}e^{\ps{\alpha-\hat s\alpha,\M}}\\
			&\E_{\varphi,c-\M}\left[\left(\exp\left(-e^{\gamma \ps{\M,e_1}}\mathrm J_1\right)-1\right)\left(\exp\left(-e^{\gamma \ps{\M,e_2}}\mathrm J_2\right)-1\right)\right]d\M.
		\end{split}
	\end{equation}
	However and unlike the case where only one integral was involved, there is one subtle issue that needs to be taken care of here. Indeed in the asymptotic where the starting point of the process $\B^\nu$ from Corollary~\ref{cor:brownien_drift} will diverge, it is far from clear how the quantities $\mathrm J_i$ will behave. In Section~\ref{section:brown_inf} we have provided a refined study of the process $\B^{\nu}$ in this regime, and a consequence of Proposition~\ref{prop:asym_prob} is in particular that as $\bm x\rightarrow\infty$ inside $\mathcal C_-$ with $\ps{s_1s_2(\alpha-Q)-s_2s_1(\alpha-Q),\bm x}\to+\infty$,
	\begin{equation}
		\lim\limits_{\bm x\to\infty} \E_{\bm x}\left[F(\mathrm J_1)G(\mathrm J_2)\right]=\expect{F\Big(J_{\gamma_1}\left(\ps{s_2(\alpha-Q),e_1^*}\right)\Big)}\expect{G\Big(J_{\gamma_2}\left(\ps{\alpha-Q,e_2^*}\right)\Big)}
	\end{equation} for $F,G$ be bounded continuous over $\R^+$ (the reasoning conducted in Section~\ref{section:brown_inf} still works for $Z$ as considered here).
	This allows to provide the desired result:
	\begin{lemma}\label{lemma:conv_refl}
		Under the assumptions of Theorem~\ref{thm:expression_refl} $R_s(\alpha;\bm c,\varphi)$ converges to a well-defined limit. When $s=s_1s_2$ this limit is equal to $R_s(\alpha)$.
	\end{lemma}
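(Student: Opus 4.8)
The plan is to show that the integral defining $R_s(\alpha;\bm c,\varphi)$ in Equation~\eqref{equ:approx_refl} converges as $\bm c\to\infty$ inside $\mathcal{C}_-$, by splitting the domain of integration $\mathcal{C}+\bm c$ into a "bulk" region where the pair of starting points $\bm c - \M$ (for the diffusion $\B^\nu$) stays far from the origin, and a "boundary" region near $\partial(\mathcal{C}+\bm c)$ which contributes only lower-order terms. First I would perform the change of variables already indicated ($\M\leftrightarrow\M-\bm c$), so that $R_s(\alpha;\bm c,\varphi)$ becomes an integral over $\mathcal{C}$ against the (polynomial times exponential) density $\epsilon(s)\ps{\hat s\alpha-\alpha,\omega_1^\vee}\ps{\hat s\alpha-\alpha,\omega_2^\vee}e^{\ps{\alpha-\hat s\alpha,\M}}$, with integrand the expectation
\[
\E_{\varphi,\bm c-\M}\left[\left(\exp\left(-e^{\gamma\ps{\M,e_1}}\mathrm J_1\right)-1\right)\left(\exp\left(-e^{\gamma\ps{\M,e_2}}\mathrm J_2\right)-1\right)\right].
\]
The exponent $\ps{\alpha-\hat s\alpha,\M}$ is (for $s=s_1s_2$, and for the relevant $s\in W_{1,2}$) a genuine decaying weight on $\mathcal{C}$ under our assumption $\ps{\hat s\alpha-\alpha,\omega_i^\vee}<\gamma$ together with $\ps{\hat s\alpha-\hat s'\alpha,\bm c}\to+\infty$, which makes the integral absolutely convergent once the integrand is controlled.

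The key analytic input is a uniform bound on the expectation above, together with its pointwise limit. For the bound, I would proceed as in the proof of Proposition~\ref{prop:asymptot_1_term}: use $|e^{-a}-1|\leq a^{1-\delta}$ (or $\leq a$) to dominate each factor by a power of $e^{\gamma\ps{\M,e_i}}\mathrm J_i$, then invoke Lemma~\ref{lemma:moments_cond} (finite negative and small positive moments of $J_\gamma$, valid since $\ps{\alpha-Q,e_i^\vee}\in(-\gamma,0)$) and the uniform boundedness of $\varphi$ on $\D$ to get a bound of the form $C\,e^{\gamma(1-\delta)(\ps{\M,e_1}+\ps{\M,e_2})}$ with $C$ independent of $\bm c$ and $\varphi$. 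Combined with the exponential weight $e^{\ps{\alpha-\hat s\alpha,\M}}$ this gives an integrable dominating function on $\mathcal{C}$ uniformly in $\bm c$, provided $\delta$ is chosen small enough — this is where the hypothesis $\ps{\hat s\alpha-\alpha,\omega_i^\vee}<\gamma$ is used with room to spare. For the pointwise limit, the point is that as $\bm c\to\infty$ the starting point $\bm c-\M$ of $\B^\nu$ diverges inside $\mathcal{C}_-$ with the right separation condition, so by Proposition~\ref{prop:asym_prob} (in the form quoted just before the lemma, applied with the bounded continuous functions $F(x)=e^{-e^{\gamma\ps{\M,e_1}}x}-1$, $G(x)=e^{-e^{\gamma\ps{\M,e_2}}x}-1$) the expectation converges to a product
\[
\expect{F\!\left(J_{\gamma_1}\!\left(\ps{s_2(\alpha-Q),e_1^*}\right)\right)}\expect{G\!\left(J_{\gamma_2}\!\left(\ps{\alpha-Q,e_2^*}\right)\right)}.
\]
Dominated convergence then yields that $R_s(\alpha;\bm c,\varphi)$ converges.

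For the identification of the limit when $s=s_1s_2$, I would integrate the resulting product against the $\M$-density and compute: each factor is exactly the one-dimensional Liouville-type integral already evaluated in the proof of Proposition~\ref{prop:asymptot_1_term}, namely $\int_\R \ps{\nu,e_i^\vee}e^{\ps{\nu,e_i^\vee}M}\E[F(\dots)]\,dM = -\tfrac{\ps{\nu,e_i^\vee}}{\gamma}\Gamma\!\big(\tfrac{\ps{\nu,e_i^\vee}}{\gamma}\big)\E\big[J_{\gamma_i}(\cdot)^{-\ps{\nu,e_i^\vee}/\gamma}\big]$, which by Equation~\eqref{equ:eval_J} equals the corresponding factor of $A(s(\alpha-Q))/A(\alpha-Q)$ (with the appropriate drift argument coming from $s_2\nu$ resp. $\nu$); assembling the two factors with the sign $\epsilon(s)=\epsilon(s_1s_2)=1$ and using $R_{s_1s_2}(\alpha)=R_{s_1}(s_2\alpha)R_{s_2}(\alpha)$ recovers the formula~\eqref{eq:expression_refl_1}. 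The main obstacle I anticipate is making the uniform domination rigorous near the corner/boundary of $\mathcal{C}+\bm c$ (equivalently, for $\M$ near $\partial\mathcal{C}$ after the shift): there the two diffusion components are not yet decorrelated, the estimates from Section~\ref{section:brown_inf} degrade, and one must argue — as in Propositions~\ref{prop:asym_prob_A2} and~\ref{prop:asym_prob} — that this region carries negligible mass, so that the naive moment bound still suffices after restricting $\M$ to stay a fixed (small) angular distance inside $\mathcal{C}$; handling the complementary sliver requires the explicit exit-point estimates of Proposition~\ref{prop:exact_proba_hitting} rather than the clean limit of Proposition~\ref{prop:asym_prob}.
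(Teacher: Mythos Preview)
Your plan is essentially the paper's proof: split the domain of integration into a bulk region $\mathcal{C}+(1-\eta)\bm c$ and a boundary sliver $\mathcal{C}_\eta$, kill the sliver with the crude bound $\norm{e^{-a}-1}\leq a$, use the factorisation limit from Proposition~\ref{prop:asymptot_A2} (this is what is quoted just before the lemma; Proposition~\ref{prop:asym_prob} only gives $\P(\mathcal{E}_s)\to 1$) for pointwise convergence on the bulk, pass to the limit by dominated convergence, and then evaluate the resulting product of one-dimensional integrals via Equation~\eqref{equ:eval_J}. The identification $R_{s_1s_2}(\alpha)=R_{s_1}(\hat s_2\alpha)R_{s_2}(\alpha)$ you invoke at the end is exactly how the paper packages the computation.

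Two small inaccuracies to clean up. First, the inequality $\norm{e^{-a}-1}\leq a^{1-\delta}$ is false for large $a$; what you actually need (and what the paper uses) is $\norm{e^{-a}-1}\leq\min(1,a)$, so that the density $e^{\ps{\alpha-\hat s\alpha,\M}}$ handles the region $\ps{\M,e_i}\to+\infty$ and the factor $e^{\gamma\ps{\M,e_i}}$ handles $\ps{\M,e_i}\to-\infty$. Second, Lemma~\ref{lemma:moments_cond} concerns the limiting variable $J_\gamma(\nu)$, not the $\bm c$-dependent $\mathrm J_i$; the uniform-in-$\bm c$ moment control you need on $\mathrm J_i$ is not literally that lemma, and the paper is equally terse here (it simply asserts the bound $\leq C e^{\gamma(1-\eta)\ps{\bm c,e_1}}e^{\ps{\bm c,\alpha-\hat s\alpha}}$). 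Neither point changes the architecture of the argument.
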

	\begin{proof}
		Let us pick some positive $\eta$ and split the integral between the domains $\mathcal{C}+(1-\eta)\bm c$ and $\mathcal{C}_\eta\coloneqq\mathcal{C}+\bm c\setminus\left(\mathcal{C}_\eta+(1-\eta)\bm c\right)$.
		
		The integral over the domain $\mathcal{C}_\eta$ will be negligible. Indeed for any $\M\in\mathcal{C}_\eta$ there is some $i\in\{1,2\}$ such that $\ps{\M,e_i}<(1-\eta)\ps{\bm c,e_i}$ (say $i=1$); as a consequence over this domain the expectation term is  of order at most $e^{\gamma (1-\eta)\ps{\bm c,e_i}}$:
		\begin{align*}
			&\E_{\varphi,c-\M}\left[\left(\exp\left(-e^{\gamma \ps{\M,e_1}}\mathrm J_1\right)-1\right)\left(\exp\left(-e^{\gamma \ps{\M,e_2}}\mathrm J_2\right)-1\right)\right]\\
			&\leq \gamma e^{\gamma (1-\eta)\ps{\bm c,e_1}}\E_{\varphi,c-\M}\left[\mathrm J_1\exp\left(-e^{\gamma (1-\eta)\ps{\bm c,e_1}}\mathrm J_1\right)\left(1-\exp\left(-e^{\gamma \ps{\M,e_2}}\mathrm J_2\right)\right)\right]\\
			&\leq \gamma e^{\gamma (1-\eta)\ps{\bm c,e_1}}\E_{\varphi,c-\M}\left[\mathrm J_1\left(1-\exp\left(-e^{\gamma \ps{\M,e_2}}\mathrm J_2\right)\right)\right].
		\end{align*} 
		This implies that
		\begin{align*}
			&\int_{\ps{\bm c,e_1}<\ps{\M,e_1}<(1-\eta)\ps{\bm c,e_1}}e^{\ps{\alpha-\hat s\alpha,\M}}\E_{\varphi,c-\M}\left[\left(\exp\left(-e^{\gamma \ps{\M,e_1}}\mathrm J_1\right)-1\right)\left(\exp\left(-e^{\gamma \ps{\M,e_2}}\mathrm J_2\right)-1\right)\right]d\M\\
			&\leq \gamma e^{\gamma (1-\eta)\ps{\bm c,e_1}}\int_{\ps{\bm c,e_1}<\ps{\M,e_1}<(1-\eta)\ps{\bm c,e_1}}e^{\ps{\alpha-\hat s\alpha,\M}}\E_{\varphi,c-\M}\left[\mathrm J_1\left(1-\exp\left(-e^{\gamma \ps{\M,e_2}}\mathrm J_2\right)\right)\right]d\M\\
			&\leq C e^{\gamma (1-\eta)\ps{\bm c,e_1}}e^{\ps{\bm c,\alpha-\hat s\alpha}}
		\end{align*}
		for some positive constant $C$. Choosing $\eta$ small enough this term vanishes as $\ps{\bm c,e_1}$ and $\ps{\bm c,e_2}$ go to $-\infty$ under the assumptions of Theorem~\ref{thm:primary_reflection}. Therefore the integral over the whole domain $\mathcal{C}_\eta$ becomes negligible in the limit.
		
		Therefore the integral over the other domain $\mathcal{C}+(1-\eta)\bm c$ will be the contributing one. Indeed this set has been defined so that inside $\mathcal{C}+(1-\eta)\bm c$, the starting point of the process will diverge to $\infty$ inside $\mathcal{C}_-$, so that we are in the setting of Proposition~\ref{prop:asymptot_A2}:
		\begin{align*}
			&\E_{\varphi,c-\M}\left[\left(\exp\left(-e^{\gamma \ps{\M,e_1}}\mathrm J_1\right)-1\right)\left(\exp\left(-e^{\gamma \ps{\M,e_2}}\mathrm J_2\right)-1\right)\right]\text{ will converge pointwise to}\\
			&\expect{\exp\left(-e^{\gamma \ps{\M,e_1}}J_{\gamma_1}\left(\ps{s_2(\alpha-Q),e_1^*}\right)\right)-1}\expect{\exp\left(-e^{\gamma \ps{\M,e_2}}J_{\gamma_2}\left(\ps{\alpha-Q,e_2^*}\right)\right)-1}.
		\end{align*}
		By dominated convergence this implies that 
		\begin{align*}
			&\int_{\mathcal{C}+(1-\eta)\bm c}e^{\ps{\alpha-\hat s\alpha,\M}}\E_{\varphi,c-\M}\left[\left(\exp\left(-e^{\gamma \ps{\M,e_1}}\mathrm J_1\right)-1\right)\left(\exp\left(-e^{\gamma \ps{\M,e_2}}\mathrm J_2\right)-1\right)\right]d\M
		\end{align*}
		converges to  the quantity 
		\begin{align*}
			\int_{\V}&e^{\ps{\alpha-\hat s\alpha,\omega_1^\vee}\ps{\M,e_1}}\E\left[\left(\exp\left(-e^{\gamma \ps{\M,e_1}}J_{\gamma_1}\left(\ps{s_2(\alpha-Q),e_1^*}\right)\right)-1\right)\right]\times\\
			&e^{\ps{\alpha-\hat s\alpha,\omega_2^\vee}\ps{\M,e_2}}\expect{\left(\exp\left(-e^{\gamma \ps{\M,e_2}}J_{\gamma_2}\left(\ps{\alpha-Q,e_2^*}\right)\right)-1\right)}d\M.
		\end{align*}
		The latter is nothing but
		\begin{align*}
			&\frac1\gamma\Gamma\left(\frac1\gamma{\ps{\alpha-\hat s\alpha,\omega_1^\vee}}\right)\expect{J_{\gamma_1}\left(\ps{s_2(\alpha-Q),e_1^*}\right)^{\frac{\ps{\hat s\alpha-\alpha,\omega_1^\vee}}{\gamma}}}\times\\ &\frac1\gamma\Gamma\left(\frac1\gamma{\ps{\alpha-\hat s\alpha,\omega_2^\vee}}\right)\expect{J_{\gamma_2}\left(\ps{\alpha-Q,e_2^*}\right)^{\frac{\ps{\hat s\alpha-\alpha,\omega_2^\vee}}{\gamma}}}.
		\end{align*}
		Now if we choose $s=s_1s_2$, we can write that $\frac{\ps{s_2(\alpha-Q),e_1^*}}{\gamma_1}=\frac{\ps{s_2(\alpha-Q),e_1^\vee}}{2\gamma}=\frac{\ps{\alpha-\hat s\alpha,\omega_1^\vee}}{2\gamma}$ as well as $\frac{\ps{\alpha-Q,e_2^*}}{\gamma_2}=\frac{\ps{\alpha-Q,e_2^\vee}}{2\gamma}=\frac{\ps{\alpha-\hat s\alpha,\omega_2^\vee}}{2\gamma}$.
		As a consequence we can evaluate the above quantity thanks to Equation~\eqref{equ:eval_J}:
		\begin{align*}
			&\left(\pi l\left(\frac{\gamma_1^2}{4}\right)\right)^{\frac{\ps{\hat s\alpha-\alpha,\omega_1^\vee}}\gamma}\frac{1}{\gamma}\frac{\Gamma\left(\frac1\gamma{\ps{\alpha-\hat s\alpha,\omega_1^\vee}}\right)\Gamma\left(1+\frac\gamma2{\ps{\alpha-\hat s\alpha,\omega_1^\vee}}\right)}{\Gamma\left(1+\frac1\gamma{\ps{\hat s\alpha-\alpha,\omega_1^\vee}}\right)\Gamma\left(1+\frac\gamma2{\ps{\hat s\alpha-\alpha,\omega_1^\vee}}\right)}\times\\
			&\left(\pi l\left(\frac{\gamma_2^2}{4}\right)\right)^{\frac{\ps{\hat s\alpha-\alpha,\omega_2^\vee}}\gamma}
			\frac{1}{\gamma}\frac{\Gamma\left(\frac1\gamma{\ps{\alpha-\hat s\alpha,\omega_2^\vee}}\right)\Gamma\left(1+\frac\gamma2{\ps{\alpha-\hat s\alpha,\omega_2^\vee}}\right)}{\Gamma\left(1+\frac1\gamma{\ps{\hat s\alpha-\alpha,\omega_2^\vee}}\right)\Gamma\left(1+\frac\gamma2{\ps{\hat s\alpha-\alpha,\omega_2^\vee}}\right)}\cdot
		\end{align*}
		Therefore, collecting up terms, we see that
		\begin{align*}
			&\int_{\mathcal{C}_\eta}+\int_{\mathcal{C}+(1-\eta)\bm c}e^{\ps{\alpha-\hat s\alpha,\M}}\E_{\varphi,c-\M}\left[\left(\exp\left(-e^{\gamma \ps{\M,e_1}}J_1\right)-1\right)\left(\exp\left(-e^{\gamma \ps{\M,e_2}}J_2\right)-1\right)\right]d\M
		\end{align*}
		does converge, and for $s=s_1s_2$ the limit of $R_s(\alpha;\bm c,\varphi)$ will be given by $R_s(\alpha)$.
	\end{proof}
	This allows to conclude for the proof of Theorem~\ref{thm:expression_refl} in the case where $p=2$.
	\subsubsection{The general case}
	Without loss of generality we assume that $s=s_1\cdots s_r$ and\\ $\ps{\hat s\alpha-\hat s'\alpha,\bm c}\to+\infty$ for all $s'\neq s\in W_{1\cdots,r}$. Then along the same lines as above we get
	\begin{align*}
		&\E_{\varphi}\left[\prod_{k=1}^r\left(\exp\left(-e^{\gamma \ps{\bm c,e_{i_k}}}I_{i_k}\right)-1\right)\right]\\
		&=\sum_{s'}e^{\ps{\hat s'\alpha-\alpha,\bm c}}\epsilon(s')\int_{\mathcal{C}+\bm c}\prod_{i=1}^r\ps{\hat s'\alpha-\alpha,\omega_i^\vee}e^{\ps{\alpha-\hat s'\alpha,\M}}\E_{\varphi,\bm c-\M}\left[\prod_{k=1}^p\left(\exp\left(-e^{\gamma \ps{\bm c,e_{i_k}}}I_{i_k}\right)-1\right)\right]
	\end{align*}
	where the sum ranges over the elements $s'\in W$ whose reduced expression contain $s_1,\cdots,s_r$, that is $W_{1,\cdots, r}$. Indeed we can write $\ps{\hat s'\alpha-\alpha,\omega_i^\vee}=\ps{\alpha-Q,s'^{-1}\omega_i^\vee-\omega_i^\vee}$ where $ s'^{-1}\omega_i^\vee-\omega_i^\vee=0$ if this is not the case (indeed $s'_i\omega_j^\vee=0$ for $j\neq i$). Using the same estimates as before, we see that up to a term which vanishes in the limit, the integral can be reduced to an integral over $\mathcal{C}+(1-\eta)\bm c$ where $\eta>0$ is small enough. Now recall from Section~\ref{section:brown_inf} that 
	in the limit where $\bm x\rightarrow \infty$ inside $\mathcal{C}$ with $\ps{\hat s\alpha-\hat s'\alpha,\bm x}\to+\infty$ for all $s'\neq s\in W_{1,\cdots,r}$,
	\begin{equation}\label{eq:asym_deco_bis}
		\E_{\bm x}\left[\prod_{i=1}^r F_i(\mathrm J_i)\right]\text{ converges to }\prod_{i=1}^r\expect{F_i\Big(J_{\gamma_i}\left(\ps{s_{i+1}\cdots s_r(\alpha-Q),e_i^*}\right)\Big)}
	\end{equation}
	for $F_1,\cdots, F_r$ bounded continuous over $\R^+$.
	With Equation~\eqref{eq:asym_deco_bis} at hand we see that
	\begin{align*}
		&\lim\limits_{\bm c\to\infty}\int_{\mathcal{C}+\bm c}\prod_{i=1}^r\ps{\hat s'\alpha-\alpha,\omega_i^\vee}e^{\ps{\alpha-\hat s'\alpha,\M}}\E_{\varphi,\bm c-\M}\left[\prod_{k=1}^p\left(\exp\left(-e^{\gamma \ps{\bm c,e_{i_k}}}I_{i_k}\right)-1\right)\right]\\
		&=\prod_{i=1}^r\Gamma\left(1+\frac1\gamma\ps{\hat s'\alpha-\alpha,\omega_i^\vee}\right)\expect{J_{\gamma_i}\left(\ps{s_{i+1}\cdots s_r(\alpha-Q),e_i^*}\right)^{\frac1\gamma\ps{\hat s'\alpha-\alpha,\omega_i^\vee}}}.
	\end{align*}
	Now in the case where $s'=s$, we can use the fact that $s_j\omega_i^\vee=\omega_i^\vee$ for all $j\neq i$ while $s_i\omega_i^\vee=\omega_i^\vee - e_i^\vee$ to see that the exponent is actually equal to
	\begin{align*}
		\frac1\gamma\ps{s\alpha-\alpha,\omega_i^\vee}=\frac1\gamma\ps{\alpha-Q,s_r\cdots s_1\omega_i^\vee-\omega_i^\vee}=\frac1\gamma\ps{\alpha-Q,s_r\cdots s_{i+1}e_i^\vee}=\frac1{\gamma_i}\ps{s_{i+1}\cdots s_r(\alpha-Q),e_i^*}.
	\end{align*}
	This allows to evaluate the result in that case via Equation~\eqref{equ:eval_J}:
	\[
	\prod_{i=1}^r\left(\pi l\left(\frac{\gamma_i^2}{4}\right)\right)^{\frac{\ps{\hat s\alpha-\alpha,\omega_i^\vee}}\gamma}
	\frac{\Gamma\left(1+\frac1\gamma\ps{\hat s\alpha-\alpha,\omega_i^\vee}\right)\Gamma\left(1+\frac\gamma2\ps{\hat s\alpha-\alpha,\omega_i}\right)}{\Gamma\left(1+\frac1\gamma\ps{\alpha-\hat s\alpha,\omega_i^\vee}\right)\Gamma\left(1+\frac\gamma2\ps{\alpha-\hat s\alpha,\omega_i}\right)}\cdot
	\]
	Moreover under our assumptions we know that the higher order term in the expansion of
	\[\sum_{s'\in W_{1,\cdots,r}}e^{\ps{\hat s'\alpha-\alpha,\bm c}}\epsilon(s')\int_{\mathcal{C}+\bm c}\prod_{i=1}^r\ps{\hat s'\alpha-\alpha,\omega_i^\vee}e^{\ps{\alpha-\hat s'\alpha,\M}}\E_{\varphi,\bm c-\M}\left[\prod_{k=1}^p\left(\exp\left(-e^{\gamma \ps{\bm c,e_{i_k}}}I_{i_k}\right)-1\right)\right]
	\] will correspond to $s'=s$. The proof of Theorem~\ref{thm:expression_refl} is thus complete.

	\subsection{Proof of Theorems~\ref{thm:whittaker_expansion} and~\ref{thm:tail_expansion_refl}}
	The proof of the tail expansion of the GMC measures follows the very same lines as the reasoning presented above so we will be brief. One only needs to replace the terms of the form $\E_{\varphi}\left[\prod_{i=1}^p\left(\exp\left(-e^{\gamma \ps{\bm c,e_{i}}}I_{i}\right)-1\right)\right]$ by $\P\left(I_i>e^{-\gamma \ps{\bm c,e_i}}, 1\leq i\leq p\right)$.
	By doing so we will end up with expression of the type
	\begin{align*}
		\P\left(I_i>e^{-\gamma \ps{\bm c,e_i}},1\leq i\leq p\right))&=\int_{\mathcal{C}}d\P(\M)\P_{-\M}\left(\mathrm J_i>e^{-\gamma \ps{\bm c+\M,e_i}}, 1\leq i\leq p\right)\\
		&=\sum_{s\in W_{1,\cdots,p}}e^{\ps{\hat s\alpha-\alpha,\bm c}}\overline{R_s}(\alpha;\bm c,\varphi)+u
	\end{align*}
	with $u$ a lower order term and $\overline{R_s}(\alpha;\bm c,\varphi)$ defined as
	\[
	\epsilon(s)\int_{\mathcal{C}+\bm c}\prod_{i=1}^p\ps{\hat s\alpha-\alpha,\omega_i^\vee}e^{\ps{\alpha-\hat s\alpha,\M}}\P_{c-\M}\left(\mathrm J_i>e^{-\gamma \ps{\M,e_i}},1\leq i\leq p\right)d\M.
	\]
	Thanks to Proposition~\ref{prop:asymptot_J} above we can prove in the same way as before that $\overline{R_s}(\alpha;\bm c,\varphi)$ converges and its limit is given by $\overline{R_s}(\alpha)$ when $s$ is as in the statement of Theorem~\ref{thm:tail_expansion_refl}. This wraps up the proof of Theorem~\ref{thm:tail_expansion_refl}.

	The proof proceeds in the very same way for Whittaker functions. We work with
	\begin{equation}
		I_i\coloneqq \int_0^{\infty}e^{-\ps{B^\mu_t, e_i}}dt\quad\text{and}\quad J(\nu)\coloneqq \int_0^{\infty}e^{-\mathcal B^\nu_t}dt.
	\end{equation}
	In analogy with Toda Vertex Operators, we also introduce the renormalized quantity
	\begin{equation}
		\phi_\mu(\bm x)\coloneqq \expect{\exp\left(-\sum_{i=1}^re^{-\ps{\bm x,e_i}}\int_0^{\infty}e^{-\ps{B^\mu_t, e_i}}dt\right)}.
	\end{equation}
	Then the statement of Theorem~\ref{thm:whittaker_expansion} boils down to proving that $\phi_\mu(\bm x)$ has the asymptotic 
	\[
	\phi_\mu(\bm x)=1+\sum_{i=1}^r \frac{b'(s_i\mu)}{b'(\mu)}e^{\ps{s_i\mu-\mu,\bm x}} + \frac{b'(s_1s_2\mu)}{b'(\mu)}e^{\ps{s_1s_2\mu-\mu,\bm x}} + l.o.t.
	\]
	where $l.o.t.$ stands for lower order terms in the asymptotic studied, and
	\[
	b'(\mu)\coloneqq \prod_{e\in\Phi^+}\left(\frac2{\ps{e,e}}\right)^{-\frac{\ps{\mu,e^\vee}}2}\Gamma\left(\ps{\mu,e^\vee}\right).
	\]
	
	To prove this, we decompose $\phi_\mu$ under the form 
	\begin{align*}
		\phi_\mu(\bm x)=1&+\sum_{i=1}^r\expect{\exp\left(-e^{-\ps{\bm x,e_i}}I_i\right)-1}\\
		&+\sum_{i<j}\expect{\left(\exp\left(-e^{-\ps{\bm x,e_i}}I_i\right)-1\right)\left(\exp\left(-e^{-\ps{\bm x,e_j}}I_j\right)-1\right)}+l.o.t.
	\end{align*}
	where $l.o.t.$ is negligible from the proof of Theorem~\ref{thm:primary_reflection}.
	
	Along the same lines as above we see that 
	\[
	\expect{\exp\left(-e^{-\ps{\bm x,e_i}}I_i\right)-1}=e^{\ps{s_i\mu-\mu,\bm x}}r'_s(\mu)+u
	\]
	where $r_{s_i}(\mu)$ has the probabilistic expression 
	\begin{equation}
		r'_{s_i}(\mu)=-\ps{e_i,e_i}^{-\ps{\mu,e_i^\vee}}\Gamma(1-\ps{\mu,e_i^\vee})\expect{J\left(\frac12\ps{\mu,e_i^\vee}\right)^{\ps{\mu,e_i^\vee}}},
	\end{equation}
	and $u$ is a $\mathcal{O}\left(e^{(1-\eta)\left(1+\ps{\mu,e_i}\right)\ps{\bm x, e_i}}\right)$, via an analog of Proposition~\ref{prop:asymptot_1_term}.
	The expectation term in the right-hand side has been evaluated in Proposition~\ref{prop:eval_J} where it was found to be equal to 
	\begin{equation}
		\frac{2^{\ps{\mu,e_i^\vee}}}{\Gamma(1+\ps{\mu,e_i^\vee})}\cdot
	\end{equation}
	This implies that the coefficients $r'_{s_i}(\mu)$ are given by
	\[
	r'_{s_i}(\mu)=-\left(\frac2{\ps{e_i,e_i}}\right)^{\ps{\mu,e_i^\vee}}\frac{\Gamma(1-\ps{\mu,e_i^\vee})}{\Gamma(1+\ps{\mu,e_i^\vee})}=\left(\frac2{\ps{e_i,e_i}}\right)^{\ps{\mu,e_i^\vee}}\frac{\Gamma(-\ps{\mu,e_i^\vee})}{\Gamma(\ps{\mu,e_i^\vee})},
	\]
	which does coincides with the proposed expression
	\begin{equation}
		r'_s(\mu)=\prod_{e\in\Phi^+}\frac{\left(\frac2{\ps{e,e}}\right)^{-\frac{\ps{s\mu,e^\vee}}2}\Gamma(\ps{s\mu,e^\vee})}{\left(\frac2{\ps{e,e}}\right)^{-\frac{\ps{\mu,e^\vee}}2}\Gamma(\ps{\mu,e^\vee})}=\frac{b'(s\mu)}{b'(\mu)}
	\end{equation}
	when $s=s_i$ has length $1$.
	
	Likewise and relying on the path decomposition of Theorem~\ref{thm:williams}, the proof of Theorem~\ref{thm:primary_reflection} shows that the expectation terms that involve two distinct reflections admit the expansion 
	\[
	\expect{\left(\exp\left(-e^{-\ps{\bm x,e_i}}I_i\right)-1\right)\left(\exp\left(-e^{-\ps{\bm x,e_j}}I_j\right)-1\right)}=e^{\ps{s_is_j\mu-\mu,\bm x}}r'_{s_is_j}(\mu)+l.o.t.
	\]
	as soon as $\ps{s_is_j\mu-s_js_i\mu,\bm x}\to\infty$, and where $r'_{s_is_j}(\mu)$ has the probabilistic expression
	\begin{equation}
		r'_{s_is_j}(\mu)=\Gamma(1-2\ps{s_j\mu,e_i^\vee})\Gamma(1-2\ps{\mu,e_j^\vee})\expect{J\left(\frac12\ps{s_j\mu,e_i^\vee}\right)^{2\ps{s_j\mu,e_i^\vee}}}\expect{J\left(\frac12\ps{\mu,e_j^\vee}\right)^{2\ps{\mu,e_j^\vee}}}.
	\end{equation}
	The latter is nothing but $r'_{s_i}(s_j\mu)r'_{s_j}(\mu)$, so that the reflection coefficients $r'_{s}(\mu)$ are also found to be equal to
	\begin{equation}
		r'_s(\mu)=\prod_{e\in\Phi^+}\frac{\left(\frac2{\ps{e,e}}\right)^{-\frac{\ps{s\mu,e^\vee}}2}\Gamma(\ps{s\mu,e^\vee})}{\left(\frac2{\ps{e,e}}\right)^{-\frac{\ps{\mu,e^\vee}}2}\Gamma(\ps{\mu,e^\vee})}=\frac{b'(s\mu)}{b'(\mu)}
	\end{equation}
	for $s$ with length $2$.
	
	More generally, we can consider the reflection coefficients defined by setting, for $s=s_{i_1}\cdots s_{i_p}$ with $i_1,\cdots,i_p$ distinct, 
	\begin{equation}
		r'_s(\mu)\coloneqq \lim\limits_{\bm x\to\infty} e^{-\ps{s\mu,\bm x}}\expect{\prod_{k=1}^p\left(\exp\left(-e^{-\ps{\bm x,e_{i_k}}}I_{i_k}\right)-1\right)}
	\end{equation}
	where the limit is taken in the regime where
	\[
	\ps{s\mu-s'\mu,\bm x}\to+\infty\quad\text{for all }s'\neq s\in W_{i_1,\cdots,i_p}.
	\]
	The same reasoning as the one conducted in Subsection~\ref{subsec:all_refl} shows that these can be evaluated as
	\begin{equation}
		r'_s(\mu)=\frac{b'(s\mu)}{b'(\mu)}
	\end{equation}
	for such $s$, based on the property that $r'_{s\tau}(\mu)=r'_s(\tau\mu)r'_\tau(\mu)$, which is a consequence of the form of the process $\mathcal{B}^\nu$ seen from infinity (see Section~\ref{section:brown_inf}).
	
	This establishes the validity of Theorem~\ref{thm:whittaker_expansion} and concludes for the present document.
	


	
	\subsection*{Data Availability Statement}
	Data sharing is not applicable to this article as no datasets were generated or analysed.

	\bibliography{biblio}
	\bibliographystyle{plain}

\end{document}